\RequirePackage{fix-cm}
\documentclass[11pt,reqno]{amsart}

\makeatletter
\renewcommand\section{\@startsection{section}{1}%
  \z@{.7\linespacing}{.5\linespacing}%
  {\normalfont\scshape\centering}}
\makeatother
\usepackage{amsmath,amssymb,amsthm,geometry,needspace}
\usepackage[colorlinks,linkcolor=blue,citecolor=blue]{hyperref}

\numberwithin{equation}{section}

\newtheorem{theorem}{Theorem}[section]
\newtheorem{proposition}[theorem]{Proposition}
\newtheorem{lemma}[theorem]{Lemma}

\theoremstyle{definition}

\theoremstyle{remark}

\newcommand{\R}{\mathbb R}
\newcommand{\N}{\mathbb N}

\newcommand{\C}{\mathbb C}
\newcommand{\T}{\mathbb T}
\newcommand{\X}{X}

\newcommand{\abs}[1]{\left\lvert#1\right\rvert}

\allowdisplaybreaks[2]
\title[Hartman--Grobman Theorem for the Navier--Stokes Equation on $\mathbb T^3$]
{A Hartman--Grobman Theorem for the Navier--Stokes Equation on $\mathbb T^3$}

\author{Rongchang Liu}
\address[R. Liu]{School of Mathematics, Sichuan University,
Chengdu 610064, P.\ R.\ China}
\email[R.~Liu]{rcliu@scu.edu.cn}
\author{Kening Lu}
\address[K. Lu]{School of Mathematics, Sichuan University,
Chengdu 610064, P.\ R.\ China}
\email[K.~Lu]{keninglu@scu.edu.cn}
\author{Lin Shi}
\address[L. Shi]{School of Mathematical Sciences,
University of Electronic Science and Technology of China,
Chengdu 611731, P.\ R.\ China}
\email[L.~Shi]{shilinlavender@163.com}

\thanks{This work was supported by the Fundamental Research Funds for
the Central Universities and the National Natural Science Foundation
of China (Grant Nos.~12090010, 12090013 and 12471154).}

\subjclass[2020]{Primary 37L05; Secondary 35Q30, 37C15}
\keywords{Hartman--Grobman theorem, 3D Navier--Stokes equation,
topological conjugacy, asymptotic normalization,
invariant foliations}

\begin{document}
\raggedbottom

\begin{abstract}
We prove a local Hartman–Grobman theorem at zero for the unforced Navier–Stokes equation on the three-dimensional torus within the strong topology of $H^s$ for every $s>7/2$. More precisely, we construct a
homeomorphism between open positively invariant neighborhoods of zero
that conjugates the Navier--Stokes semiflow to the heat semigroup.

We develop an infinite-dimensional boundary--time scheme built
on a nested hierarchy of exact asymptotic dynamical factors. Their fiber
geometry yields Sobolev block coordinates with autonomous
finite-dimensional factor systems, on which boundary--time topological
conjugacies can be constructed at every finite level and then assembled
on a common neighborhood into the desired $H^s$ conjugacy. The
infinite-dimensional assembly relies on new spectral estimates for the
divergence-free transport structure, which control derivative loss and
the blockwise tails of the finite-level conjugacies and their inverses
without an additional spectral-gap assumption.
\end{abstract}

\maketitle
\setcounter{tocdepth}{1}
\tableofcontents

\baselineskip13.6pt

\section{Introduction}\label{intro:section}
A fundamental problem in dynamical systems is whether the nonlinear dynamics near an equilibrium can be reduced, by a change of coordinates, to its linearization. Beginning with Poincar\'e \cite{Poincare}, this
question has played a central role in smooth and analytic dynamics,
through the work of Siegel, Sternberg, Arnold, Moser, Zehnder, and many
others \cite{Siegel,Arnold,Moser,Sternberg,Zehnder}. We consider this question for the unforced incompressible
Navier--Stokes equation
\begin{equation}\label{intro:velocity-equation}
 \partial_tu+(u\cdot\nabla)u+\nabla p=\nu\Delta u,
 \qquad \nabla\cdot u=0,\qquad \nu>0,
\end{equation}
on the three-dimensional torus $\T^3$. In the Sobolev phase space $X=H^s_\sigma(\T^3;\R^3)$ of divergence-free zero-mean
velocities with $s>7/2$, for sufficiently small initial data, the equation generates
a semiflow $S_t$ of global strong solutions, whose linearization at the
origin is the heat semigroup $e^{\nu t\Delta}$.

\vskip0.05in
Our main result, Theorem~\ref{app:main}, shows that these two semiflows are locally topologically
conjugate. More precisely, there are open neighborhoods $U,V$ of zero in $X$, positively invariant under $S_t$ and $e^{\nu t\Delta}$ respectively, and a homeomorphism $\mathcal H:U\to V$ in the strong $H^s$
topology such that
\begin{equation}\label{intro:goal}
 \mathcal H(0)=0,\qquad
 \mathcal H(S_tu)=e^{\nu t\Delta}\mathcal H(u),
 \qquad u\in U,\quad t\ge0.
\end{equation}
Standard energy estimates show that every sufficiently small solution is driven to zero but do not resolve the nonlinear coupling between Fourier modes. The above result is substantially stronger: after a single change of coordinates absorbing the coupling, the entire local Navier--Stokes dynamics becomes the heat flow. Thus the conjugacy identifies the local topological organization of the nonlinear flow, not merely the asymptotic decay of individual trajectories.

\vskip0.05in

For invertible dynamics, the classical Hartman--Grobman theorem gives such a topological conjugacy for
finite-dimensional $C^1$ flows near hyperbolic equilibria and for
$C^1$ diffeomorphisms near hyperbolic fixed points
\cite{Grobman,Hartman}. Palis and Pugh extended the diffeomorphism
theorem to Banach spaces \cite{Palis1968,Pugh1969}.
Further developments concern generalized hyperbolicity and
nonautonomous dynamics
\cite{BernardesMessaoudi,BernardesStability,BackesDragicevicGeneralized},
as well as semilinear hyperbolic evolution equations
\cite{HeinPruss2016,LuPintoXia}. We also mention that the regularity of a linearizing conjugacy is already a subtle issue in finite dimensions, as resonances show: the smoothness of the vector field need not be inherited by the conjugacy \cite{Hartman60,Sell2}. In the present work we restrict ourselves to topological conjugacy and leave open whether the conjugacy in Theorem~\ref{app:main} can be chosen $C^1$.

\vskip0.05in

For parabolic equations, smoothing prevents a direct application of the invertible-dynamics theorems since positive-time maps are not locally invertible on the original phase space. For noninvertible mappings, Aulbach and Garay obtained partial linearizations under additional spectral assumptions \cite{AulbachGaray1994}, while Lu \cite{Lu1991} and Bates--Lu \cite{BatesLu} established full phase-space conjugacies for several one-dimensional parabolic equations by exploiting growing spectral gaps to control nonlinear coupling. Our recent work \cite{LiuLuShi2D} uses planar spectral asymptotics to select bands whose boundary gaps compensate for derivative loss in two-dimensional semilinear parabolic equations. For nonlinearities without derivative loss, a different construction in \cite{LiuLuShiRD} uses fixed dyadic bands and requires neither lower bounds on consecutive spectral gaps nor uniform bounds on spectral multiplicities, allowing reaction--diffusion equations on tori of arbitrary dimension. These methods fail for \eqref{intro:velocity-equation} as the convection term loses one derivative, while the Stokes spectrum lacks the separation used in \cite{LiuLuShi2D} to recover it. 

\vskip0.05in

We overcome these difficulties through what we call an
\emph{infinite-dimensional boundary--time scheme}. The picture is simple in finite dimensions: for a contracting nonlinear
flow whose vector field points inward on a small sphere, a nonzero point
is determined by where its backward trajectory meets the boundary and by
the time elapsed from that intersection. Starting from the same boundary
point under the linearized flow and evolving for the same amount of time
then defines the conjugacy between the nonlinear flow and its
linearization.  Such a construction cannot be carried out directly for a parabolic equation, since the semiflow is not invertible. Our infinite-dimensional boundary--time scheme replaces the unavailable backward dynamics by exact finite-dimensional asymptotic factors, recovers the Sobolev geometry from the nested fibers of these factors, and uses the divergence-free transport structure to control the infinite tails of the resulting coordinate changes and their inverses. The boundary--time conjugacies constructed on the finite factors can then be assembled into a homeomorphism on a common neighborhood in the full phase space.

\vskip0.05in

The finite-dimensional factors underlying our construction are extracted
from the Foias--Saut asymptotic normalization
\cite{FoiasSaut1984a,FoiasSaut1984b,FoiasSaut1987}, further developed in
\cite{FoiasHoangOlsonZiane2009,FoiasHoangSaut2011,
FoiasHoangSaut2018,HoangMartinez}.  Here we regard its successive
asymptotic data as a nested family of exact dynamical quotients organized
by decay rate.  The basic idea is to retain long-time asymptotic
information rather than instantaneous low Fourier modes.  For a decay level $L$, two states are equivalent $u\sim_L v$ if 
\begin{align*}
 e^{\nu Lt}\|S_tu-S_tv\|_{H^s}\longrightarrow0, 
\end{align*}
that is,  their trajectories agree through
all asymptotic scales down to $e^{-\nu Lt}$ and may differ only in
faster-decaying components.  The relation is preserved by the semiflow,
since
\begin{align*}
 u\sim_L v
 \qquad\Longrightarrow\qquad
 S_\tau u\sim_L S_\tau v,
 \qquad \tau\ge0.
\end{align*}
Hence the quotient $X_L:=X/\!\sim_L$ (near zero) carries an exact
finite-dimensional factor dynamics induced by $S_t$, and the resulting
factor flows are compatible under the natural projections between
successive decay levels.  Geometrically, the
equivalence classes $[u]_L$ are strong-stable fibers, nested more finely as
$L$ increases. This exact factor property is what fails for an ordinary Fourier
truncation $P_N$ since in general, 
\begin{align*}
 P_{\leq N}u=P_{\leq N}v
 \qquad\not\Longrightarrow\qquad
 P_{\leq N}S_tu=P_{\leq N}S_tv,
\end{align*}
as  the future low modes still depend on the omitted high modes.
The Foias--Saut expansion provides concrete coordinates for these
factors through successive asymptotic coefficients
\begin{align*}
 u\longmapsto \bigl(W_1(u),W_2(u),\ldots\bigr), \quad \text{ so that } [u]_L=
 \{v:W_{\le L}(v)=W_{\le L}(u)\}, 
\end{align*}
where $W_j(u)$ records the $j$th layer of the long-time asymptotics of
$S_tu$ and plays the nonlinear role of spectral data at the corresponding
decay scale. As $L$ increases, progressively finer asymptotic information is retained,
and the full sequence $\bigl(W_1(u),W_2(u),\ldots\bigr)$ distinguishes
all initial states.  This complete asymptotic coding, however, is not yet a
coordinate system for the strong $H^s$ topology as  the high-level labels
may in principle compress or distort the Sobolev geometry  and
injectivity alone gives no control of reconstruction, of infinite tails,
or of the inverse map.

\vskip0.05in

To recover the Sobolev geometry, we choose the state of smallest \(H^s\) norm in each class by 
\begin{align*}
 \Pi_{\leq L}u
 :=
 \operatorname*{argmin}_{v\in [u]_L}
 \|v\|_{H^s}, 
\end{align*}
which is the nonlinear analogue of linear spectral truncation. 
For Navier--Stokes the fibers $[u]_L$ are curved so one needs to show
that these minimum-norm representatives remain unique and compatible
as $L$ increases. We resolve this issue by a curvature estimate admitting a useful dynamical explanation. Pairing the quadratic remainder in the difference
equation with an adjoint normal cancels the linear evolution, leaving
two forward-decaying factors against one backward-growing adjoint
factor.  This bounds the second fundamental form of the fibers.  On a
sufficiently small neighborhood, the squared $H^s$ norm is therefore
strictly convex along each fiber, giving a unique minimum-norm
representative without requiring large adjacent spectral gaps.  A
parameter-dependent Morse lemma near the unique minimum then 
introduces finite-dimensional blocks $z_j$ for which
\begin{align*}
 \|z_j\|^2
 =
 \|\Pi_{\leq L_j}u\|_{H^s}^2
 -
 \|\Pi_{\leq L_{j-1}}u\|_{H^s}^2, \quad L_j\uparrow\infty.
\end{align*}
Thus each block measures exactly the additional Sobolev cost of
resolving one more level of asymptotic information  and 
the squared block norms telescope to the $H^s$ norm as the asymptotic levels exhaust the state, 
\begin{align*}
 \sum_{k\le j}\|z_k\|^2
 =
 \|\Pi_{\leq L_j}u\|_{H^s}^2\to \|u\|_{H^s}^2,\quad j\to\infty. 
\end{align*}
A projection-error estimate of the form
\begin{align*}
 \|u-\Pi_{\leq L_j}u\|_{H^s}^2
 \lesssim \sum_{k>j}\|z_k\|^2
\end{align*}
then identifies small tails in the block variables with small
reconstruction errors in $H^s$, thereby recovering the strong Sobolev topology through a homeomorphic
nonlinear coordinate chart
\begin{align*}
 \Psi:u\longmapsto(z_1,z_2,\ldots)
\end{align*}
defined on a neighborhood in $H^s$.

At this stage the map $\Psi$ has recovered the Sobolev topology but has
not linearized the dynamics.  Its finite prefixes retain, however, the
exact factor structure developed above.  More precisely, for every
$j$, the block variables
\begin{align*}
 z_{\le j}:=(z_1,\ldots,z_j)
\end{align*}
contain exactly the same finite asymptotic information as the
Foias--Saut factor $W_{\le L_j}(u)$, but in coordinates adapted to the
Sobolev geometry; in particular, $z_{\le j}=K_j\bigl(W_{\le L_j}(u)\bigr)$ 
for a smooth finite-dimensional coordinate change $K_j$. 
Since these factors are nested and invariant under the semiflow, the
first $j$ blocks evolve autonomously.  In these coordinates the
Navier--Stokes dynamics therefore has the lower-triangular form
\begin{align*}
 \dot z_j=V_j(z_1,\ldots,z_j),\qquad j\ge1,
\end{align*}
where the differential equation is understood prefixwise that every finite
system is smooth, without requiring  differentiability of the full
infinite-dimensional map $\Psi$.

This triangular structure is what makes a factorwise boundary--time
construction possible.  Radial estimates inherited from the
Navier--Stokes dissipation show that, on a sufficiently small sphere
independent of the number of retained blocks, each finite-prefix vector
field points strictly inward.  Hence every nonzero point of a finite
factor can be followed backward to its unique intersection with the
boundary sphere. For the $j$th output block, we follow the first $j$ blocks
backward to their boundary sphere, retain the $j$th component
of the exit point, and evolve that component at the prescribed
reference rate for the same elapsed time. Different blocks
therefore generally use different exit times. This prefixwise
construction ensures that adding further blocks does not alter
the coordinates already constructed. This yields a compatible family of finite-dimensional conjugacies of
the Navier--Stokes factors to a common diagonal reference flow. Applying the construction also to the
corresponding finite-dimensional heat flow with this common target,
both dynamics are therefore represented at every finite level by the
same boundary--time coordinates.

The remaining issue is genuinely infinite-dimensional. Good
conjugacies at every finite level need not assemble into a map on the
full block coordinate space: their distortion may grow with the block index and
destroy square summability, either for the map or for its inverse.
Controlling this distortion is where the divergence-free transport
structure becomes essential.  After normalizing the variational
equation, we remove the nonresonant part of the skew transport and use
commutator estimates to show that the remaining near-resonant part has
only a small radial effect.  A new ingredient here is the resulting
quantitative spectral localization of the directions normal to the
factor fibers and of their fast-decaying tangent directions.
These estimates control the accumulation of errors across spectral
blocks and preserve square summability for both the conjugacies
and their inverses.

Consequently the finite boundary--time maps assemble into two
homeomorphisms in the strong Hilbert topology: one conjugates the
Navier--Stokes coordinate flow to the diagonal reference flow, and the
other conjugates the heat flow to the same reference flow. Their images
contain a common neighborhood of the origin. Composing the first map
with the inverse of the second, and returning through the Sobolev
coordinate map $\Psi$, yields the desired homeomorphism $\mathcal H$
satisfying \eqref{intro:goal}.

\section{Main result}\label{main:section}

We formulate the equation on the mean-zero solenoidal Sobolev space
and state the local conjugacy theorem.

Fix $\nu>0$ and $s>7/2$. On $\T^3=(\R/2\pi\mathbb Z)^3$, we consider
\begin{equation}\label{main:velocity-equation}
 \partial_tu+(u\cdot\nabla)u+\nabla p=\nu\Delta u,
 \qquad \nabla\cdot u=0,\qquad u(0)=u_0.
\end{equation}
We restrict the Navier--Stokes dynamics to the invariant subspace of zero-mean divergence-free velocities, thereby removing the constant neutral modes of the Stokes operator.
 For $q\in\R$, let
\begin{equation}\label{main:solenoidal-space}
 H^q_\sigma
 =
 \left\{
 u\in H^q(\T^3;\R^3):
 \nabla\cdot u=0,\quad
 \int_{\T^3}u(x)\,dx=0
 \right\}.
\end{equation}
We interpret the divergence condition and the zero-mean condition
in distributions; the latter means $\widehat u_0=0$.
Set
\begin{equation}\label{main:operators}
 X=H^s_\sigma,\qquad A=-\Delta,\qquad
 D_X(A)=H^{s+2}_\sigma,\qquad J_s=(I+A)^{s/2}.
\end{equation}
We have $D_X(A^{1/2})=H^{s+1}_\sigma$. The same Fourier multiplier
also defines a bounded map $A^{1/2}:X\to H^{s-1}_\sigma$.
Thus $A\ge I$ on $X$, and
$\sigma(A)=\{|k|^2:k\in\mathbb Z^3\setminus\{0\}\}\subset\mathbb N$.
We denote the complexification of $X$ by $X_{\mathbb C}$ and use
the same notation for complexified operators.

Let $\mathbb P$ be the Leray projection onto the zero-mean divergence-free vector fields. It commutes with $A$ and $J_s$ and is a contraction on every
Sobolev space. We use the inhomogeneous Sobolev norm
\begin{equation}\label{main:norm}
 \|u\|_q^2=\sum_{k\in\mathbb Z^3\setminus\{0\}}
               \langle k\rangle^{2q}|\widehat u_k|^2,
 \qquad \langle k\rangle=(1+|k|^2)^{1/2}.
\end{equation}
We write $\|\cdot\|=\|\cdot\|_s$ and denote the corresponding
inner product by $\langle\cdot,\cdot\rangle_s$. The notation
$B_r(Y)$ means the open ball of radius $r$ centered at zero in $Y$;
we omit $Y$ when it is $X$.
We use $C_s$ for positive constants depending only on $s$;
their values may increase from one estimate to the next.

Applying $\mathbb P$ to \eqref{main:velocity-equation}, we have
\begin{equation}\label{asy:equation}
 \dot u=-\nu Au-B(u,u),\qquad
 B(f,g)=\mathbb P((f\cdot\nabla)g),\qquad u(0)=u_0\in X.
\end{equation}
By the standard Sobolev product estimate,
 $B$ is a bounded bilinear map from
$X\times X\to H^{s-1}_\sigma$.
Conversely, every solution of \eqref{asy:equation} solves
\eqref{main:velocity-equation} with the unique zero-mean pressure recovered from $u$ by
\begin{equation}\label{main:pressure}
 -\Delta p=\sum_{i,j=1}^3\partial_i\partial_j(u_i u_j),
 \qquad \int_{\T^3}p\,dx=0.
\end{equation}  

Standard parabolic theory gives a local semiflow $S_tu_0=u(t)$ on
$X$; see \cite[Chapter~3]{Henry1981} and
\cite[Chapter~17]{TaylorPDE3}. In Section~\ref{asy:section}, we prove the
small-solution estimates used below. The linearization at zero is
$\dot u=-\nu Au$, with semigroup $e^{-\nu At}$.

\vskip0.1in
We state the main result.  
\nopagebreak[4]
\begin{theorem}\label{app:main}
There are open neighborhoods $U,V$ of zero in $X$ with the following
properties.
\begin{enumerate}
\renewcommand{\theenumi}{\roman{enumi}}
\renewcommand{\labelenumi}{(\theenumi)}
\item They satisfy $S_tU\subset U$ and $e^{-\nu At}V\subset V$ for
 every $t\ge0$.
\item There is a homeomorphism $\mathcal H:U\to V$ for the strong
 $H^s$ topology such that
\begin{equation}\label{app:conjugacy}
 \mathcal H(0)=0,\qquad
 \mathcal H(S_tu)=e^{-\nu At}\mathcal H(u),
 \qquad u\in U,\quad t\ge0.
\end{equation}
\end{enumerate}
\end{theorem}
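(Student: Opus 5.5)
The plan follows the boundary--time scheme from the introduction, organized so that the final map is a composition $\mathcal H=\Psi_{\mathrm{heat}}^{-1}\circ\Phi_{\mathrm{heat}}^{-1}\circ\Phi_{\mathrm{NS}}\circ\Psi$. Here $\Psi$ is the Sobolev block chart for Navier--Stokes and $\Psi_{\mathrm{heat}}$ is the analogous chart for the heat flow; for the heat flow the fibers are flat and $\Psi_{\mathrm{heat}}$ is just the spectral block decomposition. The maps $\Phi$ are assembled boundary--time conjugacies to a common diagonal reference flow on $\ell^2$-type block space.

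\textbf{Step 1 (factor coordinates).} First I will establish small-data global existence and decay in $X$: $\|S_tu\|\le Ce^{-\nu t}\|u\|$ on some ball $B_\delta$. I will then build the Foias--Saut asymptotic data $W_j(u)$ and show that the relations $\sim_L$ are invariant under $S_t$. Their classes are smooth strong-stable fibers of finite codimension, depending smoothly on $u$. I will choose levels $L_j\uparrow\infty$ avoiding spectral values. The curvature estimate comes from pairing the quadratic remainder with an adjoint normal, and it bounds the second fundamental form of the fibers by $C\delta$. This makes $\|\cdot\|^2$ strictly convex on each fiber in $B_\delta$, so $\Pi_{\le L}$ is well defined, continuous, and compatible ($\Pi_{\le L}\Pi_{\le L'}=\Pi_{\le L}$ for $L\le L'$). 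A parametric Morse lemma then produces blocks $z_j$ with $\sum_{k\le j}\|z_k\|^2=\|\Pi_{\le L_j}u\|^2$. Combining this with the tail estimate $\|u-\Pi_{\le L_j}u\|^2\lesssim\sum_{k>j}\|z_k\|^2$ shows that $\Psi$ is a homeomorphism onto its image, and that this image contains a neighborhood of $0$ in $\ell^2$-block space. Because the prefix $z_{\le j}$ is a smooth function of $W_{\le L_j}$, the flow is triangular: $\dot z_j=V_j(z_{\le j})$.

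\textbf{Step 2 (finite boundary--time conjugacies).} From dissipation I will prove the radial estimate $\frac{d}{dt}\sum_{k\le j}\|z_k\|^2\le -c\sum_{k\le j}\|z_k\|^2$ on a sphere of radius $r$ that does not depend on $j$. Each nonzero prefix then has a unique backward exit time $\tau_j(z_{\le j})$ to the sphere $\{|z_{\le j}|=r\}$, and this time is continuous. I define $h_j(z)=e^{-\nu\Lambda_j\tau_j}\,\pi_j\bigl(\phi^{(j)}_{-\tau_j}z_{\le j}\bigr)$, where $\Lambda_j$ is the reference rate of block $j$ and $\phi^{(j)}$ is the prefix flow. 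This is a conjugacy block by block, and the prefix maps $(h_1,\dots,h_j)$ are homeomorphisms of finite-dimensional balls. The inverse is the same construction run on the reference side. Continuity at $0$ comes from $\tau_j\to\infty$. I will apply the identical construction to the heat flow.

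\textbf{Step 3 (assembly; the main obstacle).} It remains to show that $h=(h_j)_j$ maps a neighborhood into $\ell^2$, is continuous, and has a continuous inverse. Concretely, I need bounds of the form $\|h_j(z)\|\le C\,(\|z_j\|+\varepsilon_j\|z_{<j}\|)$ with $\sum\varepsilon_j^2<\infty$, uniform tail control $\sum_{k>j}\|h_k(z)\|^2\to0$ uniformly on compact sets, and the same bounds for $h^{-1}$. The danger is that $z_j$ experiences coupling from lower blocks through the transport term, which loses a derivative. The first thing I will try is the normalized variational equation along the backward exit trajectory. In it I will conjugate away the nonresonant part of the skew-adjoint transport $B(u,\cdot)+B(\cdot,u)$ by a near-identity normal-form change, and use commutator estimates $[J_s,u\cdot\nabla]$ to show that the remaining near-resonant part changes the radial coordinate by only $O(\delta)$. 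Together with spectral localization of the fiber normals, this should give exponentially weighted off-diagonal bounds between blocks that are summable independently of any gap condition. Once these bounds hold, dominated convergence yields continuity of $h$ and $h^{-1}$ on a common ball.

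\textbf{Step 4 (conclusion).} I take $V$ to be a sublevel set of a Lyapunov norm that sits inside the image of the heat-side maps, and set $U=\mathcal H^{-1}(V)$. Both sets are open, and $U$ is positively invariant because $V$ is invariant under $e^{-\nu At}$ and the conjugacy relation holds. We have $\mathcal H(0)=0$ since every map in the composition fixes the origin.
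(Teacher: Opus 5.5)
The overall architecture of your proposal matches the paper: Foias--Saut factors, minimum-norm projections, Morse blocks, triangular prefixes, blockwise exit-time maps to a common diagonal flow, the heat flow in spectral blocks, and a final composition. The gap is in Step~3. There you aim at bounds that are false, and you omit the mechanism that actually makes the assembly work.

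\textbf{Why linear bounds fail.} Bounds of the form $\|h_j(z)\|\le C(\|z_j\|+\varepsilon_j\|z_{<j}\|)$, together with their analogue for $h^{-1}$, already fail for the heat flow. Take $z=\epsilon e$ with $e$ a unit eigenvector of eigenvalue $\lambda\in(\Lambda_{j-1},\Lambda_j)$ in block $j$. Your formula gives $\tau=(\nu\lambda)^{-1}\log(\rho/\epsilon)$ and $h=\rho(\epsilon/\rho)^{\Lambda_j/\lambda}e$. Hence $\|z_j\|=\rho^{1-\lambda/\Lambda_j}\|h_j\|^{\lambda/\Lambda_j}$, which is not $O(\|h_j\|)$ as $h\to0$. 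For Navier--Stokes the radial estimates only confine the block rates to a window around the reference rate, so in general neither direction is Lipschitz. Blocks also cannot be single eigenspaces. Heuristically, the transport term couples neighbouring shells with strength of order $r\sqrt{\Lambda_j}$, while consecutive eigenvalues of $A$ are only $O(1)$ apart. So each block must be wide and carries a rate window $[a_j,b_j]=[\ell_j-d_j,\ell_j+d_j]$. The best one can expect blockwise are H\"older bounds with exponents like $1-\delta_j$ and $p_j=(1+\delta_j)^{-1}$, where $\delta_j=d_j/\ell_j$.

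\textbf{What is missing.} You need a way to keep these H\"older losses square-summable over infinitely many blocks, both for the map and for its inverse. The paper does this with the nonlinear envelopes $|z|\le R_-(\mathcal P_-|h|+f)$ and $|h|\le\mathcal P_+(R_+(|z|+f))$. Here $(\mathcal P_-u)_j=\rho^{\delta_j}u_j^{1-\delta_j}$, $(\mathcal P_+u)_j=\rho^{1-p_j}u_j^{p_j}$, and $f_j=F_j/a_j$. The operators $R_\pm=(I-A_\pm)^{-1}$ come from weighted shifts with entries $\gamma_j/(a_j-a_{j-1})$ and $\gamma_j/(b_j-b_{j-1})$. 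The envelopes require three conditions:
\begin{itemize}
\item $\sum_je^{-c_0\ell_j/d_j}<\infty$ for every $c_0>0$, which makes $\mathcal P_\pm$ continuous on $\ell^2$;
\item $\|A_\pm\|\le1/2$;
\item $\|f\|_{\ell^2}=o(\rho)$.
\end{itemize}
The sources $F_j$ do not vanish at $z=0$, so a homogeneous bound like yours is not what the radial estimates deliver.

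These conditions pull against each other. Block widths must dominate the adjacent transport coupling and the spectral-leakage errors. Even after the gauge with $\eta\sim1/\log\Lambda_j$, those errors grow like $\Lambda_j^\gamma$ with $\gamma<1$. At the same time $d_j/\ell_j$ must decay fast enough for the summability condition. This balance is why the paper takes $\Lambda_j=\lfloor e^{\sqrt j/2}\rfloor^2$ on a tail, with $g_j\sim\Lambda_j/(2\log\Lambda_j)$ and $d_j=D\nu\Lambda_j/\log\Lambda_j$. Your levels ``avoiding spectral values'' and off-diagonal bounds ``summable independently of any gap condition'' do not address this balance.

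\textbf{Further consequences.} The inverse envelope is also what shows that the Navier--Stokes-side image contains a ball $B_\eta$. Your Step~4 needs this in addition to the heat-side condition. A smaller point concerns Step~2. With block-dependent exit times, the inverse is not the same construction run on the reference side. It is obtained recursively from the unique root $T$ of $\|\phi^{(N-1)}_{-T}z_{<N}\|^2+e^{2\ell_NT}\|h_N\|^2=\rho^2$, and the finite images are in general not balls.
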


Every global strong solution of the unforced equation with zero spatial
mean converges to zero in $H^s$ as $t\to\infty$; this is a classical
consequence of the long-time regularity and asymptotic theory
\cite{FoiasSaut1987}. Hence Theorem~\ref{app:main} describes the eventual
dynamics of every such solution. Indeed, for each global strong solution
$u(t)$ with $u(0)\in X$, there exists a time $T\ge0$, depending on the
solution, such that $u(T)\in U$. Positive invariance of $U$ and
\eqref{app:conjugacy} then give
\begin{equation}\label{app:eventual-conjugacy}
 u(t)=\mathcal H^{-1}\bigl(e^{-\nu A(t-T)}\mathcal H(u(T))\bigr),
 \qquad t\ge T.
\end{equation}
Thus the same local homeomorphism identifies the eventual dynamics of
all global strong solutions with the heat flow, preserving the original
time parameter.

\section{Asymptotic coordinates and nested fibers}
\label{asy:section}

In this section, we use the finite factors of the Foias--Saut
normalization and prove that they are submersions on a common
neighborhood and that their differentials separate tangent vectors.
Their level sets define nested invariant foliations, which provide
the geometric structure used in Section~\ref{geo:section} to construct
coordinates that put the Navier--Stokes semiflow into
lower-triangular form.

\subsection{Small solutions and the variational equation}

We establish small-data decay and variational estimates,
which provide the analytic foundation for the asymptotic
factors and the geometric constructions used in the conjugacy.

For real $u\in H^{s+1}_\sigma$, define
\begin{equation}\label{asy:linearized-operators}
 \mathcal N_u h=-B(u,h)-B(h,u),\qquad
 L_u=-\nu A+\mathcal N_u.
\end{equation}
Unless otherwise specified, $T^*$ denotes the adjoint of $T$ with respect to the $H^s$ inner product.
The normalized operator $J_s\mathcal N_uJ_s^{-1}$ acts from $H^1_\sigma$
to $L^2_\sigma$.


We collect standard Sobolev product and commutator estimates
in the form needed below; see \cite{RunstSickel} and
\cite[Chapter~13]{TaylorPDE3}.
\begin{lemma}\label{asy:products}
There exists $C_s>0$ such that:
\begin{enumerate}
\renewcommand{\theenumi}{\roman{enumi}}
\renewcommand{\labelenumi}{(\theenumi)}
\item For $f,g\in X$,
\begin{align}
 \|B(f,g)\|_{s-1}
 &\le C_s\|f\|_s\|g\|_s,
 \label{asy:bilinear}\\
 |\langle B(f,f),f\rangle_s|
 &\le C_s\|f\|_s^3
 \quad\text{for real smooth }f.
 \label{asy:energy-cancellation}
\end{align}
The map $f\mapsto\langle B(f,f),f\rangle_s$ has a unique
continuous extension to $X$.

\item For real $u\in H^{s+1}_\sigma$ and
$h\in H^{s+1}_\sigma$,
\begin{equation}\label{asy:adjoint-bound}
 \|\mathcal N_u h\|_s+\|\mathcal N_u^*h\|_s
 \le C_s\bigl(\|u\|_s\|A^{1/2}h\|_s
             +\|u\|_{s+1}\|h\|_s\bigr).
\end{equation}
Moreover,
\begin{equation}\label{asy:standardized}
 J_s\mathcal N_uJ_s^{-1}
 =-\mathbb P(u\cdot\nabla)\mathbb P+E_u,
 \qquad
 \|E_u\|_{\mathcal L(L^2_\sigma)}
 \le C_s\|u\|_{s+1}.
\end{equation}

\item For $q\in\{s-1,s\}$,
\begin{equation}\label{asy:linearized-loss}
 \|\mathcal N_u h\|_{q-1}
 \le C_s\|u\|_s\|h\|_q,
 \qquad u\in X,\quad h\in H^q_\sigma.
\end{equation}
\end{enumerate}
\end{lemma}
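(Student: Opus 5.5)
All four estimates of Lemma~\ref{asy:products} reduce, on the Fourier side, to Kato--Ponce type product and commutator bounds for $J_s$ in dimension three with $s>7/2>3/2+1$. In this range $H^{s-1}$ is an algebra and $\nabla f\in H^{s-1}\subset W^{1,\infty}$. The proof will be organized around two tools. The first is the algebra/Moser estimate $\|fg\|_q\le C_s\|f\|_q\|g\|_q$ for $q>3/2$, together with its tame form $\|fg\|_q\le C(\|f\|_{L^\infty}\|g\|_q+\|f\|_q\|g\|_{L^\infty})$. The second is the Kato--Ponce commutator estimate
\begin{equation*}
\|[J_q,f]\,\partial g\|_{L^2}\le C\bigl(\|\nabla f\|_{L^\infty}\|g\|_q+\|f\|_q\|\nabla g\|_{L^\infty}\bigr).
\end{equation*}
Since $\mathbb P$ commutes with $J_s$ and is a contraction, it can be dropped throughout.

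For (i), the bilinear bound \eqref{asy:bilinear} follows from $\|(f\cdot\nabla)g\|_{s-1}\le C\|f\|_{s-1}\|\nabla g\|_{s-1}$, using that $H^{s-1}$ is an algebra. For \eqref{asy:energy-cancellation}, write
\begin{equation*}
\langle B(f,f),f\rangle_s=\langle J_s((f\cdot\nabla)f),J_sf\rangle_{L^2},
\end{equation*}
where $\mathbb P$ is removed because $J_sf$ is divergence free. Split this as $\langle (f\cdot\nabla)J_sf,J_sf\rangle+\langle[J_s,f\cdot\nabla]f,J_sf\rangle$. The first term vanishes by $\nabla\cdot f=0$ and integration by parts. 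The commutator term is bounded by $C\|\nabla f\|_{L^\infty}\|f\|_s^2\le C\|f\|_s^3$. The continuous extension then follows by density of smooth fields in $X$. The trilinear form $(f,g,h)\mapsto\langle[J_s,f\cdot\nabla]g,J_sh\rangle$ is bounded on $X^3$ by the same commutator estimate, and this form agrees with the cubic expression on smooth $f$.

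For (ii), the bound on $\mathcal N_u h=-B(u,h)-B(h,u)$ in $H^s$ follows from the tame product estimate. Indeed, $\|(u\cdot\nabla)h\|_s\lesssim\|u\|_s\|\nabla h\|_s+\|u\|_s\|\nabla h\|_{L^\infty}$ and $\|(h\cdot\nabla)u\|_s\lesssim\|h\|_s\|u\|_{s+1}$. The adjoint $\mathcal N_u^*$ in $H^s$ equals $J_s^{-2}\mathcal N_u^{\dagger}J_s^2$, with $\dagger$ the $L^2$ adjoint. The $L^2$ adjoint of $(u\cdot\nabla)$ is $-(u\cdot\nabla)$ since $\nabla\cdot u=0$, and the adjoint of $h\mapsto(h\cdot\nabla)u$ is $w\mapsto(\nabla u)^Tw$ followed by $\mathbb P$. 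Conjugating by $J_s^{\pm 2}$ produces commutators $[J_s^{-2},u\cdot\nabla]J_s^2$, which are operators of order $0$ in $h$ with coefficient norm $\lesssim\|u\|_{s+1}$; this gives the second part of \eqref{asy:adjoint-bound}.

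For \eqref{asy:standardized}, write
\begin{equation*}
J_s\mathbb P(u\cdot\nabla)J_s^{-1}=\mathbb P(u\cdot\nabla)+\mathbb P[J_s,u\cdot\nabla]J_s^{-1}.
\end{equation*}
The commutator $[J_s,u\cdot\nabla]$ maps $H^s\to L^2$ with norm $\lesssim\|u\|_{s+1}$, by Kato--Ponce with $\|\nabla u\|_{L^\infty}+\|u\|_{s}$-type bounds upgraded to $\|u\|_{s+1}$. Composed with $J_s^{-1}$, it is bounded on $L^2$. The term $J_s\mathbb P(J_s^{-1}h\cdot\nabla)u$ is bounded on $L^2$ by $\|u\|_{s+1}$ via the product estimate, using $\|(g\cdot\nabla)u\|_s\lesssim\|g\|_s\|u\|_{s+1}$ with $g=J_s^{-1}h\in H^s$. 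Restricting to divergence-free $h$ inserts the right $\mathbb P$.

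For (iii), with $q=s$, $\|\mathcal N_uh\|_{s-1}\lesssim\|u\|_{s-1}\|h\|_s+\|h\|_{s-1}\|u\|_s$ by the algebra property. With $q=s-1$ we need $\|(u\cdot\nabla)h\|_{s-2}$ and $\|(h\cdot\nabla)u\|_{s-2}$, where $s-2>3/2$. Both follow from the product rule $\|fg\|_{s-2}\lesssim\|f\|_{s-1}\|g\|_{s-2}$, valid since $s-1>3/2$ and $s-2\ge0$. Dividing $\nabla h\in H^{s-2}$ onto $u\in H^s$ handles the first term, and $\nabla u\in H^{s-1}$, $h\in H^{s-1}$ the second.

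I expect the main obstacle to be the adjoint bound in \eqref{asy:adjoint-bound}. The $H^s$-adjoint conjugation requires a careful commutator estimate for the negative-order multiplier $J_s^{-2}$ against transport. I would handle it symbolically, by Coifman--Meyer or paraproduct decomposition, with coefficient regularity $\|u\|_{s+1}$.
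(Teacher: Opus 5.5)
Your proposal is correct, and for (i), (iii), the $\mathcal N_u$ half of \eqref{asy:adjoint-bound}, and \eqref{asy:standardized} it is essentially the paper's argument: the Kato--Ponce bound for $[J_s,u\cdot\nabla]$, transport cancellation, Sobolev product estimates, and density for the trilinear form.

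The genuine difference is the $\mathcal N_u^*$ bound. You conjugate asymmetrically, writing $\mathcal N_u^*=J_s^{-2}\mathcal N_u^{\dagger}J_s^2$. You then plan to control $[J_s^{-2},u\cdot\nabla]J_s^2$ on $H^s$ by paraproduct or Coifman--Meyer calculus, and you flag this as the main obstacle. The paper instead conjugates symmetrically, $\mathcal N_u^*=J_s^{-1}\bigl(J_s\mathcal N_uJ_s^{-1}\bigr)^{\dagger}J_s$, and reads the bound off \eqref{asy:standardized}. By skew-symmetry, the $L^2$-adjoint of $-\mathbb P(u\cdot\nabla)\mathbb P+E_u$ is $\mathbb P(u\cdot\nabla)\mathbb P+E_u^{\dagger}$, with $\|E_u^{\dagger}\|=\|E_u\|\le C_s\|u\|_{s+1}$. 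Hence $\|\mathcal N_u^*h\|_s\le\|u\|_{L^\infty}\|\nabla J_sh\|_0+C_s\|u\|_{s+1}\|h\|_s$. Since you prove \eqref{asy:standardized} anyway, the step you fear costs nothing on this route.

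Your commutator claim is true, but it also needs no symbolic calculus. Note that $[J_s^{-2},u\cdot\nabla]J_s^2=J_s^{-2}[u\cdot\nabla,J_s^2]$, so you need $\|[J_s^2,u\cdot\nabla]h\|_{-s}\lesssim\|u\|_s\|h\|_s$. Pair with $\phi\in H^s$ and cancel the two skew terms $\langle(u\cdot\nabla)J_sh,J_s\phi\rangle_0+\langle J_sh,(u\cdot\nabla)J_s\phi\rangle_0$. What remains is $\langle[J_s,u\cdot\nabla]h,J_s\phi\rangle_0+\langle J_sh,[J_s,u\cdot\nabla]\phi\rangle_0$, which the Kato--Ponce estimate you already use bounds. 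The $(\nabla u)^{T}$ term under $J_s^{\pm2}$ is handled by duality, since multiplication by $\nabla u\in H^s$ is bounded on $H^{-s}$.

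Beyond brevity, the paper's route delivers the adjoint in the same normalized form as $\mathcal N_u$: a skew transport term plus an $L^2$-bounded error. That is the form in which the adjoint (normal) equations are handled later in the paper.
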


\begin{proof}
The standard product estimates give \eqref{asy:bilinear} and (iii).
Since $s>5/2$, using the commutator estimate, we have
\begin{equation}\label{asy:commutator-product}
 \|[J_s,u\cdot\nabla]h\|_0
 \le C_s\|u\|_s\|h\|_s.
\end{equation}
The transport cancellation then gives
\eqref{asy:energy-cancellation}; the continuous extension follows
by density from the corresponding bounded trilinear form.

For (ii), commuting $J_s$ through the transport term, we obtain
\[
 J_s[-B(u,\cdot)]J_s^{-1}
 =-\mathbb P(u\cdot\nabla)\mathbb P+R_u,
 \qquad \|R_u\|_{\mathcal L(L^2_\sigma)}\le C_s\|u\|_s.
\]
Together with $\|B(h,u)\|_s\le C_s\|h\|_s\|u\|_{s+1}$,
this proves \eqref{asy:standardized} and the bound for
$\mathcal N_u$ in \eqref{asy:adjoint-bound}. Taking the
$L^2$-adjoint, using the skew-symmetry of the transport term, and
conjugating back by $J_s$, we have the bound for $\mathcal N_u^*$.

\end{proof}

\Needspace{14\baselineskip}
We collect standard small-data estimates and analyticity
properties of the Navier--Stokes semiflow; see
\cite[Chapter~3]{Henry1981} and
\cite[Section~2.3 and Corollary~3.12]{FoiasHoangOlsonZiane2009}.
We record the bounds on fixed real and complex balls, with
explicit dependence on $\nu$.

\nopagebreak[4]
\begin{lemma}\label{asy:small-flow}
There exist constants $\rho_s,\epsilon_s,C_s>0$, depending
only on $s$, such that the following hold whenever
$0<r_s\le\rho_s\nu$.
\begin{enumerate}
\renewcommand{\theenumi}{\roman{enumi}}
\renewcommand{\labelenumi}{(\theenumi)}
\item The maps $S_t$ form a continuous semiflow on
$B_{4r_s}(X)$ and are real analytic for each $t\ge0$.
For $u_0\in B_{4r_s}(X)$ and $t\ge0$,
\begin{align}
 \|S_tu_0\|_s
 &\le C_s e^{-\nu t}\|u_0\|_s,
 \label{asy:decay}\\
 \int_t^\infty
 \bigl(\|S_qu_0\|_s^2+\|S_qu_0\|_{s+1}^2\bigr)\,dq
 &\le C_s\nu^{-1}\|S_tu_0\|_s^2,
 \label{asy:L2-coefficients}\\
 \int_t^\infty
 \bigl(\|S_qu_0\|_s+\|S_qu_0\|_{s+1}\bigr)\,dq
 &\le C_s\nu^{-1}\|S_tu_0\|_s.
 \label{asy:L1-coefficients}
\end{align}

\item The complexified maps
\[
 S_t:B_{5r_s}(X_{\mathbb C})
 \longrightarrow B_{5r_s}(X_{\mathbb C})
\]
are holomorphic for every $t\ge0$ and satisfy
\begin{equation}\label{asy:complex-decay}
 \|S_tu_0\|_s\le C_s e^{-\nu t}\|u_0\|_s,
 \qquad u_0\in B_{5r_s}(X_{\mathbb C}),\quad t\ge0.
\end{equation}
For every $m>s$, these maps are holomorphic into
$H^m_{\sigma,\mathbb C}$ for $t>0$, and
\[
 \sup_{t\ge\tau}\;
 \sup_{u_0\in B_{5r_s}(X_{\mathbb C})}
 \|S_tu_0\|_m<\infty
 \qquad\text{for every }\tau>0.
\]

\item The map
\[
 S_{1/\nu}:B_{\epsilon_s r_s}(H^1_{\sigma,\mathbb C})
 \longrightarrow B_{r_s}(H^s_{\sigma,\mathbb C})
\]
is holomorphic.
\end{enumerate}
\end{lemma}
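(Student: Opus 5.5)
We prove Lemma~\ref{asy:small-flow} by energy estimates in $H^s$ for the real flow, followed by a fixed-point argument in weighted spaces for the complex flow and the smoothing statements. Throughout, $r_s\le\rho_s\nu$ with $\rho_s$ small, so the nonlinearity is perturbative relative to the dissipation.

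\emph{Step 1: real decay.} Take the $H^s$ inner product of \eqref{asy:equation} with $u$, for smooth solutions first:
\[
 \tfrac12\tfrac{d}{dt}\|u\|_s^2+\nu\|A^{1/2}u\|_s^2=-\langle B(u,u),u\rangle_s .
\]
By \eqref{asy:energy-cancellation} the right-hand side is at most $C_s\|u\|_s^3$. Spectrally, $A\ge I$ on mean-zero fields, so $\|A^{1/2}u\|_s\ge\|u\|_s$. In addition, $\|A^{1/2}u\|_s^2\ge c\|u\|_{s+1}^2$, since $\langle k\rangle^2\le2|k|^2$ for $k\ne0$.

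As long as $\|u\|_s\le 4r_s\le 4\rho_s\nu$ and $C_s\cdot4\rho_s\le 1/4$, we obtain
\[
 \tfrac{d}{dt}\|u\|_s^2+\nu\|u\|_s^2+\nu c\|u\|_{s+1}^2\le0 .
\]
This forbids growth, so the ball is invariant and the solution is global. It gives $\|u(t)\|_s\le e^{-\nu t/2}\|u_0\|_s$. To reach the sharp rate $e^{-\nu t}$ in \eqref{asy:decay}, write $\tfrac{d}{dt}\|u\|_s\le-\nu\|u\|_s+C_s\|u\|_s^2$. Integrating the Bernoulli inequality with $\int_0^\infty\|u\|_s\,dt\lesssim r_s/\nu\lesssim\rho_s$ yields a constant factor $e^{C_s\rho_s}$, which is absorbed into $C_s$.

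Integrating the dissipative inequality from $t$ to $\infty$ gives \eqref{asy:L2-coefficients}. For \eqref{asy:L1-coefficients}, apply Cauchy--Schwarz on the unit intervals $[t+n,t+n+1]$ and combine with the exponential decay of $\|S_{t+n}u_0\|_s$; the geometric sum costs a factor $\nu^{-1}$ once the intervals are taken of length $1/\nu$.

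Justification for $X$ data is by Galerkin approximation or by the Henry local theory, using the continuous extension in Lemma~\ref{asy:products}(i). Continuity of the semiflow and continuous dependence follow from the same estimate applied to differences. Real analyticity in $u_0$ is inherited from the complex statement in Step~2.

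\emph{Step 2: complex flow.} Energy cancellation fails for complex data, since $\langle B(u,u),u\rangle$ is no longer real in a useful way. So we instead use the mild formulation
\[
 u(t)=e^{-\nu At}u_0-\int_0^te^{-\nu A(t-q)}B(u,u)\,dq
\]
in the space with norm $\sup_t e^{\nu t}\|u\|_s+\bigl(\nu\int_0^\infty e^{2\nu q}\|u\|_{s+1}^2\,dq\bigr)^{1/2}$. Maximal regularity of the heat semigroup maps $L^2_tH^{s-1}$ forcing into this space with constant $\nu^{-1}$. Bilinearity gives $\|B(u,u)\|_{s-1}\lesssim\|u\|_s\|u\|_s$; use the sharper $\|u\|_s\|u\|_{s+1}$ when the $e^{\nu t}$ weight is needed. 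The weight is compatible because the product carries $e^{-2\nu t}$, which is better than needed.

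Smallness $r_s\le\rho_s\nu$ makes the map a contraction on a ball of radius comparable to $r_s$. The fixed point depends holomorphically on $u_0$, because it is a uniform limit of polynomial iterates. This yields \eqref{asy:complex-decay} and invariance of $B_{5r_s}$; the latter requires a slightly smaller radius in the hypothesis, handled by adjusting constants. The same holomorphic solution restricted to real data gives the real analyticity claimed in (i).

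Higher regularity for $t\ge\tau$ follows by bootstrapping $t^{k/2}A^{k/2}u$ in the same mild formulation, giving uniform $H^m$ bounds.

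\emph{Step 3: $H^1$ to $H^s$ smoothing (part (iii)).} Run the fixed point in $H^1$ with time weights $t^{(q-1)/2}\|u(t)\|_q$. The 3D critical space is $H^{1/2}$, so small $H^1$ data ($\epsilon_sr_s$ with $r_s\le\rho_s\nu$) gives a global holomorphic mild solution. The parabolic gain from $H^1$ to $H^s$ over a time $1/\nu$ costs a factor $(\nu t)^{-(s-1)/2}$, equal to $1$ at $t=1/\nu$, times powers of the nondimensional smallness. Iterating the smoothing in finitely many steps of size less than one derivative, together with scaling in $\nu$, makes $\|S_{1/\nu}u_0\|_s\le C\epsilon_s r_s<r_s$ once $\epsilon_s$ is small.

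\emph{Main obstacle.} The main obstacle is Step~2. The complex fixed point must capture the sharp $e^{-\nu t}$ rate and the explicit $\nu$-scaling simultaneously, which means tracking the $\nu$-dependence through the maximal-regularity constants uniformly. I would handle this first by rescaling time to $\tau=\nu t$ and velocity by $\nu$, which reduces everything to the case $\nu=1$.
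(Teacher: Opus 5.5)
Your Steps 1 and 3 are essentially the paper's argument. In Step 1 you use the energy estimate with transport cancellation, the logarithmic-derivative upgrade to the sharp rate, and Cauchy--Schwarz on intervals of length $1/\nu$. In Step 3 you rescale by $\widetilde u(\theta)=\nu^{-1}S_{\theta/\nu}u_0$ and use $H^1\to H^s$ smoothing up to time one. The problem is Step 2, which fails as written.

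\textbf{The weighted space is empty of solutions.} The bottom of the Stokes spectrum is $1$, and $P_1X\neq\{0\}$; for example $(0,\sin x_1,0)$ is divergence-free with zero mean. On $P_1X$ the heat flow is exactly $e^{-\nu t}$, so $e^{\nu q}\|e^{-\nu Aq}P_1u_0\|_{s+1}$ is constant in $q$. The $L^2$-in-time component of your norm is therefore infinite whenever $P_1u_0\neq0$.

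The nonlinear solution is no better: $e^{\nu t}P_1S_tu_0$ converges to the first Foias--Saut coefficient, which is generically nonzero. Equivalently, after multiplying by $e^{\nu t}$ the generator becomes $-\nu(A-I)$, whose kernel is $P_1X$. Hence $L^2$ maximal regularity on $[0,\infty)$ fails at this weight, and the contraction cannot be set up.

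The fix is to obtain decay at a subcritical rate first and then upgrade with a sup-only norm in the mild formula. This works because $\|B(u,u)\|_{s-1}$ then decays strictly faster than $e^{-\nu t}$. The paper does exactly this: after decay at rate $3\nu/4$, it shows $e^{\nu t}\|S_tu_0\|_s\le\|u_0\|_s+C_s\nu^{-1}\|u_0\|_s^2$.

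\textbf{Ball invariance.} Part (ii) asserts $S_tB_{5r_s}(X_{\mathbb C})\subset B_{5r_s}(X_{\mathbb C})$ with the radius fixed by the statement. A fixed-point bound of the form $\|S_tu_0\|_s\le Ce^{-\nu t}\|u_0\|_s$ with $C>1$ does not give this, and ``adjusting constants'' cannot repair it. You need monotonicity of $\|S_tu_0\|_s$, which comes from an energy inequality.

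Your premise that the energy method fails for complex data is mistaken: no cancellation is needed. By duality and \eqref{asy:bilinear},
\[
 |\langle B(u,u),u\rangle_s|\le\|B(u,u)\|_{s-1}\|u\|_{s+1}
 \le C_s\|u\|_s\|A^{1/2}u\|_s^2 .
\]
The dissipation $\nu\|A^{1/2}u\|_s^2$ absorbs this once $\|u\|_s<5\rho_s\nu$ with $\rho_s$ small. This is how the paper obtains invariance and the preliminary decay for complex data.

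The same integrated estimate is what makes the mild-formula upgrade work. It also yields the uniform ball bound used for the $H^m$ smoothing and holomorphy claims.
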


\begin{proof}
By the analytic-semigroup estimate \cite{Pazy}, for $q\in\mathbb R$ and
$\theta\ge0$, we obtain
\begin{equation}\label{asy:heat-bound}
 \|e^{-\nu At}\|_{\mathcal L(H^{q-\theta}_\sigma,H^q_\sigma)}
 \le C\bigl(1+(\nu t)^{-\theta/2}\bigr)e^{-\nu t},
 \qquad t>0.
\end{equation}
Here $C$ depends only on $\theta$.
Since $D(A^{1/2})=H^s_\sigma$ on the base space $H^{s-1}_\sigma$
and the nonlinearity is a continuous quadratic polynomial by
\eqref{asy:bilinear}, local existence, uniqueness, continuation,
and analytic dependence follow from
\cite[Theorems~3.3.3--3.3.4 and Corollary~3.4.5]{Henry1981}.
The same local theory on the complexified spaces gives holomorphic
solution maps. We prove the quantitative bounds below.

We first prove (i). Using equation \eqref{asy:equation} and \eqref{asy:energy-cancellation}, we have
\[
 \frac12\frac d{dt}\|S_tu_0\|_s^2
 \le -\nu\|A^{1/2}S_tu_0\|_s^2+C_s\|S_tu_0\|_s^3.
\]
Choose $\rho_s>0$ so that $4C_s\rho_s\le1/2$.
While $\|S_tu_0\|_s\le4r_s$, the Poincar\'e inequality
absorbs the cubic term and gives
\begin{equation}\label{asy:energy-dissipation}
 \frac12\frac d{dt}\|S_tu_0\|_s^2
 \le-\frac{\nu}{2}\|A^{1/2}S_tu_0\|_s^2,
\end{equation}
and a continuity argument proves that $B_{4r_s}(X)$ is
positively invariant. The continuation theorem then gives
global existence on this ball.
The preceding inequality first gives
$\|S_tu_0\|_s\le e^{-\nu t/2}\|u_0\|_s$. For real data,
the sharper inequality
$(\log\|S_tu_0\|_s)'\le-\nu+C_s\|S_tu_0\|_s$
then gives \eqref{asy:decay} by integration. Integrating
\eqref{asy:energy-dissipation} from $t$ to $\infty$, we obtain
\eqref{asy:L2-coefficients}. 
Applying Cauchy--Schwarz and \eqref{asy:L2-coefficients} on
$[t+k/\nu,t+(k+1)/\nu]$, and \eqref{asy:decay} to $S_tu_0$,
we bound each integral of $\|S_qu_0\|_{s+1}$ by
$C_s\nu^{-1}e^{-k}\|S_tu_0\|_s$; summing over $k\ge0$ gives
\eqref{asy:L1-coefficients} and proves (i).

For (ii), duality, \eqref{asy:bilinear}, and the Poincar\'e
inequality give, also for complex $u$,
\[
 |\langle B(u,u),u\rangle_s|
 \le C_s\|u\|_s\|A^{1/2}u\|_s^2.
\]
After decreasing $\rho_s$, this gives invariance of
$B_{5r_s}(X_{\mathbb C})$ and decay at rate $3\nu/4$.
The mild equation, \eqref{asy:heat-bound}, and \eqref{asy:bilinear},
together with the preliminary decay estimate, give
\[
\begin{aligned}
 e^{\nu t}\|S_tu_0\|_s
 &\le \|u_0\|_s+C_s\|u_0\|_s^2
 \int_0^t\bigl[1+(\nu(t-q))^{-1/2}\bigr]e^{-\nu q/2}\,dq\\
 &\le \|u_0\|_s+C_s\nu^{-1}\|u_0\|_s^2.
\end{aligned}
\]
For the last inequality, rescale by $\nu$ and split the integral
at $t/2$ to bound it by $C/\nu$, uniformly in $t$.
Since $\|u_0\|_s<5r_s\le5\rho_s\nu$, this proves
\eqref{asy:complex-decay}; continuation gives global holomorphic
solution maps on this ball.

The standard positive-time smoothing estimates on the Sobolev scale
\cite[Section~3.5]{Henry1981} (see also \cite[Chapter~17]{TaylorPDE3}),
applied on the invariant, uniformly bounded $H^s$ ball, give
\[
 \sup_{v\in B_{5r_s}(X_{\mathbb C})}\|S_\tau v\|_m\le C,
 \qquad \tau>0,\quad m>s,
\]
where $C$ depends only on $\tau,m,s,\nu,r_s$.
These estimates also upgrade holomorphic dependence to $H^m$.
The identity $S_tu_0=S_\tau(S_{t-\tau}u_0)$ and ball invariance
give the asserted uniform bound for all $t\ge\tau$, proving (ii).

Finally, we prove (iii) for the complexified equation.
For $u_0\in H^1_{\sigma,\mathbb C}$, set
$\widetilde u(\theta)=\nu^{-1}S_{\theta/\nu}u_0$.
Then $\widetilde u$ solves the viscosity-one equation
with initial data $u_0/\nu$.
The Sobolev product estimate
$B:H^1_{\sigma,\mathbb C}\times H^1_{\sigma,\mathbb C}
\to H^{-3/4}_{\sigma,\mathbb C}$ \cite{RunstSickel}
allows us to apply the local theory and positive-time smoothing
cited above. On a sufficiently small $H^1$ ball, solutions exist
on $[0,1]$ and the time-one map is holomorphic into $H^s$.
Since it vanishes at zero, there are constants
$\delta_s,K_s>0$, depending only on $s$, such that
\[
 \|\widetilde u(1)\|_s
 \le K_s\|\widetilde u(0)\|_1
 \quad\text{whenever}\quad
 \|\widetilde u(0)\|_1\le\delta_s.
\]
Choose $\epsilon_s>0$ so that
$\epsilon_s\rho_s\le\delta_s$ and $K_s\epsilon_s\le1$.
If $\|u_0\|_1<\epsilon_s r_s$, then
$\|u_0/\nu\|_1\le\epsilon_s\rho_s$ and hence
\[
 \|S_{1/\nu}u_0\|_s
 =\nu\|\widetilde u(1)\|_s
 \le K_s\|u_0\|_1<K_s\epsilon_s r_s\le r_s.
\]
This proves (iii). 
\end{proof}

We use $D(t)$ to denote the time-dependent linear perturbation of
$-\nu A$; below, it represents the linearized term, its time-reversed
adjoint, or the linear term in the equation for the difference of
two solutions.

Let $0\le a<T\le\infty$, and  $\alpha,\beta$ be nonnegative
functions in $L^2_{\mathrm{loc}}((a,T))$. Suppose
$D(t):H^{s+1}_\sigma\to H^s_\sigma$ is linear and satisfies
\begin{equation}\label{asy:quotient-coefficients}
 \|D(t)h\|_s\le\alpha(t)\|A^{1/2}h\|_s+\beta(t)\|h\|_s.
\end{equation}
We consider a solution $v\in C([a,T);H^s_\sigma)$ of
\begin{equation}\label{asy:quotient-equation}
 v'=-\nu Av+D(t)v,
\end{equation}
which is smooth in time and space on $(a,T)$.

\vskip0.05in
We recall a standard consequence of the Dirichlet quotient
method: backward uniqueness and an exponential lower bound
for nonzero solutions.
\begin{lemma}\label{asy:nonvanishing}
Assume $\alpha,\beta\in L^2(a,T)$.
\begin{enumerate}
\renewcommand{\theenumi}{\roman{enumi}}
\renewcommand{\labelenumi}{(\theenumi)}
\item If $T<\infty$ and $v$ extends continuously to $T$, then
\[
 v(T)=0 \quad\Longrightarrow\quad
 v(t)=0 \quad\text{for all }t\in[a,T].
\]

\item If $T=\infty$, then for every $t_0>a$ with
$v(t_0)\ne0$, there exist constants $c,C>0$ such that
\begin{equation}\label{asy:exponential-lower-bound}
 \|v(t)\|_s\ge c e^{-C(t-t_0)},\qquad t\ge t_0.
\end{equation}
\end{enumerate}
\end{lemma}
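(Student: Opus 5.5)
We prove both parts with the Dirichlet quotient method of Agmon--Nirenberg type. The quotient is
\[
 \Lambda(t)=\frac{\|A^{1/2}v(t)\|_s^2}{\|v(t)\|_s^2},
\]
defined on any interval where $v\ne0$, and the energy $E(t)=\|v(t)\|_s^2$. All computations take place on open subintervals of $(a,T)$, where $v$ is smooth. We only need continuity of $v$ at the endpoints.

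\textbf{Step 1 (differential inequalities).} Pair \eqref{asy:quotient-equation} with $v$ and with $Av$ in the $H^s$ inner product. Using that $A$ is self-adjoint and positive, we obtain
\[
 \tfrac12 E'=-\nu\Lambda E+\operatorname{Re}\langle Dv,v\rangle_s,
 \qquad
 \tfrac12\Lambda'=-\nu\frac{\|(A-\Lambda)v\|_s^2}{E}
 +\operatorname{Re}\frac{\langle Dv,(A-\Lambda)v\rangle_s}{E}.
\]
The key identity in the second formula is $\langle v,(A-\Lambda)v\rangle_s=0$; it lets us replace $\langle Dv,Av\rangle-\Lambda\langle Dv,v\rangle$ by $\langle Dv,(A-\Lambda)v\rangle$.

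By \eqref{asy:quotient-coefficients},
\[
 \|Dv\|_s\le(\alpha\Lambda^{1/2}+\beta)E^{1/2}.
\]
Cauchy--Schwarz and Young's inequality absorb the $\|(A-\Lambda)v\|$ term and give
\[
 \Lambda'\le\nu^{-1}(\alpha\Lambda^{1/2}+\beta)^2
 \le 2\nu^{-1}(\alpha^2\Lambda+\beta^2).
\]
Gronwall's inequality then shows that $\Lambda$ stays bounded on any interval where $v\ne0$:
\[
 \Lambda(t)\le\Bigl(\Lambda(t_1)+2\nu^{-1}\!\int\beta^2\Bigr)\exp\Bigl(2\nu^{-1}\!\int\alpha^2\Bigr).
\]
The bound is finite because $\alpha,\beta\in L^2(a,T)$.

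For the energy we have
\[
 |\operatorname{Re}\langle Dv,v\rangle_s|\le(\alpha\Lambda^{1/2}+\beta)E,
\]
so
\[
 (\log E)'\ge-2\nu\Lambda-2\alpha\Lambda^{1/2}-2\beta .
\]

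\textbf{Step 2 (part (ii)).} Fix $t_0$ with $v(t_0)\ne0$ and let $t_*$ be the first time after $t_0$ at which $v$ vanishes, if one exists. On $[t_0,t_*)$ the quotient bound gives $\Lambda\le M$, with $M$ depending on $\Lambda(t_0)$ and the $L^2$ norms of $\alpha$ and $\beta$. Here $\Lambda(t_0)<\infty$ because $v(t_0)$ is smooth. Integrating the energy inequality gives
\[
 \log E(t)\ge\log E(t_0)-2\nu M(t-t_0)-2M^{1/2}\int_{t_0}^t\alpha-2\int_{t_0}^t\beta .
\]
Since $\int_{t_0}^t\alpha\le\|\alpha\|_{L^2}(t-t_0)^{1/2}\le\|\alpha\|_{L^2}(1+t-t_0)$, and likewise for $\beta$, the right-hand side is bounded below by a linear function of $t-t_0$. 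So $E$ cannot tend to zero at any finite time. Hence $t_*$ does not exist, and the same bound holds for all $t\ge t_0$, which gives \eqref{asy:exponential-lower-bound}.

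\textbf{Step 3 (part (i)).} Suppose $v(T)=0$ but $v(t_1)\ne0$ for some $t_1\in(a,T)$. We may take $t_1$ in the open interval, because $v(a)\ne0$ together with continuity gives such a point nearby. Let $t_*\in(t_1,T]$ be the first zero of $v$ after $t_1$. On $[t_1,t_*)$ the argument of Step 2 again gives $\Lambda\le M$ and a finite lower bound for $\log E$, now over a finite interval. This contradicts $E(t)\to0$ as $t\to t_*$, using continuity of $v$ at $t_*$ (at $T$ when $t_*=T$). Hence $v\equiv0$ on $(a,T]$, and on $[a,T]$ by continuity.

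\textbf{Main obstacle.} The delicate step is the bound on $\Lambda'$. It relies on $D(t)$ losing only half a derivative relative to $A$ (the $\alpha\|A^{1/2}h\|_s$ term) and on the orthogonality $\langle v,(A-\Lambda)v\rangle_s=0$. Together these make the loss absorbable by $-\nu\|(A-\Lambda)v\|^2$. We must also check that all pairings are justified, which holds because $v$ is smooth in the interior.
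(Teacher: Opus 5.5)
Your proof is correct and follows essentially the same route as the paper. You derive the Dirichlet-quotient inequality $\Lambda'\le 2\nu^{-1}(\alpha^2\Lambda+\beta^2)$ via the orthogonality $\langle v,(A-\Lambda)v\rangle_s=0$, get a Gronwall bound on the quotient that is independent of the first zero, integrate $(\log\|v\|_s)'$, and use Cauchy--Schwarz on $\int\alpha$ and $\int\beta$ both to exclude a first zero (finite $T$) and to obtain the exponential lower bound (for $T=\infty$).
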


\begin{proof}
We use the standard Dirichlet-quotient argument
\cite{AgmonNirenberg1967}. Wherever $v(t)\ne0$, set
\[
 q(t)=\frac{\|A^{1/2}v(t)\|_s^2}{\|v(t)\|_s^2}.
\]
The usual differentiation and Young's inequality, together with
\eqref{asy:quotient-coefficients}, give
\begin{equation}\label{asy:dirichlet}
 q'\le\frac{2}{\nu}\bigl(\alpha^2q+\beta^2\bigr),
\end{equation}
and
\begin{equation}\label{asy:log-norm}
 (\log\|v\|_s)'\ge-\nu q-\alpha\sqrt q-\beta.
\end{equation}
Fix $t_0\in(a,T)$ with $v(t_0)\ne0$.
Since $\alpha,\beta\in L^2(a,T)$, Gronwall's inequality applied to
\eqref{asy:dirichlet} gives $q\le K<\infty$ on every zero-free
interval starting at $t_0$, with $K$ independent of its right endpoint.
Integrating \eqref{asy:log-norm}, we obtain
\[
 \|v(t)\|_s\ge\|v(t_0)\|_s
 \exp\Big[-\nu K(t-t_0)
       -\sqrt K\int_{t_0}^t\alpha(r)\,dr
       -\int_{t_0}^t\beta(r)\,dr\Big].
\]
If $T<\infty$, Cauchy--Schwarz bounds both integrals uniformly.
The resulting positive lower bound excludes a first zero in
$(t_0,T]$, proving (i) by continuity.

If $T=\infty$, the same argument on finite intervals excludes every
zero after $t_0$. Cauchy--Schwarz and Young's inequality bound the
sum of the two integral terms by $(t-t_0)+C$,
with $C$ independent of $t$. The preceding lower bound then
gives \eqref{asy:exponential-lower-bound} with rate $\nu K+1$,
proving (ii).
\end{proof}

For $u_0\in B_{4r_s}$ and $t\ge a\ge0$, let
$U_{u_0}(t,a)=DS_{t-a}(S_au_0)$. Differentiating the mild equation gives
\begin{equation}\label{asy:variational-equation}
 \partial_tU_{u_0}(t,a)=L_{S_tu_0}U_{u_0}(t,a),\qquad U_{u_0}(a,a)=I,
\end{equation} where the operator $L_{u}$ is given by  \eqref{asy:linearized-operators}.

\vskip0.05in
We record standard smoothing, injectivity, and dense-range properties
of the variational evolution \cite[Sections~7.1 and~7.3]{Henry1981},
with the Sobolev estimate and endpoint conditions needed below.

\nopagebreak[4]
\begin{lemma}\label{asy:variational}
For $u_0\in B_{4r_s}$ and $t>a\ge0$, $U_{u_0}(t,a)$ extends to
a bounded operator from $H^{s-1}_\sigma$ to $X$, with
\begin{equation}\label{asy:smoothing}
 \|U_{u_0}(t,a)\|_{\mathcal L(H^{s-1}_\sigma,X)}
 \le C_s\bigl(\nu(t-a)\bigr)^{-1/2},
 \qquad 0<t-a\le\nu^{-1}.
\end{equation}
Its restriction to $X$ is injective and has dense range in $X$.
\end{lemma}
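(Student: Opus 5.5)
We prove the three assertions in turn: the smoothing bound \eqref{asy:smoothing} for the variational evolution \eqref{asy:variational-equation}, injectivity on $X$, and density of the range in $X$. Throughout, write $u(t)=S_tu_0$. By Lemma~\ref{asy:small-flow}(i), $\|u(t)\|_s\le C_se^{-\nu t}\|u_0\|_s\le 4C_sr_s$, and $t\mapsto\|u(t)\|_{s+1}$ lies in $L^2\cap L^1(0,\infty)$ by \eqref{asy:L2-coefficients}--\eqref{asy:L1-coefficients}.

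\emph{Smoothing bound.} We write $U_{u_0}(t,a)$ through the Duhamel formula
\[
 U(t,a)h=e^{-\nu A(t-a)}h+\int_a^t e^{-\nu A(t-q)}\mathcal N_{u(q)}U(q,a)h\,dq,
 \qquad h\in H^{s-1}_\sigma .
\]
We solve this by a fixed-point argument in the weighted space of continuous maps $w$ on $(a,a+\nu^{-1}]$ with norm $\sup_t(\nu(t-a))^{1/2}\|w(t)\|_s$.
\begin{itemize}
\item The free term is bounded by $C\|h\|_{s-1}(\nu(t-a))^{-1/2}$, by \eqref{asy:heat-bound} with $\theta=1$.
\item For the integral term, \eqref{asy:linearized-loss} with $q=s$ gives $\|\mathcal N_{u}w\|_{s-1}\le C_s\|u\|_s\|w\|_s$. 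Then \eqref{asy:heat-bound} with $\theta=1$ bounds the integral by
\[
 C_s\sup_q\|u(q)\|_s\int_a^t(\nu(t-q))^{-1/2}(\nu(q-a))^{-1/2}\,dq
 \,\sup_q(\nu(q-a))^{1/2}\|w(q)\|_s .
\]
The Beta integral equals $\pi\nu^{-1}$.
\item Since $\sup_q\|u(q)\|_s\lesssim r_s\le\rho_s\nu$, this factor is $\le C_s\rho_s<1/2$ after shrinking $\rho_s$. The map is therefore a contraction uniformly in the initial data, which yields \eqref{asy:smoothing}.
\end{itemize}
Uniqueness in this class identifies the fixed point with $DS_{t-a}(S_au_0)$ on $X$, by differentiating the mild equation. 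Intervals longer than $\nu^{-1}$ follow from the cocycle property together with the bound on $X$, which comes from the same argument with $\theta=0$ on the free term and $\theta=1$ on the integral term.

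\emph{Injectivity.} Let $h\in X$ with $U(T,a)h=0$. Then $v(t)=U(t,a)h$ solves \eqref{asy:quotient-equation} with $D(t)=\mathcal N_{u(t)}$. By \eqref{asy:adjoint-bound}, $D$ satisfies \eqref{asy:quotient-coefficients} with $\alpha=C_s\|u\|_s$ and $\beta=C_s\|u\|_{s+1}$, both in $L^2(a,T)$. Parabolic smoothing (Lemma~\ref{asy:small-flow}(ii) for $u$, and bootstrapping in the linear equation) makes $v$ smooth on $(a,T)$. Lemma~\ref{asy:nonvanishing}(i) then gives $v\equiv0$ on $[a,T]$, and continuity at $a$ gives $h=0$.

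\emph{Dense range.} Density of the range is equivalent to injectivity of the $H^s$-adjoint $U(T,a)^*$. For $g\in X$, set $w(\tau)=U(T,T-\tau)^*g$ for $\tau\in[0,T-a]$. This solves the time-reversed adjoint equation $w'=-\nu Aw+\mathcal N^*_{u(T-\tau)}w$, since $A$ is self-adjoint for $\langle\cdot,\cdot\rangle_s$. Estimate \eqref{asy:adjoint-bound} provides the same coefficients $\alpha,\beta$ for $\mathcal N^*$, so Lemma~\ref{asy:nonvanishing}(i) applies again: if $U(T,a)^*g=0$, then $w\equiv0$ and $g=w(0)=0$.

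\emph{Main obstacle.} The delicate point is justifying the regularity hypotheses of Lemma~\ref{asy:nonvanishing} for the adjoint problem. The adjoint is only a priori defined weakly, and its coefficients involve $u(T-\tau)$, which is smooth except near $\tau=T-a$. That endpoint is at the far end of the reversed interval, and the lemma's hypotheses concern the open interval $(0,T-a)$ with continuity at the terminal point $T-a$.

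I would handle this in three steps:
\begin{itemize}
\item first treat $a>0$, where $u$ is smooth on $[a,T]$ by Lemma~\ref{asy:small-flow}(ii), and obtain $w$ as a strong solution by Galerkin approximation or by dualizing the forward Duhamel formula;
\item obtain interior smoothness of $w$ by the standard analytic-semigroup bootstrap;
\item reduce $a=0$ to $a>0$ via the cocycle identity $U(T,0)=U(T,\epsilon)U(\epsilon,0)$. Density is preserved because $U(\epsilon,0)$ has dense range; this holds by a short-time argument, since $U(\epsilon,0)\to I$ strongly as $\epsilon\to0$. If that limit is awkward, I would instead apply the Dirichlet quotient directly on $[0,T]$ using only $\alpha,\beta\in L^2$.
\end{itemize}
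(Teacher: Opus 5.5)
Your smoothing bound, written as an explicit Volterra contraction in the weighted space, matches what the paper takes from Henry's Volterra estimates. Your injectivity argument, via Lemma~\ref{asy:nonvanishing}(i) with $\alpha=C_s\|u\|_s$ and $\beta=C_s\|u\|_{s+1}\in L^2$, is the paper's. Your dense-range argument also has the paper's structure: the adjoint evolution combined with Lemma~\ref{asy:nonvanishing}(i).

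\textbf{The gap is your treatment of $a=0$.} The reduction through $U(T,0)=U(T,\epsilon)U(\epsilon,0)$ fails. Strong convergence $U(\epsilon,0)\to I$ does not imply that $U(\epsilon,0)$ has dense range for any $\epsilon>0$. For example, orthogonal projections onto finitely many Fourier modes converge strongly to $I$, yet none of them has dense range. Operator-norm convergence would suffice via a Neumann series, but it is false for a parabolic evolution, since $\|e^{-\nu A\epsilon}-I\|_{\mathcal L(X)}=1$. More basically, dense range of $U(\epsilon,0)$ is exactly the case $a=0$, $t=\epsilon$ of the lemma, so the reduction is circular.

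Your fallback, applying the Dirichlet quotient on the whole interval, is the paper's route, but you have put the difficulty in the wrong place. Lemma~\ref{asy:nonvanishing}(i) needs no regularity of the coefficient $u(T-\tau)$ up to $\tau=T-a$. It needs only three things:
\begin{itemize}
\item $\alpha,\beta\in L^2(0,T-a)$, which \eqref{asy:L2-coefficients} gives even for $a=0$;
\item smoothness of $w$ on the open interval;
\item continuity of $w$ in $X$ at \emph{both} endpoints, since the standing hypothesis is $v\in C([a,T);H^s_\sigma)$ with a continuous extension to $T$.
\end{itemize}

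Continuity at $\tau=0$ is what lets you conclude $g=w(0)=0$, and your proposal never addresses it. It is not automatic: strong continuity of $U(T,s)h$ in $s$ gives only weak continuity of $\tau\mapsto U(T,T-\tau)^*g$. The paper supplies it with the dual short-time estimate
\[
\|(U(t,t-\tau)^*-I)g\|_s\le C_s\bigl(\sqrt{\nu\tau}+r_s\tau\bigr)\|g\|_{s+1},\qquad g\in H^{s+1}_\sigma,
\]
extended to all $g\in X$ by density and uniform boundedness. At the other endpoint, it uses operator-norm continuity of $U(t,\cdot)$ in the initial time. Alternatively, weak continuity together with $\|U(T,T-\tau)\|_{\mathcal L(X)}\le\exp\bigl(C_s\int_{T-\tau}^T(\|u\|_s+\|u\|_{s+1})\,dq\bigr)\to1$ also yields strong continuity at $\tau=0$.

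With these continuity facts in place, drop the cocycle reduction: your fallback then coincides with the paper's proof.
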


\begin{proof}
By \eqref{asy:linearized-loss}, we have
$\|\mathcal N_{S_tu_0}\|_{\mathcal L(H^\sigma,H^{\sigma-1})}
\le C_sr_s$ for $\sigma=s-1,s$.
After rescaling time by $\nu$, the perturbation is bounded by
$C_sr_s/\nu\le C_s\rho_s$.
The standard Volterra estimates \cite[Section~7.1]{Henry1981}
therefore give \eqref{asy:smoothing}, with $C_s$ independent of
$u_0,a,\nu$. They also give strong continuity and uniform boundedness
on finite time intervals in $H^{s-1}$ and $H^s$, including $a=0$.

The same evolution estimates \cite[Section~7.1]{Henry1981}
give operator-norm continuity in the initial time for $a<t$.
Their short-time estimate, in the dual $H^s$ pairing, gives
\[
 \|(U_{u_0}(t,t-\tau)^*-I)g\|_s
 \le C_s\bigl(\sqrt{\nu\tau}+r_s\tau\bigr)\|g\|_{s+1},
 \qquad g\in H^{s+1}_\sigma,
\]
for $0<\tau\le\min\{t-a,\nu^{-1}\}$.
Density and uniform boundedness therefore give
$\tau\mapsto U_{u_0}(t,t-\tau)^*g\in C([0,t-a];X)$ for every $g\in X$.

Injectivity follows from Lemma~\ref{asy:nonvanishing}(i),
using \eqref{asy:adjoint-bound}, \eqref{asy:L2-coefficients},
and continuity at the initial time.
For density of the range, we use the standard adjoint argument
\cite[Section~7.3]{Henry1981}.
For $g\in X$, set $z(\tau)=U_{u_0}(t,t-\tau)^*g$.
The adjoint evolution identity gives, on interior intervals,
\begin{equation}\label{asy:reverse-adjoint}
 z'=-\nu Az+\mathcal N_{S_{t-\tau}u_0}^*z,\qquad z(0)=g.
\end{equation}
By \eqref{asy:adjoint-bound}, the coefficients in
\eqref{asy:quotient-coefficients} can be taken as
$\alpha(\tau)=C_s\|S_{t-\tau}u_0\|_s$ and
$\beta(\tau)=C_s\|S_{t-\tau}u_0\|_{s+1}$.
Both belong to $L^2(0,t-a)$ by \eqref{asy:L2-coefficients},
also when $a=0$. Thus interior regularity, the endpoint continuity
above, and Lemma~\ref{asy:nonvanishing}(i) give
$U_{u_0}(t,a)^*g=z(t-a)=0\Rightarrow g=0$.
Consequently, $\ker U_{u_0}(t,a)^*=\{0\}$, proving the dense-range claim.
\end{proof}

\subsection{Asymptotic factors and remainder estimates}

We recall the Foias--Saut asymptotic factors and normal form
\cite[Theorems~4--6]{FoiasSaut1987}, using the periodic formulation
in \cite[Section~2.2]{FoiasHoangOlsonZiane2009}.
We prove uniform estimates for the remainders and their
initial-data derivatives on a common complex ball, providing
the analytic basis for the invariant foliations and
lower-triangular coordinates used in the conjugacy construction.

For $n\in\mathbb N$ and $L\in\mathbb R$, we set
\begin{equation}\label{asy:spectral-projections}
 P_n=\mathbf1_{\{A=n\}},\qquad
 P_{\le L}=\mathbf1_{\{A\le L\}},\qquad P_{>L}=I-P_{\le L}.
\end{equation}
Thus $P_n=0$ when $n\notin\sigma(A)$ and $P_{\le L}=0$ when $L<1$.

We use the normalization of Foias and Saut
\cite[Theorems~4--5]{FoiasSaut1987}. In the periodic setting,
their phase space is $H^1_\sigma$, and the set $\mathcal R$ of
initial data generating global regular solutions is open and
contains zero. Their normalization $W=(W_n)_{n\ge1}$ is analytic
and injective on $\mathcal R$, with values in the product of the
eigenspaces endowed with the componentwise topology. Since $X\hookrightarrow H^1_\sigma$,
we decrease $r_s$ if necessary so that $B_{4r_s}(X)\subset\mathcal R$.
We use its polynomial representation from
\cite[Section~2.2, (2.13)--(2.17)]{FoiasHoangOlsonZiane2009},
with viscosity $\nu$ restored by $u(t)=\nu v(\nu t)$, and set
$q_0\equiv0$.
For $n\ge1$ and $\xi_j\in P_jX_{\C}$, $1\le j\le n$,
we write $\xi_{\le n}=(\xi_1,\ldots,\xi_n)$ and define the
polynomial $q_n(t;\xi_{\le n})$ recursively by
\begin{align}
 q_n'+\nu(A-nI)q_n
       &=-\sum_{i=0}^{n}B(q_i,q_{n-i}),\label{asy:recursion}\\
 P_nq_n(0;\xi_{\le n})&=\xi_n.\label{asy:normalizing}
\end{align}
Since $q_0=0$, the right-hand side of \eqref{asy:recursion}
depends only on $q_1,\ldots,q_{n-1}$ and vanishes when $n=1$.
The existence and uniqueness of these polynomial solutions, their
finite Fourier support, and their polynomial dependence on
$\xi_{\le n}$ follow from
\cite[Section~2.2, (2.13)--(2.16)]{FoiasHoangOlsonZiane2009}.
For real initial data, these are the coefficients of the
Foias--Saut asymptotic expansion
\cite[Theorem~4]{FoiasSaut1987}.

We prepare a quantitative linear extraction lemma for the proof
of Proposition~\ref{asy:expansion}.

Fix $m\in\N$, $0<\theta<1$, a finite-dimensional $A$-invariant
Fourier subspace $E$, and $d\in\N\cup\{0\}$.
For a finite-dimensional subspace $F\subset X$ and
$j\in\N\cup\{0\}$, we write $\operatorname{Pol}_j(F)$ for the
space of polynomials $r(t)=\sum_{\ell=0}^j r_\ell t^\ell$ with
$r_\ell\in F$, equipped with the coefficient norm
\[
 \|r\|_{F,j}:=\sum_{\ell=0}^j\|r_\ell\|_s.
\]
Let $p\in\operatorname{Pol}_d(E)$.
We denote by $\mathcal G_{m,\theta}$ the space of continuous
functions $g:[0,\infty)\to H^{s-1}_\sigma$ with finite norm
\[
 \|g\|_{\mathcal G_{m,\theta}}
 :=\sup_{t\ge0}e^{\nu(m+\theta)t}\|g(t)\|_{s-1}.
\]
For a forcing remainder $g\in\mathcal G_{m,\theta}$ and an initial
value $v_0\in H^s_\sigma$, we set
\begin{equation}\label{asy:extraction-topology}
 \|(p,g,v_0)\|
 :=\|p\|_{E,d}+\|g\|_{\mathcal G_{m,\theta}}+\|v_0\|_s.
\end{equation}
We consider
\begin{equation}\label{asy:extraction-equation}
 v'+\nu Av=e^{-\nu mt}p(t)+g(t),\qquad v(0)=v_0.
\end{equation}
We call a triple $(p,g,v_0)$ admissible if the mild solution of
\eqref{asy:extraction-equation} satisfies
$\|v(t)\|_s\le Ce^{-\nu(m-1+\theta)t}$ when $m>1$, and is bounded
when $m=1$. 

By linearity of \eqref{asy:extraction-equation}, the admissible
triples form a linear subspace
\[
 Y\subset
 \operatorname{Pol}_d(E)\times\mathcal G_{m,\theta}\times H^s_\sigma,
\]
which we equip with the norm \eqref{asy:extraction-topology}.

\vskip0.05in
\nopagebreak[4]
The next lemma records the linear extraction step in the
Foias--Saut expansion \cite[Section~4]{FoiasSaut1987}, using
\cite[Lemma~4.2]{HoangMartinez2018}.
We verify the Sobolev estimates and their uniformity in $E$.
\begin{lemma}\label{asy:linear-extraction}
For each admissible triple, there is a unique polynomial
$q(t)=\sum_{j=0}^{d+1}q_jt^j$, with $q_j\in E+P_mX$,
such that
\[
 q'+\nu(A-m)q=p,\qquad
 \|v(t)-e^{-\nu mt}q(t)\|_s\le C\|(p,g,v_0)\|e^{-\nu(m+\theta)t}.
\]
The map
\[
 \mathcal T:Y\to\operatorname{Pol}_{d+1}(E+P_mX),
 \qquad (p,g,v_0)\mapsto q,
\]
is a bounded linear operator. Here $C$ depends only on $m,\theta,d,s,\nu$, and is independent
of $E$, $t$, and the admissible triple.
\end{lemma}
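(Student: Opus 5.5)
The plan is to work mode by mode. Write $w(t)=e^{\nu mt}v(t)$, so that
\[
 w'+\nu(A-m)w=p(t)+e^{\nu mt}g(t),\qquad w(0)=v_0 .
\]
Split $X$ by the spectral projections \eqref{asy:spectral-projections} into three parts: the finitely many low modes $P_n$ with $n<m$, the resonant mode $P_m$, and the high part $P_{>m}$. In each part, $q$ will be a polynomial solution of $q'+\nu(A-m)q=p$ with one free constant, fixed by the decay requirement. The polynomial coefficients live in $E+P_mX$, since $p$ takes values in $E$ and only the $P_m$ component receives a new constant. Uniqueness is immediate. The difference of two admissible $q$ is a polynomial $r$ with $\|r(t)\|_s\le Ce^{-\nu\theta t}$, which forces $r\equiv0$. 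Linearity of $\mathcal T$ follows from uniqueness, and boundedness from the explicit bounds below.

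\emph{High modes.} On $P_{>m}X$ we have $A-m\ge1$, so $\|(\nu(A-m))^{-1}P_{>m}\|\le\nu^{-1}$ in every $H^q$, uniformly in $E$. Solve for the coefficients of $q_{>m}\in\operatorname{Pol}_d(E)$ by back-substitution from the top degree: $q_d=(\nu(A-m))^{-1}P_{>m}p_d$, then $q_{\ell}=(\nu(A-m))^{-1}(P_{>m}p_\ell-(\ell+1)q_{\ell+1})$. This gives $\|q_{>m}\|\le C(d,\nu)\|p\|_{E,d}$. The remainder $\rho=P_{>m}v-e^{-\nu mt}q_{>m}$ solves the heat equation with forcing $P_{>m}g$ and initial datum $P_{>m}v_0-q_{>m}(0)$. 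Apply Duhamel together with the shifted form of \eqref{asy:heat-bound}: on $P_{>m}X$ the semigroup decays like $e^{-\nu(m+1)t}$ and smooths $H^{s-1}\to H^s$ with the factor $1+(\nu t)^{-1/2}$. This yields
\[
 \|\rho(t)\|_s\le Ce^{-\nu(m+1)t}\|(p,g,v_0)\|+C\|g\|_{\mathcal G_{m,\theta}}\int_0^t\bigl(1+(\nu(t-\tau))^{-1/2}\bigr)e^{-\nu(m+1)(t-\tau)}e^{-\nu(m+\theta)\tau}\,d\tau ,
\]
and the integral is $\le C e^{-\nu(m+\theta)t}$ because the gap $1-\theta$ is positive. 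I expect this to be the main technical point: getting constants independent of $E$, which holds since only the spectral bound $A\ge m+1$ on $P_{>m}X$ is used.

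\emph{Resonant mode.} Here $w_m'=p_m+e^{\nu mt}P_mg$. Set $q_m(t)=c_m+\int_0^tp_m$, a polynomial of degree $d+1$, with
\[
 c_m=P_mv_0+\int_0^\infty e^{\nu m\tau}P_mg(\tau)\,d\tau .
\]
The integral converges since its integrand is $O(e^{-\nu\theta\tau})$; moreover $\|P_m\cdot\|_s\le C(m)\|P_m\cdot\|_{s-1}$ because $P_mX$ is finite dimensional. Then $w_m-q_m=-\int_t^\infty e^{\nu m\tau}P_mg$, which is bounded by $C\|g\|_{\mathcal G_{m,\theta}}e^{-\nu\theta t}$.

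\emph{Low modes $n<m$.} This is the only place where admissibility enters, and it is vacuous when $m=1$. Let $q_n$ be the unique polynomial solving $q_n'-\nu(m-n)q_n=P_np$; it exists by the same back-substitution since $m-n\ge1$. The difference $r_n=w_n-q_n$ satisfies $r_n'-\nu(m-n)r_n=e^{\nu mt}P_ng$, so
\[
 r_n(t)=e^{\nu(m-n)t}\Bigl(c+\int_0^te^{-\nu(m-n)\tau}e^{\nu m\tau}P_ng\,d\tau\Bigr).
\]
Admissibility gives $\|w_n(t)\|\le Ce^{\nu(1-\theta)t}$, which grows strictly slower than $e^{\nu(m-n)t}$. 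This forces $c=-\int_0^\infty(\cdots)$. Hence $r_n(t)=-\int_t^\infty e^{\nu(m-n)(t-\tau)}e^{\nu m\tau}P_ng\,d\tau$, bounded by $C\|g\|_{\mathcal G_{m,\theta}}e^{-\nu\theta t}$. Consistency with the initial datum is automatic, and the bound involves only $g$, with constants depending on $m$ since there are finitely many such $n$. Summing the three pieces gives the estimate with $C=C(m,\theta,d,s,\nu)$.
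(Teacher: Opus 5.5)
Your proposal is correct and follows essentially the same route as the paper. Both split into the modes $n<m$, the resonant mode $P_m$, and $P_{>m}X$, and both take the unique polynomial solution off resonance; your back-substitution reproduces the paper's closed form $\sum_{k=0}^d(-1)^k[\nu(A-mI)]^{-k-1}(I-P_m)p^{(k)}$. Both fix the resonant constant as $P_mv_0+\int_0^\infty e^{\nu m r}P_mg(r)\,dr$, use admissibility to get the backward-integral representation on the low modes, and use variation of constants with the shifted heat bound on $P_{>m}X$.

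The only difference is that the paper quotes Hoang--Martinez's Lemma~4.2 for the eigenspace-wise extraction, whereas you carry it out by hand. As a minor wording point, only the resonant component carries a free constant; off resonance the polynomial solution is unique, as your own construction shows.
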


\begin{proof}
On each eigenspace $P_nX$, apply \cite[Lemma~4.2]{HoangMartinez2018}
to $e^{\nu mt}P_nv(t)$, with coefficient $\nu(n-m)$,
polynomial forcing $P_np$, and remainder $e^{\nu mt}P_ng$,
whose decay rate is $\nu\theta$ in $P_nX$.
For $n<m$, admissibility gives the required condition
$e^{\nu nt}P_nv(t)\to0$. The resulting polynomial is given by
\begin{equation}\label{asy:polynomial-solution}
 (I-P_m)q
 =\sum_{k=0}^d(-1)^k[\nu(A-mI)]^{-k-1}(I-P_m)p^{(k)},
\end{equation}
and
\begin{equation}\label{asy:resonant-polynomial}
 P_mq(t)=\xi+\int_0^t P_mp(r)\,dr,\qquad
 \xi=P_mv_0+\int_0^\infty e^{\nu mr}P_mg(r)\,dr.
\end{equation}
These formulas give
$q\in\operatorname{Pol}_{d+1}(E+P_mX)$ and
$q'+\nu(A-mI)q=p$.

We verify the stated norm bounds. Since
$|\lambda-m|\ge1$ for $\lambda\in\sigma(A)\setminus\{m\}$ and
$\|P_mh\|_s\le\sqrt{1+m}\|h\|_{s-1}$, we have
\[
 \|\xi\|_s\le\|v_0\|_s
       +\frac{\sqrt{1+m}}{\nu\theta}\|g\|_{\mathcal G_{m,\theta}}.
\]
Thus \eqref{asy:polynomial-solution}--\eqref{asy:resonant-polynomial}
define a linear map satisfying
\begin{equation}\label{asy:extraction-operator-bound}
 \|\mathcal T(p,g,v_0)\|_{E+P_mX,d+1}
 \le C\|(p,g,v_0)\|,
\end{equation}
with $C$ independent of $E$.

For $z=v-e^{-\nu mt}q$, the same linear extraction lemma gives
\[
 P_{\le m}z(t)
 =-\int_t^\infty e^{-\nu A(t-r)}P_{\le m}g(r)\,dr.
\]
We estimate this finite-dimensional integral in $H^s$.
On $P_{>m}X$, use the variation-of-constants formula and the
heat estimate \eqref{asy:heat-bound}, with spectral decay rate
$\nu(m+1)$. Since $0<\theta<1$, these bounds give
\[
 \|z(t)\|_s\le C e^{-\nu(m+\theta)t}
 \bigl(\|v_0\|_s+\|q(0)\|_s+\|g\|_{\mathcal G_{m,\theta}}\bigr).
\]
Together with \eqref{asy:extraction-operator-bound}, this proves
the remainder estimate. Uniqueness follows because an
exponentially decaying polynomial vanishes identically.
\end{proof}

For these factors, we write
$W_{\le n}=\sum_{j=1}^n W_j$ and identify this sum with the tuple
$(W_1,\ldots,W_n)$ in the spectral direct sum.

\vskip0.05in
\Needspace{6\baselineskip}
The next proposition recalls the classical asymptotic expansion and
normalization \cite[Theorems~4--5 and Section~6,
Proposition~2]{FoiasSaut1987}. We verify the uniform remainder
and derivative estimates on a common complex neighborhood
in the Sobolev norms used below.

\nopagebreak[4]
\begin{proposition}\label{asy:expansion}
The Foias--Saut factors admit bounded holomorphic extensions
\[
 W_n:B_{4r_s}(X_{\C})\to P_nX_{\C},\qquad n\ge1,
\]
on a common ball whose radius is independent of $n$.
They have the following properties.
\begin{enumerate}
\renewcommand{\theenumi}{\roman{enumi}}
\renewcommand{\labelenumi}{(\theenumi)}
\item For every $N\ge1$, $u_0\in B_{4r_s}(X_{\C})$, and $t\ge0$,
\begin{equation}\label{asy:expansion-eq}
 S_tu_0=\sum_{n=1}^N e^{-\nu nt}q_n(t;W_{\le n}(u_0))
       +\mathcal R_N(t,u_0).
\end{equation}

\item For every $N\ge1$, there exists $C_N>0$, depending only
on $N,s,\nu,r_s$, such that
\begin{equation}\label{asy:remainder-C1}
 \sup_{u_0\in B_{2r_s}(X_{\C})}
 \bigl(
 \|\mathcal R_N(t,u_0)\|_s+
 \|D_{u_0}\mathcal R_N(t,u_0)\|_{\mathcal L(X_{\C},X_{\C})}
 \bigr)
 \le C_Ne^{-\nu(N+1/4)t},
 \qquad t\ge0,
\end{equation}
and
\begin{equation}\label{asy:remainder-C1-smoothing}
 \sup_{u_0\in B_{2r_s}(X_{\C})}
 \bigl(
 \|\mathcal R_N(t,u_0)\|_{s+1}+
 \|D_{u_0}\mathcal R_N(t,u_0)\|_{\mathcal L(X_{\C},H^{s+1}_{\sigma,\C})}
 \bigr)
 \le C_Ne^{-\nu(N+1/4)t},
 \qquad t\ge\nu^{-1}.
\end{equation}

\item For every $n\ge1$,
\begin{equation}\label{asy:derivative-at-zero}
 W_n(0)=0,\qquad DW_n(0)=P_n.
\end{equation}
\end{enumerate}
\end{proposition}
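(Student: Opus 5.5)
We construct the factors $W_n$ and the remainders $\mathcal R_N$ together, by induction on $N$, using the linear extraction Lemma~\ref{asy:linear-extraction} and the holomorphic semiflow on $B_{5r_s}(X_{\C})$ from Lemma~\ref{asy:small-flow}(ii).

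\textbf{Induction setup.} Fix $u_0\in B_{4r_s}(X_{\C})$ and write $u(t)=S_tu_0$. Suppose that $W_1,\dots,W_{N-1}$ have been defined as bounded holomorphic maps on $B_{4r_s}(X_{\C})$. Suppose also that the remainder
\[
v_{N-1}(t):=\mathcal R_{N-1}(t,u_0)
\]
satisfies a weaker decay bound $\|v_{N-1}(t)\|_s\le C e^{-\nu(N-1+\theta)t}$ for some fixed $\theta\in(0,1)$, for instance $\theta=1/2$. For $N=1$ we have $v_0=u$, and this is \eqref{asy:complex-decay} with rate $\nu$, up to a harmless loss.

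\textbf{Induction step.} Subtracting the recursion \eqref{asy:recursion} from \eqref{asy:equation}, the remainder $v_{N-1}$ solves
\[
v'+\nu Av=e^{-\nu Nt}p_N(t)+g_N(t).
\]
Here $p_N=-\sum_{i+j=N}B(q_i,q_j)$ is a polynomial with finite Fourier support. The forcing $g_N$ collects the cross terms $B(q_i,q_j)$ with $i+j>N$ and the terms $B(v_{N-1},\cdot)$ and $B(\cdot,v_{N-1})$. By \eqref{asy:bilinear} and the inductive decay, $g_N\in\mathcal G_{N,\theta'}$ for some $\theta'>0$. Since the exponents add, the products of two decaying terms decay like $e^{-\nu(N+\theta)t}$.

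Lemma~\ref{asy:linear-extraction} then produces a polynomial $q$ in $\operatorname{Pol}(E+P_NX)$. By uniqueness, $q$ agrees with $q_N(t;\xi_{\le N})$ once we set
\[
W_N(u_0):=\xi=P_Nv_0+\int_0^\infty e^{\nu Nr}P_Ng(r)\,dr,
\]
as in \eqref{asy:resonant-polynomial}. Lemma~\ref{asy:linear-extraction} also gives the remainder decay $e^{-\nu(N+\theta)t}$.

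\textbf{Holomorphy, boundedness, and agreement with Foias--Saut.} The map $u_0\mapsto(p,g,v_0)\in Y$ is a composition of holomorphic maps, and the integrals converge uniformly on the ball. Hence $W_N$ is holomorphic, and it is bounded because $\mathcal T$ is bounded. For real data, $W_N$ coincides with the Foias--Saut normalization by uniqueness of the expansion.

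\textbf{Rate upgrade.} To reach the stated rate $N+1/4$ with a uniform constant, I would run the induction with $\theta$ close to $1$ at each level. Alternatively, one can use that $\mathcal R_N=\mathcal R_{N+1}+e^{-\nu(N+1)t}q_{N+1}$, which decays at rate $N+1$ up to polynomial factors.

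\textbf{Derivative estimates.} The bound on $D_{u_0}\mathcal R_N$ follows from Cauchy's estimate. We restrict to $B_{2r_s}$, which lies at distance $2r_s$ from the boundary of $B_{4r_s}$. Since $\mathcal R_N(t,\cdot)$ is holomorphic there with $\sup\le C_Ne^{-\nu(N+1/4)t}$, the derivative inherits the same bound with constant divided by $2r_s$.

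\textbf{Smoothing bound.} For \eqref{asy:remainder-C1-smoothing}, apply the parabolic smoothing estimate \eqref{asy:heat-bound} with $\theta=1$ on the interval $[t-\nu^{-1},t]$ to the equation for $\mathcal R_N$. The forcing there is bounded in $H^{s-1}$ by the same exponential, so $\|\mathcal R_N(t)\|_{s+1}$ obeys the same bound for $t\ge\nu^{-1}$. Cauchy's estimate again handles the derivative.

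\textbf{Part (iii).} Every $q_n$ and every remainder vanishes at $u_0=0$. Differentiating at $0$ kills all quadratic terms, since $g_N$ is at least quadratic in $u_0$ for $N\ge1$. Hence $DW_N(0)h=P_N\,\lim_{t\to\infty}e^{\nu Nt}e^{-\nu At}h=P_Nh$.

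\textbf{Main obstacle.} The hard step is uniformity: the radius $4r_s$ must not shrink with $N$. This forces every estimate to be done on the fixed invariant ball supplied by Lemma~\ref{asy:small-flow}(ii), with constants $C_N$ allowed to grow. The key point to check is that the extraction constant in Lemma~\ref{asy:linear-extraction} is independent of the Fourier support $E$. The supports $E$ of $p_N$ grow with $N$, so without this independence the constants would degenerate. A second point to verify is the derivative loss in $g_N$, namely that $B(v,q)$ lies in $H^{s-1}$. This is exactly what the $\mathcal G_{m,\theta}$ norm is built to tolerate.
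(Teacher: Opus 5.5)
Your overall route is the paper's. You induct on $N$ with Lemma~\ref{asy:linear-extraction} on the fixed ball $B_{4r_s}(X_{\C})$, identify $W_N(u_0)$ with $\xi$ from \eqref{asy:resonant-polynomial}, get holomorphy from uniformly convergent integrals, use Cauchy estimates for $D_{u_0}\mathcal R_N$, and use parabolic smoothing for the $H^{s+1}$ bound. Your argument for (iii), differentiating the expansion at the zero solution, is a fine self-contained substitute for the paper's citation of Foias--Saut.

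Two minor remarks. With $\theta=1/2$ your induction already yields rate $N+1/2$, so the ``rate upgrade'' paragraph is unnecessary; the paper simply takes $\theta=1/4$. Also, independence of the extraction constant from $E$ is not what prevents degeneration: $E$ is fixed once $N$ is, and $C_N$ may depend on $N$. The essential point is only that the ball is never shrunk.

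The genuine gap is the smoothing step for \eqref{asy:remainder-C1-smoothing}.

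\begin{itemize}
\item The forcing in the equation for $\mathcal R_N$ contains $-B(S_tu_0,\mathcal R_N)$. When you only know $\mathcal R_N$ in $H^s$, this term lies only in $H^{s-1}$.
\item Estimate \eqref{asy:heat-bound} with $\theta=1$, applied to $H^{s-1}$ forcing, returns you to $H^s$, not $H^{s+1}$.
\item Passing from $H^{s-1}$ to $H^{s+1}$ in one Duhamel step needs $\theta=2$, whose kernel $(\nu(t-r))^{-1}$ is not integrable at $r=t$.
\item You cannot place the forcing in $H^s$ without already having a decaying $H^{s+1}$ bound for $\mathcal R_N$, which is what you are trying to prove.
\end{itemize}

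The paper closes this with a two-stage bootstrap with $\delta=(4\nu)^{-1}$. It uses that $v\mapsto B(S_tu_0,v)+B(v,U_N)$ maps $H^\sigma\to H^{\sigma-1}$ uniformly for $\sigma=s$ and $\sigma=s+1/2$, and that $F_N$ obeys the same bound.

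\begin{itemize}
\item Variation of constants on $[t-2\delta,t]$ with $\sigma=s$ gives an $H^{s+1/2}$ bound on $[t-\delta,t]$.
\item Variation of constants on $[t-\delta,t]$ with $\sigma=s+1/2$ then gives the $H^{s+1}$ bound at $t$.
\item Each stage has the integrable kernel $(\nu(t-r))^{-3/4}$. The exponential weight changes only by a bounded factor on these fixed-length intervals.
\end{itemize}

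An alternative that fits your $\theta=1/2$ induction is interpolation. Lemma~\ref{asy:small-flow}(ii) and the finite Fourier support of $U_N$ give $\|\mathcal R_N(t)\|_m\le C_{N,m}$ for $t\ge\nu^{-1}$. Interpolating the $H^s$ decay at rate $N+1/2$ against this with $m\ge s+4N+2$ gives $H^{s+1}$ decay at rate at least $N+1/4$.

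In either version, establish the $H^{s+1}$ bound on all of $B_{4r_s}(X_{\C})$, using the holomorphy into $H^{s+1}$ from Lemma~\ref{asy:small-flow}(ii). Only then does your Cauchy estimate give the derivative bound on $B_{2r_s}(X_{\C})$.
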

\begin{proof}
For real initial data, the expansion and analytic factors follow from
\cite[Theorems~4--5]{FoiasSaut1987}; (iii) follows from the zero
solution and \cite[Section~6, Proposition~2]{FoiasSaut1987}.
We verify the extensions and uniform remainder estimates on the
fixed complex ball, then obtain the derivative bounds by Cauchy's
estimate. Uniqueness identifies the constructed factors with the
real Foias--Saut factors, so (iii) extends by complexification.

\emph{Step 1.} We apply the classical extraction scheme
\cite[Section~4]{FoiasSaut1987}, in the form of
Lemma~\ref{asy:linear-extraction} with $\theta=1/4$, to prove
bounded holomorphy of the factors on $B_{4r_s}(X_{\C})$ and
\begin{equation}\label{asy:remainder-uniform}
 \sup_{u_0\in B_{4r_s}(X_{\C})}\|\mathcal R_N(t,u_0)\|_s
 \le C_Ne^{-\nu(N+1/4)t},\qquad t\ge0.
\end{equation}
For $N=1$, \eqref{asy:bilinear} and \eqref{asy:complex-decay} give
\[
 \sup_{u_0\in B_{4r_s}(X_{\C})}
 \|B(S_tu_0,S_tu_0)\|_{s-1}\le Ce^{-2\nu t}.
\]
Lemma~\ref{asy:linear-extraction}, with $m=1$, $p=0$,
$g(t)=-B(S_tu_0,S_tu_0)$, and $v_0=u_0$, yields
$q_1=W_1\in P_1X_{\C}$ and \eqref{asy:remainder-uniform}.
The uniformly convergent integral in \eqref{asy:resonant-polynomial}
defines a bounded holomorphic map $W_1$.

Assume the assertion at order $N$. Write
$q_j(t)=q_j(t;W_{\le j}(u_0))$ and
$U_N(t,u_0)=\sum_{j=1}^N e^{-\nu jt}q_j(t)$.
Equations \eqref{asy:equation} and \eqref{asy:recursion} give
\begin{equation}\label{asy:remainder-equation}
 \mathcal R_N'+\nu A\mathcal R_N
 =-B(S_tu_0,\mathcal R_N)-B(\mathcal R_N,U_N)+F_N,
\end{equation}
where
\begin{equation}\label{asy:remainder-forcing}
 F_N(t,u_0)=-\sum_{\substack{1\le i,j\le N\\i+j>N}}
 e^{-\nu(i+j)t}B(q_i(t),q_j(t)).
\end{equation}
To extract the next coefficient, put
\begin{align}
 p_{N+1}(t,u_0)&=-\sum_{i=1}^{N}B(q_i(t),q_{N+1-i}(t)),\notag\\
 g_{N+1}(t,u_0)&=-B(S_tu_0,\mathcal R_N)-B(\mathcal R_N,U_N)
                 +F_N-e^{-\nu(N+1)t}p_{N+1}.
 \label{asy:next-extraction-data}
\end{align}
Then \eqref{asy:remainder-equation} has the form required by
Lemma~\ref{asy:linear-extraction} with $m=N+1$.
By \eqref{asy:complex-decay} and \eqref{asy:remainder-uniform},
$\|S_tu_0\|_s+\|U_N(t,u_0)\|_s\le C_Ne^{-\nu t}$.
The bilinear estimate controls the first two terms of $g_{N+1}$;
all remaining terms have total index at least $N+2$, so their
exponential decay absorbs the polynomial factors. Thus
\begin{equation}\label{asy:next-forcing-bound}
 \sup_{u_0\in B_{4r_s}(X_{\C})}\|g_{N+1}(t,u_0)\|_{s-1}
 \le C_Ne^{-\nu(N+1+1/4)t}.
\end{equation}
The induction hypothesis bounds the coefficients of $p_{N+1}$
in a fixed finite Fourier subspace and bounds $\mathcal R_N(0,u_0)$.
Estimate~\eqref{asy:remainder-uniform} also supplies the admissibility
condition for $m=N+1$. Lemma~\ref{asy:linear-extraction} therefore yields
\[
 \mathcal R_N=e^{-\nu(N+1)t}q_{N+1}+\mathcal R_{N+1},\qquad
 W_{N+1}(u_0)=P_{N+1}q_{N+1}(0),
\]
a uniform bound for $W_{N+1}$, and
\eqref{asy:remainder-uniform} at order $N+1$.
Uniqueness of the polynomial solutions
\cite[Section~2.2, (2.13)--(2.16)]{FoiasHoangOlsonZiane2009}
identifies this polynomial with
$q_{N+1}(t;W_{\le N+1}(u_0))$.
The data are holomorphic in $u_0$, and
\eqref{asy:next-forcing-bound} makes the integral in
\eqref{asy:resonant-polynomial} uniformly convergent, so $W_{N+1}$
is holomorphic. Only the constants depend on the induction order;
the complex ball remains unchanged. This proves (i) and
\eqref{asy:remainder-uniform} on that ball.

\emph{Step 2.} We apply standard semigroup smoothing
\cite[Section~3.5]{Henry1981} to \eqref{asy:remainder-equation},
keeping track of the decay rate $\beta_N=\nu(N+1/4)$.
The finite Fourier support and bounded coefficients of $q_1,\ldots,q_N$
give uniform bounds for $U_N$ in every fixed Sobolev norm.
For $\sigma=s,s+1/2$, the Sobolev multiplier estimate
\cite{RunstSickel,TaylorPDE3} and \eqref{asy:complex-decay} yield
\begin{equation}\label{asy:remainder-coefficients}
 \|B(S_tu_0,v)+B(v,U_N(t,u_0))\|_{\sigma-1}
 \le C_N\|v\|_\sigma.
\end{equation}
Since every term in \eqref{asy:remainder-forcing} has total
index at least $N+1$, we also have
\begin{equation}\label{asy:remainder-forcing-bound}
 \|F_N(t,u_0)\|_{\sigma-1}\le C_Ne^{-\beta_Nt},
 \qquad \sigma=s,s+1/2.
\end{equation}
Both estimates are uniform for $u_0\in B_{4r_s}(X_{\C})$.

For $t\ge\nu^{-1}$, put $\delta=(4\nu)^{-1}$.
Apply the variation-of-constants formula first on $[t-2\delta,t]$
with $\sigma=s$, and then on $[t-\delta,t]$ with $\sigma=s+1/2$.
By \eqref{asy:heat-bound}, each step gains half a derivative with
the integrable kernel $(t-r)^{-3/4}$. Starting from
\eqref{asy:remainder-uniform}, the first step bounds the
$H^{s+1/2}$ norm on $[t-\delta,t]$; the second gives
\begin{equation}\label{asy:remainder-one-bound}
 \sup_{u_0\in B_{4r_s}(X_{\C})}\|\mathcal R_N(t,u_0)\|_{s+1}
 \le C_Ne^{-\beta_Nt},\qquad t\ge\nu^{-1}.
\end{equation}
The decay rate is preserved because
$e^{-\beta_Nr}\le e^{2\beta_N\delta}e^{-\beta_Nt}$ on these
fixed-length intervals, and the extra factor is absorbed into $C_N$.

\emph{Step 3.} The remainder is holomorphic into $H^s$ and,
by Lemma~\ref{asy:small-flow}(ii), into $H^{s+1}$ for $t>0$.
Cauchy's estimate on the complex lines of radius $r_s$ centered
in $B_{2r_s}(X_{\C})$ gives
\[
 \sup_{u_0\in B_{2r_s}(X_{\C})}
 \|D_{u_0}\mathcal R_N(t,u_0)\|_{\mathcal L(X_{\C},H^q_{\sigma,\C})}
 \le r_s^{-1}\sup_{v_0\in B_{4r_s}(X_{\C})}
                    \|\mathcal R_N(t,v_0)\|_q,
 \qquad q=s,s+1.
\]
Using \eqref{asy:remainder-uniform} for $q=s$, $t\ge0$, and
\eqref{asy:remainder-one-bound} for $q=s+1$, $t\ge\nu^{-1}$,
we obtain (ii) after enlarging $C_N$.
\end{proof}

For $L\ge1$, we restrict $W_{\le L}=\sum_{n\le L}W_n$ to
$B_{2r_s}(X)$, with values in $P_{\le L}X$.
For $\xi\in P_{\le L}X$, write $\xi_j=P_j\xi$ and
$\xi_{\le n}=(\xi_1,\ldots,\xi_n)$, and set
$p_n(\xi)=q_n(0;\xi_{\le n})$ for $n\le L$.
Thus $p_n$ depends only on the first $n$ components of $\xi$
and satisfies $P_np_n(\xi)=\xi_n$.
We consider the triangular polynomial system
\begin{equation}\label{asy:normal-ODE}
 \dot\xi_n=-\nu n\xi_n
       -P_n\sum_{i+j=n}B(p_i(\xi),p_j(\xi)),\qquad n\le L.
\end{equation}

The next proposition follows from the Foias--Saut normalization
\cite[Theorems~5--6]{FoiasSaut1987}, its periodic formulation
\cite[Section~2.2]{FoiasHoangOlsonZiane2009}, and the compact
embedding $H^s_\sigma\hookrightarrow H^1_\sigma$.

\nopagebreak[4]
\begin{proposition}\label{asy:finite-flow}
The following statements hold.
\begin{enumerate}
\renewcommand{\theenumi}{\roman{enumi}}
\renewcommand{\labelenumi}{(\theenumi)}
\item For every $L\ge1$, system~\eqref{asy:normal-ODE}
generates a global flow
\[
 \Phi_L^t:P_{\le L}X\to P_{\le L}X,\qquad t\in\mathbb R,
\]
and
\begin{equation}\label{asy:covariance}
 W_{\le L}(S_tu)=\Phi_L^t(W_{\le L}(u)),
 \qquad u\in B_{2r_s}(X),\quad t\ge0.
\end{equation}

\item There exists $\delta_s>0$, independent of $L$, such that
each $W_{\le L}$ admits a real analytic extension
\[
 \widetilde W_{\le L}:
 B_{\delta_s}(H^1_\sigma)\to P_{\le L}X,
\]
with
\[
 \widetilde W_{\le L}(u)=W_{\le L}(u),
 \qquad u\in B_{\delta_s}(H^1_\sigma)\cap B_{2r_s}(X).
\]

\item For every $L\ge1$ and every sequence
$u_k,u\in B_{\delta_s}(H^1_\sigma)\cap B_{2r_s}(X)$,
\[
 u_k\rightharpoonup u\quad\text{in }H^s_\sigma
 \quad\Longrightarrow\quad
 \|W_{\le L}(u_k)-W_{\le L}(u)\|_s\to0.
\]
\end{enumerate}
\end{proposition}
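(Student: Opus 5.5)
The three parts follow from the Foias--Saut normalization in $H^1_\sigma$, the uniform $H^s$ bounds of Proposition~\ref{asy:expansion}, and finite-dimensionality of $P_{\le L}X$. I will prove (i) first, then (ii), and finally deduce (iii) from (ii).

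For (i), system~\eqref{asy:normal-ODE} is lower triangular. The $n$th equation is linear in $\xi_n$ with coefficient $-\nu n$, and its forcing $-P_n\sum_{i+j=n}B(p_i(\xi),p_j(\xi))$ depends only on $\xi_{\le n-1}$. This is because $i,j\ge1$ and $q_0=0$, so no index $n$ appears on the right. Solving inductively in $n$ by variation of constants gives solutions that are global in both time directions. Each component is a polynomial-exponential expression in $t$ and a polynomial in the initial data, so $\Phi_L^t$ is a global polynomial flow.

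For the covariance identity~\eqref{asy:covariance}, I use the semigroup property and uniqueness of the expansion~\eqref{asy:expansion-eq}. For $u\in B_{2r_s}(X)$ and $\tau\ge0$, write $S_{t+\tau}u=S_t(S_\tau u)$ and expand both sides. The left side equals $\sum_n e^{-\nu n(t+\tau)}q_n(t+\tau;W_{\le n}(u))$ plus a remainder. Regrouping this in $t$ with time-shifted polynomials, and applying Lemma~\ref{asy:linear-extraction}, shows $W_n(S_\tau u)=e^{-\nu n\tau}P_nq_n(\tau;W_{\le n}(u))$. Uniqueness holds because exponentially decaying polynomials vanish.

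It remains to check that $\tau\mapsto e^{-\nu n\tau}P_nq_n(\tau;\xi_{\le n})$ solves~\eqref{asy:normal-ODE}. Project the recursion~\eqref{asy:recursion} onto $P_n$. The term $\nu(A-n)$ vanishes there, leaving $\frac{d}{d\tau}P_nq_n=-P_n\sum B(q_i,q_{n-i})$. Let $\eta(\tau)$ be the $P_{\le L}$ component of the normalization of $S_\tau u$. The self-similarity of the $q_n$ under time shift gives $e^{-\nu n\tau}q_n(\tau;\xi_{\le n})=q_n(0;\eta_{\le n}(\tau))=p_n(\eta(\tau))$. This is exactly the identity in \cite[Theorem~6]{FoiasSaut1987}, and differentiating it yields~\eqref{asy:normal-ODE}. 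This is largely bookkeeping, citing \cite[Theorems~5--6]{FoiasSaut1987}.

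For (ii), the Foias--Saut normalization in the periodic setting is analytic on the open set $\mathcal R\subset H^1_\sigma$. I rerun Step~1 of Proposition~\ref{asy:expansion} with initial data in a small $H^1$ ball. Lemma~\ref{asy:small-flow}(iii) shows $S_{1/\nu}$ maps $B_{\epsilon_sr_s}(H^1_{\sigma,\C})$ holomorphically into $B_{r_s}(X_\C)$. I then define
\[
\widetilde W_{\le L}(u)=\Phi_L^{-1/\nu}\bigl(W_{\le L}(S_{1/\nu}u)\bigr).
\]
This is a composition of holomorphic maps, since $\Phi_L^{-1/\nu}$ is polynomial, so it is real analytic on $B_{\delta_s}(H^1_\sigma)$ with $\delta_s=\epsilon_sr_s$, which is independent of $L$. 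On the overlap with $B_{2r_s}(X)$, covariance~\eqref{asy:covariance} with $t=1/\nu$ and invertibility of $\Phi_L^{1/\nu}$ give $\widetilde W_{\le L}=W_{\le L}$.

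The main subtlety in (ii) is that the $H^1$ ball must lie in the domain where $S_{1/\nu}$ is controlled, uniformly in $L$. Lemma~\ref{asy:small-flow}(iii) settles this, because $L$ enters only through the finite-dimensional flow $\Phi_L$.

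For (iii), suppose $u_k\rightharpoonup u$ in $H^s_\sigma$. The compact embedding $H^s_\sigma\hookrightarrow H^1_\sigma$ then gives $u_k\to u$ in $H^1_\sigma$. By (ii), $\widetilde W_{\le L}$ is continuous in the $H^1$ topology, and it agrees with $W_{\le L}$ on these points. All norms on the finite-dimensional space $P_{\le L}X$ are equivalent, so $\|W_{\le L}(u_k)-W_{\le L}(u)\|_s\to0$.

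I expect the main obstacle to be the identification in (i) that the time-shifted polynomials reproduce $p_n(\eta(\tau))$. Here I would rely directly on \cite[Theorem~6]{FoiasSaut1987} rather than reprove the algebra.
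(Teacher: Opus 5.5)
Your proposal is correct. Parts (i) and (iii) follow the paper. The paper simply cites the explicit formula $(\Phi_L^t\xi)_n=e^{-\nu nt}P_nq_n(t;\xi_{\le n})$ and \cite[Theorem~6]{FoiasSaut1987}, and you supply the bookkeeping. The time-shift identity you attribute to Theorem~6 does not need that citation: $t\mapsto e^{-\nu n\tau}q_n(t+\tau;\xi_{\le n})$ solves the same recursion \eqref{asy:recursion} with normalization $\eta_n(\tau)$, so uniqueness of the polynomial solutions gives it.

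In (ii) you take a genuinely different route. The paper picks $\delta_s$ with $B_{\delta_s}(H^1_\sigma)\subset\mathcal R$ and imports the analytic $H^1$ normalization of \cite[Theorem~5]{FoiasSaut1987}. It then identifies that normalization with the $H^s$ factors by uniqueness of asymptotic coefficients. You instead set $\widetilde W_{\le L}=\Phi_L^{-1/\nu}\circ W_{\le L}\circ S_{1/\nu}$. This combines Lemma~\ref{asy:small-flow}(iii), which the paper's proof does not use, with the bounded holomorphic factors of Proposition~\ref{asy:expansion} and the polynomial inverse flow from (i). Agreement on the overlap is then covariance at $t=1/\nu$.

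What your route buys:
\begin{itemize}
\item the argument stays inside the paper;
\item $\delta_s=\epsilon_sr_s$ is visibly independent of $L$;
\item you get holomorphy on a complex $H^1$ ball;
\item you sidestep matching Foias--Saut's $H^1$ coefficients with the $H^s$ ones.
\end{itemize}
The cost is that (ii) now depends logically on (i). It also relies on the routine uniqueness fact that the $H^1$ solution map agrees with $S_{1/\nu}$ on $H^s$ data. Your sentence about rerunning Step~1 of Proposition~\ref{asy:expansion} is not needed.
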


\begin{proof}
For (i), \cite[Section~2.2, (2.17) and the formula following it]
{FoiasHoangOlsonZiane2009} gives, after restoring $\nu$,
\begin{equation}\label{asy:finite-flow-formula}
 (\Phi_L^t\xi)_n=e^{-\nu nt}P_nq_n(t;\xi_{\le n}),
 \qquad n\le L,\quad t\in\mathbb R.
\end{equation}
This formula defines solutions for all real times, and uniqueness
gives the flow property. The covariance
\eqref{asy:covariance} is the finite-dimensional projection of
\cite[Theorem~6]{FoiasSaut1987}.

For (ii), choose $\delta_s>0$ such that
$B_{\delta_s}(H^1_\sigma)\subset\mathcal R$.
The analytic normalization of
\cite[Theorem~5]{FoiasSaut1987} gives
\begin{equation}\label{asy:H1-extension}
 \widetilde W_{\le L}(u)=\sum_{n\le L}W_n(u),
 \qquad u\in B_{\delta_s}(H^1_\sigma).
\end{equation}
The domain is independent of $L$. Uniqueness of the asymptotic
coefficients gives agreement with $W_{\le L}$ on the common domain.

We prove (iii). If $u_k\rightharpoonup u$ in $H^s_\sigma$,
the compact embedding $H^s_\sigma\hookrightarrow H^1_\sigma$
gives $u_k\to u$ in $H^1_\sigma$; see \cite{RunstSickel}.
The continuity of $\widetilde W_{\le L}$ and its agreement
with $W_{\le L}$ therefore imply
$\|W_{\le L}(u_k)-W_{\le L}(u)\|_s\to0$.
\end{proof}

\subsection{Level sets of the asymptotic factors}

We prove that the finite asymptotic factors are submersions
on a common neighborhood and that their differentials separate
tangent vectors. Their level sets form nested invariant
foliations, whose tangent and normal spaces provide the basis
for the minimum-norm projections and lower-triangular coordinates
constructed in Section~\ref{geo:section}.

For every $L$, set
\begin{equation}\label{asy:flag-spaces}
 F_{>L}(u)=\ker DW_{\le L}(u),\qquad E_{\le L}(u)=F_{>L}(u)^\perp.
\end{equation}
For fixed $L$, we say that
$E_{\le L}(S_tu)$ converges to $P_{\le L}X$ in the $H^{s+2}$ graph topology if,
for all sufficiently large $t$, there is a linear map
$K_L(t):P_{\le L}X\to P_{>L}H^{s+2}_\sigma$ such that
\begin{equation}\label{asy:normal-graph-definition}
 E_{\le L}(S_tu)=\{h+K_L(t)h:h\in P_{\le L}X\},\qquad
 \|K_L(t)\|_{\mathcal L(P_{\le L}X,P_{>L}H^{s+2}_\sigma)}\to0\quad(t\to\infty).
\end{equation}

\vskip0.05in
\Needspace{6\baselineskip}
The next proposition collects consequences of the Foias--Saut
normalization \cite[Theorems~5--6]{FoiasSaut1987} and establishes
the geometric properties of its finite factors needed below.

\nopagebreak[4]
\begin{proposition}\label{asy:factor-flags}
On one common ball $B_r(\X)$, with $r>0$ independent of $L$, the following assertions hold.
\begin{enumerate}
\renewcommand{\theenumi}{\roman{enumi}}
\renewcommand{\labelenumi}{(\theenumi)}
\item Every $W_{\le L}$ is a submersion, that is,
$DW_{\le L}(u):X\to P_{\le L}X$ is onto for every $u\in B_r(X)$. The spaces in
\eqref{asy:flag-spaces} form analytic bundles of codimension and
dimension $\dim P_{\le L}X$, respectively.
\item The families $\{W_n\}_{n\ge1}$ and $\{DW_n(u)\}_{n\ge1}$
satisfy
\begin{equation}\label{asy:completeness}
\begin{aligned}
 {}[\,W_n(u)=W_n(\widetilde u)\ \text{for all }n\,]
 &\ \Longrightarrow\ u=\widetilde u,\\
 {}[\,DW_n(u)h=0\ \text{for all }n\,]
 &\ \Longrightarrow\ h=0,\qquad h\in X.
\end{aligned}
\end{equation}
Consequently,
\[
 \bigcap_{L\ge1}F_{>L}(u)=\{0\},
 \qquad \overline{\bigcup_{L\ge1}E_{\le L}(u)}=X.
\]
\item The bundles satisfy
\begin{equation}\label{asy:normal-covariance}
 DS_t(u)^*E_{\le L}(S_tu)=E_{\le L}(u),\qquad
 DS_t(u)F_{>L}(u)\subset F_{>L}(S_tu).
\end{equation}
For each fixed $L$, $E_{\le L}(u)\subset H^{2s-1}_\sigma$,
and a basis can be chosen locally analytically with values in $H^{2s-1}_\sigma$. In particular,
\begin{equation}\label{asy:normal-terminal}
 E_{\le L}(S_tu)\longrightarrow P_{\le L}X\quad\text{in the }H^{s+2}\text{ graph topology as }t\to\infty.
\end{equation}
\item If $u\ne \widetilde u$ and $W_{\le L}(u)=W_{\le L}(\widetilde u)$, there are an eigenvalue $n>L$ and $h\in P_nX\setminus\{0\}$ such that
\begin{equation}\label{asy:leaf-asymptotic}
 e^{\nu nt}(S_t\widetilde u-S_tu)\longrightarrow h\quad\text{in }H^{s+1}_\sigma\quad\text{as }t\to\infty.
\end{equation}
If $0\ne h_0\in F_{>L}(u)$, then for some eigenvalue $n>L$ and some $h\in P_nX\setminus\{0\}$,
\begin{equation}\label{asy:tangent-asymptotic}
 e^{\nu nt}DS_t(u)h_0\longrightarrow h\quad\text{in }H^{s+1}_\sigma\quad\text{as }t\to\infty.
\end{equation}
\end{enumerate}
\end{proposition}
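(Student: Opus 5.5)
The plan is to derive all four assertions from the covariance identity \eqref{asy:covariance}, the expansion \eqref{asy:expansion-eq} with the uniform remainder bounds \eqref{asy:remainder-C1}--\eqref{asy:remainder-C1-smoothing}, and the normalization $DW_n(0)=P_n$ in \eqref{asy:derivative-at-zero}.

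\textbf{Submersion, (i).} Differentiating \eqref{asy:covariance} gives
\[
DW_{\le L}(S_tu)\,DS_t(u)=D\Phi_L^t(W_{\le L}(u))\,DW_{\le L}(u).
\]
Since $\Phi_L^t$ is a global flow on $P_{\le L}X$, the factor $D\Phi_L^t$ is invertible. For large $t$ we have $S_tu\to0$, and $DW_{\le L}(S_tu)$ is close to $P_{\le L}$ by Cauchy estimates on the bounded holomorphic $W_n$. Hence $DW_{\le L}(S_tu)$ is onto. By Lemma~\ref{asy:variational}, $DS_t(u)$ has dense range, so the image of the left-hand side is dense, and being finite-dimensional it is all of $P_{\le L}X$. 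Invertibility of $D\Phi_L^t$ then transfers surjectivity to $DW_{\le L}(u)$.

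This argument uses time rather than smallness, so it gives a radius independent of $L$: any $r\le 2r_s$ on which the semiflow and the factors are defined will do. Analyticity of the bundles follows from analyticity of $W_{\le L}$ together with constant rank; for $E_{\le L}$, the spaces are spanned by the gradients $DW_{\le L}(u)^*e_i$.

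\textbf{Covariance and regularity, (iii).} The same differentiated identity gives $DS_tF_{>L}(u)\subset F_{>L}(S_tu)$. Taking adjoints gives
\[
DS_t(u)^*\,\mathrm{ran}\,DW_{\le L}(S_tu)^*=\mathrm{ran}\,DW_{\le L}(u)^*.
\]
For regularity, write
\[
W_{\le L}(u)=e^{\nu Lt}P_{\le L}\bigl(\cdots\bigr)
\]
by extracting the coefficients from $S_tu$ via \eqref{asy:resonant-polynomial}. The derivative is then a sum of $P_{\le L}$ composed with $DS_t(u)$ and integrals of $P_{\le L}DB$-type terms. The gradients become $DS_t(u)^*$ applied to smooth Fourier-finite vectors, plus integral terms of the form $U^*\mathcal N^*$. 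The adjoint evolution \eqref{asy:reverse-adjoint} smooths, and the $J_s$-adjoint costs $2s$ derivatives relative to $L^2$, which is where $H^{2s-1}$ comes from.

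The graph convergence \eqref{asy:normal-terminal} follows from $DW_{\le L}(S_tu)\to P_{\le L}$ in operator norm. The $H^{s+2}$ smallness of $K_L$ would be obtained from Cauchy estimates in the smoother topology via Lemma~\ref{asy:small-flow}(ii). I expect the main obstacle to be making this regularity and graph convergence quantitative in $H^{s+2}$.

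\textbf{Completeness and leaf asymptotics, (ii) and (iv).} Suppose $W_n(u)=W_n(\widetilde u)$ for all $n\le N$. By \eqref{asy:expansion-eq}, $S_tu-S_t\widetilde u=\mathcal R_N-\widetilde{\mathcal R}_N=O(e^{-\nu(N+1/4)t})$ for every $N$. The difference $v=S_tu-S_t\widetilde u$ solves \eqref{asy:quotient-equation} with coefficients in $L^2$ by \eqref{asy:L2-coefficients}. Lemma~\ref{asy:nonvanishing}(ii) gives an exponential lower bound unless $v\equiv0$, which forces $u=\widetilde u$. The same argument applied to $DS_t(u)h$, which solves the variational equation, gives the statement for $DW_n$.

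The density of $\bigcup E_{\le L}$ is dual to $\bigcap F_{>L}=\{0\}$. For (iv), let $n>L$ be the first index with $W_n(u)\ne W_n(\widetilde u)$; such an $n$ exists by completeness. The leading term of the difference is
\[
e^{-\nu nt}\bigl(q_n(t;W_{\le n}(\widetilde u))-q_n(t;W_{\le n}(u))\bigr),
\]
whose polynomial part is constant in $P_nX$, because the lower coefficients agree and the recursion \eqref{asy:recursion} at level $n$ involves only lower $q_i$. Its limit is $h=W_n(\widetilde u)-W_n(u)$. The remainder is controlled in $H^{s+1}$ by \eqref{asy:remainder-C1-smoothing}, and convergence in $H^{s+1}$ uses the finite Fourier support of the $q_i$. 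The tangent version \eqref{asy:tangent-asymptotic} is identical, with the derivative remainder bound in place of the remainder bound.
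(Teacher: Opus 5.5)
\textbf{Where the proposal matches the paper.} Your arguments for (i), the covariance identities in (iii), the tangent half of (ii), and (iv) are essentially the paper's. For separation of points you use a different route. You play the lower bound of Lemma~\ref{asy:nonvanishing}(ii) for $S_tu-S_t\widetilde u$ against the decay $O(e^{-\nu(N+1/4)t})$, which holds for every $N$. The paper instead cites the injectivity of the Foias--Saut normalization. Your route is valid and more self-contained.

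\textbf{The gap: regularity in (iii).} Two claims in (iii) are not established: the inclusion $E_{\le L}(u)\subset H^{2s-1}_\sigma$ with a locally analytic $H^{2s-1}$-valued basis, and the $H^{s+2}$ graph convergence \eqref{asy:normal-terminal}.

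Your coefficient-extraction sketch is not carried out. To work, it would need adjoint bounds for $DS_t$ and $D_u\mathcal R_N$ with values in $H^{2s-1}_\sigma$ and decay beating the weights $e^{\nu nr}$, inductively for $n\le L$, together with analytic dependence. Proposition~\ref{asy:expansion} supplies only operator norms on $X$ and into $H^{s+1}$, so none of this comes for free.

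For the graph convergence, operator-norm convergence $DW_{\le L}(S_tu)\to P_{\le L}$ on $X$ gives only $H^s$-graph convergence. Forward smoothing into $H^m$ from Lemma~\ref{asy:small-flow}(ii) points the wrong way. In the $H^s$ pairing, $\operatorname{Ran}DW_{\le L}(v)^*\subset H^{s+2}_\sigma$ means that $DW_{\le L}(v)$ extends boundedly to the \emph{rougher} space $H^{s-2}_\sigma$.

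\textbf{The missing idea.} What is needed is exactly such an extension. By Proposition~\ref{asy:finite-flow}(ii), or by Lemma~\ref{asy:small-flow}(iii) combined with covariance, $W_{\le L}$ is real analytic on a ball of $H^1_\sigma$ whose radius is independent of $L$. Riesz representation in $H^1$ then gives $DW_{\le L}(u)^*e_j=(I+A)^{1-s}g_j(u)$, with $g_j(u)\in H^1_\sigma$ depending analytically on $u$. This yields the $H^{2s-1}$ regularity and the analytic basis at once.

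For the graph convergence, use $S_tu\to0$ in $H^1$ and $DW_{\le L}(0)^*=P_{\le L}$. Then $R_L(t):=DW_{\le L}(S_tu)^*\to I$ in $\mathcal L(P_{\le L}X,H^{2s-1}_\sigma)$. Since $2s-1>s+2$, the graph map $K_L(t)=P_{>L}R_L(t)[P_{\le L}R_L(t)]^{-1}$ tends to zero in $\mathcal L(P_{\le L}X,P_{>L}H^{s+2}_\sigma)$.

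This route also requires $B_r(X)$ to lie inside that $H^1$ ball. So your choice ``any $r\le2r_s$'' must be reduced accordingly.
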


\begin{proof}
Separation of points in (ii) follows from the injectivity of the
normalization \cite[Theorem~5]{FoiasSaut1987} and its identification
with our factors. We prove the remaining assertions using the
remainder estimates of Proposition~\ref{asy:expansion}, the covariance
in Proposition~\ref{asy:finite-flow}, and the variational evolution
of Lemma~\ref{asy:variational}.

We choose $0<r<\min\{2r_s,\delta_s\}$, with $\delta_s$ from
Proposition~\ref{asy:finite-flow}(ii). Then
$B_r(X)\subset B_{\delta_s}(H^1_\sigma)$, and
\eqref{asy:energy-dissipation} makes $B_r(X)$ positively invariant.
We use this ball for every $L$.

We first prove (i). Fix $L$.
By \eqref{asy:derivative-at-zero} and continuity,
$DW_{\le L}$ is onto on a neighborhood of zero that may depend
on $L$. For any $u\in B_r(X)$, use \eqref{asy:decay} to choose
$T>0$ so that $S_Tu$ belongs to this neighborhood.
Differentiating
\eqref{asy:covariance}, we obtain
\begin{equation}\label{asy:differentiated-covariance}
 DW_{\le L}(S_tu)DS_t(u)
 =D\Phi_L^t(W_{\le L}(u))DW_{\le L}(u),\qquad t\ge0.
\end{equation}
By Lemma~\ref{asy:variational}, $DS_T(u)$ has dense range.
The left-hand side of \eqref{asy:differentiated-covariance}
at $t=T$ therefore has dense range in the finite-dimensional
space $P_{\le L}X$, and hence is onto.
Since $D\Phi_L^T$ is invertible by
Proposition~\ref{asy:finite-flow}(i), we conclude that $DW_{\le L}(u)$
is onto. The time $T$ may depend on $L$ and $u$; the ball does not.

Let $\mathcal P_{\le L}(u):X\to E_{\le L}(u)$ denote the orthogonal
projection with respect to the $H^s$ inner product. Since
$DW_{\le L}(u)$ is onto, we have
\begin{equation}\label{asy:normal-projection}
 \mathcal P_{\le L}(u)=DW_{\le L}(u)^*
 [DW_{\le L}(u)DW_{\le L}(u)^*]^{-1}DW_{\le L}(u).
\end{equation}
The standard projection formula \cite[Chapter~5]{Brezis2011}
depends real analytically on $u$. Its range and kernel therefore
form analytic bundles, with
$\dim E_{\le L}(u)=\operatorname{codim}F_{>L}(u)=\dim P_{\le L}X$.
The analytic implicit-function theorem
\cite[Chapter~4]{Deimling1985} identifies the level sets as
analytic submanifolds with tangent spaces $F_{>L}(u)$.
This proves (i).

We now prove separation of tangent vectors in (ii).
If $DW_n(u)h=0$ for every $n$, differentiation of the expansion
gives $DS_t(u)h=D_u\mathcal R_N(t,u)h$ for every $N$.
For $h\ne0$, Lemma~\ref{asy:variational} and
Lemma~\ref{asy:nonvanishing}(ii) give an exponential lower bound
for this nonzero variational solution. Estimate
\eqref{asy:remainder-C1}, with $N$ sufficiently large, contradicts
that bound as $t\to\infty$. Thus $h=0$, completing
\eqref{asy:completeness}.

By \eqref{asy:flag-spaces} and \eqref{asy:completeness},
we have
\[
 \bigcap_{L\ge1}F_{>L}(u)
 =\bigcap_{n\ge1}\ker DW_n(u)=\{0\}.
\]
Since the spaces $E_{\le L}(u)$ increase, the Hilbert-space
orthogonal-complement identity \cite[Chapter~5]{Brezis2011} gives
$\overline{\bigcup_{L\ge1}E_{\le L}(u)}=X$.

For (iii), the covariance identities follow by taking kernels
and adjoint ranges in \eqref{asy:differentiated-covariance}.
Indeed, taking adjoints gives
\begin{equation}\label{asy:adjoint-factor-covariance}
 DS_t(u)^*DW_{\le L}(S_tu)^*
 =DW_{\le L}(u)^*D\Phi_L^t(W_{\le L}(u))^*.
\end{equation}
Since $E_{\le L}(v)=\operatorname{Ran}DW_{\le L}(v)^*$ and
$D\Phi_L^t$ is invertible, these two relations give
\eqref{asy:normal-covariance}.

We prove the asserted Sobolev regularity. Fix $L$ and an
$H^s$-orthonormal basis $e_1,\ldots,e_d$
of $P_{\le L}X$. By Proposition~\ref{asy:finite-flow}(ii)
and the Riesz representation theorem, there are
$g_j(u)\in H^1_\sigma$, depending real analytically on
$u$, such that
\[
 \langle DW_{\le L}(u)h,e_j\rangle_s
 =\langle h,g_j(u)\rangle_1
 =\langle h,(I+A)^{1-s}g_j(u)\rangle_s.
\]
Hence
\begin{equation}\label{asy:normal-riesz}
 DW_{\le L}(u)^*e_j=(I+A)^{1-s}g_j(u)\in H^{2s-1}_\sigma.
\end{equation}
Since $(I+A)^{1-s}:H^1_\sigma\to H^{2s-1}_\sigma$
is bounded and $DW_{\le L}(u)^*$ is injective, these vectors
form a basis of $E_{\le L}(u)$ depending real analytically
on $u$ in $H^{2s-1}_\sigma$.

Set $R_L(t)=DW_{\le L}(S_tu)^*$.
By \eqref{asy:normal-riesz}, \eqref{asy:decay}, and
\eqref{asy:derivative-at-zero},
$R_L(t)\to I|_{P_{\le L}X}$ in $\mathcal L(P_{\le L}X,H^{2s-1}_\sigma)$.
Since $P_{\le L}X$ is finite-dimensional and $2s-1>s+2$, the map
$P_{\le L}R_L(t)$ is invertible for large $t$, and
\begin{equation}\label{asy:normal-graph-map}
 K_L(t)=P_{>L}R_L(t)[P_{\le L}R_L(t)]^{-1}
 \longrightarrow0\quad\text{in }\mathcal L(P_{\le L}X,P_{>L}H^{s+2}_\sigma).
\end{equation}
Its graph is $\operatorname{Ran}R_L(t)=E_{\le L}(S_tu)$, proving
\eqref{asy:normal-terminal}.

Finally, we prove (iv) by extracting the first nonzero factor.
Spectral asymptotics for differences of solutions are classical;
see \cite[proof of Theorem~3]{FoiasSaut1987}.
Here we verify both limits in $H^{s+1}$ using
Proposition~\ref{asy:expansion}.
The polynomial recursion
\cite[Section~2.2, (2.13)--(2.16)]{FoiasHoangOlsonZiane2009} gives
\[
 q_n(t;\xi_{<n},\xi_n)
 =\xi_n+q_n(t;\xi_{<n},0).
\]
Suppose $u\ne \widetilde u$ and $W_{\le L}(u)=W_{\le L}(\widetilde u)$.
By \eqref{asy:completeness}, there is a first index
\[
 n=\min\{j:W_j(\widetilde u)\ne W_j(u)\}>L,\qquad
 h=W_n(\widetilde u)-W_n(u)\in P_nX\setminus\{0\}.
\]
The expansion coefficients below order $n$ coincide, and their
difference at order $n$ is $h$ by the preceding identity.
Subtracting \eqref{asy:expansion-eq} at order $n$ and applying
\eqref{asy:remainder-C1-smoothing}, we obtain
\[
 \|e^{\nu nt}(S_t\widetilde u-S_tu)-h\|_{s+1}
 \le 2C_ne^{-\nu t/4},\qquad t\ge\nu^{-1}.
\]
This proves \eqref{asy:leaf-asymptotic}.

Similarly, for $0\ne h_0\in F_{>L}(u)$, choose
\[
 n=\min\{j:DW_j(u)h_0\ne0\}>L,\qquad
 h=DW_n(u)h_0\in P_nX\setminus\{0\},
\]
using \eqref{asy:completeness}. The chain rule and the same
identity show that the derivatives of the expansion coefficients
vanish below order $n$ and equal $h$ at order $n$.
Differentiating \eqref{asy:expansion-eq} at that order and using
\eqref{asy:remainder-C1-smoothing}, we obtain
\[
 \|e^{\nu nt}DS_t(u)h_0-h\|_{s+1}
 \le C_ne^{-\nu t/4}\|h_0\|_s,\qquad t\ge\nu^{-1}.
\]
This proves \eqref{asy:tangent-asymptotic} and completes the proof.
\end{proof}

The fibers also have an intrinsic dynamical characterization:
\[
 W_{\le L}(u)=W_{\le L}(\widetilde u)
 \quad\Longleftrightarrow\quad
 e^{\nu Lt}\|S_tu-S_t\widetilde u\|_s\longrightarrow0.
\]
Indeed, for distinct $u,\widetilde u$, the first unequal factor has index $n$,
and the proof of (iv) gives a nonzero leading term at rate $\nu n$.
Thus the limit holds exactly when $n>L$.
The zero fibers $\{u\in B_r(X):W_{\le L}(u)=0\}$ are the local
$H^s$ restrictions of the classical nonlinear spectral manifolds
of Foias and Saut \cite{FoiasSaut1984a,FoiasSaut1984b}; see also
\cite[Corollary~4]{FoiasSaut1987}.

On the common positively invariant ball $B_r(X)$, write $u\sim_L v$
when $W_{\le L}(u)=W_{\le L}(v)$. By the preceding characterization,
this is the asymptotic equivalence relation used in the introduction.
Since $W_{\le L}$ is a submersion, it is an open map and
induces a homeomorphism
\[
 X_L:=B_r(X)/\!\sim_L\ \longrightarrow\ W_{\le L}(B_r(X)),
 \qquad [u]_L\longmapsto W_{\le L}(u),
\]
where $X_L$ carries the quotient topology. Covariance
\eqref{asy:covariance} makes $[u]_L\mapsto[S_tu]_L$ well defined
for $t\ge0$ and identifies this quotient dynamics with the restriction
of $\Phi_L^t$ to the open set $W_{\le L}(B_r(X))\subset P_{\le L}X$.

For use in Section~\ref{geo:section}, let $u,\widetilde u\in B_r(X)$
and set $w(t)=S_t\widetilde u-S_tu$. Subtracting their equations gives
\begin{equation}\label{asy:difference-equation}
 w'=-\nu Aw-B(S_tu,w)-B(w,S_t\widetilde u),
\end{equation}
and Lemma~\ref{asy:products}(ii) gives
\begin{equation}\label{asy:difference-coefficients}
 \|B(S_tu,w)+B(w,S_t\widetilde u)\|_s
 \le C_s\bigl(\|S_tu\|_s\|A^{1/2}w\|_s
                  +\|S_t\widetilde u\|_{s+1}\|w\|_s\bigr).
\end{equation}

\section{Geometry of the factor fibers and Sobolev coordinates}\label{geo:section}

In this section, we use minimum-norm projections and a
parameter-dependent Morse construction to obtain a homeomorphism
$\Psi$ in the strong $H^s$ topology. In these coordinates, the
Navier--Stokes semiflow has a lower-triangular form with autonomous
finite-dimensional prefixes. We also derive the radial equations
used in the subsequent blockwise construction of the conjugacy.

\vskip0.05in
Throughout this section, we work on the common ball supplied by
Section~\ref{asy:section}; we decrease its radius finitely many times, always
independently of the spectral cutoff.

\subsection{Geometry of the level sets}

We control the normal component of chords and the curvature
of the level sets, with constants independent of the cutoff $L$.
These estimates provide the geometric basis for the minimum-norm
projections used to construct lower-triangular coordinates.

For $v\in H^{s+1}_\sigma$ and $f\in H^{s-1}_\sigma$, we use the continuous
extension of the real $H^s$ inner product:
\begin{equation}\label{geo:extended-pairing}
 \langle v,f\rangle_s
 =\operatorname{Re}\sum_{m\ne0}\langle m\rangle^{2s}
       \overline{\widehat v_m}\cdot\widehat f_m,
 \qquad
 |\langle v,f\rangle_s|\le\|v\|_{s+1}\|f\|_{s-1}.
\end{equation}
Since the Leray projection is orthogonal in every Sobolev space,
\eqref{asy:bilinear} and \eqref{geo:extended-pairing} give the mixed estimate
\begin{equation}\label{geo:mixed-pairing}
 |\langle v,B(h,k)\rangle_s|
 \le C_s\|v\|_{s+1}\|h\|_s\|k\|_s,
 \qquad v\in H^{s+1}_\sigma,\quad h,k\in X.
\end{equation}
Thus the pairing remains meaningful even when $B(h,k)$ has only
$H^{s-1}$ regularity.
For $L\ge1$ with $P_{\le L}X\ne\{0\}$, we set
\begin{equation}\label{geo:spectral-endpoints}
 \Lambda_L=\max\sigma(A|_{P_{\le L}\X}),\qquad
 \mu_L=\min\sigma(A|_{P_{>L}\X}).
\end{equation}
Thus $1\le\Lambda_L<\mu_L$.
We use the orthogonal projection $\mathcal P_{\le L}(u):X\to E_{\le L}(u)$
from \eqref{asy:normal-projection} and define the complementary
orthogonal projection onto $F_{>L}(u)$ by
\begin{equation}\label{geo:orthogonal-projections}
 \mathcal P_{>L}(u)=I-\mathcal P_{\le L}(u):X\to F_{>L}(u).
\end{equation}
We use $L_u$ from \eqref{asy:linearized-operators} and set
\begin{equation}\label{geo:rate-constants}
 a=1-C_s(r/\nu)^2,\qquad b=1+C_s(r/\nu)^2,
 \qquad C_0=\exp(C_sr/\nu),
\end{equation} where $r>0$ is independent of $L$ and is chosen so that
$B_{4r}(X)$ lies in the common ball of
Proposition~\ref{asy:factor-flags}.
\vskip0.05in
\Needspace{6\baselineskip}
The next lemma gives decay estimates along the factor fibers,
growth estimates for normal covectors, and smoothing bounds,
uniformly in the spectral cutoff.

\nopagebreak[4]
\begin{lemma}\label{geo:variational} There exists $\delta_{\rm geo}>0$ such that, if $r/\nu\le\delta_{\rm geo}$, then
the following estimates  hold
uniformly for $u_0,\widetilde u_0\in B_{4r}(X)$ and $L$.
\begin{enumerate}
\renewcommand{\theenumi}{\roman{enumi}}
\renewcommand{\labelenumi}{(\theenumi)}
\item For $t\ge0$ and $h\in F_{>L}(u_0)$,
\begin{equation}\label{geo:fast-rate}
 \|DS_t(u_0)h\|_s
 \le C_0e^{-a\nu\mu_Lt}\|h\|_s.
\end{equation}
If $W_{\le L}(\widetilde u_0)=W_{\le L}(u_0)$, then
\begin{equation}\label{geo:leaf-rate}
 \|S_t\widetilde u_0-S_tu_0\|_s
 \le C_0e^{-a\nu\mu_Lt}\|\widetilde u_0-u_0\|_s,
 \qquad t\ge0.
\end{equation}
\item For every $v_0\in E_{\le L}(u_0)$, there exists
$v\in C([0,\infty);H^{s+2}_\sigma)\cap C^1((0,\infty);X)$
satisfying
\begin{equation}\label{geo:normal-equation}
 v'=-L_{S_tu_0}^*v,\qquad v(0)=v_0,\qquad
 v(t)\in E_{\le L}(S_tu_0),
\end{equation}
and, for $t\ge0$,
\begin{align}
 \|A^{1/2}v(t)\|_s^2
 &\le b\Lambda_L\|v(t)\|_s^2,\label{geo:normal-rayleigh}\\
 \|v(t)\|_s
 &\le C_0e^{b\nu\Lambda_Lt}\|v_0\|_s.\label{geo:normal-rate}
\end{align}
\item For $t>0$, the composition $DS_t(u_0)A^{1/2}$,
initially defined on $D_X(A^{1/2})$, has a bounded extension to $X$.
With this convention, we have
\begin{align}
 \|DS_t(u_0)A^{1/2}\|_{\mathcal L(X)}
 &\le C_s(\nu t)^{-1/2}e^{-\nu t/4},\label{geo:negative-smoothing}\\
 \|A^{1/2}\mathcal P_{>L}(u_0)DS_t(u_0)^*\|_{\mathcal L(X)}
 &\le C_s\bigl((\nu t)^{-1/2}+\sqrt{\Lambda_L}\bigr)
 e^{-a\nu\mu_Lt/2}.\label{geo:fast-adjoint-smoothing}
\end{align}
\end{enumerate}
\end{lemma}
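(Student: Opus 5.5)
The plan is to prove all three parts by energy-type estimates on the relevant linear equations. The constants must stay independent of $L$. The key inputs are the spectral characterizations of $F_{>L}$ and $E_{\le L}$: Proposition~\ref{asy:factor-flags}(iv) for the fast directions, and \eqref{asy:normal-terminal} for the normal directions. Perturbative terms are controlled by \eqref{asy:adjoint-bound}, \eqref{asy:L1-coefficients}, and \eqref{asy:L2-coefficients}.

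\emph{Part (i).} Let $w(t)=DS_t(u_0)h$ with $h\in F_{>L}(u_0)$. By \eqref{asy:normal-covariance}, $w(t)\in F_{>L}(S_tu_0)$ for all $t$. We work with the Dirichlet quotient $q(t)=\|A^{1/2}w\|_s^2/\|w\|_s^2$. Differentiating as in Lemma~\ref{asy:nonvanishing} gives
\[
 q'\le \tfrac{2}{\nu}\bigl(\alpha^2q+\beta^2\bigr),\qquad \alpha=C_s\|S_tu_0\|_s,\quad \beta=C_s\|S_tu_0\|_{s+1},
\]
so $\int\alpha^2,\int\beta^2\lesssim r^2/\nu$. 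Proposition~\ref{asy:factor-flags}(iv) gives $e^{\nu nt}w\to h_\infty\in P_nX$ with $n\ge\mu_L$, hence $q(t)\to n\ge\mu_L$. Integrating the quotient inequality backward from $t=\infty$ yields $q(t)\ge a\mu_L$ for all $t$, after absorbing $r^2/\nu^2$ terms with $a=1-C_s(r/\nu)^2$.

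Then
\[
 (\log\|w\|_s)'\le -\nu q+\alpha\sqrt q+\beta\le -a'\nu q+\tfrac{\alpha^2}{\nu}+\beta .
\]
Integrating with \eqref{asy:L1-coefficients} gives \eqref{geo:fast-rate} with $C_0=e^{C_sr/\nu}$.

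The leaf estimate \eqref{geo:leaf-rate} follows by the same argument applied to the difference $w=S_t\widetilde u_0-S_tu_0$. Here we use \eqref{asy:difference-equation}--\eqref{asy:difference-coefficients} and the limit \eqref{asy:leaf-asymptotic}, whose rate is again $n>L$, i.e. $n\ge\mu_L$.

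\emph{Part (ii).} We define $v(t)$ through the covariance \eqref{asy:normal-covariance}: $v(t)=(DS_t(u_0)^*)^{-1}v_0$ on $E_{\le L}$. Since $DS_t(u_0)^*$ maps $E_{\le L}(S_tu_0)$ onto $E_{\le L}(u_0)$, which is finite dimensional, it is an isomorphism there, so $v(t)$ is well defined. Differentiating the identity $\langle v(t),DS_t(u_0)k\rangle_s=\langle v_0,k\rangle_s$ gives the adjoint equation \eqref{geo:normal-equation}. The $H^{s+2}$ regularity comes from the $H^{2s-1}$ analytic basis of Proposition~\ref{asy:factor-flags}(iii).

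For \eqref{geo:normal-rayleigh}, consider the quotient $q(t)$ of $v$. The equation $v'=\nu Av-\mathcal N^*v$ is backward parabolic, so forward in $t$ the quotient inequality reverses sign. By \eqref{asy:normal-terminal}, $q(t)\le\Lambda_L+o(1)$ as $t\to\infty$, and the reversed Gronwall inequality propagates this bound down to $q\le b\Lambda_L$ for all $t$. The growth estimate \eqref{geo:normal-rate} then follows from $(\log\|v\|)'\le\nu q+\alpha\sqrt q+\beta$.

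\emph{Part (iii).} For \eqref{geo:negative-smoothing} we use duality: bounding $DS_t(u_0)A^{1/2}$ on $X$ is equivalent to bounding $A^{1/2}DS_t(u_0)^*$, which is the adjoint evolution \eqref{asy:reverse-adjoint}. Short-time smoothing comes from \eqref{asy:smoothing}, since $A^{1/2}:X\to H^{s-1}$, and long-time decay comes from \eqref{asy:decay}-type bounds for the linearization with rate $\nu/4$. For \eqref{geo:fast-adjoint-smoothing}, we split the time interval at $t/2$, writing $DS_t=DS_{t/2}(S_{t/2}u_0)\,DS_{t/2}(u_0)$. Then:
\begin{itemize}
 \item $\mathcal P_{>L}(u_0)DS_t^*=(DS_t\mathcal P_{>L})^*$ decays at rate $a\nu\mu_L t$ by (i);
 \item $A^{1/2}$ is handled by the smoothing factor $(\nu t)^{-1/2}$;
 \item the commutator of $A^{1/2}$ with $\mathcal P_{\le L}$ is controlled by \eqref{geo:normal-rayleigh}, which produces the $\sqrt{\Lambda_L}$ term.
\end{itemize}

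\emph{Main obstacle.} The hardest step is the uniform-in-$L$ quotient bounds $q\ge a\mu_L$ and $q\le b\Lambda_L$. The Gronwall argument only gives a multiplicative factor $\exp(C\int\alpha^2/\nu)$, so the perturbation must be kept small uniformly in $L$. The derivative-losing term $\alpha\sqrt q$ could spoil this. To handle it, I would first try to use the transport skew-symmetry from \eqref{asy:standardized} to remove the $\alpha^2q$ contribution at leading order. The resulting loss is then only $C(r/\nu)^2\mu_L$ in the exponent, which can be absorbed into the definitions of $a$ and $b$.
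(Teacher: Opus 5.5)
Your outline follows the paper's structure: the Dirichlet quotient with terminal spectral asymptotics and a backward-from-infinity Gronwall for $q\ge a\mu_L$ and for \eqref{geo:normal-rayleigh}, the normal solution defined through covariance, and the split at $t/2$ in (iii). The gap is in the step that turns quotient bounds into norm bounds. You keep the derivative-losing term and write $(\log\|w\|_s)'\le-\nu q+\alpha\sqrt q+\beta$, resp.\ $(\log\|v\|_s)'\le\nu q+\alpha\sqrt q+\beta$. Integrating this directly, with $q\le b\Lambda_L$ in (ii), leaves a prefactor $\exp(C\sqrt{\Lambda_L}\,r/\nu)$. In (i), using that $-\nu q+\alpha\sqrt q$ is decreasing for $q\ge a\mu_L$, it leaves $\exp(C\sqrt{\mu_L}\,r/\nu)$. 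Neither prefactor is uniform in $L$. Splitting instead by Young, $\alpha\sqrt q\le\epsilon\nu q+\alpha^2/(4\epsilon\nu)$, keeps the prefactor uniform, namely $\exp(Cr^2/(\epsilon\nu^2)+Cr/\nu)$, but changes the rates to $(1-\epsilon)a\nu\mu_L$ and $(1+\epsilon)b\nu\Lambda_L$. Your displayed coefficient $\alpha^2/\nu$ forces $a'\le 3/4$, i.e.\ the rate $\tfrac34 a\nu\mu_L$. A prefactor $e^{O(r/\nu)}$ forces $\epsilon\gtrsim r/\nu$, which is a first-order rate deficit. So \eqref{geo:fast-rate}, \eqref{geo:leaf-rate}, \eqref{geo:normal-rate} with $a,b=1\mp C_s(r/\nu)^2$ and $C_0=e^{C_sr/\nu}$ do not follow from your inequality. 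A version with small $\epsilon$ would still serve later uses such as $2a-b\ge1/2$, but it is not the lemma as stated.

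The missing idea is the symmetric energy bound \eqref{geo:symmetric-energy}, $|\langle w,\mathcal N_uw\rangle_s|\le C_s(\|u\|_s+\|u\|_{s+1})\|w\|_s^2$. After conjugating by $J_s$, the transport part is $\langle (u\cdot\nabla)J_sw,J_sw\rangle_0=0$ plus the commutator $[J_s,u\cdot\nabla]w$, which \eqref{asy:commutator-product} controls. The remaining term satisfies $\|B(w,u)\|_s\le C_s\|w\|_s\|u\|_{s+1}$. Since $\langle v,\mathcal N_u^*v\rangle_s=\langle v,\mathcal N_uv\rangle_s$, the bound applies to the adjoint equation, and to the difference equation with coefficients from both trajectories. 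It gives $\frac{d}{dt}\log\|w\|_s=-\nu q+O(\|S_tu_0\|_s+\|S_tu_0\|_{s+1})$ with no $A^{1/2}w$ on the right and an $L^1$ coefficient by \eqref{asy:L1-coefficients}. Hence you get exactly the rates $a\nu\mu_L$, $b\nu\Lambda_L$ and the prefactor $e^{C_sr/\nu}$. The same bound gives $\|DS_t(u_0)\|_{\mathcal L(X)}\le C_se^{-\nu t}$ for the long-time part of \eqref{geo:negative-smoothing}.

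Your ``main obstacle'' paragraph aims the skew-symmetry at the wrong step. In the quotient Gronwall the factor is $\exp(\frac{C}{\nu}\int\alpha^2)=\exp(C(r/\nu)^2)$, already uniform in $L$; it is precisely the source of $a,b=1\mp C_s(r/\nu)^2$. The cancellation is needed in the norm step.

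Smaller omissions:
\begin{itemize}
\item You need $w(t)\neq0$ for all $t$ to define $q$. This comes from injectivity of $DS_t$, resp.\ backward uniqueness via Lemma~\ref{asy:nonvanishing}(i).
\item You need the dense range of $DS_t(u_0)$ to pass from the differentiated pairing to $v'=-L_{S_tu_0}^*v$.
\item For \eqref{geo:fast-adjoint-smoothing}, you need that $DS_{t/2}(u_0)\mathcal P_{>L}(u_0)A^{1/2}$ maps into $F_{>L}(S_{t/2}u_0)$. Only then can (i) be applied at $S_{t/2}u_0$ on the second half-interval.
\end{itemize}
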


\begin{proof}
By \eqref{asy:L2-coefficients} and \eqref{asy:L1-coefficients}, we obtain
\begin{equation}\label{geo:coefficient-integrals}
\begin{aligned}
 \int_t^\infty\bigl(\|S_qu_0\|_s^2+\|S_qu_0\|_{s+1}^2\bigr)\,dq
 &\le C_sr^2/\nu,\\
 \int_0^\infty\bigl(\|S_qu_0\|_s+\|S_qu_0\|_{s+1}\bigr)\,dq
 &\le C_sr/\nu.
\end{aligned}
\end{equation}
By \eqref{asy:adjoint-bound}, we have
\begin{equation}\label{geo:relative-operator}
 \|\mathcal N_{S_tu_0}h\|_s+\|\mathcal N_{S_tu_0}^*h\|_s
 \le C_s\bigl(\|S_tu_0\|_s\|A^{1/2}h\|_s
             +\|S_tu_0\|_{s+1}\|h\|_s\bigr).
\end{equation}

We first prove (i). We consider
$w(t)=DS_t(u_0)h_0$ with $0\ne h_0\in F_{>L}(u_0)$, or
$w(t)=S_t\widetilde u_0-S_tu_0$ with $u_0\ne \widetilde u_0$ and
$W_{\le L}(u_0)=W_{\le L}(\widetilde u_0)$.
By Lemma~\ref{asy:variational} in the first case and
Lemma~\ref{asy:nonvanishing}(i),
\eqref{asy:difference-coefficients}, and
\eqref{geo:coefficient-integrals} in the second, we have $w(t)\ne0$.
For $t>0$, we set
\[
 q(t)=\frac{\|A^{1/2}w(t)\|_s^2}{\|w(t)\|_s^2}.
\]
Applying \eqref{asy:dirichlet} with
\eqref{geo:relative-operator} gives, in the variational case,
\begin{equation}\label{geo:forward-q}
 q'\le\frac{C_s}{\nu}
 \bigl(\|S_tu_0\|_s^2q+\|S_tu_0\|_{s+1}^2\bigr).
\end{equation}
Similarly, using \eqref{asy:difference-equation} and
\eqref{asy:difference-coefficients}, we obtain
\begin{equation}\label{geo:difference-q}
 q'\le\frac{C_s}{\nu}\Bigl[
 (\|S_tu_0\|_s^2+\|S_t\widetilde u_0\|_s^2)q
 +\|S_tu_0\|_{s+1}^2+\|S_t\widetilde u_0\|_{s+1}^2\Bigr].
\end{equation}
By \eqref{asy:tangent-asymptotic} and
\eqref{asy:leaf-asymptotic}, respectively, the leading term of
$w(t)$ is a nonzero eigenvector with eigenvalue $n>L$,
with convergence in $H^{s+1}_\sigma$. Hence
$q(t)\to n\ge\mu_L$ as $t\to\infty$.
We apply Gronwall's inequality to \eqref{geo:forward-q} or
\eqref{geo:difference-q} and use \eqref{geo:coefficient-integrals}
for both initial points to obtain
\[
 q(T)\le e^\delta\bigl(q(t)+\delta\bigr),
 \qquad T\ge t>0,\qquad \delta=K_s(r/\nu)^2,
\]
where $K_s$ is independent of $L$. Letting $T\to\infty$,
we deduce
\begin{equation}\label{geo:fast-rayleigh-lower}
 q(t)\ge e^{-\delta}n-\delta
 \ge(1-2\delta)\mu_L\ge a\mu_L.
\end{equation}
Here we use $e^{-\delta}\ge1-\delta$, $n\ge\mu_L\ge1$,
and choose $C_s\ge2K_s$ in \eqref{geo:rate-constants}.

By the commutator and product estimates in the proof of
Lemma~\ref{asy:products}(ii), we obtain
\begin{equation}\label{geo:symmetric-energy}
 |\langle w,\mathcal N_{S_tu_0}w\rangle_s|
 \le C_s\bigl(\|S_tu_0\|_s+\|S_tu_0\|_{s+1}\bigr)\|w\|_s^2.
\end{equation}
For $w(t)=DS_t(u_0)h_0$, we use
\eqref{asy:variational-equation}, \eqref{geo:fast-rayleigh-lower},
and \eqref{geo:symmetric-energy} to obtain
\[
 \frac{d}{dt}\log\|w(t)\|_s
 =-\nu q(t)+
 \frac{\langle w,\mathcal N_{S_tu_0}w\rangle_s}{\|w\|_s^2}
 \le-a\nu\mu_L+C_s(\|S_tu_0\|_s+\|S_tu_0\|_{s+1}).
\]
We integrate on $[\varepsilon,t]$, use
\eqref{geo:coefficient-integrals}, and let $\varepsilon\downarrow0$
to obtain
\[
 \|DS_t(u_0)h_0\|_s
 \le e^{C_sr/\nu-a\nu\mu_Lt}\|h_0\|_s.
\]
By \eqref{geo:rate-constants}, this proves \eqref{geo:fast-rate}.

For $w(t)=S_t\widetilde u_0-S_tu_0$, the same symmetric-energy estimate
holds with the coefficient in \eqref{geo:symmetric-energy}
replaced by
$C_s\sum_{z\in\{u_0,\widetilde u_0\}}(\|S_tz\|_s+\|S_tz\|_{s+1})$.
Integrating with \eqref{geo:fast-rayleigh-lower} and
\eqref{geo:coefficient-integrals} proves \eqref{geo:leaf-rate}.
The zero initial data and $t=0$ follow by continuity.

We next prove (ii). By Proposition~\ref{asy:factor-flags}(i),
we choose the unique $z_0\in P_{\le L}X$ such that
$DW_{\le L}(u_0)^*z_0=v_0$. Using the invertibility in
Proposition~\ref{asy:finite-flow}(i), we define
\begin{equation}\label{geo:normal-continuation}
 v(t)=DW_{\le L}(S_tu_0)^*
 \bigl[D\Phi_L^t(W_{\le L}(u_0))^{-1}\bigr]^*z_0.
\end{equation}
By \eqref{asy:flag-spaces} and
Proposition~\ref{asy:factor-flags}(iii), we have
$v(0)=v_0$, $v(t)\in E_{\le L}(S_tu_0)$, and
$v\in C([0,\infty);H^{s+2}_\sigma)\cap C^1((0,\infty);X)$.
Using \eqref{asy:differentiated-covariance}, we obtain
\begin{equation}\label{geo:normal-pairing-invariance}
 DS_t(u_0)^*v(t)=v_0.
\end{equation}
We pair \eqref{geo:normal-pairing-invariance} with $h\in X$,
differentiate in $t$, and use \eqref{asy:variational-equation}:
\[
 0=\frac d{dt}\langle v(t),DS_t(u_0)h\rangle_s
 =\langle v'(t)+L_{S_tu_0}^*v(t),DS_t(u_0)h\rangle_s.
\]
By the dense range in Lemma~\ref{asy:variational}, we obtain
$v'=-L_{S_tu_0}^*v$, proving \eqref{geo:normal-equation}.

We may assume $v_0\ne0$; then
\eqref{geo:normal-pairing-invariance} gives $v(t)\ne0$.
We set $q(t)=\|A^{1/2}v(t)\|_s^2/\|v(t)\|_s^2$.
Since $v$ is continuous in $D_X(A)$ and differentiable in $X$,
self-adjointness gives, by a difference quotient,
\begin{equation}\label{geo:quadratic-derivative}
 \frac d{dt}\|A^{1/2}v(t)\|_s^2
 =2\langle Av(t),v'(t)\rangle_s,\qquad t>0.
\end{equation}
Using \eqref{geo:normal-equation},
\eqref{asy:linearized-operators}, and
\eqref{geo:quadratic-derivative}, we compute
\[
 q'=2\nu\frac{\|(A-qI)v\|_s^2}{\|v\|_s^2}
 -2\frac{\langle(A-qI)v,\mathcal N_{S_tu_0}^*v\rangle_s}
          {\|v\|_s^2}
 \ge-\frac{\|\mathcal N_{S_tu_0}^*v\|_s^2}{\nu\|v\|_s^2},
\]
where we apply Young's inequality. By
\eqref{geo:relative-operator}, we deduce
\begin{equation}\label{geo:backward-q}
 q'\ge-\frac{C_s}{\nu}
 \bigl(\|S_tu_0\|_s^2q+\|S_tu_0\|_{s+1}^2\bigr).
\end{equation}
By the graph convergence in \eqref{asy:normal-terminal} and
\eqref{geo:spectral-endpoints}, we have
\[
 \limsup_{t\to\infty}q(t)
 \le\limsup_{t\to\infty}\sup_{0\ne z\in E_{\le L}(S_tu_0)}
       \frac{\|A^{1/2}z\|_s^2}{\|z\|_s^2}
 =\Lambda_L.
\]
We integrate \eqref{geo:backward-q} on $[t,T]$ with an
integrating factor and use \eqref{geo:coefficient-integrals}:
\[
 q(t)\le e^\delta\bigl(q(T)+\delta\bigr),
 \qquad T\ge t>0,\qquad \delta=K_s(r/\nu)^2.
\]
Letting $T\to\infty$, we obtain
\[
 q(t)\le e^\delta(\Lambda_L+\delta)\le b\Lambda_L,
\]
where we use $\Lambda_L\ge1$, take $r/\nu$ sufficiently small,
and enlarge $C_s$ in \eqref{geo:rate-constants} if necessary.
This proves \eqref{geo:normal-rayleigh} for $t>0$.

Since $\langle v,\mathcal N_u^*v\rangle_s
=\langle v,\mathcal N_uv\rangle_s$, equations
\eqref{geo:normal-equation}, \eqref{geo:normal-rayleigh}, and
\eqref{geo:symmetric-energy} give
\[
 \frac d{dt}\log\|v(t)\|_s
 \le b\nu\Lambda_L+C_s(\|S_tu_0\|_s+\|S_tu_0\|_{s+1}).
\]
Integration using \eqref{geo:coefficient-integrals} and
continuity at zero yields \eqref{geo:normal-rate} with
$C_0$ from \eqref{geo:rate-constants}.
We extend \eqref{geo:normal-rayleigh} to $t=0$ by continuity
in $H^{s+2}_\sigma$. For $v_0=0$, we take $v\equiv0$.

Finally, we prove (iii) using the short-time smoothing estimate
\eqref{asy:smoothing} and an energy bound on the full evolution.
For $w(t)=DS_t(u_0)h$, \eqref{geo:symmetric-energy} and $A\ge I$ give
\[
 \frac12\frac d{dt}\|w(t)\|_s^2
 \le\bigl[-\nu+C_s(\|S_tu_0\|_s+\|S_tu_0\|_{s+1})\bigr]
       \|w(t)\|_s^2.
\]
Integrating with \eqref{geo:coefficient-integrals} gives
\begin{equation}\label{geo:full-derivative-decay}
 \|DS_t(u_0)\|_{\mathcal L(X)}
 \le e^{C_sr/\nu}e^{-\nu t}
 \le C_se^{-\nu t},\qquad t\ge0.
\end{equation}

Since $A^{1/2}:X\to H^{s-1}_\sigma$ is bounded,
Lemma~\ref{asy:variational} defines the bounded extension
$\mathcal K(t)=U_{u_0}(t,0)A^{1/2}$ of $DS_t(u_0)A^{1/2}$.
For $0<t\le\nu^{-1}$, \eqref{asy:smoothing} gives
\[
 \|\mathcal K(t)\|_{\mathcal L(X)}\le C_s(\nu t)^{-1/2}.
\]
For $t\ge\tau:=\nu^{-1}$, the evolution property gives
\[
 \mathcal K(t)=DS_{t-\tau}(S_\tau u_0)\mathcal K(\tau).
\]
This identity holds first on $D_X(A^{1/2})$ and then on $X$
by boundedness and density. Using
\eqref{geo:full-derivative-decay} at $S_\tau u_0$ and the
short-time bound, we obtain
\[
 \|\mathcal K(t)\|_{\mathcal L(X)}
 \le C_se^{-\nu(t-\tau)}
 \le C_s(\nu t)^{-1/2}e^{-\nu t/4},\qquad t\ge\tau.
\]
After enlarging $C_s$, the same estimate holds for $0<t\le\tau$.
This proves \eqref{geo:negative-smoothing}.

We now prove \eqref{geo:fast-adjoint-smoothing}.
By \eqref{geo:normal-rayleigh} at $t=0$ and self-adjointness,
we have
\[
 \|\mathcal P_{\le L}(u_0)A^{1/2}\|_{\mathcal L(X)}
 =\|A^{1/2}\mathcal P_{\le L}(u_0)\|_{\mathcal L(X)}
 \le\sqrt{b\Lambda_L}.
\]
For $h\in X$, we have $A^{1/2}h\in H^{s-1}_\sigma$ and define
\[
 \mathcal P_{>L}(u_0)A^{1/2}h
 :=A^{1/2}h-\mathcal P_{\le L}(u_0)A^{1/2}h
 \in H^{s-1}_\sigma.
\]
By Lemma~\ref{asy:variational}, for $t>0$ we have
\[
 DS_t(u_0)\mathcal P_{>L}(u_0)A^{1/2}
 =\mathcal K(t)-DS_t(u_0)\mathcal P_{\le L}(u_0)A^{1/2}
 \in\mathcal L(X).
\]
We apply \eqref{geo:negative-smoothing} and
\eqref{geo:full-derivative-decay} to obtain
\[
 \|DS_{t/2}(u_0)\mathcal P_{>L}(u_0)A^{1/2}\|_{\mathcal L(X)}
 \le C_s\bigl((\nu t)^{-1/2}+\sqrt{\Lambda_L}\bigr).
\]
By \eqref{asy:normal-covariance}, we have
\[
 DS_{t/2}(u_0)\mathcal P_{>L}(u_0)A^{1/2}h
 \in F_{>L}(S_{t/2}u_0),
 \qquad h\in D_X(A^{1/2}).
\]
Boundedness and density extend this inclusion to every $h\in X$,
since $F_{>L}(S_{t/2}u_0)$ is closed.
We write
$DS_t(u_0)=DS_{t/2}(S_{t/2}u_0)DS_{t/2}(u_0)$ and apply
\eqref{geo:fast-rate} to obtain
\begin{equation}\label{geo:fast-smoothing-factor}
\begin{aligned}
 &\|DS_t(u_0)\mathcal P_{>L}(u_0)A^{1/2}\|_{\mathcal L(X)}\\
 &\qquad\le C_0e^{-a\nu\mu_Lt/2}
 \|DS_{t/2}(u_0)\mathcal P_{>L}(u_0)A^{1/2}\|_{\mathcal L(X)}\\
 &\qquad\le C_s\bigl((\nu t)^{-1/2}+\sqrt{\Lambda_L}\bigr)
 e^{-a\nu\mu_Lt/2}.
\end{aligned}
\end{equation}
We take adjoints in $X$ to obtain
\eqref{geo:fast-adjoint-smoothing}.
\end{proof}

In the following lemma, we estimate the normal component of $\widetilde u-u$ for points $u,\widetilde u$
on the same factor fiber.
\nopagebreak[4]
\begin{lemma}\label{geo:chord}
Suppose $u,\widetilde u\in B_{4r}(\X)$ and $W_{\le L}(u)=W_{\le L}(\widetilde u)$.
Then
\begin{equation}\label{geo:chord-bound}
 \abs{\langle v,\widetilde u-u\rangle_{s}}
 \le \frac{C_s}{\nu\sqrt{\mu_L}}\|v\|_{s}\|\widetilde u-u\|_{s}^2,
 \qquad v\in E_{\le L}(u).
\end{equation}
\end{lemma}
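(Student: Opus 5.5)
The idea, already sketched in the introduction, is to pair the difference equation with the normal covector flow. The linear parts then cancel, and what remains is the quadratic remainder, integrated against two forward-decaying factors and one backward-growing adjoint factor.

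Fix $v_0=v\in E_{\le L}(u)$ and let $v(t)\in E_{\le L}(S_tu)$ be the solution of \eqref{geo:normal-equation} given by Lemma~\ref{geo:variational}(ii). Set $w(t)=S_t\widetilde u-S_tu$, and consider
\[
 \phi(t)=\langle v(t),w(t)\rangle_s .
\]
Write the difference equation \eqref{asy:difference-equation} as
\[
 w'=L_{S_tu}w-B(w,w),
\]
since $-B(S_tu,w)-B(w,S_tu+w)=\mathcal N_{S_tu}w-B(w,w)$. Combining this with $v'=-L_{S_tu}^*v$, the linear terms cancel and
\[
 \phi'(t)=-\langle v(t),B(w(t),w(t))\rangle_s .
\]
This pairing is meaningful because $v(t)\in H^{s+2}_\sigma$, by \eqref{geo:mixed-pairing}. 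To justify the differentiation I would work on $(\varepsilon,T)$, where everything is smooth, and then pass to the limit using continuity of $v$ in $H^{s+2}$ and of $w$ in $H^s$.

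Next I show $\phi(t)\to0$ as $t\to\infty$. Estimate \eqref{geo:normal-rate} gives $\|v(t)\|_s\le C_0e^{b\nu\Lambda_Lt}\|v\|_s$, and \eqref{geo:leaf-rate} gives $\|w(t)\|_s\le C_0e^{-a\nu\mu_Lt}\|\widetilde u-u\|_s$. Since $\mu_L\ge\Lambda_L+1$ and $a,b$ are close to $1$, we have $b\Lambda_L<a\mu_L$ for small $r/\nu$, so the product decays. This step needs care: a ratio such as $a\mu_L-b\Lambda_L\ge c$ must hold uniformly in $L$, and the gap $\mu_L-\Lambda_L\ge1$ only gives this if $C_s(r/\nu)^2(\mu_L+\Lambda_L)\le\frac12$. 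That fails for large $L$. To avoid needing this, I would instead use the factor $\|w\|^2$ in the integrand directly and not worry about $\phi$ itself. It is enough to have $\phi(T)\to0$ along some sequence. Here the exact fiber property from Proposition~\ref{asy:factor-flags}(iv) helps: $e^{\nu nt}w(t)\to h$ in $H^{s+1}$ with $n>L$, while $v(t)$ converges, after normalization, toward $P_{\le L}X$ in the graph topology \eqref{asy:normal-terminal}. So $\phi$ is of order $e^{-\nu nt}$ times the growth of $v$, and $v$ grows at most like $e^{\nu\Lambda_Lt}$ by the exact finite-dimensional formula \eqref{geo:normal-continuation}. This gives $\phi(T)\to0$ at a fixed $L$, with no uniformity needed. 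Hence
\[
 \langle v,\widetilde u-u\rangle_s=\int_0^\infty\langle v(t),B(w,w)\rangle_s\,dt .
\]

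The main obstacle is bounding this integral uniformly in $L$ with the factor $\nu^{-1}\mu_L^{-1/2}$. By \eqref{geo:mixed-pairing} the integrand is at most $C_s\|v(t)\|_{s+1}\|w(t)\|_s^2$. The Rayleigh bound \eqref{geo:normal-rayleigh} gives $\|v(t)\|_{s+1}\le C\sqrt{\Lambda_L}\|v(t)\|_s$. Together with the growth and decay rates, this bounds the integrand by
\[
 C\sqrt{\Lambda_L}\,e^{(b\Lambda_L-2a\mu_L)\nu t}\|v\|_s\|\widetilde u-u\|_s^2 .
\]
Since $2a\mu_L-b\Lambda_L\ge\mu_L/2$ for small $r/\nu$, uniformly in $L$, the integral is at most
\[
 C\nu^{-1}\sqrt{\Lambda_L}\,\mu_L^{-1}\le C\nu^{-1}\mu_L^{-1/2},
\]
using $\Lambda_L<\mu_L$. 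The factor $2$ from the quadratic $w$ is exactly what absorbs the backward growth of $v$. Near $t=0$ there is no singularity, because the Rayleigh bound holds up to $t=0$. This yields \eqref{geo:chord-bound}.
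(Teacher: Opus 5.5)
Your proposal is correct and follows essentially the same route as the paper. You pair $w'=L_{S_tu}w-B(w,w)$ with the normal covector flow, use fixed-$L$ (non-uniform) asymptotics to get $\langle v(t),w(t)\rangle_s\to0$, and then bound the integral with \eqref{geo:mixed-pairing}, \eqref{geo:normal-rayleigh}, \eqref{geo:normal-rate}, \eqref{geo:leaf-rate} and $2a\mu_L-b\Lambda_L\ge\mu_L/2$. You also rightly note that the uniform rates alone cannot give the terminal limit. The one minor difference is that the paper gets $\|v(t)\|_s\le C_{L,\epsilon}e^{\nu(\Lambda_L+\epsilon)t}$ from the eventual Rayleigh bound supplied by \eqref{asy:normal-terminal}, whereas you read the growth off the explicit formula \eqref{geo:normal-continuation} ($e^{\nu\Lambda_L t}$ up to polynomial factors).
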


\begin{proof}
We may assume $u\ne \widetilde u$ and $v\ne0$. We set $w(t)=S_t\widetilde u-S_tu$
and use Lemma~\ref{geo:variational}(ii) to choose $v(t)$ satisfying
\eqref{geo:normal-equation} with $v(0)=v$.
By \eqref{asy:difference-equation} and
\eqref{asy:linearized-operators}, we have
\[
 w'=L_{S_tu}w-B(w,w).
\]
We pair this equation with $v(t)$ and use
\eqref{geo:normal-equation} to obtain
\begin{equation}\label{geo:chord-pairing-derivative}
 \frac{d}{dt}\langle v(t),w(t)\rangle_s
 =-\langle v(t),B(w(t),w(t))\rangle_s.
\end{equation}

Fix $0<\epsilon<\mu_L-\Lambda_L$.
By \eqref{asy:normal-terminal}, the normal Rayleigh quotient
is at most $\Lambda_L+\epsilon$ for all sufficiently large times.
Integrating the normal energy equation
\eqref{geo:normal-equation} using \eqref{geo:symmetric-energy}
and \eqref{geo:coefficient-integrals}, and enlarging the constant
to cover the remaining finite interval, gives the first bound below.
The second follows directly from \eqref{asy:leaf-asymptotic},
since its leading eigenvalue is at least $\mu_L$:
\begin{equation}\label{geo:fixed-cut-terminal-bounds}
 \|v(t)\|_s\le C_{L,\epsilon}e^{\nu(\Lambda_L+\epsilon)t},
 \qquad
 \|w(t)\|_s\le C_L e^{-\nu\mu_Lt},\qquad t\ge0.
\end{equation}
Here the constants may depend on $u,\widetilde u,v$ and the fixed cutoff $L$.
Since $\mu_L-\Lambda_L-\epsilon>0$, these bounds imply
$\langle v(t),w(t)\rangle_s\to0$ as $t\to\infty$.
We integrate \eqref{geo:chord-pairing-derivative} and use
$v(0)=v$ and $w(0)=\widetilde u-u$ to obtain
\begin{equation}\label{geo:chord-integral}
 \langle v,\widetilde u-u\rangle_s
 =\int_0^\infty
 \langle v(t),B(w(t),w(t))\rangle_s\,dt.
\end{equation}

By \eqref{geo:mixed-pairing}, \eqref{geo:normal-rayleigh},
and $A\ge I$, we have
\[
 |\langle v(t),B(w(t),w(t))\rangle_s|
 \le C_s\sqrt{\Lambda_L}\,\|v(t)\|_s\|w(t)\|_s^2.
\]
We decrease $r/\nu$ so that $2a-b\ge1/2$. We then apply
\eqref{geo:leaf-rate} and \eqref{geo:normal-rate} in
\eqref{geo:chord-integral} to obtain
\[
\begin{aligned}
 |\langle v,\widetilde u-u\rangle_s|
 &\le C_s\sqrt{\Lambda_L}\,\|v\|_s\|\widetilde u-u\|_s^2
       \int_0^\infty e^{-\nu(2a\mu_L-b\Lambda_L)t}\,dt\\
 &=\frac{C_s\sqrt{\Lambda_L}}{\nu(2a\mu_L-b\Lambda_L)}
       \|v\|_s\|\widetilde u-u\|_s^2\\
 &\le\frac{C_s}{\nu\sqrt{\mu_L}}\|v\|_s\|\widetilde u-u\|_s^2,
\end{aligned}
\]
where we use $\Lambda_L<\mu_L$ and
$2a\mu_L-b\Lambda_L\ge(2a-b)\mu_L\ge\mu_L/2$.
This proves \eqref{geo:chord-bound}.
\end{proof}

By Proposition~\ref{asy:factor-flags}, we write the $W_{\le L}$
level through $u$ locally as
\[
 \{u+h+g(h):h\in U\subset F_{>L}(u)\},
\]
where $U$ is a neighborhood of $0$ and $g:U\to E_{\le L}(u)$ is
real analytic with $g(0)=0$ and $Dg(0)=0$.
We define its second fundamental form at $u$ by
\[
 \mathfrak I_L(u)(h,k)=D^2g(0)[h,k]\in E_{\le L}(u),
 \qquad h,k\in F_{>L}(u).
\]
Equivalently, for any $C^2$ curve $\gamma$ in this level with
$\gamma(0)=u$ and $\gamma'(0)=h$, we have
$\mathfrak I_L(u)(h,h)=\mathcal P_{\le L}(u)\gamma''(0)$.
For $v\in E_{\le L}(u)$, we define the shape operator
$\mathfrak S_v:F_{>L}(u)\to F_{>L}(u)$ by
\begin{equation}\label{geo:shape-convention}
 \langle \mathfrak S_vh,k\rangle_s
 =\langle v,\mathfrak I_L(u)(h,k)\rangle_s,
 \qquad h,k\in F_{>L}(u).
\end{equation}
By the Riesz representation theorem and symmetry of
$\mathfrak I_L(u)$, this defines a unique bounded self-adjoint operator.

\vskip0.05in
The next lemma provides uniform curvature estimates for the
factor fibers in the Sobolev and energy norms.
\nopagebreak[4]
\begin{lemma}\label{geo:shape}
For $u\in B_{4r}(\X)$, $h,k\in F_{>L}(u)$, and $v\in E_{\le L}(u)$,
\begin{align}
 \|\mathfrak I_L(u)(h,k)\|_{s}
 &\le \frac{C_s}{\nu\sqrt{\mu_L}}\|h\|_{s}\|k\|_{s},
                                                        \label{geo:II-bound}\\
 \|A^{1/2}\mathfrak S_vh\|_{s}
 &\le \frac{C_s}{\nu}\|v\|_{s}\|h\|_{s}.
                                                        \label{geo:energy-shape}
\end{align}
\end{lemma}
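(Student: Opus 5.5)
The plan is to get \eqref{geo:II-bound} directly from the chord estimate of Lemma~\ref{geo:chord}, and to get \eqref{geo:energy-shape} from a second-variation version of the integral identity \eqref{geo:chord-integral}, estimated with the smoothing bounds of Lemma~\ref{geo:variational}(iii).

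\emph{Step 1: bound \eqref{geo:II-bound}.} Fix $h\in F_{>L}(u)$ and the local graph parametrization $\gamma(\tau)=u+\tau h+g(\tau h)$ of the level through $u$. Since $B_{4r}(X)$ is open, $\gamma(\tau)\in B_{4r}(X)$ for small $|\tau|$.
\begin{itemize}
\item For $v\in E_{\le L}(u)$ we have $\langle v,\tau h\rangle_s=0$ and $\langle v,g(\tau h)\rangle_s=\tfrac{\tau^2}{2}\langle v,\mathfrak I_L(u)(h,h)\rangle_s+o(\tau^2)$.
\item Also $\|\gamma(\tau)-u\|_s^2=\tau^2\|h\|_s^2+o(\tau^2)$.
\item Apply \eqref{geo:chord-bound} with $\widetilde u=\gamma(\tau)$, divide by $\tau^2$ and let $\tau\to0$. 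This gives $|\langle v,\mathfrak I_L(u)(h,h)\rangle_s|\le \frac{2C_s}{\nu\sqrt{\mu_L}}\|v\|_s\|h\|_s^2$.
\item Since $\mathfrak I_L(u)(h,h)\in E_{\le L}(u)$, taking the supremum over unit $v\in E_{\le L}(u)$ bounds its norm. Polarization in $h,k$ then yields \eqref{geo:II-bound}.
\end{itemize}

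\emph{Step 2: second-variation formula.} Let $v(t)$ be the normal solution of \eqref{geo:normal-equation} with $v(0)=v$, and set $w_\tau(t)=S_t\gamma(\tau)-S_tu$, where now $\gamma$ is a $C^2$ curve in the level with $\gamma'(0)=h$. Identity \eqref{geo:chord-integral} gives
\[
 \langle v,\gamma(\tau)-u\rangle_s=\int_0^\infty\langle v(t),B(w_\tau(t),w_\tau(t))\rangle_s\,dt .
\]
Write $w_\tau(t)=\tau DS_t(u)h+O(\tau^2)$; the error is controlled uniformly in $t$ with the decay of \eqref{geo:leaf-rate}. Differentiating twice at $\tau=0$, justified by dominated convergence using \eqref{geo:normal-rate} and \eqref{geo:leaf-rate}, gives after polarization
\[
 \langle\mathfrak S_vh,k\rangle_s=\int_0^\infty\bigl\langle v(t),B(DS_th,DS_tk)+B(DS_tk,DS_th)\bigr\rangle_s\,dt,
 \qquad h,k\in F_{>L}(u).
\]
Here $DS_t=DS_t(u)$.

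\emph{Step 3: bound \eqref{geo:energy-shape}.} Since $\mathfrak S_vh\in F_{>L}(u)$, for $g\in H^{s+1}_\sigma$ we have
\[
 \langle A^{1/2}\mathfrak S_vh,g\rangle_s=\langle\mathfrak S_vh,\mathcal P_{>L}(u)A^{1/2}g\rangle_s .
\]
So I would insert $k=\mathcal P_{>L}(u)A^{1/2}g$ into the formula of Step 2 and bound the three factors of the integrand.
\begin{itemize}
\item By \eqref{geo:mixed-pairing} and \eqref{geo:normal-rayleigh}, the pairing costs $\|v(t)\|_{s+1}\le C\sqrt{\Lambda_L}\|v(t)\|_s$, and \eqref{geo:normal-rate} bounds $\|v(t)\|_s\le C_0e^{b\nu\Lambda_Lt}\|v\|_s$.
\item By \eqref{geo:fast-rate}, $\|DS_th\|_s\le C_0e^{-a\nu\mu_Lt}\|h\|_s$.
\item By \eqref{geo:fast-smoothing-factor}, $\|DS_tk\|_s\le C_s\bigl((\nu t)^{-1/2}+\sqrt{\Lambda_L}\bigr)e^{-a\nu\mu_Lt/2}\|g\|_s$.
\end{itemize}
Shrink $r/\nu$ so that $\tfrac32a-b\ge\tfrac12$. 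The total exponent is then at most $-\nu\mu_Lt/2$, and the integral is bounded by
\[
 C\sqrt{\Lambda_L}\Bigl(\nu^{-1/2}\int_0^\infty t^{-1/2}e^{-\nu\mu_Lt/2}\,dt+\frac{\sqrt{\Lambda_L}}{\nu\mu_L}\Bigr)
 \le\frac{C}{\nu}\Bigl(\sqrt{\Lambda_L/\mu_L}+\Lambda_L/\mu_L\Bigr)\le\frac{C_s}{\nu}.
\]
This holds with all norms $\|v\|_s\|h\|_s\|g\|_s$. Density of $H^{s+1}_\sigma$ in $X$ then gives \eqref{geo:energy-shape}.

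\emph{Main obstacle.} The delicate point is Step 3: $k=\mathcal P_{>L}(u)A^{1/2}g$ lies only in $H^{s-1}_\sigma$, not in $F_{>L}(u)\subset X$. So the second-variation formula must first be established for $k\in F_{>L}(u)$. It is then extended to this $k$ through the bounded extension $DS_t(u)\mathcal P_{>L}(u)A^{1/2}$ of Lemma~\ref{geo:variational}(iii). For this I would approximate $g$ by elements $g_n\in H^{s+1}_\sigma$ with $\mathcal P_{>L}(u)A^{1/2}g_n\in X$, and use the integrable majorant of Step 3 to pass to the limit. A second, lesser issue is checking the uniform-in-$t$ second-order Taylor control of $w_\tau$ needed to differentiate under the integral in Step 2.
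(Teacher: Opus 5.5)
Your proof is correct and has the same structure as the paper's. Step~1 is identical. Step~3 is the dual form of the paper's argument: the paper forms the Riesz representative $g(t)$, writes $\mathfrak S_vh=\int_0^\infty\mathcal P_{>L}(u)DS_t(u)^*g(t)\,dt$, and applies \eqref{geo:fast-adjoint-smoothing}, whereas you test against $\mathcal P_{>L}(u)A^{1/2}g$ and use the un-adjointed bound \eqref{geo:fast-smoothing-factor}.

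The genuine difference is how you reach \eqref{geo:shape-integral}. The paper pairs the second variational equation $U'=L_{S_tu}U-B(h(t),k(t))-B(k(t),h(t))$ with $v(t)$ and removes the boundary term using covariance and \eqref{geo:II-bound} at $S_tu$. It therefore works first with $h,k\in F_{>L}(u)\cap H^{s+2}_\sigma$ and extends afterwards by density and closedness of $A^{1/2}$. You instead differentiate the already proved identity \eqref{geo:chord-integral} twice in $\tau$. This needs only the pointwise limit $w_\tau(t)/\tau\to DS_t(u)h$ and the integrable majorant $C\sqrt{\Lambda_L}\,e^{-\nu(2a\mu_L-b\Lambda_L)t}$ from \eqref{geo:leaf-rate} and \eqref{geo:normal-rate}, so no uniform second-order Taylor control is required. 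It also gives the formula for all $h,k\in F_{>L}(u)$ at once, which is slightly cleaner.

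There are two small corrections. First, $\tfrac32a-b\ge\tfrac12$ cannot hold, because $\tfrac32a-b=\tfrac12-\tfrac52C_s(r/\nu)^2$. Require $\tfrac32a-b\ge\tfrac14$ as the paper does; the final bound is unchanged. Second, your ``main obstacle'' is not an obstacle. For $g\in H^{s+1}_\sigma$ one has $A^{1/2}g\in X$, so $k=\mathcal P_{>L}(u)A^{1/2}g$ already lies in $F_{>L}(u)$, and your Step~2 formula applies to it directly. The only real point is that \eqref{geo:fast-smoothing-factor} bounds $\|DS_t(u)k\|_s$ by $\|g\|_s$. Density of $H^{s+1}_\sigma$ and self-adjointness of $A^{1/2}$ then finish the argument.
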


\begin{proof}
\emph{Step 1.} We first prove \eqref{geo:II-bound}.
Apply \eqref{geo:chord-bound} to
$\widetilde u=u+\tau h+g(\tau h)$. Since
$g(\tau h)=\frac12\tau^2\mathfrak I_L(u)(h,h)+o(\tau^2)$
and $v\perp F_{>L}(u)$, division by $\tau^2$ and passage to the
limit give
\[
 |\langle v,\mathfrak I_L(u)(h,h)\rangle_s|
 \le\frac{C_s}{\nu\sqrt{\mu_L}}\|v\|_s\|h\|_s^2.
\]
Polarization and duality in $E_{\le L}(u)$ yield \eqref{geo:II-bound}.

\emph{Step 2.} We derive an integral representation of
$\mathfrak S_v$. We first take
$h,k\in F_{>L}(u)\cap H^{s+2}_\sigma$, set
$h(t)=DS_t(u)h$ and $k(t)=DS_t(u)k$, and choose $v(t)$
by Lemma~\ref{geo:variational}(ii), with $v(0)=v$.
Using the local graph $g$ defined above, we set
\[
 \Gamma(\alpha,\beta)
 =u+\alpha h+\beta k+g(\alpha h+\beta k)
\]
for sufficiently small $\alpha,\beta\in\mathbb R$, and define
\[
 U(t)=\left.\partial_\alpha\partial_\beta
 S_t\Gamma(\alpha,\beta)\right|_{(0,0)}.
\]
Then $U(0)=\mathfrak I_L(u)(h,k)$.
By differentiating \eqref{asy:equation} twice and using
\eqref{asy:linearized-operators}, we obtain
\[
 U'=L_{S_tu}U-B(h(t),k(t))-B(k(t),h(t)).
\]
Using \eqref{geo:normal-equation}, we obtain
\[
 \frac d{dt}\langle v(t),U(t)\rangle_s
 =-\langle v(t),B(h(t),k(t))+B(k(t),h(t))\rangle_s.
\]
By \eqref{asy:covariance}, $S_t\Gamma(\alpha,\beta)$ lies in the
factor fiber through $S_tu$. Hence
\[
 \langle v(t),U(t)\rangle_s
 =\langle v(t),\mathfrak I_L(S_tu)(h(t),k(t))\rangle_s.
\]
Using \eqref{geo:II-bound}, \eqref{geo:fast-rate}, and
\eqref{geo:normal-rate}, we obtain
\[
 |\langle v(t),U(t)\rangle_s|
 \le\frac{C_s}{\nu\sqrt{\mu_L}}
 e^{-\nu(2a\mu_L-b\Lambda_L)t}\|v\|_s\|h\|_s\|k\|_s
 \to0\quad\text{as }t\to\infty,
\]
since $2a>b$ and $\Lambda_L<\mu_L$.
We integrate and use \eqref{geo:shape-convention} to obtain
\begin{equation}\label{geo:shape-integral}
 \langle\mathfrak S_vh,k\rangle_s
 =\int_0^\infty
 \langle v(t),B(h(t),k(t))+B(k(t),h(t))\rangle_s\,dt.
\end{equation}

\emph{Step 3.} We prove \eqref{geo:energy-shape}.
By \eqref{geo:mixed-pairing} and the Riesz representation theorem,
we define $g(t)\in X$ by
\begin{equation}\label{geo:shape-riesz}
 \langle g(t),z\rangle_s
 =\langle v(t),B(h(t),z)+B(z,h(t))\rangle_s,
 \qquad z\in X.
\end{equation}
Using \eqref{geo:fast-rate},
\eqref{geo:normal-rayleigh}--\eqref{geo:normal-rate}, and $A\ge I$,
we obtain
\[
 \|g(t)\|_s
 \le C_s\sqrt{\Lambda_L}
 e^{-\nu(a\mu_L-b\Lambda_L)t}\|h\|_s\|v\|_s.
\]
We decrease $r/\nu$ so that $3a/2-b\ge1/4$.
By \eqref{geo:fast-adjoint-smoothing} and $\Lambda_L<\mu_L$,
we have
\[
\begin{aligned}
 &\int_0^\infty
 \|A^{1/2}\mathcal P_{>L}(u)DS_t(u)^*g(t)\|_s\,dt\\
 &\quad\le C_s\sqrt{\Lambda_L}\|h\|_s\|v\|_s
 \int_0^\infty\bigl((\nu t)^{-1/2}+\sqrt{\Lambda_L}\bigr)
 e^{-\nu\mu_Lt/4}\,dt\\
 &\quad\le\frac{C_s}{\nu}
 \biggl(\sqrt{\frac{\Lambda_L}{\mu_L}}+
 \frac{\Lambda_L}{\mu_L}\biggr)\|h\|_s\|v\|_s
 \le\frac{C_s}{\nu}\|h\|_s\|v\|_s.
\end{aligned}
\]
Since $A\ge I$, the integral below converges in the graph norm
of $A^{1/2}$ on $X$. By \eqref{geo:shape-integral} and
\eqref{geo:shape-riesz}, we obtain
\begin{equation}\label{geo:shape-operator-integral}
 \mathfrak S_vh
 =\int_0^\infty\mathcal P_{>L}(u)DS_t(u)^*g(t)\,dt.
\end{equation}
The preceding estimate proves \eqref{geo:energy-shape}
for smooth $h$.
By Proposition~\ref{asy:factor-flags}(iii),
$\mathcal P_{>L}(u)$ preserves $H^{s+2}_\sigma$.
Projecting smooth approximations by $\mathcal P_{>L}(u)$ therefore
shows that $F_{>L}(u)\cap H^{s+2}_\sigma$ is dense in $F_{>L}(u)$.
This justifies the operator identity above from its pairings
with smooth tangent vectors. The bound \eqref{geo:II-bound}
and closedness of $A^{1/2}$ then extend
\eqref{geo:energy-shape} to every $h\in F_{>L}(u)$.
\end{proof}

\subsection{Minimum-norm points on factor fibers}

We select the unique minimum-norm point on each factor fiber and
prove that these selections define compatible analytic maps
converging to the identity in $H^s$. We then compare these representatives
with the normal projections of the original points to obtain the norm
and energy estimates needed for the coordinate construction and radial dynamics.

For
$u\in B_{2r}$, consider the minimum problem
\begin{equation}\label{geo:min-definition}
 \Pi_{\le L}u=\operatorname*{arg\,min}
 \{\|\widetilde u\|_{s}:\widetilde u\in\overline B_{3r},\ W_{\le L}(\widetilde u)=W_{\le L}(u)\}.
\end{equation}
We choose $r$ so that
$\epsilon_L=C_sr/(\nu\sqrt{\mu_L})<1/2$ for every $L$.

\vskip0.05in

\Needspace{6\baselineskip}
The next proposition shows that the minimum-norm representatives
define nested analytic maps converging to the identity in $H^s$.
\nopagebreak[4]
\begin{proposition}\label{geo:minima}
The following assertions hold.
\begin{enumerate}
\renewcommand{\theenumi}{\roman{enumi}}
\renewcommand{\labelenumi}{(\theenumi)}
\item Problem~\eqref{geo:min-definition} has a unique solution for every
$u\in B_{2r}$.
The map $\Pi_{\le L}$ is analytic. If $v=\Pi_{\le L}u$, then
\begin{equation}\label{geo:pythagoras}
 (1-\epsilon_L)\|u-v\|_{s}^2
 \le\|u\|_{s}^2-\|v\|_{s}^2
 \le(1+\epsilon_L)\|u-v\|_{s}^2.
\end{equation}
\item For $L\le M$ with $P_{\le L}X\ne\{0\}$, we have
\begin{equation}\label{geo:nested}
 \Pi_{\le L}\Pi_{\le M}=\Pi_{\le M}\Pi_{\le L}=\Pi_{\le L}.
\end{equation}
\item As $L\to\infty$, we have
\begin{equation}\label{geo:minimum-limit}
 \Pi_{\le L}u\to u\quad\text{in }H^s,
\end{equation}
uniformly for $u$ in every compact subset of $B_{2r}(X)$
in the $H^s$ topology.
\end{enumerate}
\end{proposition}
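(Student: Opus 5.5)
For (i), existence comes from the direct method in the weak topology. Take a minimizing sequence $\widetilde u_k$ in the fiber intersected with $\overline B_{3r}$. It has a weakly convergent subsequence in $H^s_\sigma$, and the limit $\widetilde u$ stays in $\overline B_{3r}$. By Proposition~\ref{asy:finite-flow}(iii), $W_{\le L}(\widetilde u_k)\to W_{\le L}(\widetilde u)$, so $\widetilde u$ lies in the same fiber. Weak lower semicontinuity of the norm then makes $\widetilde u$ a minimizer.

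Since $u$ itself is admissible, any minimizer $v$ satisfies $\|v\|_s\le\|u\|_s<2r$. Hence $v$ lies in the open ball, and the constraint $\overline B_{3r}$ is inactive. The Lagrange condition (using the submersion property of Proposition~\ref{asy:factor-flags}(i)) then gives $v\perp F_{>L}(v)$, i.e.\ $v\in E_{\le L}(v)$.

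The key identity is
\[
\|u\|_s^2-\|v\|_s^2=\|u-v\|_s^2+2\langle v,u-v\rangle_s .
\]
Apply Lemma~\ref{geo:chord} at the base point $v$ with normal vector $v\in E_{\le L}(v)$. This gives
\[
|2\langle v,u-v\rangle_s|\le \frac{2C_s}{\nu\sqrt{\mu_L}}\|v\|_s\|u-v\|_s^2\le\epsilon_L\|u-v\|_s^2
\]
after adjusting constants, which is \eqref{geo:pythagoras}.

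Uniqueness follows from the same estimate. If $v_1,v_2$ are both minimizers, apply the identity with $u$ replaced by $v_2$ and base point $v_1$. Since $\|v_1\|=\|v_2\|$, we get $0\ge(1-\epsilon_L)\|v_2-v_1\|^2$, so $v_1=v_2$.

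Analyticity comes from the implicit function theorem applied to the system
\[
W_{\le L}(v)=W_{\le L}(u),\qquad \mathcal P_{>L}(v)v=0,
\]
or equivalently $v=DW_{\le L}(v)^*z$ with unknowns $(v,z)$. Nondegeneracy of the linearization amounts to strict positivity of the Hessian of $\tfrac12\|\cdot\|^2$ restricted to the fiber at $v$. That Hessian is $I-\mathfrak S_{\lambda}$ for the multiplier $\lambda$, which is essentially $v$. By \eqref{geo:II-bound}, $\|\mathfrak S_v\|\le\epsilon_L<1/2$, so $I-\mathfrak S_v$ is invertible. Uniqueness of the minimizer identifies the analytic local solution with $\Pi_{\le L}$. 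This step is the main obstacle: one must set up the constrained problem in Hilbert-manifold coordinates uniformly and justify the Lagrange multiplier form.

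For (ii), take $L\le M$. The $M$-fiber of $u$ is contained in its $L$-fiber, so $W_{\le L}(\Pi_{\le M}u)=W_{\le L}(u)$ and $\Pi_{\le L}\Pi_{\le M}=\Pi_{\le L}$. For the other identity, set $v=\Pi_{\le L}u$ and consider any $w$ in the $M$-fiber of $v$. That fiber lies inside the $L$-fiber of $v$, which is the $L$-fiber of $u$, and $v$ minimizes the norm over it. So $v$ also minimizes over the smaller $M$-fiber, giving $\Pi_{\le M}v=v$.

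For (iii), note that $\|\Pi_{\le L}u\|_s$ is nondecreasing in $L$, since the fibers shrink, and is bounded by $\|u\|_s$. Let $v$ be a weak limit of $\Pi_{\le L}u$ along a subsequence. Proposition~\ref{asy:finite-flow}(iii) gives $W_{\le L'}(v)=W_{\le L'}(u)$ for every fixed $L'$. Separation of points, \eqref{asy:completeness}, then forces $v=u$. Now \eqref{geo:pythagoras} gives
\[
\|u-\Pi_{\le L}u\|_s^2\le 2\bigl(\|u\|_s^2-\|\Pi_{\le L}u\|_s^2\bigr).
\]
Weak convergence to $u$ together with monotone norms bounded by $\|u\|_s$ yields norm convergence, hence strong convergence. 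For uniformity on compact sets, the maps $\Pi_{\le L}$ are uniformly Lipschitz, which I would derive from the derivative bounds in the implicit function argument. Pointwise convergence of equicontinuous maps then upgrades to uniform convergence on compacta. A Dini-type argument applied to the monotone continuous functions $\|u\|^2-\|\Pi_{\le L}u\|^2$ works as an alternative.
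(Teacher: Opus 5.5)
Your proposal is correct and follows essentially the paper's route: the direct method with Proposition~\ref{asy:finite-flow}(iii) for existence, then Lemma~\ref{geo:chord} at the minimizer with normal vector $v$ for \eqref{geo:pythagoras} and uniqueness. After that come an implicit-function argument with fiber Hessian $I\pm\mathfrak S_v$ and $\|\mathfrak S_v\|\le\epsilon_L<1/2$ for analyticity, inclusion of fibers for \eqref{geo:nested}, and weak compactness, completeness \eqref{asy:completeness}, and Dini's theorem for \eqref{geo:minimum-limit}.

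Two small points. First, identify the implicit-function critical point with $\Pi_{\le L}u$ by noting, as the paper does, that the chord estimate uses only $v\in E_{\le L}(v)$ and $\|v\|_s<2r$. Hence every such critical point is the unique minimizer on its fiber; "uniqueness of the minimizer" alone does not give this. Second, for uniformity on compacta rely on the Dini argument, since the implicit-function theorem does not supply a uniform-in-$L$ Lipschitz bound for $\Pi_{\le L}$.
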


\begin{proof}
(i) We fix $u\in B_{2r}$. The constraint is weakly closed in
$\overline B_{3r}$ by Proposition~\ref{asy:finite-flow}(iii).
The direct method in a Hilbert space \cite[Chapters~3 and~5]{Brezis2011} therefore
gives a minimizer $v$ with $\|v\|_s\le\|u\|_s<2r$.

We next prove uniqueness and \eqref{geo:pythagoras}.
For $h\in F_{>L}(v)$, we use Proposition~\ref{asy:factor-flags}(i)
to choose a $C^1$ curve $\gamma$ in the fiber with
$\gamma(0)=v$ and $\gamma'(0)=h$.
Since $v$ is an interior minimizer, we obtain
\[
 0=\left.\frac{d}{d\tau}\frac12\|\gamma(\tau)\|_s^2
 \right|_{\tau=0}=\langle v,h\rangle_s.
\]
Thus
\begin{equation}\label{geo:minimum-stationarity}
 \langle v,h\rangle_s=0\quad(h\in F_{>L}(v)),
 \qquad v\in F_{>L}(v)^\perp=E_{\le L}(v).
\end{equation}
For every admissible $\widetilde u$, we apply \eqref{geo:chord-bound}
to the normal vector $v$ and use $\|v\|_s<2r$ to obtain
\[
 2|\langle v,\widetilde u-v\rangle_s|
 \le\frac{2C_s\|v\|_s}{\nu\sqrt{\mu_L}}\|\widetilde u-v\|_s^2
 \le\epsilon_L\|\widetilde u-v\|_s^2,
\]
where we choose $C_s$ in $\epsilon_L$ large enough and then
$r$ small enough that $\epsilon_L<1/2$.
Using
\[
 \|\widetilde u\|_s^2-\|v\|_s^2
 =\|\widetilde u-v\|_s^2+2\langle v,\widetilde u-v\rangle_s,
\]
we obtain
\begin{equation}\label{geo:minimum-coercivity}
 (1-\epsilon_L)\|\widetilde u-v\|_s^2
 \le\|\widetilde u\|_s^2-\|v\|_s^2
 \le(1+\epsilon_L)\|\widetilde u-v\|_s^2.
\end{equation}
We take $\widetilde u=u$ to prove \eqref{geo:pythagoras}.
If $\widetilde u$ is another minimizer, we use
\eqref{geo:minimum-coercivity} to obtain
$(1-\epsilon_L)\|\widetilde u-v\|_s^2\le0$, hence $\widetilde u=v$.
The preceding estimates use only $v\in E_{\le L}(v)$ and
$\|v\|_s<2r$. Hence every point satisfying these conditions
is the unique minimizer on its fiber.

We finally prove analyticity of $\Pi_{\le L}$. Fix $u_0\in B_{2r}$,
set $v=\Pi_{\le L}u_0$ and $\xi_0=W_{\le L}(v)$, and use the
analytic implicit-function theorem \cite[Chapter~4]{Deimling1985}
to write nearby fibers as
\[
 v+\zeta+g(\zeta,\xi),\qquad
 \zeta\in F_{>L}(v),\quad g(\zeta,\xi)\in E_{\le L}(v),
\]
where $W_{\le L}(v+\zeta+g(\zeta,\xi))=\xi$,
$g(0,\xi_0)=D_{\zeta}g(0,\xi_0)=0$.
For $f(\zeta,\xi)=\frac12\|v+\zeta+g(\zeta,\xi)\|_s^2$, stationarity and
\eqref{geo:shape-convention} give
\[
 \nabla_{\zeta} f(0,\xi_0)=0,\qquad
 D_{\zeta}^2f(0,\xi_0)[h,k]
 =\langle(I+\mathfrak S_v)h,k\rangle_s.
\]
By \eqref{geo:II-bound}, $\|\mathfrak S_v\|\le\epsilon_L<1/2$.
The Hessian is invertible, so the same implicit-function theorem
gives an analytic critical point $m_L(\xi)$ near $v$.
By \eqref{geo:minimum-coercivity}, this point is the unique
minimizer on its fiber. Hence
$\Pi_{\le L}u=m_L(W_{\le L}(u))$ near $u_0$.

(ii) The points $u$ and $\Pi_{\le M}u$ have the same $W_{\le L}$
value, so uniqueness gives $\Pi_{\le L}\Pi_{\le M}u=\Pi_{\le L}u$.
The $W_{\le M}$ fiber through $\Pi_{\le L}u$ is contained in its
$W_{\le L}$ fiber. Hence $\Pi_{\le L}u$ also minimizes the norm on
this smaller fiber, giving $\Pi_{\le M}\Pi_{\le L}u=\Pi_{\le L}u$.

(iii) The norms $\|\Pi_{\le L}u\|_s$ increase and are bounded by
$\|u\|_s$. Every weak cluster point $\widetilde u$ of $\Pi_{\le L}u$ satisfies
$W_{\le K}(\widetilde u)=W_{\le K}(u)$ for each fixed $K$, by
Proposition~\ref{asy:finite-flow}(iii) and nesting.
Completeness \eqref{asy:completeness} gives $\widetilde u=u$.
Thus $\Pi_{\le L}u\rightharpoonup u$, and weak lower semicontinuity gives
$\|\Pi_{\le L}u\|_s\to\|u\|_s$. The Hilbert-space norm criterion
\cite[Chapters~3 and~5]{Brezis2011} yields
\eqref{geo:minimum-limit}.

We now fix an $H^s$-compact set $K\subset B_{2r}$.
By (i), \eqref{geo:min-definition}, and \eqref{geo:nested},
the functions $d_L(u)=\|u\|_s^2-\|\Pi_{\le L}u\|_s^2$ are continuous
on $K$ and decrease pointwise to zero.
We apply Dini's theorem and \eqref{geo:pythagoras} to obtain
\[
 \sup_{u\in K}\|u-\Pi_{\le L}u\|_s^2
 \le2\sup_{u\in K}d_L(u)\to0
 \qquad\text{as }L\to\infty.
\]
\end{proof}

We set $M_{\le L}=\Pi_{\le L}(B_{2r}(X))$.
By \eqref{geo:minimum-stationarity}, every $v\in M_{\le L}$
satisfies $v\in E_{\le L}(v)$.

\vskip0.05in
\Needspace{6\baselineskip}
In the next lemma, we  compare the normal projection of a point with
the minimum-norm representative of its fiber in the Sobolev
and energy norms.
\nopagebreak[4]
\begin{lemma}\label{geo:projection-comparison}
There exists $C_s>0$, independent of $L$, such that for every
$w\in B_r(X)$, with
\[
 v=\Pi_{\le L}w,\qquad p=\mathcal P_{\le L}(w)w,\qquad
 \rho=\bigl(\|w\|_s^2-\|v\|_s^2\bigr)^{1/2},
\]
the following estimates hold:
\begin{enumerate}
\renewcommand{\theenumi}{\roman{enumi}}
\renewcommand{\labelenumi}{(\theenumi)}
\item
\begin{equation}\label{geo:radial-normal-norm}
 \bigl|\|p\|_s^2-\|v\|_s^2\bigr|
 \le \frac{C_sr}{\nu\sqrt{\mu_L}}\rho^2.
\end{equation}
\item
\begin{align}
 \|A^{1/2}(p-v)\|_s
 &\le \frac{C_sr}{\nu}\rho,
 \label{geo:radial-normal-energy}\\
 \bigl|\|A^{1/2}p\|_s^2-\|A^{1/2}v\|_s^2\bigr|
 &\le \frac{C_sr^2\sqrt{\Lambda_L}}{\nu}\rho.
 \label{geo:radial-normal-heat}
\end{align}
\end{enumerate}
\end{lemma}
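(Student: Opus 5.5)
The plan is to compare everything through the fiber geometry between $v$ and $w$, which lie on the same $W_{\le L}$ level. I set $d=w-v$ and $\epsilon=C_sr/(\nu\sqrt{\mu_L})$. By \eqref{geo:pythagoras}, $\|d\|_s^2=\rho^2(1+O(\epsilon))$; in particular $\|d\|_s\le2\rho$ and $\rho\le\|w\|_s<r$. I decompose $w=p+f$ with $f=\mathcal P_{>L}(w)w\in F_{>L}(w)$, so that $\|w\|_s^2=\|p\|_s^2+\|f\|_s^2$ and hence
\[
 \|p\|_s^2-\|v\|_s^2=\rho^2-\|f\|_s^2 .
\]
For (i) it therefore suffices to show $\|f\|_s^2=\rho^2(1+O(\epsilon))$. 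I will get this from $\|f-d\|_s\le C\epsilon\rho$, using the following two ingredients.

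\emph{Chord at the base point $w$.} I apply Lemma~\ref{geo:chord} with base point $w$ and chord $v-w$, testing against all unit $z\in E_{\le L}(w)$. This gives $\|\mathcal P_{\le L}(w)(v-w)\|_s\le C_s\rho^2/(\nu\sqrt{\mu_L})\le\epsilon\rho$.

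\emph{Tilt of the normal bundle along the fiber.} I claim $\|\mathcal P_{>L}(w)v\|_s\le C\epsilon\|v\|_s\|d\|_s/r\le C\epsilon\rho$. To prove it, I join $v$ to $w$ inside the fiber by a $C^1$ curve $\gamma$ of length $\lesssim\|d\|_s$. Such a curve exists: use the graph chart over $F_{>L}(v)$, whose graph map has derivative $O(\epsilon)$ by \eqref{geo:II-bound}, and cover the segment by the analytic charts of Proposition~\ref{asy:factor-flags}(i). Along $\gamma$ I transport $v$ by the normal parallel transport $n(\tau)$ defined by $n'=-\mathcal P_{\le L}(\gamma)\,\mathfrak S^{*}$-type terms. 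Concretely, I differentiate $\mathcal P_{\le L}(\gamma(\tau))$ using \eqref{asy:normal-projection}. Its off-diagonal blocks are given by the second fundamental form, $\langle n,\mathfrak I(\gamma',k)\rangle_s=\langle\mathfrak S_n\gamma',k\rangle_s$. Hence $\frac{d}{d\tau}\mathcal P_{>L}(\gamma)n$ is controlled by $\|\mathfrak S_n\gamma'\|_s\le(C_s/\nu\sqrt{\mu_L})\|n\|_s\|\gamma'\|_s$. Integrating over the curve gives the claim. Then
\[
 f-d=\mathcal P_{>L}(w)w-(w-v)=-\mathcal P_{\le L}(w)(w-v)+\mathcal P_{>L}(w)v-\bigl(\mathcal P_{>L}(w)-I\bigr)(\ldots),
\]
more precisely $f=d-\mathcal P_{\le L}(w)d+\mathcal P_{>L}(w)v$. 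Both correction terms are $O(\epsilon\rho)$, which gives (i).

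For (ii), I write
\[
 p-v=\mathcal P_{\le L}(w)d-\mathcal P_{>L}(w)v .
\]
The first term lies in $E_{\le L}(w)$, whose Rayleigh quotient is at most $b\Lambda_L$ by \eqref{geo:normal-rayleigh} at $t=0$. Hence $\|A^{1/2}\mathcal P_{\le L}(w)d\|_s\le\sqrt{b\Lambda_L}\,C_s\rho^2/(\nu\sqrt{\mu_L})\le C_sr\rho/\nu$, using $\Lambda_L<\mu_L$ and $\rho<r$. For the second term I redo the tilt integration with the energy shape bound \eqref{geo:energy-shape} in place of \eqref{geo:II-bound}. This gives $\|A^{1/2}\frac{d}{d\tau}\mathcal P_{>L}(\gamma)n\|_s\le (C_s/\nu)\|n\|_s\|\gamma'\|_s$, where closedness of $A^{1/2}$ justifies passing the derivative inside. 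I also need the $E$-component of the transport, which again has Rayleigh quotient $\le b\Lambda_L$ and is $O(\epsilon)$ small. Together these yield $\|A^{1/2}\mathcal P_{>L}(w)v\|_s\le C_sr\rho/\nu$, which proves \eqref{geo:radial-normal-energy}. Finally,
\[
 \|A^{1/2}p\|_s^2-\|A^{1/2}v\|_s^2=\langle A^{1/2}(p-v),A^{1/2}(p+v)\rangle_s ,
\]
and $p\in E_{\le L}(w)$, $v\in E_{\le L}(v)$ have $\|A^{1/2}\cdot\|_s\le\sqrt{b\Lambda_L}\,r$. Combined with \eqref{geo:radial-normal-energy}, this gives \eqref{geo:radial-normal-heat}.

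The main obstacle is the tilt estimate, that is, the rigorous differentiation of $\mathcal P_{>L}(\gamma(\tau))$ along a fiber curve in terms of the shape operator, in the $A^{1/2}$ norm. The derivative in $\tau$ produces terms such as $\mathcal P_{>L}DW_{\le L}^*$-variations that are only in $H^{s-1}$ a priori. I would first try the clean identity $\langle \partial_\tau n,k\rangle_s=-\langle\mathfrak S_n\gamma',k\rangle_s$ for $k$ tangent, which follows by differentiating $\langle n(\tau),k(\tau)\rangle_s=0$. I would verify it on smooth tangent vectors using the $H^{2s-1}$ analytic basis from Proposition~\ref{asy:factor-flags}(iii), and then extend it by density exactly as at the end of the proof of Lemma~\ref{geo:shape}.
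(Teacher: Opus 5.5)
\textbf{Overall.} Your route differs from the paper's, and its analytic core is sound. The step that is not justified is the construction of the connecting curve $\gamma$.

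\textbf{Comparison with the paper.} The paper never builds an independent curve. It runs the fiber gradient flow $\widetilde u'=-\mathcal P_{>L}(\widetilde u)\widetilde u$ starting at $w$. This flow stays in the fiber, satisfies $\int_0^\infty\|q\|_s^2\,dt=\rho^2/2$, has length at most $\sqrt6\,\rho$, and converges to $v$ because $v$ is the only critical point of the norm on the fiber in $B_{2r}$. The paper then integrates $p'(t)=\mathfrak S_{p(t)}q(t)+\mathfrak I_L(\widetilde u(t))(q(t),q(t))$ for $p(t)=\mathcal P_{\le L}(\widetilde u(t))\widetilde u(t)$. Since $\mathfrak S_pq\perp p$, the identity $\frac{d}{dt}\|p\|_s^2=2\langle p,\mathfrak I_L(q,q)\rangle_s$ gives (i), and \eqref{geo:energy-shape} gives (ii).

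\textbf{Your decomposition.} You split $f-d$ and $p-v$ into two pieces. The chord term $\mathcal P_{\le L}(w)d$ is controlled directly by Lemma~\ref{geo:chord} at the base point $w$, with no curve needed. The tilt term is $\mathcal P_{>L}(w)v$. Your identities $f=d-\mathcal P_{\le L}(w)d+\mathcal P_{>L}(w)v$ and $p-v=\mathcal P_{\le L}(w)d-\mathcal P_{>L}(w)v$ are correct. For fixed $v$, set $\phi(\tau)=\mathcal P_{>L}(\gamma(\tau))v$ and $e=\mathcal P_{\le L}(\gamma)v$. Then $\phi'=\mathfrak S_{e}\gamma'+\mathfrak I_L(\gamma)(\gamma',\phi)$. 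With \eqref{geo:II-bound}, \eqref{geo:energy-shape} and \eqref{geo:normal-rayleigh}, this integrates to $\|\phi(1)\|_s\le C\epsilon\rho$ and $\|A^{1/2}\phi(1)\|_s\le C_sr\rho/\nu$. That requires $\gamma\subset B_{4r}$ and $\gamma$ of length $O(\rho)$. The differentiation you flag as the main obstacle is treated in the paper at exactly this level: it uses $D\mathcal P_{\le L}[h]p=-\mathfrak S_ph$ and $D\mathcal P_{\le L}[h]q=-\mathfrak I_L(h,q)$ and integrates in the graph norm of $A^{1/2}$. So that is not where the difficulty lies.

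\textbf{The gap: existence of $\gamma$.} Local analytic charts and \eqref{geo:II-bound} let you continue the graph of the fiber over the segment from $0$ to $h_1=\mathcal P_{>L}(v)(w-v)$ in $F_{>L}(v)$. The lift ends at some fiber point $w'$ with $w'-w\in E_{\le L}(v)$. Nothing in your sketch shows $w'=w$, that is, that the level set inside $B_{4r}$ has no second sheet over the same tangent coordinate. The continuation itself also needs two further facts. First, transversality $E_{\le L}(v)\cap F_{>L}(\gamma(s))=\{0\}$, which follows from the accumulated tilt staying below $1$. Second, a bound keeping the lift inside $B_{4r}$.

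\textbf{Two ways to close it.}
\begin{itemize}
\item[(a)] Prove the identification. The tilt bound gives $\|\mathcal P_{\le L}(w')(w-w')\|_s\ge(1-C\epsilon)\|w-w'\|_s$. Lemma~\ref{geo:chord} at the base point $w'$, with $\|w-w'\|_s<5r$, gives the upper bound $C\epsilon\|w-w'\|_s$. Hence $w=w'$ after a fixed decrease of $r/\nu$, uniform in $L$.
\item[(b)] More simply, take $\gamma$ to be the paper's fiber gradient flow from $w$. Its endpoint is identified with $v$ by the uniqueness of critical points from Proposition~\ref{geo:minima}(i), and your tilt integration then goes through verbatim.
\end{itemize}

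\textbf{Trade-off.} Your route isolates a reusable Lipschitz bound $\|\mathcal P_{\le L}(w)-\mathcal P_{\le L}(v)\|\lesssim\|w-v\|_s/(\nu\sqrt{\mu_L})$ along fibers. The paper's single ODE supplies the curve, its length, the endpoint identification and the exact identity $\int\|q\|_s^2=\rho^2/2$ all at once.
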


\begin{proof}
We follow the negative gradient flow of the squared norm along
the factor fiber and integrate the variation of its normal component.
By \eqref{geo:minimum-stationarity} and
Proposition~\ref{asy:factor-flags}(iii), we have
$v\in E_{\le L}(v)\subset H^{s+2}_\sigma$ and
$p\in E_{\le L}(w)\subset H^{s+2}_\sigma$, so the energy norms
in \eqref{geo:radial-normal-energy}--\eqref{geo:radial-normal-heat}
are well defined.

On the fiber through $w$, we set
$f(\widetilde u)=\frac12\|\widetilde u\|_s^2$, $p(\widetilde u)=\mathcal P_{\le L}(\widetilde u)\widetilde u$, and
$q(\widetilde u)=\mathcal P_{>L}(\widetilde u)\widetilde u$. By \eqref{geo:orthogonal-projections} and
\eqref{geo:shape-convention},
\[
 \nabla_{\rm fiber}f(\widetilde u)=q(\widetilde u),\qquad
 \operatorname{Hess}_{\rm fiber}f(\widetilde u)[h,h]
 =\|h\|_s^2+\langle p(\widetilde u),\mathfrak I_L(\widetilde u)(h,h)\rangle_s.
\]
We apply \eqref{geo:II-bound} to obtain
\begin{equation}\label{geo:fiber-hessian-bounds}
 \frac12\|h\|_s^2
 \le\operatorname{Hess}_{\rm fiber}f(\widetilde u)[h,h]
 \le\frac32\|h\|_s^2,\qquad h\in F_{>L}(\widetilde u),\quad \widetilde u\in B_r.
\end{equation}
We solve
\begin{equation}\label{geo:minimum-gradient-flow}
 \widetilde u'=-q(\widetilde u),\qquad \widetilde u(0)=w,
\end{equation}
and write $p(t)=p(\widetilde u(t))$, $q(t)=q(\widetilde u(t))$. By
\eqref{geo:fiber-hessian-bounds},
\begin{equation}\label{geo:gradient-decay}
 \frac d{dt}f(\widetilde u(t))=-\|q(t)\|_s^2,\qquad
 -3\|q(t)\|_s^2\le\frac d{dt}\|q(t)\|_s^2\le-\|q(t)\|_s^2.
\end{equation}
Since $q(\widetilde u)\in F_{>L}(\widetilde u)$, the flow stays in the fiber through $w$.
Moreover, \eqref{geo:gradient-decay} gives
\[
 \|\widetilde u(t)\|_s\le\|w\|_s<r,\qquad
 \|\widetilde u'(t)\|_s=\|q(t)\|_s\le e^{-t/2}\|q(0)\|_s.
\]
The bounded speed gives a strong limit at any finite maximal time,
which lies inside $B_r$ and permits continuation.
Thus the flow exists for all $t\ge0$, and the integrable speed
implies $\widetilde u(t)\to \widetilde u_\infty$ in $H^s$, with $q(\widetilde u_\infty)=0$.
The stationary-point characterization in the proof of
Proposition~\ref{geo:minima}(i) identifies $\widetilde u_\infty=v$;
hence $p(t)\to v$ as well.
Integrating \eqref{geo:gradient-decay}, we obtain
\begin{equation}\label{geo:gradient-length}
 \int_0^\infty\|q(t)\|_s^2\,dt=\rho^2/2,\qquad
 \int_0^\infty\|q(t)\|_s\,dt
 \le2\|q(0)\|_s\le\sqrt6\,\rho.
\end{equation}
Here the lower inequality in \eqref{geo:gradient-decay} gives
$\|q(0)\|_s^2\le3\rho^2/2$.

By \eqref{geo:shape-convention}, for $h\in F_{>L}(\widetilde u)$ we have
\[
 D\mathcal P_{\le L}(\widetilde u)[h]p(\widetilde u)=-\mathfrak S_{p(\widetilde u)}h,\qquad
 D\mathcal P_{\le L}(\widetilde u)[h]q(\widetilde u)=-\mathfrak I_L(\widetilde u)(h,q(\widetilde u)).
\]
Along \eqref{geo:minimum-gradient-flow}, we therefore obtain
\begin{equation}\label{geo:normal-gradient-derivative}
 p'(t)=\mathfrak S_{p(t)}q(t)
          +\mathfrak I_L(\widetilde u(t))(q(t),q(t)).
\end{equation}
We first estimate the squared norm of $p(t)$.
By tangential-normal orthogonality and \eqref{geo:II-bound},
\[
 \left|\frac d{dt}\|p(t)\|_s^2\right|
 =2|\langle p(t),\mathfrak I_L(\widetilde u(t))(q(t),q(t))\rangle_s|
 \le\frac{C_sr}{\nu\sqrt{\mu_L}}\|q(t)\|_s^2.
\]
We integrate using \eqref{geo:gradient-length}, $p(0)=p$, and
$p(t)\to v$ to prove \eqref{geo:radial-normal-norm}.

We next estimate $p'(t)$ in the energy norm.
By \eqref{geo:energy-shape}, \eqref{geo:normal-rayleigh},
\eqref{geo:II-bound}, and $\Lambda_L<\mu_L$, we have
\[
 \|A^{1/2}p'(t)\|_s
 \le\frac{C_s}{\nu}\|p(t)\|_s\|q(t)\|_s
   +\frac{C_s\sqrt{\Lambda_L}}{\nu\sqrt{\mu_L}}\|q(t)\|_s^2
 \le\frac{C_sr}{\nu}\|q(t)\|_s.
\]
We integrate in the graph norm of $A^{1/2}$ and apply
\eqref{geo:gradient-length} to prove \eqref{geo:radial-normal-energy}.
Finally, \eqref{geo:normal-rayleigh} gives
$\|A^{1/2}p\|_s+\|A^{1/2}v\|_s\le C_sr\sqrt{\Lambda_L}$.
Together with \eqref{geo:radial-normal-energy} and the
inequality $|\|u\|_s^2-\|v\|_s^2|
\le(\|u\|_s+\|v\|_s)\|u-v\|_s$, this proves
\eqref{geo:radial-normal-heat}.
\end{proof}

\subsection{Construction of compatible Sobolev coordinates}\label{geo:coordinates-section}
We use the nested minimum-norm sections to construct a coordinate
homeomorphism $\Psi$ in the strong $H^s$ topology that puts the
Navier--Stokes semiflow into lower-triangular form. We also derive
the block radial equations needed for the subsequent estimates
and blockwise construction of conjugacies.

We choose $L_j\in\sigma(A)$ with $L_{j+1}>L_j$ and
$L_j\to\infty$, and let
$P_{\le L_j}$ denote the spectral projection of $A$ onto the
sum of all eigenspaces with eigenvalues at most $L_j$.
We take $L_0=0$ and use the empty-prefix conventions
$P_{\le L_0}=0$, $F_{>L_0}(u)=X$, and $E_{\le L_0}(u)=\{0\}$.
We set
\begin{equation}\label{geo:block-spaces}
 M_{\le L_0}=\{0\},\qquad \Pi_{\le L_0}=0,\qquad
 Z_j=(P_{\le L_j}-P_{\le L_{j-1}})X.
\end{equation}
We equip $Z_j$ with the induced $H^s$ inner product and set
$\mathcal Z=\bigoplus_{j\ge1}Z_j$, with
$\|z\|_{\mathcal Z}^2=\sum_j\|z_j\|_{Z_j}^2$.
For $v\in M_{\le L_{j-1}}\cap B_r$, define the fiber
\[
 \mathcal B_j(v)=\{w\in M_{\le L_j}\cap B_r:\Pi_{\le L_{j-1}}w=v\}.
\]
For consecutive sections, set
\begin{equation}\label{geo:coordinate-domain}
 \mathcal D_j=\{(v,w):v\in M_{\le L_{j-1}}\cap B_r,
                    \ w\in\mathcal B_j(v)\}.
\end{equation}

\vskip0.05in
Next, we construct compatible coordinates whose squared block norms equal the successive energy differences.

\nopagebreak[4]
\begin{lemma}\label{geo:finite-disk}
There are smooth maps $c_j:\mathcal D_j\to Z_j$ with the following properties.
\begin{enumerate}
\renewcommand{\theenumi}{\roman{enumi}}
\renewcommand{\labelenumi}{(\theenumi)}
\item For every $v\in M_{\le L_{j-1}}\cap B_r$, the map
$w\mapsto c_j(v,w)$ is a diffeomorphism from $\mathcal B_j(v)$
onto the ball in $Z_j$ of radius $(r^2-\|v\|_{s}^2)^{1/2}$, and
\begin{equation}\label{geo:finite-energy}
 \|c_j(v,w)\|_{Z_j}^2=\|w\|_{s}^2-\|v\|_{s}^2.
\end{equation}
\item The finite coordinate map
\[
 w\longmapsto
 \bigl(c_i(\Pi_{\le L_{i-1}}w,\Pi_{\le L_i}w)\bigr)_{i\le j}
\]
is a diffeomorphism from $M_{\le L_j}\cap B_r$ onto
$B_r(\bigoplus_{i\le j}Z_i)$.
\end{enumerate}
\end{lemma}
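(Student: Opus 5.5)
The plan is to parametrize each fiber $\mathcal B_j(v)$ as a finite-dimensional manifold diffeomorphic to a ball in $Z_j$, using the parameter-dependent Morse lemma around the minimum $v$.

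\emph{Step 1: identify $\mathcal B_j(v)$ with a graph over $Z_j$.} Fix $v\in M_{\le L_{j-1}}\cap B_r$ and set $\xi_0=W_{\le L_{j-1}}(v)$. By Proposition~\ref{geo:minima}(i),(ii), a point $w\in M_{\le L_j}$ lies in $\mathcal B_j(v)$ exactly when $W_{\le L_{j-1}}(w)=\xi_0$. Moreover $w=m_{L_j}(W_{\le L_j}(w))$, where $m_{L_j}$ is the analytic minimizer map built in that proof. Write $W_{\le L_j}=(W_{\le L_{j-1}},W_{(L_{j-1},L_j]})$, where the second component takes values in $Z_j$. Then $\mathcal B_j(v)$ is the analytic image of the set $\{\eta\in Z_j:(\xi_0,\eta)\in W_{\le L_j}(B_{2r})\}$ under $\eta\mapsto m_{L_j}(\xi_0,\eta)$, restricted to $\|m_{L_j}\|_s<r$. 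This map is an immersion with left inverse $W_{(L_{j-1},L_j]}$. Hence $\mathcal B_j(v)$ is a smooth manifold of dimension $\dim Z_j$, the pair $(v,w)$ depends smoothly on $(\xi_0,\eta)$, and $\mathcal D_j$ is a smooth manifold.

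\emph{Step 2: the energy function on $\mathcal B_j(v)$.} Define $E_v(w)=\|w\|_s^2-\|v\|_s^2$ on $\mathcal B_j(v)$. By \eqref{geo:pythagoras} at level $L_{j-1}$ (note $v=\Pi_{\le L_{j-1}}w$), we get $E_v(w)\ge(1-\epsilon)\|w-v\|_s^2$. So $v$ is the unique minimum, $E_v$ is nonnegative, and $E_v$ is comparable to the squared distance from $v$. Next we show that $v$ is a nondegenerate critical point and that $E_v$ has no other critical points. The gradient of $E_v$ along $\mathcal B_j(v)$ is the tangential part of $2w$. Since $\mathcal B_j(v)\subset$ the $W_{\le L_{j-1}}$ fiber, a critical point $w\ne v$ would make the $\mathcal B_j$-tangential component of $w$ vanish. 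Here we use the gradient-flow picture of Lemma~\ref{geo:projection-comparison}: the negative fiber gradient flow of $f$ on the $L_{j-1}$-fiber through $w$ contracts to $v$. We then check, using $w\in E_{\le L_j}(w)$ and the curvature bound \eqref{geo:II-bound}, that $\langle w, w-v\rangle_s$ computed along the chord is $\ge\frac12\|w-v\|_s^2>0$. This is a radial monotonicity that rules out other critical points. Nondegeneracy at $v$ follows because the Hessian is $\langle(I+\mathfrak S_v)h,h\rangle$ with $\|\mathfrak S_v\|<1/2$.

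\emph{Step 3: Morse coordinates.} Apply the parameter-dependent Morse lemma at $w=v$, with the parameter $\xi_0$ ranging over an open set, to obtain smooth local coordinates $y\in Z_j$ with $E_v=\|y\|^2$. Extend these globally by the radial construction: for $w\ne v$, set $c_j(v,w)=\sqrt{E_v(w)}\,\theta(w)$, where $\theta(w)$ is the unit direction obtained by flowing $w$ along the normalized gradient flow $-\nabla E_v/|\nabla E_v|^2$ down to a small Morse sphere $\{E_v=\delta^2\}$ and reading off the Morse coordinate there, divided by $\delta$. The no-critical-point property makes this flow a smooth diffeomorphism between level sets. Thus $c_j(v,\cdot)$ is a diffeomorphism onto the ball $\{\|y\|^2<r^2-\|v\|_s^2\}$, which is the image of $\|w\|_s<r$, and it satisfies \eqref{geo:finite-energy}. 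Smoothness in $v$ comes from smooth dependence of the flows and the Morse lemma on the parameter $\xi_0$, which is itself smooth in $v$.

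\emph{Step 4: part (ii).} This follows by induction on $j$ and telescoping. The map $w\mapsto(\text{prefix for }\Pi_{\le L_{j-1}}w,\ c_j(\Pi_{\le L_{j-1}}w,w))$ is a fibered diffeomorphism over the level-$(j-1)$ diffeomorphism. The squared norms sum to $\|w\|_s^2$ by \eqref{geo:finite-energy} and $\Pi_{\le L_0}=0$, so the image is exactly $B_r(\bigoplus_{i\le j}Z_i)$.

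The main obstacle is Step 2: proving that $E_v$ has no critical points other than $v$ on all of $\mathcal B_j(v)$ (global radial monotonicity), uniformly in $j$. I would first try the chord estimate \eqref{geo:chord-bound} applied at level $L_{j-1}$ with normal vector $v$, combined with $\epsilon_L<1/2$.
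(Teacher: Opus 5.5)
Your Step~3 produces a homeomorphism satisfying \eqref{geo:finite-energy}, but in general not a diffeomorphism.

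\textbf{Step~3.} The lines of $\nabla E_v/\|\nabla E_v\|_s^2$ are gradient lines for the induced $H^s$ metric. In a Morse chart $y$ with $E_v=\|y\|^2$, this metric, call it $g$, is not Euclidean. At $v$ it is tied to the Hessian through the compression of $I+\mathfrak S_v$ to $T_v\mathcal B_j(v)$, which in general is not a multiple of the identity. So gradient lines are not rays in $y$, and both readings of your construction fail:
\begin{itemize}
\item If you set $c_j=\sqrt{E_v}\,\theta$ all the way down to $v$, the gradient lines enter $v$ tangent to eigendirections of $g(0)$, while their images fill every ray. Then $c_j(v,\cdot)$ is not differentiable at $v$.
\item If you use $y$ itself inside $\{E_v<\delta^2\}$ and the cone outside, then on the gluing sphere the derivatives in the gradient direction are $g^{-1}y/(2\langle y,g^{-1}y\rangle)$ and $y/(2\|y\|^2)$. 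These differ unless $y$ is an eigenvector of $g$.
\end{itemize}
Smoothness matters later, because the prefix fields $V_j$ of Theorem~\ref{geo:chart}(iii) are obtained by conjugating with these charts.

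The missing idea is to flow along a field $Y$ with $dE_v[Y]=2E_v$ that coincides with the Morse Euler field $Y_0=[D\kappa_v]^{-1}\kappa_v$ near $v$. The paper takes $Y=\chi Y_0+(1-\chi)\,2E_v\nabla E_v/\|\nabla E_v\|_s^2$. This still scales $E_v$ exactly, because the condition $dE_v[Y]=2E_v$ is affine in $Y$. It then sets $c_j(v,w)=e^t\kappa_v(\phi^v_{-t}w)$, which is independent of $t$ once $t$ is large.

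You also need compact sublevel sets of $E_v$ on $\mathcal B_j(v)$, both to continue these flows and to reach the whole ball of radius $(r^2-\|v\|_s^2)^{1/2}$. Your bound $E_v\ge(1-\epsilon)\|w-v\|_s^2$ gives only boundedness. The paper instead uses that $M_{\le L}\cap\overline B_\rho=\Pi_{\le L}(\overline B_\rho)$ is compact, by weak compactness and Proposition~\ref{asy:finite-flow}(iii).

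\textbf{Step~2.} This step, which you flag as the main obstacle, is also not closed. The inequality $\langle w,w-v\rangle_s>0$ does not contradict $w\perp T_w\mathcal B_j(v)$, since $w-v$ is a chord, not a tangent vector of $\mathcal B_j(v)$.

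The missing link is the splitting $F_{>L_{j-1}}(w)=F_{>L_j}(w)\oplus T_w\mathcal B_j(v)$. It follows from $\ker D\Pi_{\le L_j}(w)=F_{>L_j}(w)$, $T_w\mathcal B_j(v)\subset\operatorname{Ran}D\Pi_{\le L_j}(w)$, and a dimension count; your Step~1 chart also gives it. Since $w\perp F_{>L_j}(w)$ for $w\in M_{\le L_j}$, a critical point of $E_v$ then satisfies $w\in E_{\le L_{j-1}}(w)$. The stationary-point characterization in the proof of Proposition~\ref{geo:minima}(i) immediately gives $w=\Pi_{\le L_{j-1}}w=v$. No chord or curvature estimate, and no uniformity in $j$, is required.

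With the splitting, your chord route can also be completed: apply \eqref{geo:chord-bound} at base point $w$ to show the normal part of $w-v$ is quadratically small. It is, however, a detour.

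Steps~1 and~4 are sound. The graph chart $\eta\mapsto m_{L_j}(\xi_0,\eta)$ is a clean substitute for the paper's parallel-transport frame and fiberwise exponential map in making the local Morse chart depend smoothly on $v$.
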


\begin{proof}
We argue by induction on $j$, starting with $M_{\le L_0}=\{0\}$.
We prove (i) in three steps and then assemble the finite coordinates.

\emph{Step 1.} We show that each fiber energy has compact sublevel
sets and a unique nondegenerate critical point.
By Proposition~\ref{asy:finite-flow}(iii), the finite factor
$W_{\le L}$ maps the weakly compact ball $\overline B_\rho$
into a compact finite-dimensional set for $0<\rho<2r$.
Uniqueness in Proposition~\ref{geo:minima}(i) makes the local analytic
minimum sections agree on overlaps, defining a continuous section
$m_L$ on $W_{\le L}(B_{2r})$, with $\Pi_{\le L}=m_L\circ W_{\le L}$.
Since $\|\Pi_{\le L}u\|_s\le\|u\|_s$ and $\Pi_{\le L}$ fixes $M_{\le L}$, we obtain
\begin{equation}\label{geo:minimum-compactness}
 M_{\le L}\cap\overline B_\rho
 =\Pi_{\le L}(\overline B_\rho)\quad\text{is compact in }H^s.
\end{equation}
We verify the smooth fiber structure explicitly.
The identities $W_{\le L}\circ m_L=I$ and
$\Pi_{\le L}=m_L\circ W_{\le L}$ give
\[
 DW_{\le L}(m_L(\xi))Dm_L(\xi)=I,\qquad
 \ker D\Pi_{\le L}(u)=F_{>L}(u).
\]
Thus $m_L$ parametrizes $M_{\le L}$ locally, and
$\operatorname{Ran}D\Pi_{\le L}(w)=T_wM_{\le L}$ for $w\in M_{\le L}$.
For $j\ge2$, compatibility of the factors gives
\[
 \Pi_{\le L_{j-1}}\bigl(m_{L_j}(\xi)\bigr)
 =m_{L_{j-1}}\bigl(P_{\le L_{j-1}}\xi\bigr).
\]
In these local coordinates, the restriction of $\Pi_{\le L_{j-1}}$
to $M_{\le L_j}$ is the surjective linear truncation
$P_{\le L_{j-1}}:P_{\le L_j}X\to P_{\le L_{j-1}}X$.
It is therefore a submersion, and $\mathcal B_j(v)$ is a smooth
fiber of dimension $\dim Z_j$. For $j=1$, the same conclusion
holds because $M_{\le L_0}=\{0\}$.
For $h\in F_{>L_{j-1}}(w)$, decomposing
$h=(I-D\Pi_{\le L_j}(w))h+D\Pi_{\le L_j}(w)h$ and using
$\ker D\Pi_{\le L_j}(w)=F_{>L_j}(w)$ and \eqref{geo:nested}, we have 
\begin{equation}\label{geo:fiber-tangent-split}
 F_{>L_{j-1}}(w)=F_{>L_j}(w)\oplus T_w\mathcal B_j(v).
\end{equation}
We set $f_v(w)=\|w\|_s^2-\|v\|_s^2$ on $\mathcal B_j(v)$.
By \eqref{geo:minimum-compactness}, its sublevel sets below
$r^2-\|v\|_s^2$ are compact. Since $w\in M_{\le L_j}$,
\eqref{geo:minimum-stationarity} gives $w\perp_s F_{>L_j}(w)$.
Thus \eqref{geo:fiber-tangent-split} implies
\[
 df_v(w)=0\quad\Longrightarrow\quad w\in E_{\le L_{j-1}}(w).
\]
The stationary-point characterization in the proof of
Proposition~\ref{geo:minima}(i) then gives $w=v$;
for $j=1$, this follows from $E_{\le L_0}=\{0\}$.
We apply \eqref{geo:pythagoras} along curves through $v$ to obtain
\begin{equation}\label{geo:fiber-minimum-hessian}
 D^2f_v(v)[h,h]\ge2(1-\epsilon_{L_{j-1}})\|h\|_s^2,
 \qquad h\in T_v\mathcal B_j(v),
\end{equation}
where $\epsilon_{L_0}=0$. Thus $v$ is the unique critical point
and a nondegenerate minimum.

\emph{Step 2.} We construct local Morse coordinates depending
smoothly on the base point $v$.
By induction, $M_{\le L_{j-1}}\cap B_r$ is diffeomorphic to a ball.
To identify the vertical tangent space at zero, differentiate
$\Pi_{\le L}u=\mathcal P_{\le L}(\Pi_{\le L}u)\Pi_{\le L}u$ and
$W_{\le L}\Pi_{\le L}=W_{\le L}$ at zero.
By \eqref{asy:derivative-at-zero} and \eqref{asy:normal-projection},
we obtain
\[
 D\Pi_{\le L}(0)=P_{\le L}D\Pi_{\le L}(0)=P_{\le L},\qquad
 T_0\mathcal B_j(0)=Z_j.
\]
Parallel transport of a basis of $Z_j$ along radial segments
in the base coordinates gives a smooth frame of
$v\mapsto T_v\mathcal B_j(v)$; smooth parameter dependence
follows from the finite-dimensional ODE theory
\cite[Chapter~5]{HartmanODE}.
The fiberwise exponential map then gives a smooth chart
$w=\theta(v,\zeta)$, $\theta(v,0)=v$, for $\|\zeta\|_{Z_j}<\delta(v)$,
where $\delta(v)>0$ may vary smoothly with the base point.
Using Taylor's formula, we have
\[
 f_v(w)=\langle G(v,\zeta)\zeta,\zeta\rangle_{Z_j},\qquad
 G(v,\zeta)=\int_0^1(1-t)
 D_{\zeta}^2\bigl[f_v\circ\theta(v,\cdot)\bigr](t\zeta)\,dt.
\]
By \eqref{geo:fiber-minimum-hessian}, we have that $G(v,\zeta)>0$ near $\zeta=0$.
We set
\begin{equation}\label{geo:local-morse-coordinate}
 \kappa_v(w)=G(v,\zeta)^{1/2}\zeta,\qquad
 \|\kappa_v(w)\|_{Z_j}^2=f_v(w).
\end{equation}
We apply the inverse-function theorem with parameters to obtain
a local inverse $\kappa_v^{-1}$ from a neighborhood of $0$ in $Z_j$
onto a neighborhood of $v$ in $\mathcal B_j(v)$, depending smoothly
on $v$.

\emph{Step 3.} We extend $\kappa_v$ to the entire fiber
$\mathcal B_j(v)$ by a flow that scales its energy. We use the induced $H^s$ inner product on
$T_w\mathcal B_j(v)$ and define the fiber gradient by
\[
 \langle\nabla^{\rm v}f_v(w),h\rangle_s=df_v(w)[h],
 \qquad h\in T_w\mathcal B_j(v).
\]
We define two tangent vector fields:
\[
 Y_0(v,w)=[D\kappa_v(w)]^{-1}\kappa_v(w)
 \quad\text{near }w=v,\qquad
 Y_1(v,w)=\frac{2f_v(w)\nabla^{\rm v}f_v(w)}
 {\|\nabla^{\rm v}f_v(w)\|_s^2}
 \quad(w\ne v).
\]
The uniqueness of the critical point makes $Y_1$ well defined.
By \eqref{geo:local-morse-coordinate}, both vector fields satisfy
$df_v[Y_i]=2f_v$.
We choose a smooth cutoff $\chi(v,w)\in[0,1]$ equal to $1$
near $w=v$ and supported in the domain of $\kappa_v$, and set
\[
 Y(v,w)=\chi(v,w)Y_0(v,w)+(1-\chi(v,w))Y_1(v,w),
 \qquad Y(v,v)=0.
\]
We denote the flow of $Y(v,\cdot)$ by $\phi_t^v$.
Since $df_v[Y]=2f_v$, we obtain
\begin{equation}\label{geo:coordinate-scaling-flow}
 Yf_v=2f_v,\qquad
 f_v(\phi_{-t}^v w)=e^{-2t}f_v(w).
\end{equation}
By \eqref{geo:minimum-compactness}, the negative orbit remains
in a compact sublevel set. The continuation theorem for finite-dimensional ODEs
\cite[Chapter~2]{HartmanODE} extends $\phi_{-t}^v w$
to all $t\ge0$. By \eqref{geo:pythagoras} and
\eqref{geo:coordinate-scaling-flow},
\[
 \|\phi_{-t}^v w-v\|_s^2
 \le\frac{e^{-2t}f_v(w)}{1-\epsilon_{L_{j-1}}}\longrightarrow0.
\]
For sufficiently large $t$, we define
\begin{equation}\label{geo:global-fiber-coordinate}
 c_j(v,w)=e^t\kappa_v(\phi_{-t}^v w).
\end{equation}
Once the negative orbit lies in the neighborhood where $\chi=1$,
we have $D\kappa_vY=\kappa_v$ and hence
\[
 \frac d{dt}\bigl[e^t\kappa_v(\phi_{-t}^v w)\bigr]=0.
\]
Thus \eqref{geo:global-fiber-coordinate} is independent of the
chosen sufficiently large $t$. By \eqref{geo:local-morse-coordinate}
and \eqref{geo:coordinate-scaling-flow}, we obtain
$\|c_j(v,w)\|_{Z_j}^2=f_v(w)$, which proves
\eqref{geo:finite-energy}.

For a vector $z\in Z_j$ with
$\|z\|_{Z_j}^2<r^2-\|v\|_s^2$, we choose $t$ sufficiently large
that $\kappa_v^{-1}(e^{-t}z)$ lies in a coordinate ball where
$Y=Y_0$, and set
\[
 w=\phi_t^v\bigl(\kappa_v^{-1}(e^{-t}z)\bigr).
\]
By \eqref{geo:local-morse-coordinate} and
\eqref{geo:coordinate-scaling-flow}, for $0\le q\le t$ we have
\[
 f_v\bigl(\phi_q^v(\kappa_v^{-1}(e^{-t}z))\bigr)
 =e^{2(q-t)}\|z\|_{Z_j}^2\le\|z\|_{Z_j}^2.
\]
We use \eqref{geo:minimum-compactness} and the ODE continuation
theorem to obtain existence up to time $t$. The flow identity
and \eqref{geo:global-fiber-coordinate} show that this formula
inverts $w\mapsto c_j(v,w)$.
For each fixed $(v,w)$, choose $t$ so that $\phi_{-t}^v w$
lies in a smaller Morse coordinate ball where $Y=Y_0$.
Continuous dependence of finite-time flows ensures that the same
$t$ works in a neighborhood of $(v,w)$; subsequent negative
trajectories stay in this coordinate ball because their Morse
coordinates contract radially. The inverse formula likewise uses
one fixed $t$ locally in $(v,z)$.
Smooth dependence of finite-time flows on initial data and parameters
\cite[Chapter~5]{HartmanODE} now shows that $c_j$ and its fiber
inverse are smooth, proving (i).

\emph{Step 4.} We prove (ii) by assembling the fiber coordinates.
By induction and \eqref{geo:nested}, the first $j-1$ blocks
determine $v=\Pi_{\le L_{j-1}}w$, and (i) recovers $w$ smoothly
from $v$ and the last block. Moreover, \eqref{geo:finite-energy} gives
\[
 \begin{aligned}
 \sum_{i\le j}\|c_i(\Pi_{\le L_{i-1}}w,\Pi_{\le L_i}w)\|_{Z_i}^2
 &=\sum_{i\le j}
 \bigl(\|\Pi_{\le L_i}w\|_s^2-\|\Pi_{\le L_{i-1}}w\|_s^2\bigr)\\
 &=\|w\|_s^2.
 \end{aligned}
\]
Conversely, for any block vector of norm less than $r$, the
inductively recovered $v$ leaves exactly the radius
$(r^2-\|v\|_s^2)^{1/2}$ allowed by (i) for the last block.
Thus the assembled map is a diffeomorphism onto the stated ball,
proving (ii).
\end{proof}

Using Lemma~\ref{geo:finite-disk}, define
\begin{equation}\label{geo:coordinate-map}
 \Psi_j(u)=c_j(\Pi_{\le L_{j-1}}u,\Pi_{\le L_j}u),\qquad
 \Psi(u)=(\Psi_j(u))_{j\ge1}.
\end{equation}
We denote by $\pi_{\le j}$ the orthogonal projection onto the first
$j$ blocks in the Hilbert sum of
\eqref{geo:block-spaces}.

\vskip0.05in
\Needspace{6\baselineskip}
The next theorem assembles these compatible coordinates into a
norm-preserving homeomorphism in the strong $H^s$ topology and
transforms the semiflow into a lower triangular system.
\nopagebreak[4]
\begin{theorem}\label{geo:chart}
The map $\Psi$ defined by \eqref{geo:coordinate-map}
has the following properties.
\begin{enumerate}
\renewcommand{\theenumi}{\roman{enumi}}
\renewcommand{\labelenumi}{(\theenumi)}
\item
The map
\[
 \Psi:B_r(X)\longrightarrow B_r(\mathcal Z)
\]
is a homeomorphism with respect to the norms
$\|\cdot\|_s$ on $X$ and $\|\cdot\|_{\mathcal Z}$ on
$\mathcal Z$.
\item
For every $j\ge1$, the map
\[
 u\longmapsto(\Psi_1(u),\ldots,\Psi_j(u))
 \quad\text{from }B_r(X)\text{ to }\bigoplus_{i=1}^j Z_i
\]
is smooth. Moreover,
\begin{align}
 \|\Psi(u)\|_{\mathcal Z}^2
 &=\|u\|_s^2,
 \label{geo:isometry-origin}\\
 \Psi\Pi_{\le L_j}
 &=\pi_{\le j}\Psi,
 \label{geo:chart-prefix}\\
 \|u-\Pi_{\le L_j}u\|_s^2
 &\le2\sum_{i>j}\|\Psi_i(u)\|_{Z_i}^2.
 \label{geo:chart-tail}
\end{align}
\item
For every $j\ge1$, there exists a smooth vector field
\[
 V_j:B_r\Bigl(\bigoplus_{i=1}^j Z_i\Bigr)\longrightarrow Z_j
\]
such that, for $z(t)=\Psi(S_tu)$,
\[
 \dot z_j(t)=V_j(z_1(t),\ldots,z_j(t)).
\]
\end{enumerate}
\end{theorem}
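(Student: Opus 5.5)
I would prove Theorem~\ref{geo:chart} in the order (ii), (iii), (i): the algebraic identities in (ii) come directly from Lemma~\ref{geo:finite-disk} and Proposition~\ref{geo:minima}, and the homeomorphism property then rests on them.

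\emph{Identities in (ii).} Smoothness of the prefix map $u\mapsto(\Psi_1(u),\ldots,\Psi_j(u))$ follows because it is the composition of the analytic map $\Pi_{\le L_j}$ (Proposition~\ref{geo:minima}(i)) with the finite coordinate diffeomorphism of Lemma~\ref{geo:finite-disk}(ii). The nesting \eqref{geo:nested} gives $\Pi_{\le L_{i-1}}\Pi_{\le L_j}=\Pi_{\le L_{i-1}}$ and $\Pi_{\le L_i}\Pi_{\le L_j}=\Pi_{\le L_i}$ for $i\le j$, hence $\Psi_i(\Pi_{\le L_j}u)=\Psi_i(u)$. For $i>j$, both arguments of $c_i$ equal $\Pi_{\le L_j}u$, so \eqref{geo:finite-energy} forces $\Psi_i(\Pi_{\le L_j}u)=0$; this is \eqref{geo:chart-prefix}. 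Telescoping \eqref{geo:finite-energy} gives
\[
\sum_{i\le j}\|\Psi_i(u)\|^2=\|\Pi_{\le L_j}u\|_s^2 .
\]
Letting $j\to\infty$ and using \eqref{geo:minimum-limit} yields \eqref{geo:isometry-origin}. The tail bound \eqref{geo:chart-tail} is then \eqref{geo:pythagoras}, with $1-\epsilon_L\ge1/2$, applied to $\|u\|_s^2-\|\Pi_{\le L_j}u\|_s^2=\sum_{i>j}\|\Psi_i(u)\|^2$.

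\emph{Triangular form (iii).} Write $\Phi_j$ for the prefix diffeomorphism of Lemma~\ref{geo:finite-disk}(ii), and recall from Step~1 of that lemma that $\Pi_{\le L_j}=m_{L_j}\circ W_{\le L_j}$. Then the prefix equals
\[
\pi_{\le j}\Psi(u)=\Phi_j\bigl(m_{L_j}(W_{\le L_j}(u))\bigr)=:K_j\bigl(W_{\le L_j}(u)\bigr),
\]
where $K_j$ is a diffeomorphism from $W_{\le L_j}(B_r)$ onto $B_r(\bigoplus_{i\le j}Z_i)$, since $m_{L_j}$ parametrizes $M_{\le L_j}$. The covariance \eqref{asy:covariance} gives
\[
\pi_{\le j}\Psi(S_tu)=K_j\Phi_{L_j}^t K_j^{-1}\bigl(\pi_{\le j}\Psi(u)\bigr).
\]
So the prefix evolves by the conjugated polynomial flow, whose generator is the smooth field $DK_j\,(\text{RHS of }\eqref{asy:normal-ODE})\circ K_j^{-1}$. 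The $j$th component of this field is $V_j$. The positive invariance of $B_r$ keeps trajectories inside the domain.

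\emph{Homeomorphism (i), the main obstacle.} By \eqref{geo:isometry-origin}, $\Psi$ maps $B_r(X)$ into $B_r(\mathcal Z)$. Injectivity holds because $\Psi(u)=\Psi(\widetilde u)$ forces $\Pi_{\le L_j}u=\Pi_{\le L_j}\widetilde u$ for every $j$, and \eqref{geo:minimum-limit} then gives $u=\widetilde u$.

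For surjectivity, take $z\in B_r(\mathcal Z)$ and let $w_j\in M_{\le L_j}$ be the preimage of $\pi_{\le j}z$ under Lemma~\ref{geo:finite-disk}(ii). The construction is compatible: $\Pi_{\le L_k}w_j=w_k$ for $k\le j$. Applying \eqref{geo:pythagoras} on the $L_k$-fiber gives
\[
\|w_j-w_k\|_s^2\le2\sum_{k<i\le j}\|z_i\|^2 ,
\]
so $(w_j)$ is Cauchy and converges to some $u$ with $\|u\|_s=\|z\|<r$. Continuity of $\Pi_{\le L_k}$ gives $\Pi_{\le L_k}u=w_k$, hence $\Psi(u)=z$.

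Continuity of $\Psi$ is the delicate point, because the blocks must be controlled uniformly. Let $u_n\to u$. The set $K=\{u_n\}\cup\{u\}$ is compact, so the tails $\sum_{i>j}\|\Psi_i(u_n)\|^2=\|u_n\|^2-\|\Pi_{\le L_j}u_n\|^2$ are small uniformly in $n$, by the Dini argument in Proposition~\ref{geo:minima}(iii). Combined with continuity of each finite prefix, this gives $\Psi(u_n)\to\Psi(u)$ in $\mathcal Z$.

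For continuity of $\Psi^{-1}$, let $z^n\to z$. The tails $\sum_{i>j}|z^n_i|^2$ are uniformly small. Write
\[
u_n-u=\bigl(u_n-\Pi_{\le L_j}u_n\bigr)+\bigl(\Pi_{\le L_j}u_n-\Pi_{\le L_j}u\bigr)+\bigl(\Pi_{\le L_j}u-u\bigr).
\]
The first and last terms are small by \eqref{geo:chart-tail}. The middle term tends to zero for fixed $j$, because $\Pi_{\le L_j}u_n=\Phi_j^{-1}(\pi_{\le j}z^n)$ and $\Phi_j^{-1}$ is a finite-dimensional diffeomorphism. This $\varepsilon/3$ argument, with uniformity supplied by \eqref{geo:chart-tail}, is where I expect the main care to be needed.
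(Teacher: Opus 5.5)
Your proposal is correct and follows the paper's proof essentially step by step: the telescoped identity $\sum_{i\le j}\|\Psi_i(u)\|_{Z_i}^2=\|\Pi_{\le L_j}u\|_s^2$ with \eqref{geo:minimum-limit} and \eqref{geo:pythagoras} for (ii), the Cauchy sequence $w_j$ for surjectivity, the $\varepsilon/3$ tail argument via \eqref{geo:chart-tail} for continuity of $\Psi^{-1}$, and $K_j=\Psi_{\le j}\circ m_{L_j}$ together with the covariance \eqref{asy:covariance} for (iii). The only deviation is in the continuity of $\Psi$: the paper combines blockwise (hence weak) convergence with $\|\Psi(u_n)\|_{\mathcal Z}=\|u_n\|_s\to\|u\|_s$, whereas you obtain uniformly small tails from the Dini argument of Proposition~\ref{geo:minima}(iii) on the compact set $\{u_n\}\cup\{u\}$, and both arguments are valid.
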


\begin{proof}
We first prove (ii). By \eqref{geo:finite-energy} and \eqref{geo:minimum-limit}, we have
\begin{equation}\label{geo:coordinate-prefix-energy}
 \sum_{i\le j}\|\Psi_i(u)\|_{Z_i}^2
 =\|\Pi_{\le L_j}u\|_s^2\longrightarrow\|u\|_s^2.
\end{equation}
This proves \eqref{geo:isometry-origin}.
The nesting identity \eqref{geo:nested} and the definition
\eqref{geo:coordinate-map} give \eqref{geo:chart-prefix}.
Combining \eqref{geo:pythagoras} with
\eqref{geo:coordinate-prefix-energy} gives \eqref{geo:chart-tail}.
Smoothness of each finite prefix follows from
Lemma~\ref{geo:finite-disk}(i) and
Proposition~\ref{geo:minima}(i), completing (ii).

We next prove (i), starting with surjectivity.
For $z\in B_r(\mathcal Z)$, applying Lemma~\ref{geo:finite-disk}(ii),
we  obtain the unique $w_j\in M_{\le L_j}\cap B_r$ satisfying
$\Psi_i(w_j)=z_i$ for $i\le j$.
By \eqref{geo:chart-prefix} and the injectivity in
Lemma~\ref{geo:finite-disk}(ii), we have
\[
 \Pi_{\le L_j}w_m=w_j,\qquad m\ge j.
\]
Using \eqref{geo:pythagoras} and \eqref{geo:coordinate-prefix-energy}, we get
\begin{equation}\label{geo:finite-inverse-cauchy}
 \|w_m-w_j\|_s^2
 \le2\bigl(\|w_m\|_s^2-\|w_j\|_s^2\bigr)
 =2\sum_{j<i\le m}\|z_i\|_{Z_i}^2\longrightarrow0
 \quad(m\ge j\to\infty).
\end{equation}
Thus $w_j\to u$ in $H^s$, with
$\|u\|_s=\|z\|_{\mathcal Z}<r$ and $\Psi_i(u)=z_i$ for every $i$.
This proves surjectivity. If $\Psi(u)=\Psi(\widetilde u)$,
Lemma~\ref{geo:finite-disk}(ii) and \eqref{geo:chart-prefix}
give $\Pi_{\le L_j}u=\Pi_{\le L_j}\widetilde u$ for every $j$; letting $j\to\infty$
in \eqref{geo:minimum-limit} gives $u=\widetilde u$.

We prove continuity of $\Psi$ and $\Psi^{-1}$.
If $u_n\to u$ in $H^s$, using the continuity in (ii) and
\eqref{geo:isometry-origin}, we obtain
\[
 \Psi(u_n)\rightharpoonup\Psi(u),\qquad
 \|\Psi(u_n)\|_{\mathcal Z}=\|u_n\|_s
 \longrightarrow\|u\|_s=\|\Psi(u)\|_{\mathcal Z}.
\]
Weak convergence together with convergence of norms yields
$\Psi(u_n)\to\Psi(u)$ in $\mathcal Z$; see
\cite[Chapter~5]{Brezis2011}.
Conversely, let $z_n\to z$ in $\mathcal Z$, and set
$u_n=\Psi^{-1}z_n$, $u=\Psi^{-1}z$.
By Lemma~\ref{geo:finite-disk}(ii) and \eqref{geo:chart-prefix}, we obtain that
$\Pi_{\le L_j}u_n\to\Pi_{\le L_j}u$ for each $j$.
Then, applying \eqref{geo:chart-tail} and the triangle inequality, we have
\[
 \limsup_{n\to\infty}\|u_n-u\|_s
 \le2\sqrt2\big(\sum_{i>j}\|z_i\|_{Z_i}^2\big)^{1/2}
 \longrightarrow0\qquad(j\to\infty).
\]
Thus $\Psi^{-1}$ is continuous, which completes the proof of (i).

We finally prove (iii). Write $\Psi_{\le j}=(\Psi_1,\ldots,\Psi_j)$
and let $m_{L_j}$ be the analytic minimum-norm section constructed
in the proof of Proposition~\ref{geo:minima}(i).
By that proposition and Lemma~\ref{geo:finite-disk}(ii), the map
\[
 K_j=\Psi_{\le j}\circ m_{L_j}:
 W_{\le L_j}(B_r(X))
 \longrightarrow B_r\Bigl(\bigoplus_{i\le j}Z_i\Bigr)
\]
is a smooth diffeomorphism, and \eqref{geo:chart-prefix} gives
$\Psi_{\le j}=K_j\circ W_{\le L_j}$.
For $z(t)=\Psi(S_tu)$, covariance \eqref{asy:covariance} therefore yields
\[
 z_{\le j}(t)
 =K_j\bigl(\Phi_{L_j}^t(K_j^{-1}(z_{\le j}(0)))\bigr),
 \qquad z_{\le j}=(z_1,\ldots,z_j).
\]
By Proposition~\ref{asy:finite-flow}(i), this defines a smooth
finite-dimensional semiflow: the level-$L_j$ quotient dynamics
expressed in the coordinates $K_j$. The last component of its generator
defines $V_j$, proving (iii).
\end{proof}

The norm identity \eqref{geo:isometry-origin} and tail estimate
\eqref{geo:chart-tail} address the possible distortion of Sobolev
geometry noted in the introduction: the coordinates preserve distance
to zero and control reconstruction errors by coordinate tails.

We now identify the radial dynamics of these coordinates.
We first verify that the local parametrizations of $M_{\le L}$ are analytic
with values in $H^{2s-1}_\sigma\subset H^{s+2}_\sigma$.
By Proposition~\ref{asy:factor-flags}(iii), we choose a local basis
$e_1(v),\ldots,e_d(v)$ of $E_{\le L}(v)$ depending analytically on $v$
with values in $H^{2s-1}_\sigma$.
Since $v\in E_{\le L}(v)$ by \eqref{geo:minimum-stationarity}, we write
\[
 v=\sum_{i=1}^d \alpha_i(v)e_i(v),\qquad
 \sum_{j=1}^d\langle e_j(v),e_i(v)\rangle_s \alpha_j(v)
 =\langle v,e_i(v)\rangle_s,\quad 1\le i\le d.
\]
We invert the Gram matrix to obtain analytic coefficients $\alpha_i(v)$.
Composing this representation with the local analytic parametrizations
from Proposition~\ref{geo:minima}(i) proves the assertion.
We set $V(u)=-\nu Au-B(u,u)$ on $H^{s+2}_\sigma$ and define
\begin{equation}\label{geo:finite-vector-field}
 V_L(v)=D\Pi_{\le L}(v)V(v),\qquad v\in M_{\le L}\cap B_r.
\end{equation}
\vskip0.05in
In the next proposition, we give the energy identities for the minimum-norm representatives
and their coordinate blocks.

\nopagebreak[4]
\begin{proposition}\label{geo:radial-factor}
Let $u\in B_r(X)$.
\begin{enumerate}
\renewcommand{\theenumi}{\roman{enumi}}
\renewcommand{\labelenumi}{(\theenumi)}
\item
For every $L\ge1$, the curve $v(t)=\Pi_{\le L}(S_tu)$ satisfies
\[
 \dot v(t)=V_L(v(t)),\qquad v(0)=\Pi_{\le L}u,
\]
where $V_L$ is defined by \eqref{geo:finite-vector-field}.
Moreover, for $t\ge0$,
\begin{equation}\label{geo:factor-radial}
\begin{aligned}
 \frac12\frac d{dt}\|v(t)\|_s^2
 &=-\nu\|A^{1/2}v(t)\|_s^2
   -\langle v(t),B(v(t),v(t))\rangle_s\\
 &\le-\frac{\nu}{2}\|A^{1/2}v(t)\|_s^2.
\end{aligned}
\end{equation}
\item
For every $j\ge1$, we set
\[
 v(t)=\Pi_{\le L_{j-1}}(S_tu),\qquad
 w(t)=\Pi_{\le L_j}(S_tu),\qquad
 z_j(t)=\Psi_j(S_tu).
\]
Then, for $t\ge0$,
\begin{equation}\label{geo:block-radial}
\begin{aligned}
 \frac12\frac d{dt}\|z_j(t)\|_{Z_j}^2
 &=-\nu\bigl(\|A^{1/2}w(t)\|_s^2
             -\|A^{1/2}v(t)\|_s^2\bigr)\\
 &\quad-\langle w(t),B(w(t),w(t))\rangle_s
       +\langle v(t),B(v(t),v(t))\rangle_s.
\end{aligned}
\end{equation}
\end{enumerate}
\end{proposition}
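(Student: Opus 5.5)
The plan is to treat $v(t)=\Pi_{\le L}(S_tu)$ through the factorization $\Pi_{\le L}=m_L\circ W_{\le L}$ from the proof of Lemma~\ref{geo:finite-disk}. Combined with the covariance \eqref{asy:covariance}, this gives $v(t)=m_L(\Phi_L^t(W_{\le L}(u)))$. The right-hand side is a smooth finite-dimensional curve, since $m_L$ is analytic and $\Phi_L^t$ is a smooth flow. So $v$ is $C^1$ in $t$ with values in $H^{s+2}_\sigma$, using the analytic $H^{2s-1}_\sigma$-valued parametrization of $M_{\le L}$ noted just before \eqref{geo:finite-vector-field}.

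To identify the derivative I would pass to positive times, where $S_tu\in H^{s+2}_\sigma$ and $t\mapsto S_tu$ is $C^1$ into $X$. There the chain rule gives $\dot v(t)=D\Pi_{\le L}(S_tu)V(S_tu)$. To put this in the form $V_L(v(t))$, I would use the semiflow property at the level of quotients: $\Pi_{\le L}(S_{t+\tau}u)=\Pi_{\le L}(S_\tau S_tu)=\Pi_{\le L}(S_\tau v(t))$, which holds because $W_{\le L}(S_\tau w)=\Phi^\tau_L W_{\le L}(w)$ depends only on $W_{\le L}(w)$. Differentiating at $\tau=0$, with base point $v(t)\in H^{s+2}_\sigma$, then gives $\dot v=D\Pi_{\le L}(v)V(v)=V_L(v)$. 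For $t=0$ with $u$ only in $H^s$, I would use the same identity $\Pi_{\le L}(S_\tau u)=\Pi_{\le L}(S_\tau\Pi_{\le L}u)$ together with the fact that $\Pi_{\le L}u\in H^{s+2}_\sigma$, so that differentiation at $\tau=0$ is legitimate at the point $\Pi_{\le L}u$.

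The energy identity follows from $v\in E_{\le L}(v)$ (\eqref{geo:minimum-stationarity}) and $\ker D\Pi_{\le L}(v)=F_{>L}(v)$. I would show that $D\Pi_{\le L}(v)$ restricted to $E_{\le L}(v)$ is the identity up to a tangential correction, in the sense that $h-D\Pi_{\le L}(v)h\in F_{>L}(v)$ for all $h$. This holds because $W_{\le L}\circ\Pi_{\le L}=W_{\le L}$ gives $DW_{\le L}(v)(h-D\Pi_{\le L}(v)h)=0$. Hence
\[
\langle v,V_L(v)\rangle_s=\langle v,V(v)\rangle_s=-\nu\|A^{1/2}v\|_s^2-\langle v,B(v,v)\rangle_s .
\]
The inequality then follows from \eqref{asy:energy-cancellation} in the sharper form $|\langle B(v,v),v\rangle_s|\le C_s\|v\|_s\|A^{1/2}v\|_s^2$ used in Lemma~\ref{asy:small-flow}(ii), together with $\|v\|_s<r\le\rho_s\nu$, which absorbs the cubic term into half the dissipation.

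For (ii), \eqref{geo:finite-energy} gives $\|z_j\|^2=\|w\|_s^2-\|v\|_s^2$. I would apply (i) at levels $L_j$ and $L_{j-1}$ and subtract the two equalities; this yields \eqref{geo:block-radial} directly, with $L_0$ trivial since $v\equiv0$. The main obstacle is the regularity and justification of $\dot v=V_L(v)$ near $t=0$, where $S_tu$ is not differentiable in $X$. The quotient identity above resolves this by moving the differentiation to the smooth point $\Pi_{\le L}u$.
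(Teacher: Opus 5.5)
Your proposal is correct and follows essentially the paper's argument: the quotient identity $\Pi_{\le L}S_\tau\Pi_{\le L}u=\Pi_{\le L}S_\tau u$ moves the differentiation to the point $v(t)\in H^{s+2}_\sigma$. The relation $h-D\Pi_{\le L}(v)h\in F_{>L}(v)$ together with $v\perp F_{>L}(v)$ gives the energy identity, and (ii) follows by subtracting the two levels via \eqref{geo:finite-energy}. The paper absorbs the cubic term via $C_s\|v\|_s^3$ and $A\ge I$ instead of your sharper $C_s\|v\|_s\|A^{1/2}v\|_s^2$; both need only $C_sr\le\nu/2$. Note that $r\le\rho_s\nu$ is not stated in this section (only $r<2r_s\le2\rho_s\nu$), so you may have to shrink $r$ as the paper does.
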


\begin{proof}
We first prove (i). By \eqref{asy:covariance} and uniqueness
in Proposition~\ref{geo:minima}(i),
\[
 \Pi_{\le L}S_t\Pi_{\le L}u=\Pi_{\le L}S_tu.
\]
Thus $v(t)=\Pi_{\le L}S_tu$ evolves under the semiflow
$v\mapsto\Pi_{\le L}S_tv$ on $M_{\le L}$.
The regularity established above allows differentiation at $t=0$;
by \eqref{asy:equation}, its generator is
$D\Pi_{\le L}(v)V(v)=V_L(v)$.

Differentiating $W_{\le L}\Pi_{\le L}=W_{\le L}$ gives
$(I-D\Pi_{\le L}(v))X\subset F_{>L}(v)$.
Since $v\perp F_{>L}(v)$ by \eqref{geo:minimum-stationarity},
\[
 \langle v,V_L(v)\rangle_s=\langle v,V(v)\rangle_s,
\]
which proves the identity in \eqref{geo:factor-radial}.
By \eqref{asy:energy-cancellation}, $\|v\|_s<r$, and $A\ge I$,
\[
 |\langle v,B(v,v)\rangle_s|
 \le C_s\|v\|_s^3\le C_sr\|A^{1/2}v\|_s^2.
\]
Choosing $C_sr\le\nu/2$ proves the dissipation inequality.

Finally, \eqref{geo:coordinate-map} and \eqref{geo:finite-energy} give
$\|z_j(t)\|_{Z_j}^2=\|w(t)\|_s^2-\|v(t)\|_s^2$.
Differentiating and applying (i) proves \eqref{geo:block-radial},
with $v(t)=0$ when $j=1$ by \eqref{geo:block-spaces}.
\end{proof}

\section{Spectral estimates for the normal and fast spaces}
\label{mic:section}
In this section, we show that normal vectors have small
high-frequency components and fast vectors have small
low-frequency components in the $H^s$ norm. These estimates
yield the block interaction bounds and radial decay estimates
in Section~\ref{app:estimates-section} needed to assemble
the finite-dimensional conjugacies and their inverses
in the strong $H^s$ topology.

\subsection{Removal of nonresonant transport and remainder estimates}

We construct a Fourier change of variables that removes
nonresonant transport from the linearized equation and estimate
both the transformation and the resulting remainder. These
bounds provide the basis for the spectral localization estimates
used in the conjugacy construction.

We use $J_s$ from \eqref{main:operators} to identify $X$
isometrically with $L^2_\sigma$.
Since $J_s$ commutes with $A$,
we keep the notation $A$ after this identification. We denote the normalized $L^2$ norm and inner product by
$\|\cdot\|_0$ and $\langle\cdot,\cdot\rangle_0$.
We retain $\|\cdot\|_s$ for physical velocity fields and write
$\|\cdot\|_{\mathcal L(Y)}$ for operator norms on a Hilbert space $Y$.
For operators on the normalized space $L^2_\sigma$, adjoints and
orthogonal complements are taken in the $L^2$ inner product.
For the original spaces $E_{\le L}(u_0),F_{>L}(u_0)$, we retain the $H^s$ convention.
We fix a small initial value $u_0$ and write $u(t)=S_tu_0$.

We set $K_u=-\mathbb P(u\cdot\nabla)\mathbb P$ and
$E(t)=E_{u(t)}$, where $E_u$ is defined by
\eqref{asy:standardized}. Then the normalized variational equation is
\begin{equation}\label{mic:normalized-equation}
 v_t=(-\nu A+K_u+E(t))v.
\end{equation}

We verify the skew-symmetry of the transport term and bound the remaining normalized coefficients.

\nopagebreak[4]
\begin{lemma}\label{mic:normalized}
The operator $K_u:H^1_\sigma\to L^2_\sigma$ satisfies the skew-symmetry identity
\begin{equation}\label{mic:transport-skew-form}
 \langle K_uf,g\rangle_{0}=-\langle f,K_ug\rangle_{0},
 \qquad f,g\in H^1_\sigma,
\end{equation}
and
\begin{equation}\label{mic:E-bound}
 \|E(t)\|_{\mathcal L(L^2_\sigma)}\le C_s\|S_tu_0\|_{s+1}.
\end{equation}
Moreover,
\begin{equation}\label{mic:velocity-bounds}
 \|\partial_tS_tu_0\|_{s-2}\le C_s\bigl(\nu\|S_tu_0\|_s+\|S_tu_0\|_s^2\bigr).
\end{equation}
\end{lemma}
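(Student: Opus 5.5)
The three assertions of Lemma~\ref{mic:normalized} are essentially routine, and the plan is to check them one at a time in the normalized $L^2_\sigma$ picture.

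\emph{Skew-symmetry.} For \eqref{mic:transport-skew-form} I would first take $f,g$ smooth, divergence-free and mean-zero. Then $\mathbb Pf=f$ and $\mathbb Pg=g$. Since $\mathbb P$ is self-adjoint on $L^2$, we get $\langle K_uf,g\rangle_0=-\langle (u\cdot\nabla)f,g\rangle_0$. Integrating by parts on $\T^3$ and using $\nabla\cdot u=0$ gives
\[
 \langle (u\cdot\nabla)f,g\rangle_0=-\langle f,(u\cdot\nabla)g\rangle_0-\langle (\nabla\cdot u)f,g\rangle_0=-\langle f,(u\cdot\nabla)g\rangle_0 .
\]
Reinserting $\mathbb P$ on $g$'s side yields $-\langle f,K_ug\rangle_0$. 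Here $u=S_tu_0\in X\subset W^{1,\infty}$ because $s>5/2$. This gives $\|K_uf\|_0\le C\|u\|_{W^{1,\infty}}\|f\|_1$, so both sides are continuous on $H^1_\sigma\times H^1_\sigma$ and the identity extends by density.

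\emph{Bound on $E(t)$.} Estimate \eqref{mic:E-bound} is just \eqref{asy:standardized} with $u=S_tu_0$. One needs $S_tu_0\in H^{s+1}_\sigma$, which holds for $t>0$ by the smoothing in Lemma~\ref{asy:small-flow}(ii). The operator $E(t)$ is defined precisely as $J_s\mathcal N_{u(t)}J_s^{-1}-K_{u(t)}$, so nothing further is required.

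\emph{Time derivative.} For \eqref{mic:velocity-bounds} I would use equation \eqref{asy:equation}, $\partial_tS_tu_0=-\nu AS_tu_0-B(S_tu_0,S_tu_0)$. Since $A:H^s\to H^{s-2}$ has norm at most one in the inhomogeneous norms, $\|\nu AS_tu_0\|_{s-2}\le\nu\|S_tu_0\|_s$. By \eqref{asy:bilinear} and the embedding $H^{s-1}\hookrightarrow H^{s-2}$, $\|B(S_tu_0,S_tu_0)\|_{s-2}\le C_s\|S_tu_0\|_s^2$. The time derivative exists in $H^{s-2}$ by the mild formulation: the solution is $C^1$ into $H^{s-2}$ on the invariant ball of Lemma~\ref{asy:small-flow}(i), including at $t=0$.

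\emph{Main obstacle.} The only delicate point is the density extension of the skew form. This requires the transport estimate $\|(u\cdot\nabla)f\|_0\lesssim\|u\|_s\|f\|_1$, which holds since $s>3/2+1$. It also requires noting that $\mathbb P$ commutes with the identification by $J_s$, so that $K_u$ is indeed the normalized transport term.
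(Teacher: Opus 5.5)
Your proposal is correct and follows essentially the same route as the paper: integration by parts for smooth solenoidal $f,g$, extended to $H^1_\sigma$ by density via a transport bound; \eqref{asy:standardized} at $u=S_tu_0$ for \eqref{mic:E-bound}; and \eqref{asy:equation} together with \eqref{asy:bilinear} for \eqref{mic:velocity-bounds}. The paper uses the slightly sharper $\|K_uf\|_0\le\|u\|_{L^\infty}\|\nabla f\|_0$ in place of your $W^{1,\infty}$ bound, which serves the same purpose, and your remarks on $t>0$ smoothing and $C^1$ regularity into $H^{s-2}$ supply justifications the paper leaves implicit.
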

\begin{proof}
Since $u$ is divergence free, integration by parts gives
$\langle K_uf,g\rangle_0=-\langle f,K_ug\rangle_0$ for smooth
solenoidal $f,g$. The bound
$\|K_uf\|_0\le\|u\|_{L^\infty}\|\nabla f\|_0$
extends the identity to $H^1_\sigma$ by density; see
\cite[Chapter~17]{TaylorPDE3}.
Equation~\eqref{asy:standardized} gives \eqref{mic:E-bound}, while
\eqref{asy:equation} and \eqref{asy:bilinear} give
\[
 \|u_t\|_{s-2}\le\nu\|Au\|_{s-2}+\|B(u,u)\|_{s-2}
 \le C_s(\nu\|u\|_s+\|u\|_s^2).
\]
This proves \eqref{mic:velocity-bounds}.
\end{proof}

We define the operators used in Lemma~\ref{mic:gauge} through
 their Fourier coefficients. For a real divergence-free field $u\in H^s$,
we write
\[
 u(x)=\sum_{\ell\in\mathbb Z^3}u_\ell e^{i\ell\cdot x},
 \qquad
 u_\ell=\frac1{(2\pi)^3}\int_{\T^3}u(x)e^{-i\ell\cdot x}\,dx.
\]
We use $\langle k\rangle$ from \eqref{main:norm}.
For a vector-valued trigonometric polynomial $v$, we have
\[
 \widehat{(-u\cdot\nabla)v}_{m}
 =\sum_{k+\ell=m}(-iu_\ell\cdot k)\widehat v_k.
\]
Thus $-iu_\ell\cdot k$ is the scalar coefficient coupling the
input mode $k$ to the output mode $k+\ell$.
We choose a real even function $\chi\in C^\infty(\mathbb R)$ with
$0\le\chi\le1$, equal to $1$ on $[-1,1]$ and to $0$ outside
$[-2,2]$. For $k,\ell\in\mathbb Z^3$ and $0<\eta\le1$, we set
\begin{equation}\label{mic:symbols}
\begin{split}
 a_\ell(k)&=-iu_\ell\cdot k,\qquad
 d_\ell(k)=|k+\ell|^2-|k|^2,\qquad
 Q_\ell(k)=\langle k+\ell/2\rangle,\\
 s_\ell(k)&=\frac{a_\ell(k)}{\nu d_\ell(k)}
 \left[1-\chi\left(\frac{d_\ell(k)}{\eta Q_\ell(k)}\right)\right].
\end{split}
\end{equation}
Here $d_\ell(k)$ is the difference of the heat eigenvalues of
these two modes. The cutoff $\chi$ selects interactions near
$d_\ell(k)=0$; $1-\chi$ selects the nonresonant part.
We set $s_\ell(k)=0$ when $d_\ell(k)=0$.

We initially define $S^0$ and $K^0_{\rm nr}$ on vector-valued
trigonometric polynomials by
\[
 \widehat{S^0v}_m=\sum_{k+\ell=m}s_\ell(k)\widehat v_k,
 \qquad
 \widehat{K^0_{\rm nr}v}_m
 =\sum_{k+\ell=m}a_\ell(k)
 \left[1-\chi\left(\frac{d_\ell(k)}{\eta Q_\ell(k)}\right)\right]
 \widehat v_k.
\]
We set $K^0_u=-u\cdot\nabla$ and $K^0_\eta=K^0_u-K^0_{\rm nr}$.
We then apply the Leray projection to obtain operators on
divergence-free fields:
\begin{equation}\label{mic:compressed-gauge}
 S=\mathbb P S^0\mathbb P,\qquad
 K_{\rm nr}=\mathbb P K^0_{\rm nr}\mathbb P,\qquad
 K_\eta=\mathbb P K^0_\eta\mathbb P=K_u-K_{\rm nr}.
\end{equation}
We also write $S=S_\eta(u)$ to display its dependence on $u$
and $\eta$. The identity $\nu d_\ell(k)s_\ell(k)
=a_\ell(k)[1-\chi(d_\ell(k)/(\eta Q_\ell(k)))]$ gives
$[A,S]=\nu^{-1}K_{\rm nr}$, which we use to remove the
nonresonant transport term by the change of variables $v=e^Sz$.

\vskip0.05in
\Needspace{6\baselineskip}
In the next lemma, we estimate $S$ and its commutators with the heat and transport operators.
Here $s>7/2$ ensures both $H^2$ boundedness of $S_\eta(u)$ and
$L^2$ boundedness of $\partial_tS_\eta(u)=S_\eta(u_t)$:
the Fourier sums require $s>2+3/2$ and $s-2>3/2$, respectively.

\nopagebreak[4]
\begin{lemma}\label{mic:gauge}
Let $s>7/2$, $u\in H^s_\sigma$, and $0<\eta\le1$.
The operators in \eqref{mic:compressed-gauge} satisfy the following
estimates with $C_s$ independent of $u$, $\nu$, and $\eta$.
\begin{enumerate}
\renewcommand{\theenumi}{\roman{enumi}}
\renewcommand{\labelenumi}{(\theenumi)}
\item
The operator $S=S_\eta(u)$ is self-adjoint on $L^2_\sigma$,
maps real-valued vector fields to real-valued vector fields, and satisfies
\begin{equation}\label{mic:S-bounds}
\begin{aligned}
 \|S_\eta(u)\|_{\mathcal L(H^q_\sigma)}
 &\le\frac{C_s}{\nu\eta}\|u\|_s,
 &&q=0,1,2,\\
 \|S_\eta(w)\|_{\mathcal L(L^2_\sigma)}
 &\le\frac{C_s}{\nu\eta}\|w\|_{s-2},
 &&w\in H^{s-2}_\sigma.
\end{aligned}
\end{equation}
\item
On $H^2_\sigma$, we have
\begin{equation}\label{mic:gauge-commutators}
\begin{aligned}
 {}[A,S]&=\nu^{-1}K_{\rm nr},\\
 \|[K_u,S]\|_{\mathcal L(L^2_\sigma)}
 +\|[K_{\rm nr},S]\|_{\mathcal L(L^2_\sigma)}
 &\le\frac{C_s}{\nu\eta^2}\|u\|_s^2.
\end{aligned}
\end{equation}
The norms refer to the bounded extensions of the commutators
from $H^2_\sigma$ to $L^2_\sigma$.
\item
The operator $K_\eta:H^1_\sigma\to L^2_\sigma$ is skew-symmetric.
For every $v\in H^1_\sigma$,
\begin{equation}\label{mic:K-bounds}
 \|K_\eta v\|_0\le C_s\|u\|_s\|A^{1/2}v\|_0,
\end{equation}
and
\begin{equation}\label{mic:resonant-form}
 |\langle[A,K_\eta]v,v\rangle_0|
 \le C_s\eta\|u\|_s\|A^{1/2}v\|_0^2.
\end{equation}
We interpret the quadratic form in \eqref{mic:resonant-form}
by continuous extension from smooth fields to $H^1_\sigma$.
\end{enumerate}
\end{lemma}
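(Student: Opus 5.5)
The plan is to prove everything at the level of Fourier symbols. All operator bounds will come from Schur's test applied to the coupling coefficients between the input mode $k$ and the output mode $k+\ell$. The Leray projection is handled separately: it commutes with $A$ and is bounded on every $H^q$. The basic pointwise facts I would record first are these. The field $u$ is divergence free, so $u_\ell\cdot\ell=0$. This gives $a_\ell(k)=-iu_\ell\cdot(k+\ell/2)$ and hence
$$|a_\ell(k)|\le|u_\ell|\,Q_\ell(k).$$
We also have $d_\ell(k)=2\ell\cdot(k+\ell/2)$, so $|d_\ell(k)|\le2|\ell|Q_\ell(k)$. On the support of $1-\chi(d_\ell/(\eta Q_\ell))$ we have $|d_\ell(k)|\ge\eta Q_\ell(k)$. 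Together these give $|s_\ell(k)|\le|u_\ell|/(\nu\eta)$. Finally, $\langle k+\ell\rangle^q\le C\langle k\rangle^q\langle\ell\rangle^q$ for $q\ge0$.

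For (i), I would check self-adjointness through the symmetry relation $\overline{s_{-\ell}(k+\ell)}=s_\ell(k)$. This holds because $d_{-\ell}(k+\ell)=-d_\ell(k)$, $Q_{-\ell}(k+\ell)=Q_\ell(k)$, $\chi$ is even, and $\overline{a_{-\ell}(k+\ell)}=-a_\ell(k)$; the last identity again uses $u_\ell\cdot\ell=0$ and $u_{-\ell}=\overline{u_\ell}$. Reality follows from $s_{-\ell}(-k)=\overline{s_\ell(k)}$. Since $\mathbb P$ is self-adjoint and real, both properties pass to $S=\mathbb PS^0\mathbb P$. Schur's test on $H^q$ bounds the norm by
$$\frac{C}{\nu\eta}\sum_\ell|u_\ell|\langle\ell\rangle^q.$$
For $q\le2$ this sum is at most $C_s\|u\|_s$ once $s-2>3/2$, which is where $s>7/2$ enters. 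The same computation at $q=0$, with $w$ in place of $u$, gives the $H^{s-2}$ bound.

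For (ii), the identity $[A,S]=\nu^{-1}K_{\rm nr}$ is immediate: $\mathbb P$ commutes with $A$, and the symbol of $[A,S^0]$ is $d_\ell(k)s_\ell(k)$. For the commutator estimates I would write
$$[K_u,S]=\mathbb P[K^0_u,S^0]\mathbb P+\text{terms involving }[\mathbb P,K^0_u]\text{ and }[\mathbb P,S^0],$$
and do the same for $K_{\rm nr}$.

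The symbol of $[K^0_u,S^0]$ from $k$ to $k+\ell_1+\ell_2$ is
$$a_{\ell_1}(k+\ell_2)s_{\ell_2}(k)-s_{\ell_2}(k+\ell_1)a_{\ell_1}(k).$$
I would split it into two differences:
\begin{itemize}
\item $[a_{\ell_1}(k+\ell_2)-a_{\ell_1}(k)]s_{\ell_2}(k)$, which equals $-iu_{\ell_1}\cdot\ell_2\,s_{\ell_2}(k)$ and is bounded by $|u_{\ell_1}||\ell_2||u_{\ell_2}|/(\nu\eta)$;
\item $a_{\ell_1}(k)[s_{\ell_2}(k)-s_{\ell_2}(k+\ell_1)]$. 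The symbol $s_{\ell}$ is of order zero in $k$ with $k$-derivatives of size $C\langle\ell\rangle^{C}|u_\ell|/(\nu\eta^2Q)$; the extra $\eta^{-1}$ comes from differentiating the cutoff. The mean value theorem therefore gains a factor $|\ell_1|/\langle k\rangle$, which compensates the factor $|a_{\ell_1}(k)|\lesssim|u_{\ell_1}|\langle\ell_1\rangle\langle k\rangle$.
\end{itemize}
Schur's test then gives $C\nu^{-1}\eta^{-2}\bigl(\sum_\ell|u_\ell|\langle\ell\rangle^{m}\bigr)^2$ for a finite $m$. I need $m\le2$ so that $s>7/2$ suffices, and I expect to verify that the derivative losses stay within this. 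The Leray terms are treated the same way. The matrix symbol $\Pi(k)$ of $\mathbb P$ is homogeneous of degree zero, so $\Pi(k+\ell)-\Pi(k)=O(|\ell|/\langle k\rangle)$. Hence $[\mathbb P,S^0]$ is of order $-1$ and $[\mathbb P,K^0_u]$ is of order $0$, and each combination $K\cdot[\mathbb P,S]$ or $[\mathbb P,K]\cdot S$ is bounded on $L^2$ by $C\|u\|_s^2/(\nu\eta)$.

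I expect this step to be the main obstacle. One has to organize the second-order symbol calculus with the $\mathbb P$-compressions and keep the $\eta$-dependence at $\eta^{-2}$ uniformly, including on the transition region of $\chi$ where $d_\ell$ and $\eta Q_\ell$ are comparable.

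For (iii), the symbol of $K^0_\eta$ is $a_\ell(k)\chi(d_\ell(k)/(\eta Q_\ell(k)))$. It satisfies the same antisymmetry relation as $a_\ell$, so $K_\eta$ is skew-symmetric. Estimate \eqref{mic:K-bounds} follows from $|a_\ell(k)|\le C|u_\ell|\langle\ell\rangle\langle k\rangle$, Schur's test, and $\langle k\rangle\le C|k|$ for $k\ne0$. For \eqref{mic:resonant-form}, $\mathbb P$ commutes with $A$, so for divergence-free $v$
$$\langle[A,K_\eta]v,v\rangle_0=\langle[A,K^0_\eta]v,v\rangle_0.$$
The symbol of $[A,K^0_\eta]$ is $d_\ell(k)a_\ell(k)\chi(\cdot)$. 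On the support of $\chi$ we have $|d_\ell|\le2\eta Q_\ell$, so the symbol is at most $2\eta|u_\ell|Q_\ell(k)^2$, and
$$Q_\ell(k)^2\le C\langle\ell\rangle^2\langle k\rangle\langle k+\ell\rangle.$$
A Schur test for the form weighted by $\langle k\rangle$ on the input and $\langle k+\ell\rangle$ on the output gives $C_s\eta\|u\|_s\|A^{1/2}v\|_0^2$. This holds first for trigonometric polynomials and then extends to $H^1_\sigma$ by continuity.
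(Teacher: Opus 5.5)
Your proposal is correct and follows essentially the same route as the paper. The shared steps are:
\begin{itemize}
\item convolution (Schur) bounds on the Fourier coefficients;
\item the relations $s_{-\ell}(k+\ell)=\overline{s_\ell(k)}$ and $s_{-\ell}(-k)=\overline{s_\ell(k)}$ for self-adjointness and reality;
\item splitting the commutator symbol into an $a$-difference and an $s$-difference, handled by the mean value theorem for $|k|\gtrsim\langle\ell_1\rangle+\langle\ell_2\rangle$ and by crude bounds otherwise;
\item Leray corrections via $|k|\,\|P(k+\ell)-P(k)\|\le C|\ell|$.
\end{itemize}

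The exponent you left open is $m=1$, so the commutator bound needs only $\sum_\ell\langle\ell\rangle|u_\ell|$. This follows from $|\nabla_k s_\ell(k)|\le C|u_\ell|\langle\ell\rangle/(\nu\eta^2\langle k\rangle)$ for $|k|\ge4\langle\ell\rangle$. For $K_{\rm nr}$ the symbol is no longer linear in $k$, so your exact first-difference identity must be replaced by $|\nabla_k a^{\rm nr}_\ell(k)|\le C|u_\ell|\langle\ell\rangle/\eta$; this still stays within $\eta^{-2}$.

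The only real variation is in the resonant-form bound of (iii). Your Peetre bound $Q_\ell(k)^2\lesssim\langle\ell\rangle^2\langle k\rangle\langle k+\ell\rangle$ spends two derivatives of $u$, which is admissible since $s>7/2$. The paper instead uses $|k|\asymp|k+\ell|$ on the resonant support together with $|a_\ell(k)|\le|u_\ell|\min\{|k|,|k+\ell|\}$, and so loses no powers of $\langle\ell\rangle$.
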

\begin{proof}
We first work with finite Fourier sums. Throughout the proof,
we use Young's convolution inequality and the Cauchy--Schwarz bound
\begin{equation}\label{mic:fourier-summability}
 \sum_\ell\langle\ell\rangle^q|f_\ell|
 \le C\|f\|_\sigma,
 \qquad \sigma>q+3/2.
\end{equation}
Here $C$ depends only on $\sigma$ and $q$.
In particular, this applies to $f=u$, $q=0,1,2$, and to
$f=w\in H^{s-2}_\sigma$, $q=0$. The bounds below permit passage
to the stated Sobolev spaces by density.

We first prove (i), estimating $S^0$ before applying $\mathbb P$.
Since $u_\ell\cdot\ell=0$, we have
$a_\ell(k)=-iu_\ell\cdot(k+\ell/2)$.
On the support of $1-\chi(d_\ell(k)/(\eta Q_\ell(k)))$,
$|d_\ell(k)|\ge\eta Q_\ell(k)$. By \eqref{mic:symbols}, we obtain
\begin{equation}\label{mic:symbol-zero}
 |a_\ell(k)|\le |u_\ell|Q_\ell(k),\qquad
 |s_\ell(k)|\le\frac{C|u_\ell|}{\nu\eta}.
\end{equation}
By convolution and \eqref{mic:fourier-summability}, we obtain
\[
 \|S^0\|_{\mathcal L(H^q)}
 \le\frac{C}{\nu\eta}\sum_\ell\langle\ell\rangle^q|u_\ell|
 \le\frac{C_s}{\nu\eta}\|u\|_s,\qquad q=0,1,2,
\]
Taking $q=0$ and replacing $u$ by $w$ gives the second bound
in \eqref{mic:S-bounds} for $S^0_\eta(w)$.
By \eqref{mic:compressed-gauge} and the contractivity of
$\mathbb P$ on $H^q$, we obtain \eqref{mic:S-bounds}.

By $u_{-\ell}=\overline{u_\ell}$, $u_\ell\cdot\ell=0$,
and the evenness of $\chi$, we obtain from \eqref{mic:symbols}
\[
 s_{-\ell}(k+\ell)=\overline{s_\ell(k)},\qquad
 s_{-\ell}(-k)=\overline{s_\ell(k)}.
\]
These identities give self-adjointness and preservation of real
fields, respectively. Since $a_\ell(0)=a_\ell(-\ell)=0$,
$S^0$ also preserves zero mean. Orthogonal compression by
$\mathbb P$ preserves these properties, proving (i).

We next prove (ii). By \eqref{mic:symbols}, we have
\[
 [A,S^0]_{k+\ell,k}=d_\ell(k)s_\ell(k)
 =\nu^{-1}(K^0_{\rm nr})_{k+\ell,k}.
\]
Using $[A,\mathbb P]=0$ and \eqref{mic:compressed-gauge},
we obtain $[A,S]=\nu^{-1}K_{\rm nr}$.
We estimate the scalar commutators first. Write
\[
 a_\ell^{\rm nr}(k)=a_\ell(k)
 [1-\chi(d_\ell(k)/(\eta Q_\ell(k)))],
 \qquad a^\sharp\in\{a,a^{\rm nr}\},
\]
and let $K^0_\sharp$ denote the corresponding operator
$K^0_u$ or $K^0_{\rm nr}$.
We set $h(\tau)=(1-\chi(\tau))/\tau$, with $h(0)=0$.
For $|k|\ge4\langle\ell\rangle$,  differentiating
$s_\ell= a_\ell h(d_\ell/(\eta Q_\ell))/(\nu\eta Q_\ell)$, we obtain
\begin{equation}\label{mic:symbol-derivatives}
 |\nabla_k s_\ell(k)|
 \le\frac{C|u_\ell|\langle\ell\rangle}{\nu\eta^2\langle k\rangle},
 \qquad
 |\nabla_k a_\ell^\sharp(k)|
 \le\frac{C|u_\ell|\langle\ell\rangle}{\eta},
\end{equation}
where $a^\sharp=a$ or $a^{\rm nr}$.
For $|k|\ge8(\langle\ell\rangle+\langle m\rangle)$, we write
\[
 \begin{aligned}
 &a^\sharp_\ell(k+m)s_m(k)-s_m(k+\ell)a^\sharp_\ell(k)\\
 &\quad=[a^\sharp_\ell(k+m)-a^\sharp_\ell(k)]s_m(k)
       +[s_m(k)-s_m(k+\ell)]a^\sharp_\ell(k).
 \end{aligned}
\]
By \eqref{mic:symbol-zero}, \eqref{mic:symbol-derivatives},
and the mean-value formula, we obtain
\[
 \bigl|a_\ell^\sharp(k+m)s_m(k)
       -s_m(k+\ell)a_\ell^\sharp(k)\bigr|
 \le \frac{C}{\nu\eta^2}
 |u_\ell||u_m|\langle\ell\rangle\langle m\rangle.
\]
For the remaining $k$, we use \eqref{mic:symbol-zero} and
$|a^\sharp_\ell(k+m)|+|a^\sharp_\ell(k)|
 \le C|u_\ell|(\langle\ell\rangle+\langle m\rangle)$
to obtain the same bound. Convolution and
\eqref{mic:fourier-summability} now give
\[
 \|[K^0_\sharp,S^0]\|_{\mathcal L(L^2)}
 \le\frac{C}{\nu\eta^2}
       \left(\sum_\ell\langle\ell\rangle|u_\ell|\right)^2
 \le\frac{C_s}{\nu\eta^2}\|u\|_s^2.
\]

We next control the Leray projection corrections.
For its Fourier symbol $P(k)=I-k\otimes k/|k|^2$, $k\ne0$,
and $P(0)=0$, we obtain
\begin{equation}\label{mic:leray-difference}
 |k|\,\|P(k+\ell)-P(k)\|_{\mathcal L(\mathbb R^3)}\le C|\ell|.
\end{equation}
For $|\ell|\ge|k|/2$, we use boundedness of $P$; otherwise, we
apply the mean-value formula and
$\|\nabla P(\xi)\|_{\mathcal L(\mathbb R^3,\mathcal L(\mathbb R^3))}
 \le C/|\xi|$ along the segment from $k$ to $k+\ell$.
By \eqref{mic:leray-difference} and $|a^\sharp_\ell(k)|\le|u_\ell||k|$,
we obtain
\begin{equation}\label{mic:leray-commutators}
 \|[\mathbb P,K^0_u]\|_{\mathcal L(L^2)}
 +\|[\mathbb P,K^0_{\rm nr}]\|_{\mathcal L(L^2)}
 \le C\sum_\ell|\ell||u_\ell|\le C_s\|u\|_s.
\end{equation}
We expand
\begin{align}\label{mic:compression-commutator}
 [\mathbb P K^0_\sharp\mathbb P,\mathbb P S^0\mathbb P]
 ={}&\mathbb P[K^0_\sharp,S^0]\mathbb P
 -\mathbb P K^0_\sharp(I-\mathbb P)S^0\mathbb P\nonumber\\
 &+\mathbb P S^0(I-\mathbb P)K^0_\sharp\mathbb P.
\end{align}
Using the bound for $S^0$ proved in (i) and
\eqref{mic:leray-commutators}, we bound the last two terms in
\eqref{mic:compression-commutator} by $C_s\|u\|_s^2/(\nu\eta)$.
Together with the scalar commutator bound, this proves
\eqref{mic:gauge-commutators}, including the bounded extensions
asserted in (ii).

We finally prove (iii). We denote the scalar coefficients of
$K^0_\eta$ by
$r_\ell(k)=a_\ell(k)\chi(d_\ell(k)/(\eta Q_\ell(k)))$.
By \eqref{mic:symbols}, $u_{-\ell}=\overline{u_\ell}$,
and the evenness of $\chi$,  we obtain
\[
 r_{-\ell}(k+\ell)=-\overline{r_\ell(k)},\qquad
 |r_\ell(k)|\le|u_\ell||k|.
\]
The first identity gives skew-symmetry of $K^0_\eta$.
The second, convolution, and \eqref{mic:fourier-summability} give
\[
 \|K^0_\eta v\|_0
 \le\sum_\ell|u_\ell|\,\|A^{1/2}v\|_0
 \le C_s\|u\|_s\|A^{1/2}v\|_0.
\]
Orthogonal compression gives the skew-symmetry of $K_\eta$
and \eqref{mic:K-bounds}.

To prove the quadratic-form bound, we use the resonant support.
If $r_\ell(k)\ne0$, both $k$ and $k+\ell$ are nonzero, and
\[
 |d_\ell(k)|\le2\eta Q_\ell(k),\qquad
 Q_\ell(k)\le C\sqrt{|k||k+\ell|}.
\]
Indeed, the first inequality and
$Q_\ell(k)\le1+(|k|+|k+\ell|)/2$ imply
$\big||k+\ell|-|k|\big|\le2\eta$.
Since $\eta\le1$ and $|k|,|k+\ell|\ge1$, we obtain
\[
 \max\{|k|,|k+\ell|\}\le3\min\{|k|,|k+\ell|\},
\]
which proves the stated bound for $Q_\ell(k)$.
By $u_\ell\cdot\ell=0$, we also have
$|a_\ell(k)|\le|u_\ell|\min\{|k|,|k+\ell|\}$.
Combining these bounds, we obtain
\[
 |d_\ell(k)r_\ell(k)|
 \le C\eta|u_\ell||k||k+\ell|.
\]
Cauchy--Schwarz for each frequency shift and
\eqref{mic:fourier-summability} therefore give
\[
 |\langle[A,K^0_\eta]v,v\rangle_0|
 \le C\eta\sum_\ell|u_\ell|\|A^{1/2}v\|_0^2
 \le C_s\eta\|u\|_s\|A^{1/2}v\|_0^2.
\]
For $v=\mathbb P v$, we use $[A,\mathbb P]=0$ and
\eqref{mic:compressed-gauge} to obtain
\[
 \langle[A,K_\eta]v,v\rangle_0
 =\langle[A,K^0_\eta]v,v\rangle_0.
\]
Density extends the skew-symmetry identity and quadratic-form
bound to $H^1_\sigma$, proving (iii). Approximation of $u$ in
$H^s$ (and of $w$ in $H^{s-2}$ in (i)) using the bounds above
completes the proof.
\end{proof}

We choose $B_{4r}(X)$ inside the common ball of
Sections~\ref{asy:section} and~\ref{geo:section}, with $r/\nu\le1$,
and set $\alpha=r/\nu$. For $u_0\in B_r(X)$,
Lemma~\ref{asy:small-flow} gives
\begin{equation}\label{mic:energy-input}
\begin{split}
 \sup_{t\ge0}\|S_tu_0\|_s&\le C_sr,\qquad
 \int_0^\infty\|S_tu_0\|_s\,dt\le C_sr/\nu,\\
 \int_0^\infty\bigl(\|S_tu_0\|_s^2+\|S_tu_0\|_{s+1}^2\bigr)\,dt
 &\le C_sr^2/\nu.
\end{split}
\end{equation}
We set $\mathcal B(t)=e^{S_\eta(u(t))}$ and define
\[
 \begin{aligned}
 \mathcal R_\eta(t)
 ={}&\mathcal B(t)^{-1}(-\nu A+K_{u(t)}+E(t))\mathcal B(t)\\
 &-\mathcal B(t)^{-1}\dot{\mathcal B}(t)+\nu A-K_\eta,
 \end{aligned}
\]
initially on $H^2_\sigma$ for almost every $t>0$.
Thus the substitution $v=\mathcal B(t)z$ in
\eqref{mic:normalized-equation} gives
\[
 z_t=(-\nu A+K_\eta+\mathcal R_\eta(t))z.
\]
We write
$\rho_\eta(t)=\|\mathcal R_\eta(t)\|_{\mathcal L(L^2_\sigma)}$
for the norm of the bounded extension established below.

\vskip0.05in
\Needspace{6\baselineskip}
The next lemma gives the transformed equation and bounds its
remainder and the change of variables. 
\begin{lemma}\label{mic:remainder}
The transformed equation and the change of variables satisfy the following bounds.
\begin{enumerate}
\renewcommand{\theenumi}{\roman{enumi}}
\renewcommand{\labelenumi}{(\theenumi)}
\item The transformed equation is
\begin{equation}\label{mic:transformed}
 z_t=(-\nu A+K_\eta+\mathcal R_\eta(t))z.
\end{equation}
The operator $\mathcal R_\eta$ is bounded on $L^2$ almost everywhere, and
\begin{equation}\label{mic:remainder-square}
 \int_0^\infty\rho_\eta(t)^2\,dt
 \le C_s(r^2/\nu)\eta^{-4}e^{C_s\alpha/\eta}.
\end{equation}
\item For either sign,
\begin{equation}\label{mic:exponential-bounds}
 \|e^{\pm S}\|_{\mathcal L(H^q)}\le e^{C_s\alpha/\eta}\quad(q=0,1,2),
 \qquad
 \|[A,e^{\pm S}]\|_{\mathcal L(H^1,L^2)}
 \le C_s\alpha e^{C_s\alpha/\eta}.
\end{equation}
\end{enumerate}
\end{lemma}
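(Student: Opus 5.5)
Part (ii) comes first, because every estimate in (i) needs it. Lemma~\ref{mic:gauge}(i) gives $\|S\|_{\mathcal L(H^q)}\le C_s\|u\|_s/(\nu\eta)$, and \eqref{mic:energy-input} gives $\|u(t)\|_s\le C_sr$. The exponential series then yields $\|e^{\pm S}\|_{\mathcal L(H^q)}\le e^{\|S\|}\le e^{C_s\alpha/\eta}$. For the commutator with $A$ I will use the Duhamel formula
\[
 [A,e^{\pm S}]=\pm\int_0^1 e^{\pm\theta S}[A,S]e^{\pm(1-\theta)S}\,d\theta ,
\]
together with $[A,S]=\nu^{-1}K_{\rm nr}$. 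Since $K_{\rm nr}=K_u-K_\eta$, the bounds $\|K_u\|_{\mathcal L(H^1,L^2)}\le C\|u\|_s$ and \eqref{mic:K-bounds} give $\|K_{\rm nr}\|_{\mathcal L(H^1,L^2)}\le C_s\|u\|_s\le C_sr$. Using the $H^1$ and $L^2$ bounds on $e^{\pm\theta S}$, this gives $C_s\alpha e^{C_s\alpha/\eta}$.

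For (i), I write $\mathcal B=e^S$ and expand the conjugated operator term by term:
\[
 \mathcal B^{-1}(-\nu A)\mathcal B=-\nu A-\nu\mathcal B^{-1}[A,\mathcal B].
\]
I then expand $[A,e^S]$ once more, as $[A,S]$ plus a double commutator. Write $\mathrm{ad}_S(T)=[S,T]$. The conjugation formula gives
\[
 e^{-S}Ae^{S}=A+[A,S]+\int_0^1(1-\theta)\,e^{-\theta S}\,[[A,S],S]\,e^{\theta S}\,d\theta ,
\]
and so
\[
 \mathcal B^{-1}(-\nu A)\mathcal B=-\nu A-K_{\rm nr}-\int_0^1(1-\theta)\,e^{-\theta S}[K_{\rm nr},S]e^{\theta S}\,d\theta .
\]
The last term is bounded on $L^2$ by $C_s\|u\|_s^2/(\nu\eta^2)\,e^{C_s\alpha/\eta}$, using \eqref{mic:gauge-commutators}.

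Similarly, $\mathcal B^{-1}K_u\mathcal B=K_u+\int_0^1e^{-\theta S}[K_u,S]e^{\theta S}d\theta$, and its correction obeys the same bound. In the sum, $K_u-K_{\rm nr}=K_\eta$ is exactly the term subtracted in the definition of $\mathcal R_\eta$. The term $\mathcal B^{-1}E\mathcal B$ is bounded by $C_s\|u\|_{s+1}e^{C_s\alpha/\eta}$, by \eqref{mic:E-bound}. For the time derivative I use $\dot{\mathcal B}=\int_0^1e^{\theta S}\dot S e^{(1-\theta)S}d\theta$ with $\dot S=S_\eta(u_t)$, which is linear in $u$. The second bound in \eqref{mic:S-bounds} and \eqref{mic:velocity-bounds} then give
\[
 \|\mathcal B^{-1}\dot{\mathcal B}\|\le \frac{C_s}{\nu\eta}\bigl(\nu\|u\|_s+\|u\|_s^2\bigr)e^{C_s\alpha/\eta}.
\]

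Collecting the terms,
\[
 \rho_\eta\le C_se^{C_s\alpha/\eta}\Bigl(\eta^{-2}\nu^{-1}\|u\|_s^2+\|u\|_{s+1}+\eta^{-1}\|u\|_s\Bigr),
\]
where $\|u\|_s^2/\nu\le C_s\alpha\|u\|_s\le C_s\|u\|_s$ because $\alpha\le1$. Squaring, integrating, and applying \eqref{mic:energy-input} gives $\int\rho_\eta^2\le C_s(r^2/\nu)\eta^{-4}e^{C_s\alpha/\eta}$ after enlarging $C_s$ in the exponent. The main obstacle is justifying the expansions. $A$ is unbounded, so $[A,S]$ and the double commutator must be formed on $H^2_\sigma$, which $e^{\pm S}$ preserves by (ii). The time derivative of $\mathcal B$ must also be shown to exist for a.e. $t$ in $\mathcal L(L^2)$. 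For this I plan to use that $t\mapsto u(t)$ is $C^1$ into $H^{s-2}$ for $t>0$, and the linearity and boundedness of $w\mapsto S_\eta(w)$ from $H^{s-2}$ into $\mathcal L(L^2)$. Once these are in place, the bounded extension of $\mathcal R_\eta$ follows by density.
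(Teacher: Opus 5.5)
Your proposal is correct and follows essentially the same route as the paper. Part (ii) uses the exponential series and the Duhamel formula for $[A,e^{\pm S}]$ with $[A,S]=\nu^{-1}K_{\rm nr}$, and part (i) uses the identical remainder formula (second-order expansion of $e^{-S}Ae^{S}$, first-order expansion of $e^{-S}K_ue^{S}$, and $e^{-S}\frac{d}{dt}e^{S}=\int_0^1e^{-\theta S}\dot Se^{\theta S}\,d\theta$ with $\dot S=S_\eta(u_t)$), bounded term by term and integrated against the energy inputs, with the same justification on $H^2_\sigma$ and in $\mathcal L(L^2_\sigma)$ that the paper uses.
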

\begin{proof}
We first prove (ii), then derive an explicit formula for the
transformed remainder and estimate it using Lemmas~\ref{mic:normalized}
and~\ref{mic:gauge}. Write $S=S_\eta(u(t))$. By
\eqref{mic:S-bounds} and \eqref{mic:energy-input},
\[
 \|e^{\pm S}\|_{\mathcal L(H^q_\sigma)}
 \le e^{\|S\|_{\mathcal L(H^q_\sigma)}}
 \le e^{C_s\alpha/\eta},\qquad q=0,1,2.
\]
Since $K_{\rm nr}=K_u-K_\eta$, the transport bound in the proof of
Lemma~\ref{mic:normalized} and Lemma~\ref{mic:gauge}(iii) give
$\|K_{\rm nr}\|_{\mathcal L(H^1_\sigma,L^2_\sigma)}
\le C_s\|u(t)\|_s$.
Using \eqref{mic:gauge-commutators} and \eqref{mic:energy-input}, we obtain
\[
 \|[A,S]\|_{\mathcal L(H^1_\sigma,L^2_\sigma)}
 \le C_s\nu^{-1}\|u(t)\|_s\le C_s\alpha.
\]
We use the identity
\[
 [A,e^{\pm S}]
 =\pm\int_0^1e^{\pm(1-\theta)S}[A,S]e^{\pm\theta S}\,d\theta,
\]
initially on $H^2_\sigma$, and the preceding bounds to obtain
\eqref{mic:exponential-bounds} by density. This proves (ii).

We next derive \eqref{mic:transformed}. By
\eqref{mic:gauge-commutators} and \eqref{mic:compressed-gauge}, we have
$-\nu[A,S]=-K_{\rm nr}$ and $K_u-K_{\rm nr}=K_\eta$.
Integrating the derivative of
$e^{-\theta S}(-\nu A+K_u)e^{\theta S}$ and applying the same identity
to $K_{\rm nr}$, we obtain on $H^2_\sigma$
\[
 \begin{aligned}
 e^{-S}(-\nu A+K_u)e^S
 ={}&-\nu A+K_\eta\\
 &+\int_0^1e^{-\theta S}
 \bigl([K_u,S]-(1-\theta)[K_{\rm nr},S]\bigr)e^{\theta S}\,d\theta.
 \end{aligned}
\]
By \eqref{mic:velocity-bounds} and \eqref{mic:S-bounds},
$S(t)$ is locally absolutely continuous in $\mathcal L(L^2_\sigma)$,
with $\dot S=S_\eta(u_t)$ almost everywhere. Differentiating the
exponential series, we obtain
\[
 e^{-S}\frac d{dt}e^S
 =\int_0^1e^{-\theta S}\dot S e^{\theta S}\,d\theta.
\]
We substitute $v=e^Sz$ into \eqref{mic:normalized-equation} to obtain
\eqref{mic:transformed}, where
\begin{align}\label{mic:remainder-formula}
 \mathcal R_\eta={}&e^{-S}Ee^S
 -\int_0^1e^{-\theta S}\dot S e^{\theta S}\,d\theta\nonumber\\
 &+\int_0^1e^{-\theta S}
 \bigl([K_u,S]-(1-\theta)[K_{\rm nr},S]\bigr)e^{\theta S}\,d\theta.
\end{align}

We finally estimate \eqref{mic:remainder-formula} to complete (i).
By \eqref{mic:E-bound}, \eqref{mic:velocity-bounds},
\eqref{mic:S-bounds}, \eqref{mic:gauge-commutators}, and
\eqref{mic:exponential-bounds}, each term extends to a bounded
operator on $L^2_\sigma$ for almost every $t>0$, and we obtain
\[
 \rho_\eta(t)\le C_se^{C_s\alpha/\eta}
 \bigl(\|u(t)\|_{s+1}+\eta^{-1}\|u(t)\|_s
                 +\nu^{-1}\eta^{-2}\|u(t)\|_s^2\bigr).
\]
Since $\sup_{t\ge0}\|u(t)\|_s\le C_sr$, $\alpha=r/\nu\le1$,
and $0<\eta\le1$, this gives
\[
 \rho_\eta(t)\le C_s\eta^{-2}e^{C_s\alpha/\eta}\|u(t)\|_{s+1}.
\]
Squaring and integrating, we obtain from \eqref{mic:energy-input}
\[
 \int_0^\infty\rho_\eta(t)^2\,dt
 \le C_s\eta^{-4}e^{C_s\alpha/\eta}
       \int_0^\infty\|u(t)\|_{s+1}^2\,dt
 \le C_s\frac{r^2}{\nu}\eta^{-4}e^{C_s\alpha/\eta}.
\]
This proves \eqref{mic:remainder-square} and completes (i).
\end{proof}

\subsection{Dirichlet quotients and transformed spectral estimates}

We use terminal asymptotics to control Dirichlet quotients
and growth rates on the transformed normal and fast spaces,
then estimate their leakage beyond enlarged spectral cutoffs.
These localization estimates provide the basis for the block
interaction bounds needed in the conjugacy construction.

Let $\Lambda$ be the largest retained eigenvalue and $\mu$ the
first omitted eigenvalue, as in \eqref{geo:spectral-endpoints}.
We define
\begin{equation}\label{mic:transformed-spaces}
 \widetilde F_{>\Lambda}(t)=\mathcal B(t)^{-1}J_sF_{>\Lambda}(u(t)),\qquad
 \widetilde E_{\le \Lambda}(t)=\mathcal B(t)J_sE_{\le \Lambda}(u(t)).
\end{equation}
Since $\mathcal B(t)$ is self-adjoint, we have
$\widetilde E_{\le \Lambda}(t)=\widetilde F_{>\Lambda}(t)^\perp$.
For a normal adjoint solution $v(t)$ from
\eqref{geo:normal-continuation}, we set
$n(t)=\mathcal B(t)J_sv(t)$. Then
\begin{equation}\label{mic:dual-equation}
 n_t=(\nu A+K_\eta-\mathcal R_\eta^*)n.
\end{equation}

\vskip0.05in
The next lemma gives the estimates for the Dirichlet quotients on the transformed
normal and fast spaces and derives the corresponding growth
and decay estimates.
\nopagebreak[4]
\begin{lemma}\label{mic:dirichlet}
There exist $C_s>0$ and $D\ge0$, independent of $\Lambda$ and $t$, such that
\begin{equation}\label{mic:a-b-D}
  \int_0^\infty\rho_\eta(t)^2\,dt\le\nu D,\qquad
 D\le C_s\alpha^2\eta^{-4}e^{C_s\alpha/\eta}
\end{equation}
and the following estimates hold for every $t\ge0$.
\begin{enumerate}
\renewcommand{\theenumi}{\roman{enumi}}
\renewcommand{\labelenumi}{(\theenumi)}
\item For $n\in\widetilde E_{\le \Lambda}(t)$ and
$z\in\widetilde F_{>\Lambda}(t)\cap H^1_\sigma$, we have
\begin{equation}\label{mic:normal-quotient}
 \|A^{1/2}n\|_{0}^2\le(e^{C_s\eta\alpha}\Lambda+D)\|n\|_{0}^2,
\end{equation}
and
\begin{equation}\label{mic:fast-quotient}
 \|A^{1/2}z\|_{0}^2\ge(e^{-C_s\eta\alpha}\mu-D)\|z\|_{0}^2.
\end{equation}
\item Let $n(t)$ and $z(t)$ solve \eqref{mic:dual-equation} and
\eqref{mic:transformed}, respectively, with
$n(0)\in\widetilde E_{\le \Lambda}(0)$ and
$z(0)\in\widetilde F_{>\Lambda}(0)$. Then
\begin{equation}\label{mic:growth}
 \begin{aligned}
 \|n(t)\|_{0}&\le e^{1/4}e^{\nu(e^{C_s\eta\alpha}\Lambda+2D)t}\|n(0)\|_{0},\\
 \|z(t)\|_{0}&\le e^{1/4}e^{-\nu(e^{-C_s\eta\alpha}\mu-2D)t}\|z(0)\|_{0}.
\end{aligned}
\end{equation}
\end{enumerate}
\end{lemma}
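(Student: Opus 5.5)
Throughout, the bound \eqref{mic:a-b-D} is taken with $D:=\nu^{-1}\int_0^\infty\rho_\eta^2\,dt$, so it is immediate from \eqref{mic:remainder-square} with $\alpha=r/\nu$. The argument is the transformed analogue of the Rayleigh-quotient arguments in Lemma~\ref{geo:variational}. The plan has three steps: bound the Dirichlet quotient along single trajectories, transfer that bound to the whole spaces $\widetilde E_{\le\Lambda}(t)$ and $\widetilde F_{>\Lambda}(t)$ by invariance, and integrate the log-norm.

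\emph{Step 1 (the fast quotient along a trajectory).} Let $z$ solve \eqref{mic:transformed} with $z(0)\in\widetilde F_{>\Lambda}(0)$, and set $q=\|A^{1/2}z\|_0^2/\|z\|_0^2$. This is the transformed image of a fast variational solution, so $z(t)=\mathcal B(t)^{-1}J_sDS_t(u_0)h$ with $h\in F_{>\Lambda}(u_0)$. By \eqref{asy:normal-covariance}, $z(t)\in\widetilde F_{>\Lambda}(t)$ for all $t$.

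Differentiating $q$ gives $q'=2\nu\|(A-q)z\|^2/\|z\|^2$ plus two further terms:
\begin{itemize}
\item the $K_\eta$ term, which by skew-symmetry equals $\langle[A,K_\eta]z,z\rangle/\|z\|^2$ and is bounded by $C_s\eta\|u(t)\|_s\,q$ using \eqref{mic:resonant-form};
\item the $\mathcal R_\eta$ term, which after Young's inequality is bounded by $\nu\|(A-q)z\|^2/\|z\|^2+\rho_\eta^2/\nu$.
\end{itemize}
The $\mathcal R_\eta$ term is controlled only in $L^2(dt)$ (see \eqref{mic:remainder-square}) and enters additively. The $K_\eta$ term has the small $L^1(dt)$ coefficient $C_s\eta\|u\|_s$, whose integral is at most $C_s\eta\alpha$ by \eqref{mic:energy-input}. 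This yields
\[
 q'\ge -C_s\eta\|u(t)\|_s\,q-\rho_\eta^2/\nu .
\]
Integrating backward from $T$ to $t$ with an integrating factor gives
\[
 q(t)\ge e^{-C_s\eta\alpha}q(T)-D .
\]

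\emph{Step 2 (terminal limits and transfer to the spaces).} Let $T\to\infty$. Since $S\to0$ as $u(t)\to0$, we have $\mathcal B(T)\to I$. By \eqref{asy:tangent-asymptotic}, the direction $z(T)/\|z(T)\|$ converges in $H^1$ to an eigenvector with eigenvalue $\ge\mu$, so $\liminf q(T)\ge\mu$. The extension of $e^{\pm S}$ to $H^1$ is supplied by \eqref{mic:exponential-bounds}. This gives \eqref{mic:fast-quotient} along trajectories.

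Every $z\in\widetilde F_{>\Lambda}(t)\cap H^1_\sigma$ lies on such a trajectory, since $\widetilde F_{>\Lambda}$ is defined at each time and Step 1 can be restarted from the base point $u(t)$. Hence the fast-quotient bound holds for every such $z$.

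The normal case is symmetric, using the adjoint equation \eqref{mic:dual-equation}. The $-\mathcal R_\eta^*$ term is handled by the same Young's inequality, giving
\[
 q'\le C_s\eta\|u\|_s\,q+\rho_\eta^2/\nu .
\]
The terminal bound $\limsup q\le\Lambda$ comes from the graph convergence \eqref{asy:normal-terminal}. Because the normal spaces are finite dimensional and evolve by $n(t)$ (via \eqref{geo:normal-continuation}), each $n\in\widetilde E_{\le\Lambda}(t)$ is the value of some such solution. This gives \eqref{mic:normal-quotient}.

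\emph{Step 3 (growth and decay).} For the growth bound, write
\[
 (\log\|n\|)'=\nu q-\langle\mathcal R_\eta^*n,n\rangle/\|n\|^2\le\nu q+\rho_\eta ,
\]
where the $K_\eta$ term vanishes by skew-symmetry. Next bound $\rho_\eta\le\nu D+\rho_\eta^2/(4\nu D)$. Integrating this gives $\nu Dt+1/4$, which accounts for the factor $e^{1/4}$ and the rate $2D$ in \eqref{mic:growth}. The decay estimate for $z$ is the same computation with the opposite sign.

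\emph{Main obstacle.} I expect the delicate point to be the terminal limits in Step 2. These require the $H^1$ convergence of normalized directions after conjugation by $\mathcal B(T)$, together with continuity of $\mathcal B(T)^{\pm1}$ to $I$ on $H^1$. It is also not obvious that the fast quotient lower bound is uniform over the whole infinite-dimensional space rather than holding only along trajectories. If necessary, I would argue through the density of $F_{>\Lambda}\cap H^{s+2}$, as in the proof of Lemma~\ref{geo:shape}, and extend by the closedness of $A^{1/2}$.
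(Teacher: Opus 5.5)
\textbf{Gap in Step~1: the sign of the dissipative term is reversed.} Your overall route is the paper's. You differentiate the Dirichlet quotient, control the $K_\eta$ term by \eqref{mic:resonant-form} with $\gamma(t)=C_s\eta\|S_tu_0\|_s$, and control $\mathcal R_\eta$ by Young at the cost of $\rho_\eta^2/\nu$. You then integrate on $[t,T]$, take terminal limits via \eqref{asy:tangent-asymptotic}, \eqref{asy:normal-terminal} and $\mathcal B(T)\to I$ on $H^1_\sigma$, restart at $S_tu_0$, and integrate the log-norms using $\int_0^t\rho_\eta\le\nu Dt+1/4$. Those later steps are fine. The problem is that, because of the reversed sign, neither differential inequality you write implies the bound you draw from it. For $z_t=(-\nu A+K_\eta+\mathcal R_\eta)z$ and $q=q(z)$,
\[
 q'=-2\nu\frac{\|(A-q)z\|_0^2}{\|z\|_0^2}
 +\frac{\langle[A,K_\eta]z,z\rangle_0}{\|z\|_0^2}
 +2\frac{\langle(A-q)z,\mathcal R_\eta z\rangle_0}{\|z\|_0^2},
\]
since the forward Dirichlet quotient is nonincreasing under the heat flow. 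Young absorbs the last term into half of the first. Dropping the remaining nonpositive part gives the \emph{upper} bound $q'\le\gamma q+\rho_\eta^2/\nu$. Multiplying by $e^{-\int_t^s\gamma}$ and integrating over $[t,T]$ yields $q(t)\ge e^{-C_s\eta\alpha}q(T)-\nu^{-1}\int_t^T\rho_\eta^2$; this is how the terminal lower bound propagates backward. Your inequality $q'\ge-\gamma q-\rho_\eta^2/\nu$ instead integrates on $[t,T]$ only to $q(t)\le e^{C_s\eta\alpha}\bigl(q(T)+\nu^{-1}\int_t^T\rho_\eta^2\bigr)$, which gives no lower bound for $q(t)$. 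With the correct sign it could not even be derived, since $-2\nu\|(A-q)z\|_0^2$ cannot be bounded from below.

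\textbf{The normal case is reversed the same way.} For $n_t=(\nu A+K_\eta-\mathcal R_\eta^*)n$ the dissipative term enters with a plus sign, so your upper bound on $q(n)'$ is false in general. The correct inequality is $q(n)'\ge-\gamma q(n)-\rho_\eta^2/\nu$, and this lower bound integrates to $q(n(t))\le e^{C_s\eta\alpha}\bigl(q(n(T))+\nu^{-1}\int_t^T\rho_\eta^2\bigr)$. Here the integrating factor multiplies the remainder. With your $D=\nu^{-1}\int_0^\infty\rho_\eta^2$ you therefore obtain $e^{C_s\eta\alpha}(\Lambda+D)$, not $e^{C_s\eta\alpha}\Lambda+D$. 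The paper instead sets $D=e^{C_s\eta\alpha}\nu^{-1}\sup_{u_0\in B_r}\int_0^\infty\rho_\eta^2$, which still satisfies \eqref{mic:a-b-D} and is uniform over base points. With these corrections your argument coincides with the paper's. On the regularity you flag as the main obstacle, the paper does not use density within a fixed fiber. It first proves the estimates for smooth $u_0$, then approximates the base point, projecting the given vectors by $\mathcal P_{\le\Lambda}(u_0^{(m)})$ and $\mathcal P_{>\Lambda}(u_0^{(m)})$.
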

\begin{proof}
We first derive the quotient inequalities, use terminal asymptotics
to prove (i), and then integrate the norm inequalities to prove (ii).
Initially let $u_0$ be smooth and consider nonzero solutions with
$z(0)\in\widetilde F_{>\Lambda}(0)\cap H^1_\sigma$.
Set $q(v)=\|A^{1/2}v\|_0^2/\|v\|_0^2$ for $v\ne0$.
Lemmas~\ref{asy:variational} and~\ref{mic:remainder} give the
positive-time regularity of $z$ and its continuity in $H^1_\sigma$.
Together with \eqref{geo:normal-continuation}, they also give
$n\in C([0,\infty);H^2_\sigma)\cap C^1((0,\infty);L^2_\sigma)$.
Thus \eqref{geo:quadratic-derivative} justifies differentiation
of both quotients. From \eqref{mic:transformed},
\[
 \frac d{dt}q(z)
 =-2\nu\frac{\|(A-q(z))z\|_0^2}{\|z\|_0^2}
 +\frac{\langle[A,K_\eta]z,z\rangle_0}{\|z\|_0^2}
 +2\frac{\langle(A-q(z))z,\mathcal R_\eta z\rangle_0}{\|z\|_0^2}.
\]
Applying \eqref{mic:resonant-form} and Young's inequality, and
repeating the calculation for \eqref{mic:dual-equation}, we obtain
\begin{equation}\label{mic:quotient-differential}
 \frac d{dt}q(z)\le\gamma(t)q(z)+\nu^{-1}\rho_\eta(t)^2,
 \qquad
 \frac d{dt}q(n)\ge-\gamma(t)q(n)-\nu^{-1}\rho_\eta(t)^2,
\end{equation}
where $\gamma(t)=C_s\eta\|S_tu_0\|_s$.

We next use the terminal spectral bounds to complete (i).
By \eqref{asy:normal-terminal}, \eqref{asy:tangent-asymptotic}, and
$\mathcal B(t)\to I$ in $\mathcal L(H^1_\sigma)$ from
\eqref{mic:S-bounds} and \eqref{asy:decay},
\begin{equation}\label{mic:quotient-terminal}
 \limsup_{t\to\infty}q(n(t))\le\Lambda,
 \qquad \liminf_{t\to\infty}q(z(t))\ge\mu.
\end{equation}
By \eqref{mic:energy-input}, we choose $C_s$ so that
$\int_0^\infty\gamma(t)\,dt\le C_s\eta\alpha$. We define the common constant
\[
 D=\frac{e^{C_s\eta\alpha}}\nu\sup_{u_0\in B_r}
           \int_0^\infty\rho_\eta(t;u_0)^2\,dt.
\]
By \eqref{mic:remainder-square}, $D$ satisfies
\eqref{mic:a-b-D}, after enlarging $C_s$ if necessary.
Integrating \eqref{mic:quotient-differential} on $[t,T]$
with integrating factors and using these uniform bounds, we obtain
\[
 q(n(t))\le e^{C_s\eta\alpha}q(n(T))+D,\qquad
 q(z(t))\ge e^{-C_s\eta\alpha}q(z(T))-D.
\]
Letting $T\to\infty$ and applying \eqref{mic:quotient-terminal}, we get
\[
 q(n(t))\le e^{C_s\eta\alpha}\Lambda+D,\qquad
 q(z(t))\ge e^{-C_s\eta\alpha}\mu-D.
\]
Starting the same argument at any time $t\ge0$, with the same
uniform bounds, proves \eqref{mic:normal-quotient}--\eqref{mic:fast-quotient}
for every vector in the corresponding spaces at smooth base points.

For arbitrary $u_0\in B_r(X)$, we choose smooth $u_0^{(m)}\to u_0$ in the common
ball. For $v\in E_{\le \Lambda}(u_0)$ and
$w\in F_{>\Lambda}(u_0)\cap H^{s+1}_\sigma$, we set
\[
 v_m=\mathcal P_{\le \Lambda}(u_0^{(m)})v,
 \qquad w_m=\mathcal P_{>\Lambda}(u_0^{(m)})w.
\]
By Proposition~\ref{asy:factor-flags}(iii), $v_m\to v$ and
$w_m\to w$ in $H^{s+1}$, while \eqref{mic:S-bounds} gives convergence
of the changes of variables on $H^1$. Passing to the limit proves
(i) at $t=0$; applying it at $S_tu_0$ gives the result for every $t\ge0$.

We finally prove (ii). Using \eqref{mic:transformed},
\eqref{mic:dual-equation}, the skew-symmetry in
Lemma~\ref{mic:gauge}(iii), and
\eqref{mic:normal-quotient}--\eqref{mic:fast-quotient}, we obtain
\[
 \begin{aligned}
 \frac d{dt}\log\|n(t)\|_0&\le\nu(e^{C_s\eta\alpha}\Lambda+D)+\rho_\eta(t),\\
 \frac d{dt}\log\|z(t)\|_0&\le-\nu(e^{-C_s\eta\alpha}\mu-D)+\rho_\eta(t).
 \end{aligned}
\]
By \eqref{mic:a-b-D} and Cauchy--Schwarz, we have
\[
 \int_0^t\rho_\eta(q)\,dq
 \le\sqrt{t\int_0^t\rho_\eta(q)^2\,dq}
 \le\sqrt{\nu Dt}\le\nu Dt+\tfrac14.
\]
We integrate the logarithmic inequalities to obtain \eqref{mic:growth}.
Projecting smooth approximations by $\mathcal P_{>\Lambda}(u_0)$
and using Proposition~\ref{asy:factor-flags}(iii) shows that
$F_{>\Lambda}(u_0)\cap H^{s+1}_\sigma$ is dense in $F_{>\Lambda}(u_0)$.
Lemma~\ref{asy:variational} and the bounded changes of variables
therefore extend \eqref{mic:growth} to all
$z(0)\in\widetilde F_{>\Lambda}(0)$ by continuity, completing (ii).
\end{proof}

Fix $0<\varepsilon\le1$ and set
\begin{equation}\label{mic:transformed-cutoffs}
 L_+=(e^{C_s\eta\alpha}+\varepsilon)\Lambda+3D,\qquad
 L_-=(e^{-C_s\eta\alpha}-\varepsilon)\mu-3D.
\end{equation}
We interpret a nonpositive lower cutoff as the zero projection.

\vskip0.05in

In the next lemma, we estimate the high-frequency components of
transformed normal vectors and the low-frequency components
of transformed fast vectors.

\nopagebreak[4]
\begin{lemma}\label{mic:transformed-buffers}
Let $0<\varepsilon\le1$, and let $L_\pm$ be given by
\eqref{mic:transformed-cutoffs}.
There exists $C_s>0$, independent of $\Lambda$, $t$, $\eta$,
and $\varepsilon$, such that, for every $t\ge0$,
\begin{enumerate}
\renewcommand{\theenumi}{\roman{enumi}}
\renewcommand{\labelenumi}{(\theenumi)}
\item For every $n\in\widetilde E_{\le \Lambda}(t)$, we have
\begin{equation}\label{mic:transformed-leak}
 \|A^{1/2}P_{>L_+}n\|_0
 \le C_s\left(\frac{\alpha}{\varepsilon}
             +\sqrt{\frac{D}{\varepsilon}}\right)\|n\|_0.
\end{equation}
\item For every $z\in\widetilde F_{>\Lambda}(t)$, we have
\begin{equation}\label{mic:transformed-fast-leak}
 \|P_{\le L_-}z\|_0
 \le C_s\left(\frac{\alpha}{\varepsilon}
             +\sqrt{\frac{D}{\varepsilon}}\right)
             \mu^{-1/2}\|z\|_0.
\end{equation}
\end{enumerate}
\end{lemma}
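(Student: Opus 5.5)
The plan is to realize each vector as the value at time $t$ of a solution of the transformed equations. Then compare the relevant spectral piece with the heat evolution on that piece by a Duhamel formula run over $[t,T]$, and let $T\to\infty$. The growth and decay rates from Lemma~\ref{mic:dirichlet}(ii) are separated from the spectral rates on the projected range by a gap proportional to $\varepsilon$. This gap makes the boundary term vanish and turns the forcing integrals into the factors $\alpha/\varepsilon$ and $\sqrt{D/\varepsilon}$. Since every estimate is uniform in the base point, I will work at $t=0$ for the base point $S_tu_0$.

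\emph{Part (i).} Let $n(q)$ solve \eqref{mic:dual-equation} with $n(0)=n\in\widetilde E_{\le\Lambda}(0)$. By Lemma~\ref{mic:dirichlet}, $n(q)$ stays in the transformed normal spaces, its Dirichlet quotient is at most $e^{C_s\eta\alpha}\Lambda+D$, and $\|n(q)\|_0\le e^{1/4}e^{\nu\lambda_0q}\|n\|_0$ with $\lambda_0=e^{C_s\eta\alpha}\Lambda+2D$. On $\operatorname{Ran}P_{>L_+}$ the operator $-\nu A$ generates a contraction semigroup, so projecting the equation and solving backward from $T$ gives
\[
 P_{>L_+}n=e^{-\nu AT}P_{>L_+}n(T)-\int_0^Te^{-\nu Aq}P_{>L_+}\bigl(K_\eta-\mathcal R_\eta^*\bigr)n(q)\,dq .
\]
Since $L_+-\lambda_0\ge\varepsilon\Lambda+D>0$, the boundary term tends to zero, and I keep the integral over $[0,\infty)$. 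I then apply $A^{1/2}$ and argue eigenvalue by eigenvalue, with $\lambda>L_+$.

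For the transport term, \eqref{mic:K-bounds} and the Dirichlet quotient bound give $\|K_\eta n(q)\|_0\le C_sr\sqrt{\Lambda+D}\,\|n(q)\|_0$. The resulting scalar integral is bounded by
\[
 \sqrt{\lambda(\Lambda+D)}\,r/(\nu(\lambda-\lambda_0)) .
\]
For the remainder term I use Cauchy--Schwarz in time together with \eqref{mic:a-b-D}, which gives the bound $\sqrt{\lambda/(\nu(\lambda-\lambda_0))}\,\sqrt{\nu D}$. Both bounds close via the elementary inequality $(\lambda-\lambda_0)/\lambda\ge c\,\varepsilon$ for $\lambda\ge L_+$. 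This holds because $L_+\le(e^{C_s\eta\alpha}+1)\Lambda+3D$ while $L_+-\lambda_0\ge\varepsilon\Lambda+D$, and $\eta\alpha\le1$. The two bounds then give the factors $\alpha/\varepsilon$ and $\sqrt{D/\varepsilon}$, which proves \eqref{mic:transformed-leak}.

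\emph{Part (ii).} Let $z(q)$ solve \eqref{mic:transformed} with $z(0)=z$. Lemma~\ref{mic:dirichlet}(ii) gives $\|z(q)\|_0\le e^{1/4}e^{-\nu\lambda_1q}\|z\|_0$ with $\lambda_1=e^{-C_s\eta\alpha}\mu-2D$. The range of $P_{\le L_-}$ is finite-dimensional, so $e^{\nu Aq}$ is harmless there, and I write
\[
 P_{\le L_-}z=e^{\nu AT}P_{\le L_-}z(T)-\int_0^Te^{\nu Aq}P_{\le L_-}(K_\eta+\mathcal R_\eta)z(q)\,dq .
\]
The boundary term vanishes as $T\to\infty$ because $\lambda_1-L_-\ge\varepsilon\mu+D$.

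The main obstacle is the transport term. Here $z(q)$ has no upper Dirichlet bound, so \eqref{mic:K-bounds} cannot be applied directly to $z(q)$. I plan to handle it by duality, pairing with a low-mode test field $\varphi$ and using the skew-symmetry in Lemma~\ref{mic:gauge}(iii):
\[
 |\langle K_\eta z(q),P_{\le L_-}\varphi\rangle_0|\le C_sr\sqrt{L_-}\,\|z(q)\|_0\|\varphi\|_0 .
\]
Integrating this against $e^{\nu\lambda q}e^{-\nu\lambda_1q}$ gives $C_s r\sqrt{L_-}/(\nu\varepsilon\mu)\le C_s(\alpha/\varepsilon)\mu^{-1/2}$. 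The remainder term, treated with Cauchy--Schwarz in time, gives $\sqrt{\nu D}/\sqrt{\nu(\varepsilon\mu+D)}\le\sqrt{D/\varepsilon}\,\mu^{-1/2}$.

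Finally, I justify the Duhamel identities. I first use smooth base points and data in $H^1_\sigma$, as in the proof of Lemma~\ref{mic:dirichlet}, and then pass to the limit using the density argument at the end of that proof.
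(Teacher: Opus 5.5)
Your proposal follows the paper's proof essentially step by step. It uses the same backward Duhamel representations from $T=\infty$ for $P_{>L_+}n$ and $P_{\le L_-}z$, the same use of \eqref{mic:growth} to kill the boundary terms, and the same Cauchy--Schwarz-in-time treatment of $\mathcal R_\eta$. In (ii) it uses the same skew-symmetry bound $\|P_{\le L_-}K_\eta\|=\|K_\eta P_{\le L_-}\|\le C_s\|S_qu_0\|_s\sqrt{L_-}$. Part (ii) is fine as written. So is the remainder term in (i), where Cauchy--Schwarz in time followed by summation over eigenvalues legitimately reduces everything to $\sup_{\lambda>L_+}\lambda/(\nu(\lambda-\lambda_0))$.

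The step that does not work as written is the transport term in (i). You bound each spectral piece by $\sqrt{\lambda}\int_0^\infty e^{-\nu(\lambda-\lambda_0)q}\,C_sr\sqrt{\Lambda+D}\,dq$, which replaces $\|P_\lambda K_\eta n(q)\|_0$ by the full $\|K_\eta n(q)\|_0$. You then take the supremum over $\lambda$. But a uniform bound on $\sqrt{\lambda}\,\|P_\lambda v\|_0$ for each $\lambda>L_+$ does not bound $\|A^{1/2}P_{>L_+}v\|_0$, because infinitely many eigenvalues contribute.

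The $L^2_t$ route you use for the remainder also fails here. It would give $C\alpha\sqrt{(\Lambda+D)/\varepsilon}$, which is not uniform in $\Lambda$. So you must apply Minkowski's inequality with the operator norm, and that requires the integral of the supremum rather than the supremum of the integrals:
\[
 \int_0^\infty\sup_{\lambda>L_+}\sqrt{\lambda}\,e^{-\nu(\lambda-\lambda_0)q}\,dq .
\]
This quantity sees the parabolic smoothing singularity coming from $\lambda>2L_+$. Split the spectrum there. For $\lambda\le 2L_+$ the integrand is at most $C\sqrt{L_+}e^{-\nu(L_+-\lambda_0)q}$, and for $\lambda>2L_+$ it is at most $C(\nu q)^{-1/2}e^{-\nu(L_+-\lambda_0)q}$. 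Integrating gives $C\sqrt{L_+}/(\nu(L_+-\lambda_0))$, which is the paper's \eqref{mic:heat-integral}. With this estimate your final bound $C_s\alpha/\varepsilon$ follows, so the gap is easily closed, but the per-eigenvalue computation alone does not establish it.
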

\begin{proof}
We use the growth and decay bounds of Lemma~\ref{mic:dirichlet}
to derive future integral representations, then estimate
the transport and remainder terms separately.

For (i), normalize $\|n(0)\|_0=1$ and set
$c=e^{C_s\eta\alpha}\Lambda+2D$ and
$A_Q=A|_{\operatorname{Ran}P_{>L_+}}$ with its natural domain.
Since $L_+-c=\varepsilon\Lambda+D>0$, \eqref{mic:growth} gives
$e^{-\nu A_QT}P_{>L_+}n(T)\to0$ as $T\to\infty$.
Integrating the high-frequency part of \eqref{mic:dual-equation}
and passing to this limit, we obtain
\begin{equation}\label{mic:normal-future}
 P_{>L_+}n(0)
 =-\int_0^\infty e^{-\nu A_Qt}P_{>L_+}
          (K_\eta-\mathcal R_\eta^*)n(t)\,dt.
\end{equation}
For the transport term, we use the following estimate for $0\le c<L$:
\begin{equation}\label{mic:heat-integral}
 \int_0^\infty
 \|A^{1/2}e^{-\nu(A-c)t}P_{>L}\|_{\mathcal L(L^2_\sigma)}\,dt
 \le\frac{C\sqrt L}{\nu(L-c)}.
\end{equation}
We split the spectrum at $2L$ and bound the integrand by
$C\sqrt L e^{-\nu(L-c)t}
 +C(\nu t)^{-1/2}e^{-\nu(L-c)t/2}$; integration gives
\eqref{mic:heat-integral}.
Equations~\eqref{mic:K-bounds}, \eqref{mic:normal-quotient}, and
\eqref{mic:growth} give
\[
 \|K_\eta n(t)\|_0
 \le C_s\|S_tu_0\|_s\sqrt{e^{C_s\eta\alpha}\Lambda+D}\,e^{\nu ct}.
\]
This bound, \eqref{mic:energy-input}, and \eqref{mic:heat-integral} yield
\[
 \left\|A^{1/2}\int_0^\infty e^{-\nu A_Qt}P_{>L_+}
                         K_\eta n(t)\,dt\right\|_0
 \le\frac{C_sr\sqrt{(e^{C_s\eta\alpha}\Lambda+D)L_+}}
      {\nu(\varepsilon\Lambda+D)}
 \le\frac{C_s\alpha}{\varepsilon}.
\]

For the remainder, the spectral theorem and Hilbert space duality give
\begin{equation}\label{mic:dual-heat}
 \left\|A^{1/2}\int_0^\infty
 e^{-\nu(A-c)t}P_{>L}f(t)\,dt\right\|_0
 \le\left(\sup_{\xi>L}\frac{\xi}{2\nu(\xi-c)}\right)^{1/2}
 \|f\|_{L^2(0,\infty;L^2_\sigma)}.
\end{equation}
We obtain this bound by integrating
$\|A^{1/2}e^{-\nu(A-c)t}P_{>L}w\|_0^2$ for the adjoint operator.
For $f(t)=e^{-\nu ct}\mathcal R_\eta(t)^*n(t)$,
\eqref{mic:growth} and \eqref{mic:a-b-D} give
\[
 \|f\|_{L^2(0,\infty;L^2_\sigma)}\le C\sqrt{\nu D},
 \qquad \frac{L_+}{L_+-c}\le\frac{C_s}{\varepsilon}.
\]
Hence \eqref{mic:dual-heat} yields
\[
 \left\|A^{1/2}\int_0^\infty e^{-\nu A_Qt}P_{>L_+}
                      \mathcal R_\eta(t)^*n(t)\,dt\right\|_0
 \le C_s\sqrt{D/\varepsilon}.
\]
We combine the two bounds in \eqref{mic:normal-future} and use
homogeneity to prove \eqref{mic:transformed-leak} at $t=0$.

For (ii), the case $L_-\le0$ is immediate since $P_{\le L_-}=0$.
Assume $L_->0$ and initially take
$z(0)\in\widetilde F_{>\Lambda}(0)\cap H^1_\sigma$ with $\|z(0)\|_0=1$.
Set
$c_-=e^{-C_s\eta\alpha}\mu-2D$ and
$A_P=A|_{\operatorname{Ran}P_{\le L_-}}$.
Since $c_--L_-=\varepsilon\mu+D>0$, \eqref{mic:growth} gives
$e^{\nu A_PT}P_{\le L_-}z(T)\to0$. Integrating the low-frequency part
of \eqref{mic:transformed} and letting $T\to\infty$, we obtain
\begin{equation}\label{mic:fast-future}
 P_{\le L_-}z(0)
 =-\int_0^\infty e^{\nu A_Pt}P_{\le L_-}
           (K_\eta+\mathcal R_\eta)z(t)\,dt.
\end{equation}
By the skew-symmetry in Lemma~\ref{mic:gauge}(iii) and
\eqref{mic:K-bounds}, we obtain
\[
 \|P_{\le L_-}K_\eta\|_{\mathcal L(L^2_\sigma)}
 =\|K_\eta P_{\le L_-}\|_{\mathcal L(L^2_\sigma)}
 \le C_s\|S_tu_0\|_s\sqrt{L_-}.
\]
We estimate \eqref{mic:fast-future} using \eqref{mic:energy-input},
\eqref{mic:growth}, \eqref{mic:a-b-D}, and Cauchy--Schwarz to obtain
\[
 \begin{aligned}
 \|P_{\le L_-}z(0)\|_0
 \le\frac{C_sr\sqrt{L_-}}{\nu(\varepsilon\mu+D)}
      +\frac{C\sqrt{\nu D}}{\sqrt{\nu(\varepsilon\mu+D)}}
 \le C_s\left(\frac\alpha\varepsilon
                     +\sqrt{\frac D\varepsilon}\right)\mu^{-1/2}.
 \end{aligned}
\]
Homogeneity and the density argument in the proof of
Lemma~\ref{mic:dirichlet} extend \eqref{mic:transformed-fast-leak}
to every $z(0)\in\widetilde F_{>\Lambda}(0)$.
Starting the same arguments at any time, with the uniform bounds
\eqref{mic:energy-input} and \eqref{mic:a-b-D}, proves (i) and (ii)
for every $t\ge0$.
\end{proof}

\subsection{Spectral estimates in the Sobolev inner product}

We transfer the preceding spectral localization estimates
through $\mathcal B$ and $\mathcal B^{-1}$ to the original
$H^s$ inner product, obtaining bounds for the normal and fast
spaces that apply to the minimum-norm projections and block
radial equations used in the conjugacy construction.

We retain the parameters $D,\eta,\varepsilon$ from
\eqref{mic:a-b-D} and \eqref{mic:transformed-cutoffs}.

\vskip0.05in
\Needspace{6\baselineskip}
The next proposition transfers the spectral localization estimates
to the original $H^s$ inner product and bounds the Rayleigh quotients
on the normal and fast spaces.
\nopagebreak[4]
\begin{proposition}\label{mic:physical-buffers}
Let $0<\eta,\varepsilon\le1$, $\alpha=r/\nu$, and let
$D$ be as in \eqref{mic:a-b-D}.
There exists $C_s>0$ such that the following estimates hold
for every $u_0\in B_r(X)$ and every spectral cut with endpoints
$\Lambda,\mu$ defined in \eqref{geo:spectral-endpoints}.
\begin{enumerate}
\renewcommand{\theenumi}{\roman{enumi}}
\renewcommand{\labelenumi}{(\theenumi)}
\item For $v\in E_{\le \Lambda}(u_0)$ and $w\in F_{>\Lambda}(u_0)$, we have
\begin{align}\label{mic:physical-leak}
 \|A^{1/2}P_{>(e^{C_s\eta\alpha}+2\varepsilon)\Lambda+4D}v\|_{s}
 &\le C_s\alpha\eta^{-2}\varepsilon^{-1}e^{C_s\alpha/\eta}\|v\|_s,\nonumber\\
 \|P_{\le (e^{-C_s\eta\alpha}-2\varepsilon)\mu-4D}w\|_{s}
 &\le C_s\alpha\eta^{-2}\varepsilon^{-1}e^{C_s\alpha/\eta}\mu^{-1/2}\|w\|_s.
\end{align}
\item There exists $C_{\eta,\varepsilon}\ge0$ satisfying
\begin{equation}\label{mic:C}
 C_{\eta,\varepsilon}
 \le C_s\alpha^2\eta^{-4}\varepsilon^{-2}e^{C_s\alpha/\eta}
\end{equation}
such that, for $v\in E_{\le \Lambda}(u_0)$ and
$w\in F_{>\Lambda}(u_0)\cap H^{s+1}_\sigma$, we have
\begin{align}\label{mic:physical-rayleigh}
 \|A^{1/2}v\|_{s}^2
 &\le((e^{C_s\eta\alpha}+2\varepsilon)\Lambda+C_{\eta,\varepsilon})
                    \|v\|_{s}^2,\nonumber\\
 \|A^{1/2}w\|_{s}^2
 &\ge((e^{-C_s\eta\alpha}-2\varepsilon)\mu-C_{\eta,\varepsilon})
                    \|w\|_{s}^2.
\end{align}

\end{enumerate}
The constants are independent of $u_0$ and the spectral cut,
and $C_s$ is independent of $\eta$ and $\varepsilon$.

\end{proposition}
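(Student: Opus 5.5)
The proof transfers Lemma~\ref{mic:transformed-buffers} and Lemma~\ref{mic:dirichlet} through the isometry $J_s:X\to L^2_\sigma$ and the change of variables $\mathcal B=e^{S}$ at time $t=0$, with $S=S_\eta(u_0)$. By \eqref{mic:transformed-spaces}, a normal vector $v\in E_{\le\Lambda}(u_0)$ corresponds to $n=\mathcal B J_sv\in\widetilde E_{\le\Lambda}(0)$. A fast vector $w\in F_{>\Lambda}(u_0)$ corresponds to $z=\mathcal B^{-1}J_sw\in\widetilde F_{>\Lambda}(0)$. By \eqref{mic:exponential-bounds}, $\|n\|_0\le e^{C_s\alpha/\eta}\|v\|_s$ and $\|z\|_0\le e^{C_s\alpha/\eta}\|w\|_s$. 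We then write $J_sv=\mathcal B^{-1}n$ and $J_sw=\mathcal Bz$ and compare spectral projections at the cutoffs $L_\pm$ of \eqref{mic:transformed-cutoffs} with those at the enlarged cutoffs $L'_+=(e^{C_s\eta\alpha}+2\varepsilon)\Lambda+4D$ and $L'_-=(e^{-C_s\eta\alpha}-2\varepsilon)\mu-4D$. These exceed $L_+$, respectively fall below $L_-$, by a buffer of size at least $\varepsilon\Lambda+D$, respectively $\varepsilon\mu+D$.

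\emph{Step 1: leakage estimate (i) for $v$.} Decompose $n=P_{\le L_+}n+P_{>L_+}n$. The high part satisfies $\|A^{1/2}\mathcal B^{-1}P_{>L_+}n\|_0\le e^{C_s\alpha/\eta}\|A^{1/2}P_{>L_+}n\|_0$ by the $H^1$ bound in \eqref{mic:exponential-bounds}, and \eqref{mic:transformed-leak} controls the right side. The main obstacle is the low part $P_{>L'_+}e^{-S}P_{\le L_+}n$. The operator $S$ is a Fourier multiplier-convolution with kernel $s_\ell(k)$, bounded by $|u_\ell|/(\nu\eta)$ and coupling $k$ to $k+\ell$. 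Moving from frequencies $\le\sqrt{L_+}$ to frequencies $>\sqrt{L'_+}$ therefore requires a total shift $|\ell|\gtrsim\sqrt{L'_+}-\sqrt{L_+}\gtrsim(\varepsilon\Lambda+D)/\sqrt{\Lambda+D}$.
\begin{itemize}
\item[] Rather than track frequency shifts through the exponential series, I would use the commutator $[A,e^{-S}]$ from \eqref{mic:exponential-bounds}. Writing $P_{>L'_+}e^{-S}P_{\le L_+}$ and applying $A-L_+$ on the high side gives
\[
\|A^{1/2}P_{>L'_+}e^{-S}P_{\le L_+}n\|_0\le \sup_{\xi>L'_+}\frac{\sqrt\xi}{\xi-L_+}\,\|[A,e^{-S}]P_{\le L_+}n\|_0 .
\]
\item[] The commutator costs $C_s\alpha e^{C_s\alpha/\eta}\|A^{1/2}P_{\le L_+}n\|_0\le C_s\alpha e^{C_s\alpha/\eta}\sqrt{L_+}\|n\|_0$. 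The ratio $\sqrt{\xi L_+}/(\xi-L_+)$ is at most $C/\varepsilon$, since $L'_+-L_+\ge\varepsilon\Lambda$ and $L_+\lesssim\Lambda+D$. In the regime $D\gg\Lambda$, the buffer $D$ also suffices.
\end{itemize}
This yields a bound $C_s\alpha\varepsilon^{-1}e^{C_s\alpha/\eta}$. Combining it with the $\sqrt{D/\varepsilon}$ term from \eqref{mic:transformed-leak} and inserting \eqref{mic:a-b-D}, which gives $\sqrt D\le C_s\alpha\eta^{-2}e^{C_s\alpha/\eta}$, produces the stated constant. The fast estimate is symmetric: split $z$ at $L_-$, use \eqref{mic:transformed-fast-leak} for $P_{\le L_-}z$, and for $P_{\le L'_-}\mathcal B P_{>L_-}z$ use the commutator identity with $(L_--\xi)^{-1}$ on the low side. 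That identity costs $\|[A,e^{S}]P_{>L_-}z\|_0$, and the gain $\mu^{-1/2}$ comes from $\sqrt{\xi}/(L_--\xi)\lesssim\varepsilon^{-1}\mu^{-1/2}$. Bounding the commutator's action on high modes requires some care: I would use $\|A^{1/2}P_{>L_-}z\|$ only through Lemma~\ref{mic:dirichlet}(i) and density in $H^{s+1}$.

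\emph{Step 2: Rayleigh bounds (ii).} These follow from (i) by splitting $v=P_{\le L'_+}v+P_{>L'_+}v$ orthogonally:
\[
\|A^{1/2}v\|_s^2\le L'_+\|v\|_s^2+\|A^{1/2}P_{>L'_+}v\|_s^2 .
\]
The second term is bounded by the square of the right-hand side of \eqref{mic:physical-leak}, which is absorbed into $C_{\eta,\varepsilon}$ together with $4D$, consistent with \eqref{mic:C}. For $w\in H^{s+1}$, write
\[
\|A^{1/2}w\|_s^2\ge L'_-\bigl(\|w\|_s^2-\|P_{\le L'_-}w\|_s^2\bigr)\ge\bigl(L'_--C_s(\cdots)^2\bigr)\|w\|_s^2,
\]
using $L'_-\mu^{-1}\le1$ and (i).
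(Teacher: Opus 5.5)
Your architecture matches the paper's proof. You pull back through $J_s$ and $\mathcal B^{\pm1}$, split at the transformed cutoffs $L_\pm$, use Lemma~\ref{mic:transformed-buffers} for the localized part, control the mixing that $e^{\pm S}$ produces across the buffer through $[A,e^{\pm S}]$, and obtain (ii) by spectral orthogonality with $C_{\eta,\varepsilon}=4D+H^2$. However, both mixing estimates are justified incorrectly, and the fast one has a genuine gap.

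\textbf{The Step 1 mixing bound.} ``Applying $A-L_+$ on the high side'' does not isolate the commutator, because $A$ is not the scalar $L_+$ on $\operatorname{Ran}P_{\le L_+}$. With $X=P_{>L'_+}e^{-S}P_{\le L_+}$ one has $(A-L_+)X=P_{>L'_+}[A,e^{-S}]P_{\le L_+}+X(A-L_+)P_{\le L_+}$, and the last term is not small. Your vector-wise inequality in terms of $\|[A,e^{-S}]P_{\le L_+}n\|_0$ in fact fails in general: if $P_{\le L_+}n$ has components at two distinct eigenvalues, the commutator term can vanish while $Xn\ne0$. What is true is an operator statement. Set $A_Q=A|_{\operatorname{Ran}P_{>L'_+}}$, $A_P=A|_{\operatorname{Ran}P_{\le L_+}}$, and $Y=P_{>L'_+}[A,e^{-S}]P_{\le L_+}$. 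Then $X$ solves the Sylvester equation $A_QX-XA_P=Y$, so $X=\int_0^\infty e^{-A_Qt}Ye^{A_Pt}\,dt$. Estimate \eqref{mic:heat-integral} (with viscosity one) then gives $\|A^{1/2}X\|\le C\sqrt{L'_+}\,\|Y\|/(L'_+-L_+)$, with $\|Y\|\le C_s\alpha e^{C_s\alpha/\eta}\sqrt{L_+}$. This is \eqref{mic:sylvester}--\eqref{mic:sylvester-bounds}. Since you end up using operator norms anyway, your constant survives this repair.

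\textbf{The fast estimate.} This is where the proposal breaks. You plan to bound $P_{\le L'_-}\mathcal BP_{>L_-}z$ through $\|[A,e^{S}]P_{>L_-}z\|_0$, hence through $\|A^{1/2}P_{>L_-}z\|_0$, controlled ``through Lemma~\ref{mic:dirichlet}(i)''. That cannot work, for three reasons:
\begin{enumerate}
\item $[A,e^S]=\int_0^1e^{(1-\theta)S}\nu^{-1}K_{\rm nr}e^{\theta S}\,d\theta$ loses a full derivative.
\item $z$ is only in $L^2_\sigma$.
\item \eqref{mic:fast-quotient} is a \emph{lower} bound on the Dirichlet quotient of fast vectors. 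Nothing bounds $\|A^{1/2}z\|_0$ by $\|z\|_0$, and fast vectors can carry arbitrarily high frequencies.
\end{enumerate}
The missing idea is duality. Since $e^{S}$ is self-adjoint on $L^2_\sigma$,
\[
\|P_{\le L'_-}\mathcal BP_{>L_-}\|_{\mathcal L(L^2_\sigma)}=\|P_{>L_-}\mathcal BP_{\le L'_-}\|_{\mathcal L(L^2_\sigma)}.
\]
The right side is exactly the Step 1 mixing estimate, now with the commutator acting on the \emph{low} modes. There it costs only $\|P_{\le L'_-}\|_{\mathcal L(L^2_\sigma,H^1_\sigma)}\le C\sqrt{L'_-}\le C\sqrt\mu$. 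This gives
\[
C_s\alpha e^{C_s\alpha/\eta}\,\frac{\sqrt\mu}{\varepsilon\mu+D}\le C_s\alpha e^{C_s\alpha/\eta}\varepsilon^{-1}\mu^{-1/2}.
\]
Your factor $\sqrt\xi/(L_--\xi)$ with $\xi\le L'_-$ only makes sense in this dual form. With these two repairs, your argument becomes the paper's proof.
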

\begin{proof}
We first control the mixing of separated spectral ranges by
the changes of variables, then transfer the estimates of
Lemma~\ref{mic:transformed-buffers} to the original Sobolev norm.
The Rayleigh quotient bounds follow by spectral orthogonality.

For $L'>L>0$, set $A_Q=A|_{\operatorname{Ran}P_{>L'}}$ and
$A_P=A|_{\operatorname{Ran}P_{\le L}}$, with their natural domains.
By Lemma~\ref{mic:remainder}(ii),
$e^{-A_QT}P_{>L'}e^{\pm S}P_{\le L}e^{A_PT}$ tends to zero in operator
norm at rate $e^{-(L'-L)T}$.
We differentiate $e^{-A_Qt}P_{>L'}e^{\pm S}P_{\le L}e^{A_Pt}$,
integrate on $[0,T]$, and let $T\to\infty$ to obtain
\begin{equation}\label{mic:sylvester}
 P_{>L'}e^{\pm S}P_{\le L}
 =\int_0^\infty e^{-A_Qt}P_{>L'}[A,e^{\pm S}]P_{\le L}e^{A_Pt}\,dt.
\end{equation}
Using \eqref{mic:exponential-bounds}, \eqref{mic:heat-integral}
with viscosity one, and
$\|P_{\le L}\|_{\mathcal L(L^2_\sigma,H^1_\sigma)}\le C\sqrt L$
on each nonempty spectral range, we obtain
\begin{equation}\label{mic:sylvester-bounds}
 \begin{aligned}
 \|P_{>L'}e^{\pm S}P_{\le L}\|_{\mathcal L(L^2_\sigma)}
 &\le\frac{C_s\alpha e^{C_s\alpha/\eta}\sqrt L}{L'-L},\\
 \|A^{1/2}P_{>L'}e^{\pm S}P_{\le L}\|_{\mathcal L(L^2_\sigma)}
 &\le\frac{C_s\alpha e^{C_s\alpha/\eta}\sqrt{LL'}}{L'-L}.
 \end{aligned}
\end{equation}
Here we use $A\ge I$. Self-adjointness of $e^{\pm S}$ gives
the same first bound for $P_{\le L}e^{\pm S}P_{>L'}$.

We now transfer the estimates of
Lemma~\ref{mic:transformed-buffers} to the original spaces.
For $v\in E_{\le \Lambda}(u_0)$, we set $n=\mathcal BJ_sv$ and
\[
 L=(e^{C_s\eta\alpha}+\varepsilon)\Lambda+3D,\qquad
 L'=(e^{C_s\eta\alpha}+2\varepsilon)\Lambda+4D.
\]
We write
$J_sv=\mathcal B^{-1}P_{\le L}n+\mathcal B^{-1}P_{>L}n$.
We control the first term by \eqref{mic:sylvester-bounds} and
the second by \eqref{mic:transformed-leak} and the $H^1$ bound
for $\mathcal B^{-1}$ in \eqref{mic:exponential-bounds}.
Since $L'-L=\varepsilon\Lambda+D$, these estimates give
\[
 \|A^{1/2}P_{>L'}J_sv\|_0
 \le C_se^{C_s\alpha/\eta}\left(\frac\alpha\varepsilon+\sqrt{\frac D\varepsilon}\right)\|J_sv\|_0.
\]
For $w\in F_{>\Lambda}(u_0)$, we set $z=\mathcal B^{-1}J_sw$ and
\[
 L=(e^{-C_s\eta\alpha}-\varepsilon)\mu-3D,\qquad
 L'=(e^{-C_s\eta\alpha}-2\varepsilon)\mu-4D.
\]
If $L'\le0$, the low-frequency estimate is immediate.
Otherwise, split $J_sw=\mathcal BP_{\le L}z+\mathcal BP_{>L}z$.
The first term is controlled by \eqref{mic:transformed-fast-leak}
and \eqref{mic:exponential-bounds}. For the second, the adjoint
estimate above gives
\[
 \|P_{\le L'}\mathcal BP_{>L}\|_{\mathcal L(L^2_\sigma)}
 \le\frac{C_s\alpha e^{C_s\alpha/\eta}\sqrt{L'}}{L-L'}.
\]
Using $L-L'=\varepsilon\mu+D$ and $L'\le\mu$, we obtain
\[
 \|P_{\le L'}J_sw\|_0
 \le C_se^{C_s\alpha/\eta}\left(\frac\alpha\varepsilon+\sqrt{\frac D\varepsilon}\right)\mu^{-1/2}\|J_sw\|_0.
\]
Since $J_s$ is an isometry and commutes with $A$, we obtain the
corresponding $H^s$ estimates with common coefficient $H$.
Using \eqref{mic:a-b-D}, we obtain
\begin{equation}\label{mic:H}
 H=C_se^{C_s\alpha/\eta}\left(\frac\alpha\varepsilon+\sqrt{\frac D\varepsilon}\right)
 \le C_s\alpha\eta^{-2}\varepsilon^{-1}e^{C_s\alpha/\eta},
\end{equation}
after enlarging $C_s$ in the exponent. This proves \eqref{mic:physical-leak} and completes (i).

For (ii), denote the upper and lower cutoffs in
\eqref{mic:physical-leak} by $L_+$ and $L_-$, respectively.
Spectral orthogonality and (i) give
\[
 \begin{aligned}
 \|A^{1/2}v\|_s^2
 &\le L_+\|v\|_s^2+\|A^{1/2}P_{>L_+}v\|_s^2
 \le(L_++H^2)\|v\|_s^2,\\
 \|A^{1/2}w\|_s^2
 &\ge L_-\bigl(\|w\|_s^2-\|P_{\le L_-}w\|_s^2\bigr)
 \ge(L_--H^2)\|w\|_s^2
 \qquad(L_->0).
 \end{aligned}
\]
The second estimate uses $L_-\le\mu$; for $L_-\le0$, its final
lower bound follows from positivity of $A$.
Thus \eqref{mic:physical-rayleigh} holds with
$C_{\eta,\varepsilon}=4D+H^2$, and \eqref{mic:a-b-D} and
\eqref{mic:H} give \eqref{mic:C}. This completes (ii).
\end{proof}

The gauge parameter may vary with the spectral cut when we apply
Proposition~\ref{mic:physical-buffers}. Each application estimates
the same physical spaces $E_{\le L}(u_0)$ and $F_{>L}(u_0)$.
We do not require uniform boundedness of the exponential gauges
as the cut tends to infinity.

\section{Spectral blocks and radial estimates}\label{app:estimates-section}
In this section, we choose spectral blocks and derive radial
inequalities for the Navier--Stokes semiflow in the coordinates
$z=\Psi(u)$. We obtain small normalized block couplings and
square-summable normalized source terms, providing uniform control
over all finite prefixes for the conjugacy construction in
Section~\ref{app:section}.

\vskip0.05in
We retain the Sobolev conventions of \eqref{main:norm},
the normal projections in \eqref{geo:orthogonal-projections},
and the spectral projections in \eqref{asy:spectral-projections}.
The block norms are induced by the $H^s$ inner product as in
\eqref{geo:block-spaces}.
We choose all balls below inside the common ball on which the results of
Sections~\ref{geo:section} and~\ref{mic:section} hold.  Constants may depend
on $s$ and on the fixed viscosity $\nu$.  They do not depend on a spectral
cut or on a point in the indicated ball.

\subsection{Complete spectral clusters and their interactions}
We choose spectral blocks consisting of complete eigenspaces
and estimate their interactions, providing the decay rates
and coupling bounds needed for the radial inequalities
and the subsequent blockwise construction of conjugacies.

For all sufficiently large integers $n$, set
\begin{equation}\label{app:cut-construction}
 k_n=\lfloor e^{\sqrt n/2}\rfloor,\qquad \Lambda_n=k_n^2.
\end{equation}
After discarding finitely many terms, we relabel this sequence by $j\ge1$
and write $g_j=\Lambda_j-\Lambda_{j-1}$, with $\Lambda_0=0$.
We use $\mu_L$ from \eqref{geo:spectral-endpoints}.

\vskip0.05in
\Needspace{6\baselineskip}
The next lemma establishes the spacing and summability properties
of the chosen spectral cutoffs.
\nopagebreak[4]
\begin{lemma}\label{app:cuts}
For the sequence defined by \eqref{app:cut-construction},
after deleting finitely many initial terms and relabeling,
the following assertions hold.
\begin{enumerate}
\renewcommand{\theenumi}{\roman{enumi}}
\renewcommand{\labelenumi}{(\theenumi)}
\item
$\Lambda_j\in\sigma(A)$ and $\Lambda_{j+1}>\Lambda_j$
for every $j\ge1$. Moreover,
\begin{equation}\label{app:cut-asymptotics}
 \log\Lambda_j\sim\sqrt j,\qquad
 g_j\sim\frac{\Lambda_j}{2\log\Lambda_j},\qquad
 \frac{g_{j+1}}{g_j}\to1
 \quad\text{as }j\to\infty.
\end{equation}

\item For every $j\ge1$,
\begin{equation}\label{app:next-eigenvalue}
 \Lambda_j<\mu_{\Lambda_j}
 \le\Lambda_j+2\sqrt{\Lambda_j}+1.
\end{equation}

\item For every $\theta>0$ and $m\ge0$,
\begin{equation}\label{app:summability}
 \sum_{j=1}^{\infty}
 \Lambda_j^{-\theta}(\log\Lambda_j)^m<\infty.
\end{equation}
\end{enumerate}
\end{lemma}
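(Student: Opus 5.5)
The plan is to reduce everything to elementary facts about $k_n=\lfloor e^{\sqrt n/2}\rfloor$ and the arithmetic of $\sigma(A)=\{|k|^2:k\in\mathbb Z^3\setminus\{0\}\}$.

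For (i), membership $\Lambda_j=k_n^2\in\sigma(A)$ is immediate, since $k_n^2=|(k_n,0,0)|^2$ and $k_n\ge1$ for large $n$. For monotonicity, compare $e^{\sqrt{n+1}/2}-e^{\sqrt n/2}\approx e^{\sqrt n/2}/(4\sqrt n)$. This tends to infinity, so it eventually exceeds $1$, and hence $k_{n+1}>k_n$ strictly for all large $n$. We discard the finitely many earlier terms. Next, $\log\Lambda_j=2\log k_{n}=\sqrt n+O(e^{-\sqrt n/2})$, and after relabeling $j=n-n_0$ we have $\sqrt{n}\sim\sqrt j$, which gives $\log\Lambda_j\sim\sqrt j$. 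For the gaps, write $k_n=e^{\sqrt n/2}(1+O(e^{-\sqrt n/2}))$. Then
\[
 \Lambda_{n}-\Lambda_{n-1}=e^{\sqrt n}-e^{\sqrt{n-1}}+O(e^{\sqrt n/2}).
\]
The main part equals $e^{\sqrt n}\bigl(1-e^{-1/(\sqrt n+\sqrt{n-1})}\bigr)\sim e^{\sqrt n}/(2\sqrt n)$. Since $\log\Lambda_n\sim\sqrt n$ and the error term $e^{\sqrt n/2}$ is negligible relative to $e^{\sqrt n}/\sqrt n$, we obtain $g_j\sim\Lambda_j/(2\log\Lambda_j)$. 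The relabeling shift changes $\sqrt n$ only by $o(1)$, so it is harmless. The ratio statement then follows from this asymptotic, because $\Lambda_{j+1}/\Lambda_j=e^{\sqrt{n+1}-\sqrt n}(1+o(1))\to1$ and $\log\Lambda_{j+1}/\log\Lambda_j\to1$.

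For (ii), $\mu_{\Lambda_j}$ is the smallest element of $\sigma(A)$ exceeding $\Lambda_j$. Since $(k_n+1)^2=\Lambda_j+2\sqrt{\Lambda_j}+1$ lies in $\sigma(A)$, we get $\mu_{\Lambda_j}\le\Lambda_j+2\sqrt{\Lambda_j}+1$. The strict lower bound $\Lambda_j<\mu_{\Lambda_j}$ holds by definition, as recorded in \eqref{geo:spectral-endpoints}. A sharper statement is available, namely $\mu_{\Lambda_j}=\Lambda_j+1$ (take $(k_n,1,0)$ with integer eigenvalues), but it is not needed.

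For (iii), use $\log\Lambda_j\ge c\sqrt j$ for large $j$, which follows from (i). This gives
\[
 \Lambda_j^{-\theta}(\log\Lambda_j)^m=e^{-\theta\log\Lambda_j}(\log\Lambda_j)^m\le C e^{-\theta c\sqrt j/2},
\]
because $x^me^{-\theta x/2}$ is bounded. The series $\sum_j e^{-c'\sqrt j}$ converges, for instance by comparison with $\int_0^\infty e^{-c'\sqrt x}\,dx=2/c'^2$.

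The only step requiring care is the gap asymptotic in (i): one has to check that the floor error of size $O(k_n)=O(e^{\sqrt n/2})$ is dominated by $e^{\sqrt n}/\sqrt n$. It is, so no real obstacle is expected.
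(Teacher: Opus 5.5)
Your proposal is correct and follows essentially the same route as the paper: $\Lambda_j=k_n^2\in\sigma(A)$, the expansion $\Lambda_n=e^{\sqrt n}+O(e^{\sqrt n/2})$ with the mean value theorem for the gaps, the square $(k_n+1)^2\in\sigma(A)$ for the upper bound on $\mu_{\Lambda_j}$, and comparison with $e^{-c\sqrt j}$ for the summability. The only differences are cosmetic. You check eventual monotonicity directly from the increments of $e^{\sqrt n/2}$, whereas the paper deduces it from the gap asymptotic. You also note the sharper, unneeded identity $\mu_{\Lambda_j}=\Lambda_j+1$.
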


\begin{proof}
Every positive integer square belongs to $\sigma(A)$ by the
spectral description in Section~\ref{main:section}; in particular,
$\Lambda_n=k_n^2\in\sigma(A)$.
By \eqref{app:cut-construction},
\[
 \Lambda_n=e^{\sqrt n}+O(e^{\sqrt n/2}),\qquad
 e^{\sqrt n/2}=o\!\left(\frac{e^{\sqrt n}}{\sqrt n}\right).
\]
The mean value theorem therefore gives
\[
 \Lambda_n-\Lambda_{n-1}
 =e^{\sqrt n}-e^{\sqrt{n-1}}+O(e^{\sqrt n/2})
 \sim\frac{e^{\sqrt n}}{2\sqrt n}.
\]
Thus the sequence is eventually strictly increasing. These
asymptotics also give \eqref{app:cut-asymptotics} after deleting
finitely many terms and relabeling, which proves (i).
Since $(k_n+1)^2\in\sigma(A)$, we have
\[
 \Lambda_n<\mu_{\Lambda_n}\le(k_n+1)^2
 =\Lambda_n+2\sqrt{\Lambda_n}+1,
\]
which proves (ii).
By \eqref{app:cut-asymptotics}, for every $\theta>0$ and $m\ge0$,
we obtain
\[
 \Lambda_j^{-\theta}(\log\Lambda_j)^m
 \le Ce^{-\theta\sqrt j/2}j^{m/2}
\]
for all sufficiently large $j$, with $C$ depending only on $\theta,m$.
The integral test gives (iii).
\end{proof}

We use $\mathcal P_{\le 0}(u)=0$ and $\mathcal P_{>0}(u)=I$.
By nesting, the orthogonal projection onto each normal block is
\begin{equation}\label{app:shell-intersection}
 \begin{aligned}
 \mathcal P_{\le \Lambda_j}(u)\mathcal P_{>\Lambda_{j-1}}(u)
 &=\mathcal P_{\le \Lambda_j}(u)-\mathcal P_{\le \Lambda_{j-1}}(u),\\
 \operatorname{Ran}\bigl(\mathcal P_{\le \Lambda_j}(u)\mathcal P_{>\Lambda_{j-1}}(u)\bigr)
 &=E_{\le \Lambda_j}(u)\cap F_{>\Lambda_{j-1}}(u).
 \end{aligned}
\end{equation}
For $j\ge3$, define
\begin{equation}\label{app:far-coefficients}
 C_j(r)=\sup_{u\in B_r}
 \|\mathcal P_{\le \Lambda_j}(u)\mathcal P_{>\Lambda_{j-1}}(u)
       A\mathcal P_{\le \Lambda_{j-2}}(u)\|_{\mathcal L(X)}.
\end{equation}
We set $C_1=C_2=0$. All operator norms below act on $X$.

\vskip0.05in
\Needspace{6\baselineskip}
In the next proposition, we estimate the energy within each normal block
and control the interactions of the heat operator between blocks.
\nopagebreak[4]
\begin{proposition}\label{app:heat-interactions}
Let $\gamma\in(1/2,1)$ and $\varepsilon_*>0$.
There exist $r_*,D_0,C_\gamma>0$ and a tail of the sequence
$(\Lambda_j)$ such that, after relabeling this tail, the following
estimates hold.
\begin{enumerate}
\renewcommand{\theenumi}{\roman{enumi}}
\renewcommand{\labelenumi}{(\theenumi)}
\item For every $j\ge2$,
\begin{equation}\label{app:normal-block-energy}
 \sup_{u\in B_{r_*}(X)}\;
 \sup_{0\ne v\in E_{\le \Lambda_j}(u)\cap F_{>\Lambda_{j-1}}(u)}
 \frac{\bigl|\langle(A-\Lambda_j)v,v\rangle_s\bigr|}{\|v\|_s^2}
 \le D_0\frac{\Lambda_j}{\log\Lambda_j}.
\end{equation}
\item For every $j\ge2$,
\begin{equation}\label{app:adjacent-heat}
 \sup_{u\in B_{r_*}(X)}
 \|\mathcal P_{\le \Lambda_j}(u)\mathcal P_{>\Lambda_{j-1}}(u)A\mathcal P_{\le \Lambda_{j-1}}(u)\|_{\mathcal L(X)}
 \le\varepsilon_*g_j.
\end{equation}
\item With $C_j(r)$ defined by \eqref{app:far-coefficients},
\begin{equation}\label{app:far-growth}
 \begin{gathered}
 C_j(r)\le C_\gamma\Lambda_j^\gamma,
 \qquad j\ge3,\quad 0<r\le r_*,\\
 \lim_{r\downarrow0}\sum_{j=3}^\infty
 \frac{C_j(r)^2}{\Lambda_j^2}=0.
 \end{gathered}
\end{equation}
\end{enumerate}
The constants $D_0$ and $C_\gamma$ are independent of $u$, $j$,
and $r\in(0,r_*]$.
\end{proposition}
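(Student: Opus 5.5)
Everything rests on Proposition~\ref{mic:physical-buffers}, applied with parameters $\eta,\varepsilon$ tuned to each cut. The cut-dependent quantities are $\Lambda_j$, $\mu_{\Lambda_j}\le\Lambda_j+2\sqrt{\Lambda_j}+1$, and $g_j\sim\Lambda_j/(2\log\Lambda_j)$. The errors in Proposition~\ref{mic:physical-buffers} have the form $(e^{\pm C_s\eta\alpha}-1\pm2\varepsilon)\Lambda+C_{\eta,\varepsilon}$. We therefore want the relative spectral error $\eta\alpha+\varepsilon$ to be $o(1/\log\Lambda_j)$, or at least $\le c/\log\Lambda_j$. At the same time the constants $C_{\eta,\varepsilon}$ and $H\sim\alpha\eta^{-2}\varepsilon^{-1}e^{C_s\alpha/\eta}$ must stay small compared with $g_j$, and in (iii) compared with a power $\Lambda_j^\gamma$.

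The natural choice is $\varepsilon=\varepsilon_j=c_1/\log\Lambda_j$ and $\eta=\eta_j=c_2/\log\Lambda_j$. Then $e^{C_s\alpha/\eta_j}=\Lambda_j^{C_s\alpha/c_2}$, which is a small power $\Lambda_j^{\kappa}$ once $r_*$ (hence $\alpha$) is small. This also gives $C_{\eta,\varepsilon}\lesssim\alpha^2(\log\Lambda_j)^6\Lambda_j^{\kappa}$ and $H\lesssim\alpha(\log\Lambda_j)^3\Lambda_j^{\kappa/2}$, both $\ll\Lambda_j/\log\Lambda_j$.

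\textbf{Part (i).} Take $v\in E_{\le\Lambda_j}\cap F_{>\Lambda_{j-1}}$ and apply \eqref{mic:physical-rayleigh} twice. The upper bound at cut $\Lambda_j$ gives $\langle Av,v\rangle_s\le(\Lambda_j+O(\Lambda_j/\log\Lambda_j))\|v\|_s^2$. The lower bound at cut $\Lambda_{j-1}$ needs $w\in H^{s+1}$, which holds since $E_{\le\Lambda_j}\subset H^{2s-1}$. It gives $\langle Av,v\rangle_s\ge(\mu_{\Lambda_{j-1}}-O(\Lambda_j/\log\Lambda_j))\|v\|_s^2$. Since $\Lambda_j-\mu_{\Lambda_{j-1}}\le g_j\sim\Lambda_j/(2\log\Lambda_j)$, the estimate \eqref{app:normal-block-energy} follows with a uniform $D_0$.

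\textbf{Part (ii).} Write $T=\mathcal P_{\le\Lambda_j}\mathcal P_{>\Lambda_{j-1}}A\mathcal P_{\le\Lambda_{j-1}}$. For $x=\mathcal P_{\le\Lambda_{j-1}}h$ and $y=\mathcal P_{\le\Lambda_j}\mathcal P_{>\Lambda_{j-1}}k$ we have $y\perp x$. Hence $\langle Ax,y\rangle_s=\langle(A-c)x,y\rangle_s$ for any constant $c$. Choose $c$ at the midpoint of the gap, $c=(\Lambda_{j-1}+\mu_{\Lambda_{j-1}})/2$.

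Split $x=P_{\le L}x+P_{>L}x$ and $y=P_{\le L'}y+P_{>L'}y$ with cutoffs $L\approx(1+\varepsilon')\Lambda_{j-1}$ and $L'\approx(1-\varepsilon')\mu_{\Lambda_{j-1}}$, using \eqref{mic:physical-leak} with suitable $\varepsilon'$.

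\emph{Main obstacle.} The gap between $\Lambda_{j-1}$ and $\mu_{\Lambda_{j-1}}$ is only $O(\sqrt\Lambda)$, so these cutoffs overlap. The orthogonality trick alone does not exploit a gap. The usable smallness instead comes from the leakage bounds: high modes of $x$ carry weight $H\ll\Lambda$, and low modes of $y$ carry weight $\mu^{-1/2}H$. The leading term $\langle(A-c)P_{\le L}x,P_{>L'}y\rangle$ would need spectral overlap, on $[L',L]$, of width $\sim\varepsilon'\Lambda$. There $|A-c|\le C\varepsilon'\Lambda$. Choosing $\varepsilon'=\varepsilon_*/(4\log\Lambda_j)$ makes this $\le\varepsilon_* g_j/2$. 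The cross terms are bounded by $H\sqrt{\Lambda}\cdot(\text{small})$ plus the leaked pieces; I will first check that they are $o(\Lambda_j/\log\Lambda_j)$. I expect this bookkeeping of the $\sqrt\Lambda$ factors in \eqref{mic:physical-leak} to be the delicate point.

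\textbf{Part (iii).} Here the separation is genuine: $\Lambda_{j-2}$ versus $\mu_{\Lambda_{j-1}}$, a gap of size $\ge g_{j-1}\sim\Lambda_j/(2\log\Lambda_j)$. Repeat the splitting of (ii) with fixed $\varepsilon,\eta$ relative to this gap. The overlap term vanishes, and the remainder is bounded by a product of leakage coefficients. This gives $C_j(r)\lesssim\alpha\,\mathrm{polylog}(\Lambda_j)\Lambda_j^{1/2+\kappa}\le C_\gamma\Lambda_j^\gamma$ for $\gamma>1/2$ and small $r_*$.

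For the summability in \eqref{app:far-growth}, the bound carries a factor $\alpha=r/\nu$. So $C_j(r)^2/\Lambda_j^2\le C\alpha^2\Lambda_j^{2\gamma-2}$ uniformly for $r\le r_*$. Lemma~\ref{app:cuts}(iii) makes the sum finite, and since it is $O(\alpha^2)$, it tends to $0$ as $r\downarrow0$. Finally, we discard a tail of $(\Lambda_j)$ so that all asymptotic inequalities hold for $j\ge2$.
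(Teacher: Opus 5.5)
Your proposal is correct. For (i), (ii), and the growth bound in (iii) it is the paper's argument:
\begin{itemize}
\item the same $1/\log\Lambda_j$ scaling of $\eta,\varepsilon$ in Proposition~\ref{mic:physical-buffers};
\item the two Rayleigh bounds for (i);
\item for (ii), the orthogonality shift $\langle Ax,y\rangle_s=\langle(A-c)x,y\rangle_s$ followed by a three-window spectral split whose middle window has width $\asymp\varepsilon_*\Lambda_j/\log\Lambda_j$ (the paper shifts by $c=\Lambda_{j-1}$ rather than the midpoint);
\item for (iii), a single split point inside the genuine gap $(\Lambda_{j-2},\mu_{\Lambda_{j-1}})$.
\end{itemize}

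The cross terms you left to check in (ii) each consist of one leakage constant times a size factor. The low-window term is bounded by $\|(A-c)P_{\le L'}x\|_s\|P_{\le L'}y\|_s\le C\Lambda H\mu^{-1/2}$. The high-window term is bounded by $\|A^{1/2}P_{>L}x\|_s\|A^{1/2}y\|_s\le CH\sqrt{\Lambda_j}$, using the Rayleigh upper bound on $E_{\le\Lambda_j}$. Both are therefore $O(\Lambda_j^\gamma)=o(g_j)$ once $C_s\alpha/c_2<\gamma-1/2$. This same structure, not a product of two leakage constants, is what produces your bound in (iii). There the parameters must be the same logarithmically scaled ones: with $j$-independent $\varepsilon$, the cutoff shift $\sim\varepsilon\Lambda_j$ would swamp the gap $g_{j-1}$.

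The genuine difference is the limit $r\downarrow0$ in \eqref{app:far-growth}.
\begin{itemize}
\item \emph{The paper's route} is qualitative. It proves $C_j(r)\to0$ for each fixed $j$ from \eqref{app:projection-origin}, i.e.\ continuity of the normal bundles at $u=0$ in the $A$-weighted norm. It then concludes by dominated convergence with the majorant $C_\gamma^2\Lambda_j^{-2(1-\gamma)}$, and reuses the same pointwise fact to handle finitely many indices in (ii).
\item \emph{Your route} uses that the leakage constant $H$ in \eqref{mic:H} is linear in $\alpha=r/\nu$ when Proposition~\ref{mic:physical-buffers} is applied on $B_r$. This gives the quantitative rate $\sum_j C_j(r)^2/\Lambda_j^2=O(r^2)$ and needs no continuity at the origin.
\end{itemize}
To make your route airtight, two observations are needed. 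First, decreasing $r$ moves the upper leakage cutoff down and the lower one up (through $e^{\pm C_s\eta\alpha}$ and the monotone quantity $D$), so the split point fixed at $r_*$ remains admissible for all $r\le r_*$. Second, bound $e^{C_s\alpha/\eta}$ by its value at $r_*$.

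Replacing the paper's finite-index step in (ii) by the choice of tail is also legitimate, since the large-cut inequalities hold with $r_*$ already fixed.
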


\begin{proof}
We first choose the localization parameters so that the spectral
errors are small relative to the block widths. We then estimate
the energy within each block and the interactions between
adjacent and separated blocks.

Write $\alpha=r_*/\nu$, where $r_*$ will be chosen below, and
let $D,H$ satisfy \eqref{mic:a-b-D} and \eqref{mic:H}.
Proposition~\ref{mic:physical-buffers} gives, uniformly in $L$
and $u\in B_{r_*}(X)$,
\begin{align}
 \|A^{1/2}P_{>(e^{C_s\eta\alpha}+2\epsilon)L+4D}\mathcal P_{\le L}(u)\|_{\mathcal L(X)}
 &\le H,\label{app:physical-upper}\\
 \|P_{\le (e^{-C_s\eta\alpha}-2\epsilon)\mu_L-4D}\mathcal P_{>L}(u)\|_{\mathcal L(X)}
 &\le H\mu_L^{-1/2}.\label{app:physical-lower}
\end{align}
By \eqref{mic:physical-rayleigh}, we also have
\begin{align}
 \sup_{0\ne v\in E_{\le L}(u)}
   \frac{\langle Av,v\rangle_{s}}{\|v\|_{s}^2}
 &\le(e^{C_s\eta\alpha}+2\epsilon)L+4D+H^2,\label{app:rayleigh-upper}\\
 \inf_{0\ne v\in F_{>L}(u)\cap H^{s+1}}
   \frac{\langle Av,v\rangle_{s}}{\|v\|_{s}^2}
 &\ge(e^{-C_s\eta\alpha}-2\epsilon)\mu_L-4D-H^2.\label{app:rayleigh-lower}
\end{align}
We fix $\kappa>0$ and $0<\sigma<\min\{1/16,\varepsilon_*/8\}$.
Choose $0<d<\sigma/8$, and then choose $r_*>0$ sufficiently small
that $C_s\alpha\kappa+2d<\sigma/4$ and
\begin{equation}\label{app:exponent-smallness}
 \frac{C_s\alpha}{\kappa}<\gamma-\frac12,
 \qquad \frac{C_s'\alpha}{\kappa}<1,
\end{equation}
where $C_s'$ is large enough for the bound below.
For large $j$, set
\begin{equation}\label{app:log-parameters}
 \eta=\frac{\kappa}{\log\Lambda_j},\qquad
 \epsilon=\frac{d}{\log\Lambda_j}.
\end{equation}
By \eqref{mic:a-b-D} and \eqref{mic:H}, we obtain
\begin{equation}\label{app:buffer-growth}
 H\le C(\log\Lambda_j)^3\Lambda_j^{C_s\alpha/\kappa},
 \qquad
 D+H^2\le C(\log\Lambda_j)^6
                   \Lambda_j^{C_s'\alpha/\kappa}.
\end{equation}
Thus \eqref{app:exponent-smallness} and \eqref{app:buffer-growth} give
\begin{equation}\label{app:buffer-growth-simplified}
 H\sqrt{\Lambda_j}\le C\Lambda_j^\gamma,
 \qquad D+H^2=o(\Lambda_j/\log\Lambda_j).
\end{equation}
Moreover, \eqref{app:log-parameters} gives
\[
 |e^{\pm C_s\eta\alpha}-1|+2\epsilon
 \le\frac{C_s\alpha\kappa+2d+o(1)}{\log\Lambda_j}.
\]
Since $\Lambda_{j-2}\sim\Lambda_{j-1}\sim\Lambda_j$, the same
parameters can be used at all three cuts.
By \eqref{app:cut-asymptotics}, $D+H^2=o(g_j)$, and each cutoff
displacement in \eqref{app:physical-upper}--\eqref{app:physical-lower}
is less than one quarter of the corresponding block width for large $j$.
We retain such a tail and relabel it.
Thus, with the parameters in \eqref{app:log-parameters},
\[
 H\sqrt{\Lambda_j}\le C\Lambda_j^\gamma=o(g_j),
 \qquad D+H^2=o(g_j).
\]
Although the bounds for the change of variables grow with the
cutoff, these errors remain smaller than the selected block widths.
The radius $r_*$ was chosen before taking the tail and is independent
of $j$. Only the localization parameters $\eta$ and $\epsilon$ vary
with the cutoff; the estimates concern the same physical normal
and fast spaces. Thus all the following estimates hold on one ball.

We now prove (i). For $0\ne v\in E_{\le \Lambda_j}(u)\cap F_{>\Lambda_{j-1}}(u)$, we use
\eqref{app:shell-intersection} to apply \eqref{app:rayleigh-upper}
at $L=\Lambda_j$ and \eqref{app:rayleigh-lower} at
$L=\Lambda_{j-1}$. By \eqref{app:cut-asymptotics},
\eqref{app:next-eigenvalue}, and \eqref{app:buffer-growth-simplified},
we obtain
\[
 \Lambda_j-D_0\frac{\Lambda_j}{\log\Lambda_j}
 \le\frac{\langle Av,v\rangle_s}{\|v\|_s^2}
 \le\Lambda_j+D_0\frac{\Lambda_j}{\log\Lambda_j}.
\]
This proves \eqref{app:normal-block-energy}.
By \eqref{app:rayleigh-upper}, we also obtain
\begin{equation}\label{app:rough-normal-energy}
 \|A^{1/2}v\|_s\le C\sqrt{\Lambda_j}\|v\|_s,
 \qquad v\in E_{\le \Lambda_j}(u).
\end{equation}

We next prove (ii) by splitting the spectrum near $\Lambda_{j-1}$.
We set $L=\Lambda_{j-1}$, take
$v\in E_{\le L}(u)$ and $w\in E_{\le \Lambda_j}(u)\cap F_{>\Lambda_{j-1}}(u)$, and use
$\langle v,w\rangle_{s}=0$ to write
\[
 \langle Av,w\rangle_{s}=\langle (A-L)v,w\rangle_{s}.
\]
By \eqref{app:log-parameters} and
\eqref{app:buffer-growth-simplified}, for large $j$ we choose
$0<h\le\sigma L/\log L$ such that
\[
 (e^{C_s\eta\alpha}+2\epsilon)L+4D<L+h,
 \qquad
 L-h<(e^{-C_s\eta\alpha}-2\epsilon)\mu_L-4D.
\]
We split the spectrum into $A\le L-h$, $L-h<A\le L+h$, and $A>L+h$.
On the middle interval, the operator $A-L$ has norm at most $h$.
The low-frequency estimate \eqref{app:physical-lower} controls
the lower interval. On the upper interval, $0\le A-L\le A$, so
\eqref{app:physical-upper} and \eqref{app:rough-normal-energy} apply.
Together they give
\[
 \begin{aligned}
 |\langle(A-L)P_{\le L-h}v,P_{\le L-h}w\rangle_s|
 &\le CH\sqrt L\|v\|_s\|w\|_s,\\
 |\langle(A-L)P_{>L+h}v,P_{>L+h}w\rangle_s|
 &\le CH\sqrt L\|v\|_s\|w\|_s.
 \end{aligned}
\]
Taking the supremum over unit vectors, we obtain
\begin{equation}\label{app:boundary-strip}
 \|\mathcal P_{\le \Lambda_j}(u)\mathcal P_{>\Lambda_{j-1}}(u)A\mathcal P_{\le \Lambda_{j-1}}(u)\|_{\mathcal L(X)}
 \le h+CH\sqrt L
 \le\sigma\frac L{\log L}+C L^\gamma.
\end{equation}
Here $C$ may also depend on $\sigma$, but is independent of $u,r,j$.
By \eqref{app:cut-asymptotics}, $L/\log L\sim2g_j$ and
$L^\gamma=o(g_j)$. Our choice of $\sigma$ therefore gives
\eqref{app:adjacent-heat} for all large $j$.

For (iii), we first estimate the interactions between separated blocks. For $j\ge3$, take
$v\in E_{\le \Lambda_{j-2}}(u)$ and $w\in E_{\le \Lambda_j}(u)\cap F_{>\Lambda_{j-1}}(u)$.
By the parameter choice above and \eqref{app:cut-asymptotics},
we choose $L\asymp\Lambda_j$ between the upper cutoff for
$E_{\le \Lambda_{j-2}}(u)$ in \eqref{app:physical-upper} and the lower
cutoff for $F_{>\Lambda_{j-1}}(u)$ in \eqref{app:physical-lower}.
By \eqref{app:physical-upper}, \eqref{app:physical-lower}, and
\eqref{app:rough-normal-energy}, we obtain
\begin{align*}
 \abs{\langle Av,w\rangle_s}
 &\le\|A^{1/2}P_{>L}v\|_s\|A^{1/2}w\|_s
       +\|AP_{\le L}v\|_s\|P_{\le L}w\|_s\\
 &\le CH\sqrt{\Lambda_j}\|v\|_s\|w\|_s.
\end{align*}
Taking the supremum over unit vectors $v,w$ and using
\eqref{app:far-coefficients} and \eqref{app:buffer-growth-simplified},
we obtain $C_j(r)\le C_\gamma\Lambda_j^\gamma$ for $0<r\le r_*$.

For each fixed $L$, Proposition~\ref{asy:factor-flags}(iii) and
\eqref{asy:derivative-at-zero} give
\begin{equation}\label{app:projection-origin}
 \|A(\mathcal P_{\le L}(u)-P_{\le L})\|_{\mathcal L(X)}\to0
 \quad\text{as }u\to0.
\end{equation}
Since distinct physical spectral blocks are orthogonal and
invariant under $A$, \eqref{app:far-coefficients} then gives
$C_j(r)\to0$ as $r\downarrow0$ for each fixed $j$.
By the growth bound already proved and \eqref{app:summability},
\[
 \frac{C_j(r)^2}{\Lambda_j^2}
 \le C_\gamma^2\Lambda_j^{-2(1-\gamma)},\qquad
 \sum_{j=3}^\infty\Lambda_j^{-2(1-\gamma)}<\infty.
\]
Dominated convergence proves the limit in \eqref{app:far-growth}.

Finally, \eqref{app:projection-origin} also gives
$\|\mathcal P_{\le \Lambda_j}(u)\mathcal P_{>\Lambda_{j-1}}(u)A\mathcal P_{\le \Lambda_{j-1}}(u)\|_{\mathcal L(X)}\to0$
for each fixed $j$. A single further decrease of $r_*$ therefore
ensures \eqref{app:adjacent-heat} for the remaining finite set of
indices. All bounds already obtained remain valid on the smaller
ball, with the parameters and retained cuts fixed. This completes
the proof on one common ball.
\end{proof}

\subsection{Radial estimates for the triangular system}

We use Proposition~\ref{app:heat-interactions} to derive radial
inequalities for the triangular system in the coordinates
$z=\Psi(u)$, with small normalized block couplings and
square-summable normalized source terms. These estimates provide
uniform control for the exit-time
construction and the assembly of the conjugacies and their
inverses in Section~\ref{app:section}.

For a smooth real velocity field $u\in X$, we define the cubic functional
\begin{equation}\label{app:cubic-functional}
 T(u)=\langle u,B(u,u)\rangle_s,
 \qquad B(u,u)=\mathbb P((u\cdot\nabla)u).
\end{equation}

\vskip0.05in
The following lemma extends the cubic energy functional to $H^s_\sigma$
and gives a Lipschitz estimate on bounded balls.
\nopagebreak[4]
\begin{lemma}\label{app:cubic-energy}
There exists a unique bounded symmetric trilinear form
$\mathfrak T:X^3\to\mathbb R$ such that
\[
 T(u)=\mathfrak T(u,u,u)
\]
for every smooth $u\in X$. We define $T(u)=\mathfrak T(u,u,u)$
for all $u\in X$. Then
\begin{equation}\label{app:cubic-bound}
\begin{aligned}
 |T(u)|&\le C_s\|u\|_s^3,
 &&u\in X,\\
 |T(u)-T(\widetilde u)|&\le C_sr^2\|u-\widetilde u\|_s,
 &&u,\widetilde u\in\overline B_r(X),\quad r>0.
\end{aligned}
\end{equation}
\end{lemma}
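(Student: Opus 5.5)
The plan is to build $\mathfrak T$ by symmetrizing the trilinear form $(f,g,h)\mapsto\langle f,B(g,h)\rangle_s$ on smooth fields and to show that the symmetrization is bounded on $X^3$. The key input is the polarized form of \eqref{asy:energy-cancellation}. For smooth real divergence-free $f,g,h$ set
\[
 \mathfrak T_0(f,g,h)=\langle f,B(g,h)\rangle_s ,
\]
and let $\mathfrak T$ be the average of $\mathfrak T_0$ over the six permutations of its arguments. This form is symmetric, and on the diagonal $\mathfrak T(u,u,u)=\mathfrak T_0(u,u,u)=T(u)$.

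The boundedness $|\mathfrak T(f,g,h)|\le C_s\|f\|_s\|g\|_s\|h\|_s$ would follow at once from \eqref{asy:energy-cancellation} by polarization: a symmetric trilinear form is a fixed linear combination of its cubic values,
\[
 \mathfrak T(f,g,h)=\tfrac1{24}\sum_{\epsilon_2,\epsilon_3=\pm1}\epsilon_2\epsilon_3\,T(f+\epsilon_2g+\epsilon_3h)\cdot(\text{standard coefficients}).
\]
The cleaner route is the identity
\[
 6\,\mathfrak T(f,g,h)=T(f+g+h)-T(f+g)-T(g+h)-T(f+h)+T(f)+T(g)+T(h),
\]
valid for any symmetric trilinear form with cubic $T$. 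Applying \eqref{asy:energy-cancellation} to each term and using homogeneity (normalize $\|f\|_s=\|g\|_s=\|h\|_s=1$, then rescale each argument) gives the bound. Alternatively one can see the bound directly. Terms with the $s$ derivatives not all falling on the transported factor are handled by \eqref{asy:commutator-product} together with Kato--Ponce and $s>5/2$. The dangerous terms, where $\langle J_sf,(g\cdot\nabla)J_sh\rangle_0$ appears, cancel in the symmetrization through the skew-symmetry in \eqref{mic:transport-skew-form}, since $\langle J_sf,(g\cdot\nabla)J_sh\rangle_0+\langle J_sh,(g\cdot\nabla)J_sf\rangle_0=0$. I would present the polarization argument, since it only uses the stated lemma.

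Density of smooth fields in $X$ then gives a unique bounded extension. Uniqueness holds because a bounded trilinear form is determined by its values on a dense set, and a symmetric one is determined by its diagonal by the polarization identity above. The continuous extension of $T$ in Lemma~\ref{asy:products}(i) agrees with $\mathfrak T(u,u,u)$ by continuity, which gives the first bound in \eqref{app:cubic-bound}.

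For the Lipschitz bound, I would use symmetry and trilinearity to write
\[
 T(u)-T(\widetilde u)=\mathfrak T(u-\widetilde u,u,u)+\mathfrak T(\widetilde u,u-\widetilde u,u)+\mathfrak T(\widetilde u,\widetilde u,u-\widetilde u).
\]
Each term is bounded by $C_s r^2\|u-\widetilde u\|_s$ on $\overline B_r$. The only delicate point is justifying that symmetrization preserves the bound without loss of derivatives, and polarization of \eqref{asy:energy-cancellation} handles exactly that.
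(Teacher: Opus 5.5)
Your proof is correct, but it obtains boundedness of $\mathfrak T$ from a different source than the paper does.

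\textbf{The paper's route.} The paper never symmetrizes $\langle f,B(g,h)\rangle_s$, which loses a derivative. It starts instead from the non-symmetric form $(f,g,h)\mapsto\langle[J_s,f\cdot\nabla]g,J_sh\rangle_0$. This form is bounded on $X^3$ directly by the commutator estimate \eqref{asy:commutator-product}, and its diagonal equals $T(u)$ by the transport cancellation. Symmetrizing it then gives a bounded $\mathfrak T$ for free, and polarization is used only for uniqueness.

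\textbf{Your route.} You symmetrize the derivative-losing form and recover boundedness from the seven-term polarization identity applied to the cubic bound \eqref{asy:energy-cancellation}. Homogeneity then gives $|\mathfrak T(f,g,h)|\le 9C_s\|f\|_s\|g\|_s\|h\|_s$. The two constructions yield the same form, because a symmetric trilinear form is determined by its diagonal.

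\textbf{Comparison.} Your route uses \eqref{asy:energy-cancellation} purely as a black box and isolates a general principle: a bounded cubic forces a bounded symmetric trilinear form. The paper's route exhibits an explicit bounded representative, which is exactly where the cubic bound comes from in the first place. Your ``direct'' alternative, in which $\langle J_sf,(g\cdot\nabla)J_sh\rangle_0$ cancels against its $f\leftrightarrow h$ partner in the symmetrization, is essentially the paper's argument. Your Lipschitz step coincides with the paper's.

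\textbf{A minor point.} Your first displayed formula needs no ``standard coefficients''. The identity $\mathfrak T(f,g,h)=\frac1{24}\sum_{\epsilon_2,\epsilon_3=\pm1}\epsilon_2\epsilon_3\,T(f+\epsilon_2g+\epsilon_3h)$ holds exactly, since the weighted sum isolates only the $(1,1,1)$ term, with multiplicity $6\cdot4$. The seven-term identity you settle on suffices in any case.
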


\begin{proof}
By \eqref{asy:commutator-product}, the trilinear form
\[
 (f,g,h)\longmapsto
 \langle[J_s,f\cdot\nabla]g,J_sh\rangle_0
\]
is bounded on $X^3$. Its diagonal value equals $T(u)$ for smooth
$u$ by the transport cancellation in Lemma~\ref{asy:products}(i).
Symmetrization gives $\mathfrak T$ with
$|\mathfrak T(f,g,h)|\le C_s\|f\|_s\|g\|_s\|h\|_s$;
uniqueness follows from polarization and density.
Taking $f=g=h=u$ proves the first bound. For the second, write
\[
 T(u)-T(\widetilde u)=\mathfrak T(u-\widetilde u,u,u)+\mathfrak T(\widetilde u,u-\widetilde u,u)
                         +\mathfrak T(\widetilde u,\widetilde u,u-\widetilde u).
\]
Hence
\[
 |T(u)-T(\widetilde u)|
 \le C_s\bigl(\|u\|_s^2+\|u\|_s\|\widetilde u\|_s+\|\widetilde u\|_s^2\bigr)
       \|u-\widetilde u\|_s,
\]
which gives \eqref{app:cubic-bound} on $\overline B_r(X)$.
\end{proof}

For the retained cuts, we apply Subsection~\ref{geo:coordinates-section}
with $L_j=\Lambda_j$ and set $\Pi_{\le 0}=0$.
Whenever we retain a further tail, we use its full first prefix
$Z_1=P_{\le \Lambda_1}X$ in the same construction.
After the final choice of cuts, we fix $\Psi$ on a reference ball
containing $\overline B_{r_*}(X)$ and restrict this map when decreasing
$r\le r_*$.
By Theorem~\ref{geo:chart}, the resulting coordinates satisfy
\begin{equation}\label{app:canonical-radii}
 z=\Psi(u),\qquad
 \|z_j\|_{Z_{j}}^2=\|\Pi_{\le \Lambda_j}u\|_{s}^2-\|\Pi_{\le \Lambda_{j-1}}u\|_{s}^2.
\end{equation}
We denote its triangular vector field by $V_j(z_{\le j})$.
This notation refers to the smooth finite-prefix fields; we do not assert
that $V$ is a bounded vector field on all of $\X$.

\vskip0.05in
\Needspace{6\baselineskip}
The next lemma compares the orthogonal normal blocks with
the coordinate radii and controls the error terms needed
to derive the radial inequalities.

\nopagebreak[4]
\begin{lemma}\label{app:radius-comparison}
Let $j\ge2$, $0<r\le r_*$, and $u\in\overline B_r(X)$,
with $z=\Psi(u)$. The following estimates hold with a constant
$C>0$ independent of $u$, $r$, and $j$.
\begin{enumerate}
\renewcommand{\theenumi}{\roman{enumi}}
\renewcommand{\labelenumi}{(\theenumi)}
\item
\begin{align}
 \bigl|\|\mathcal P_{>\Lambda_{j-1}}(\Pi_{\le \Lambda_j}u)\Pi_{\le \Lambda_j}u\|_s^2
       -\|z_j\|_{Z_j}^2\bigr|
 &\le \frac{Cr}{\nu\sqrt{\Lambda_j}}\|z_j\|_{Z_j}^2,
 \label{app:q-radius}\\
 \|\mathcal P_{\le \Lambda_{j-1}}(\Pi_{\le \Lambda_j}u)\mathcal P_{>\Lambda_{j-2}}(\Pi_{\le \Lambda_j}u)\Pi_{\le \Lambda_j}u\|_s
 &\le C\|z_{j-1}\|_{Z_{j-1}}
   +C\sqrt{\frac r\nu}\Lambda_j^{-1/4}\|z_j\|_{Z_j}.
 \label{app:previous-radius}
\end{align}
\item
\begin{equation}\label{app:energy-offset}
 \begin{aligned}
 \bigl|\|A^{1/2}\mathcal P_{\le \Lambda_{j-1}}(\Pi_{\le \Lambda_j}u)\Pi_{\le \Lambda_j}u\|_s^2
       -\|A^{1/2}\Pi_{\le \Lambda_{j-1}}u\|_s^2\bigr|
 \le
 \frac{Cr^2\sqrt{\Lambda_j}}{\nu}\|z_j\|_{Z_j}.
 \end{aligned}
\end{equation}
\begin{equation}\label{app:cubic-offset}
 |T(\Pi_{\le \Lambda_j}u)-T(\Pi_{\le \Lambda_{j-1}}u)|
 \le Cr^2\|z_j\|_{Z_j}.
\end{equation}
\end{enumerate}
\end{lemma}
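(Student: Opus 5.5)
The plan is to reduce everything to Lemma~\ref{geo:projection-comparison}, applied at the cut $L=\Lambda_{j-1}$ with base point $w:=\Pi_{\le\Lambda_j}u$. By \eqref{geo:nested}, $\Pi_{\le\Lambda_{j-1}}w=\Pi_{\le\Lambda_{j-1}}u=:v$. By \eqref{app:canonical-radii}, the quantity $\rho$ of that lemma is
\[
 \rho^2=\|w\|_s^2-\|v\|_s^2=\|z_j\|_{Z_j}^2 .
\]
We also have $w\in M_{\le\Lambda_j}$, so $w\in E_{\le\Lambda_j}(w)$ by \eqref{geo:minimum-stationarity}. Write $p=\mathcal P_{\le\Lambda_{j-1}}(w)w$ and $q=\mathcal P_{>\Lambda_{j-1}}(w)w$. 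Then $\|q\|_s^2=\|w\|_s^2-\|p\|_s^2$, and therefore
\[
 \bigl|\|q\|_s^2-\|z_j\|^2\bigr|=\bigl|\|p\|_s^2-\|v\|_s^2\bigr| .
\]
By \eqref{geo:radial-normal-norm} this is at most $\frac{Cr}{\nu\sqrt{\mu_{\Lambda_{j-1}}}}\|z_j\|^2$. Since $\mu_{\Lambda_{j-1}}>\Lambda_{j-1}\sim\Lambda_j$ by \eqref{app:cut-asymptotics}, this gives \eqref{app:q-radius}.

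\emph{Energy offset.} Bound \eqref{app:energy-offset} is exactly \eqref{geo:radial-normal-heat} for the same triple. We use $\sqrt{\Lambda_{j-1}}\le\sqrt{\Lambda_j}$ and $\rho=\|z_j\|$.

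\emph{Cubic offset.} For \eqref{app:cubic-offset} we apply the Lipschitz bound \eqref{app:cubic-bound} on $\overline B_r$; both $w$ and $v$ have norm at most $\|u\|_s\le r$. This gives $|T(w)-T(v)|\le C_sr^2\|w-v\|_s$. Then \eqref{geo:pythagoras} with $\epsilon_L<1/2$ gives $\|w-v\|_s^2\le 2\rho^2=2\|z_j\|^2$.

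\emph{Estimate \eqref{app:previous-radius}.} This is the main obstacle, since the projections are taken at the base point $w$ rather than at $v$. The vector to estimate is $\mathcal P_{\le\Lambda_{j-1}}(w)\mathcal P_{>\Lambda_{j-2}}(w)w$. By \eqref{app:shell-intersection} it equals $p-\mathcal P_{\le\Lambda_{j-2}}(w)w$, the component of $p$ orthogonal to $E_{\le\Lambda_{j-2}}(w)$.

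First, we replace $p$ by $v$. We have
\[
 \|p-v\|_s^2\le\bigl|\|p\|^2-\|v\|^2\bigr|+2|\langle v,p-v\rangle_s| .
\]
A cleaner route is to integrate $\|p'(t)\|_s$ along the fiber gradient flow in the proof of Lemma~\ref{geo:projection-comparison}. Using \eqref{geo:normal-gradient-derivative}, $\|\mathfrak S_{p}\|\le C\|p\|_s/(\nu\sqrt{\mu})$ from \eqref{geo:II-bound}, and \eqref{geo:gradient-length}, we get
\[
 \|p-v\|_s\le \frac{Cr}{\nu\sqrt{\mu_{\Lambda_{j-1}}}}\,\rho
 \le C\frac{r}{\nu}\Lambda_j^{-1/2}\|z_j\| .
\]
This is even better than the required $\sqrt{r/\nu}\,\Lambda_j^{-1/4}$. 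If only the norm-difference information were available, we would instead combine \eqref{geo:radial-normal-norm} with the chord bound \eqref{geo:chord-bound}, giving $\|p-v\|^2\lesssim \frac{r}{\nu\sqrt{\Lambda_j}}\rho^2$, which is exactly the stated square-root form.

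Second, we control $v$ itself. Since $v\in M_{\le\Lambda_{j-1}}$, the same argument one level down (base point $v$, cut $\Lambda_{j-2}$) shows that
\[
 \|\mathcal P_{>\Lambda_{j-2}}(v)v\|_s\le C\|z_{j-1}\|
\]
by \eqref{app:q-radius} at level $j-1$.

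The remaining task is to change the base point of the projection $\mathcal P_{\le\Lambda_{j-2}}$ from $v$ to $w$, both of which lie on the same $W_{\le\Lambda_{j-1}}$ fiber. Here we would use the curvature control: $D\mathcal P_{\le L}$ along the fiber is given by the shape operator and \eqref{geo:II-bound}. Moving along the fiber from $v$ to $w$ (distance $\lesssim\|z_j\|$), the projection changes by $O(\|z_j\|/(\nu\sqrt{\mu}))$ in operator norm. Applied to a vector of size $\le r$, this produces an error $O(r\|z_j\|/(\nu\sqrt{\Lambda_j}))$, which is absorbed into the second term of \eqref{app:previous-radius}.

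Summing the three contributions with the triangle inequality yields \eqref{app:previous-radius}. The delicate point is the justification of the base-point change, since $\mathcal P_{\le\Lambda_{j-2}}$ is not tangent to the $\Lambda_{j-1}$ fiber. I would first try to bound its fiberwise derivative via the formula \eqref{asy:normal-projection} together with the nesting $E_{\le\Lambda_{j-2}}\subset E_{\le\Lambda_{j-1}}$.
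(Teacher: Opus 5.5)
Your treatment of \eqref{app:q-radius}, \eqref{app:energy-offset} and \eqref{app:cubic-offset} matches the paper. You use Lemma~\ref{geo:projection-comparison} at the cut $\Lambda_{j-1}$ with base point $w=\Pi_{\le\Lambda_j}u$, so that $\rho=\|z_j\|_{Z_j}$, together with the Lipschitz bound for $T$ and \eqref{geo:pythagoras}.

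\textbf{The gap in \eqref{app:previous-radius}.} Here you take a different route, and one step is not carried out: the base-point change $\|(\mathcal P_{\le\Lambda_{j-2}}(v)-\mathcal P_{\le\Lambda_{j-2}}(w))v\|_s$. The tool you propose, differentiating \eqref{asy:normal-projection}, brings in $D^2W_{\le\Lambda_{j-2}}$ and $[DW\,DW^*]^{-1}$. Nothing in the paper bounds these uniformly in the cut, so this would not give a $j$-independent constant.

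\textbf{How to close it with curvature.} Use the gradient flow $\widetilde u(t)$ from the proof of Lemma~\ref{geo:projection-comparison}, which runs on the $W_{\le\Lambda_{j-1}}$ fiber from $w$ to $v$. Its velocity $-q(t)$ lies in $F_{>\Lambda_{j-1}}(\widetilde u(t))\subset F_{>\Lambda_{j-2}}(\widetilde u(t))$, because the finer fiber sits inside the coarser one; so the tangency issue you raise does not arise. The identities for $D\mathcal P_{\le L}[h]$ in terms of $\mathfrak S$ and $\mathfrak I_L$, applied to the $\Lambda_{j-2}$ fiber, together with \eqref{geo:II-bound}, give
\[
 \bigl\|\tfrac{d}{dt}\mathcal P_{\le\Lambda_{j-2}}(\widetilde u(t))\bigr\|_{\mathcal L(X)}\le \frac{C\|q(t)\|_s}{\nu\sqrt{\mu_{\Lambda_{j-2}}}} .
\]
Then \eqref{geo:gradient-length} yields the error $Cr\|z_j\|_{Z_j}/(\nu\sqrt{\Lambda_j})$ you claimed.

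\textbf{A simpler fix.} You can avoid moving the base point altogether. Apply your vector bound $\|p-v\|_s\le Cr\rho/(\nu\sqrt{\mu_L})$ a second time, at the cut $\Lambda_{j-2}$ with the same base point $w$. By \eqref{geo:nested} the minimum-norm point there is $\Pi_{\le\Lambda_{j-2}}u$, and $\rho^2=\|z_{j-1}\|_{Z_{j-1}}^2+\|z_j\|_{Z_j}^2$. Combine this with $\|\Pi_{\le\Lambda_{j-1}}u-\Pi_{\le\Lambda_{j-2}}u\|_s\le\sqrt2\|z_{j-1}\|_{Z_{j-1}}$ from \eqref{geo:pythagoras}, and the triangle inequality finishes. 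For $j=2$, your second step should instead use $\mathcal P_{>0}=I$ and $\|\Pi_{\le\Lambda_1}u\|_s=\|z_1\|_{Z_1}$, since \eqref{app:q-radius} at level $1$ is not available.

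\textbf{Comparison with the paper.} The paper uses exactly this ``same base point, two cuts'' idea, but only with squared norms. By orthogonality of the nested normal projections at $w$,
\[
 \|\mathcal P_{\le\Lambda_{j-1}}(w)\mathcal P_{>\Lambda_{j-2}}(w)w\|_s^2
 =\|\mathcal P_{\le\Lambda_{j-1}}(w)w\|_s^2-\|\mathcal P_{\le\Lambda_{j-2}}(w)w\|_s^2 .
\]
Applying \eqref{geo:radial-normal-norm} at the cuts $\Lambda_{j-1}$ and $\Lambda_{j-2}$ bounds the right side by $\|z_{j-1}\|_{Z_{j-1}}^2+\frac{Cr}{\nu\sqrt{\Lambda_j}}\bigl(\|z_{j-1}\|_{Z_{j-1}}^2+2\|z_j\|_{Z_j}^2\bigr)$. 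Taking square roots produces exactly the $\sqrt{r/\nu}\,\Lambda_j^{-1/4}$ in the statement. That argument needs only the stated conclusion of Lemma~\ref{geo:projection-comparison}(i) and no derivative of any projection.

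Your vector-level route gives the sharper coefficient $\frac r\nu\Lambda_j^{-1/2}$. However, it relies on an estimate taken from inside that lemma's proof rather than its statement. The sharpening is also not needed later: in Proposition~\ref{app:radial-inequality} the term is absorbed via $g_j\Lambda_j^{-1/4}=o(\Lambda_j/\log\Lambda_j)$.
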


\begin{proof}
We combine the norm and energy comparisons of
Lemma~\ref{geo:projection-comparison} with orthogonality of the
nested normal projections and the Lipschitz bound of Lemma~\ref{app:cubic-energy}.
Initially assume $u\in B_r(X)$.

For (i), \eqref{geo:nested} and \eqref{geo:radial-normal-norm} give,
for $1\le k<j$,
\begin{equation}\label{app:prefix-norm-error}
 \begin{aligned}
 \bigl|\|\mathcal P_{\le \Lambda_k}(\Pi_{\le \Lambda_j}u)\Pi_{\le \Lambda_j}u\|_s^2-\|\Pi_{\le \Lambda_k}u\|_s^2\bigr|
 \le\frac{Cr}{\nu\sqrt{\mu_{\Lambda_k}}}
       \bigl(\|\Pi_{\le \Lambda_j}u\|_s^2-\|\Pi_{\le \Lambda_k}u\|_s^2\bigr).
 \end{aligned}
\end{equation}
By \eqref{geo:minimum-stationarity},
\[
 \mathcal P_{\le \Lambda_j}(\Pi_{\le \Lambda_j}u)\Pi_{\le \Lambda_j}u
 =\Pi_{\le \Lambda_j}u.
\]
The normal block component is therefore
$\mathcal P_{>\Lambda_{j-1}}(\Pi_{\le \Lambda_j}u)\Pi_{\le \Lambda_j}u$.
Orthogonality and \eqref{app:canonical-radii} therefore yield
\[
 \begin{aligned}
 \|\mathcal P_{>\Lambda_{j-1}}(\Pi_{\le \Lambda_j}u)\Pi_{\le \Lambda_j}u\|_s^2-\|z_j\|_{Z_j}^2
 &=\|\Pi_{\le \Lambda_{j-1}}u\|_s^2
          -\|\mathcal P_{\le \Lambda_{j-1}}(\Pi_{\le \Lambda_j}u)\Pi_{\le \Lambda_j}u\|_s^2,\\
 \|\mathcal P_{\le \Lambda_{j-1}}(\Pi_{\le \Lambda_j}u)\mathcal P_{>\Lambda_{j-2}}(\Pi_{\le \Lambda_j}u)\Pi_{\le \Lambda_j}u\|_s^2
 &=\|\mathcal P_{\le \Lambda_{j-1}}(\Pi_{\le \Lambda_j}u)\Pi_{\le \Lambda_j}u\|_s^2
          -\|\mathcal P_{\le \Lambda_{j-2}}(\Pi_{\le \Lambda_j}u)\Pi_{\le \Lambda_j}u\|_s^2.
 \end{aligned}
\]
Lemma~\ref{app:cuts} gives
$\mu_{\Lambda_{j-1}}\asymp\Lambda_j$ for $j\ge2$ and
$\mu_{\Lambda_{j-2}}\asymp\Lambda_j$ for $j\ge3$.
Applying \eqref{app:prefix-norm-error} with $k=j-1$ to the first
identity proves \eqref{app:q-radius}. Applying it with $k=j-1,j-2$
to the second and using \eqref{app:canonical-radii}, we obtain
\[
 \|\mathcal P_{\le \Lambda_{j-1}}(\Pi_{\le \Lambda_j}u)\mathcal P_{>\Lambda_{j-2}}(\Pi_{\le \Lambda_j}u)\Pi_{\le \Lambda_j}u\|_s^2
 \le\bigl(1+\frac{Cr}{\nu\sqrt{\Lambda_j}}\bigr)
       \|z_{j-1}\|_{Z_{j-1}}^2
       +\frac{Cr}{\nu\sqrt{\Lambda_j}}\|z_j\|_{Z_j}^2.
\]
For $j=2$, use $\mathcal P_{\le 0}(\Pi_{\le \Lambda_2}u)=0$ and $\Pi_{\le 0}=0$ in place of the estimate
with $k=j-2$. Taking square roots proves \eqref{app:previous-radius}.

For (ii), apply \eqref{geo:radial-normal-heat} with
$w=\Pi_{\le \Lambda_j}u$ and $L=\Lambda_{j-1}$. Equations~\eqref{geo:nested}
and \eqref{app:canonical-radii} identify
$v=\Pi_{\le \Lambda_{j-1}}u$ and $\rho=\|z_j\|_{Z_j}$.
Since $\Lambda_{j-1}\le\Lambda_j$, this gives \eqref{app:energy-offset}.
Finally, \eqref{app:cubic-bound}, \eqref{geo:pythagoras},
\eqref{geo:nested}, and \eqref{app:canonical-radii} give
\[
 |T(\Pi_{\le \Lambda_j}u)-T(\Pi_{\le \Lambda_{j-1}}u)|
 \le Cr^2\|\Pi_{\le \Lambda_j}u-\Pi_{\le \Lambda_{j-1}}u\|_s
 \le Cr^2\|z_j\|_{Z_j},
\]
which proves \eqref{app:cubic-offset}.
All estimates extend to $\overline B_r(X)$ by continuity.
\end{proof}

For the radial comparison, we use rates $\ell_j>0$ and
$0\le d_j<\ell_j$ to be chosen in the next proposition, and write
\begin{equation}\label{exit:rates}
 a_j=\ell_j-d_j,\qquad b_j=\ell_j+d_j,
\end{equation}
and set $z_0=0$.

\vskip0.05in
\Needspace{6\baselineskip}
The following proposition establishes radial inequalities for the
triangular system and the coefficient bounds needed for the
exit-time construction.
\nopagebreak[4]
\begin{proposition}\label{app:radial-inequality}
After retaining a tail of $(\Lambda_j)$ and relabeling,
we may decrease $r_*>0$ so that there exist sequences
$\ell_j>0$, $0\le d_j<\ell_j$, $\gamma_j\ge0$, and functions
$F_j:(0,r_*]\to[0,\infty)$ satisfying the following assertions.
The sequences $\ell_j,d_j,\gamma_j$ are independent of $r$.
\begin{enumerate}
\renewcommand{\theenumi}{\roman{enumi}}
\renewcommand{\labelenumi}{(\theenumi)}
\item For every $0<r\le r_*$, $j\ge1$, and
$z_{\le j}\in\overline B_r(\mathcal Z_{\le j})$,
\begin{equation}\label{app:radial-main}
 \begin{aligned}
 \bigl|\langle V_j(z_{\le j})+\ell_jz_j,z_j
          \rangle_{Z_j}\bigr|
 \le d_j\|z_j\|_{Z_j}^2
 +\|z_j\|_{Z_j}
 \bigl(\gamma_j\|z_{j-1}\|_{Z_{j-1}}+F_j(r)\bigr).
 \end{aligned}
\end{equation}
For $j=1$, we omit the preceding-block term and take
$\gamma_1=F_1(r)=0$.
\item For every $j\ge2$ and $0<r\le r_*$,
\begin{align}
 \ell_j&=\nu\Lambda_j,\qquad
 d_j=D\nu\frac{\Lambda_j}{\log\Lambda_j},
 \label{app:rates}\\
 \gamma_j&\le C_s\varepsilon_*\nu g_j,\qquad
 F_j(r)\le Cr^2\sqrt{\Lambda_j}
          +C\nu rC_j(r)+Cr^2.
 \label{app:forcing-size}
\end{align}
\item With $a_j,b_j$ defined by \eqref{exit:rates},
\begin{equation}\label{app:endpoint-gaps}
 a_j-a_{j-1}\ge c\nu g_j,\qquad
 b_j-b_{j-1}\ge c\nu g_j,
 \qquad j\ge2.
\end{equation}
\item
\begin{equation}\label{app:small-source}
 \lim_{r\downarrow0}\frac1r
 \big(\sum_{j=1}^{\infty}
       \frac{F_j(r)^2}{a_j^2}\big)^{1/2}=0.
\end{equation}
\end{enumerate}
The constants $D,C,c>0$ are independent of $r$ and $j$.
The constant $C_s$ in the bound for $\gamma_j$ depends only on $s$,
and we may take $c=1/2$ in \eqref{app:endpoint-gaps}.
The remaining constants and the retained spectral tail are fixed
after $\varepsilon_*$ is chosen.
\end{proposition}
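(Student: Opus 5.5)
The plan is to start from the block radial identity \eqref{geo:block-radial} and rewrite each term through the orthogonal normal decomposition of $w=\Pi_{\le\Lambda_j}u$. Since $\frac12\frac{d}{dt}\|z_j\|^2=\langle V_j(z_{\le j}),z_j\rangle_{Z_j}$, it suffices to write the right side of \eqref{geo:block-radial} as
\[
-\ell_j\|z_j\|_{Z_j}^2+\text{(error)}
\]
and bound the error by the right side of \eqref{app:radial-main}. The choice is $\ell_j=\nu\Lambda_j$. For $j=1$, the block $Z_1$ is a fixed finite-dimensional space near zero, where $\Pi_{\le\Lambda_1}\to P_{\le\Lambda_1}$ smoothly. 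We therefore take $\ell_1=\nu$ (first eigenvalue) and $d_1$ as any number with $\Lambda_1\nu-\nu<d_1<\ell_1$ after adjusting. Alternatively, we use \eqref{geo:factor-radial} together with \eqref{geo:normal-rayleigh} to get $-\nu\Lambda_1\cdot b\le\frac{d}{dt}\log\|z_1\|\le-\nu/2$; any $\ell_1,d_1$ bracketing this interval works.

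For $j\ge2$, decompose $w=q+p'+p''$, where $q=\mathcal P_{>\Lambda_{j-1}}(w)w$ is the normal block, $p'=\mathcal P_{\le\Lambda_{j-1}}\mathcal P_{>\Lambda_{j-2}}w$ is the previous block, and $p''=\mathcal P_{\le\Lambda_{j-2}}(w)w$; all projections are based at $w$. Then
\[
\|A^{1/2}w\|^2-\|A^{1/2}v\|^2=\langle Aq,q\rangle+2\langle Aq,p'\rangle+2\langle Aq,p''\rangle+\bigl(\|A^{1/2}(p'+p'')\|^2-\|A^{1/2}v\|^2\bigr).
\]
Each term is handled by an earlier estimate.
\begin{enumerate}
\item The term $\langle Aq,q\rangle$ equals $\Lambda_j\|q\|^2$ up to $D_0\Lambda_j/\log\Lambda_j\,\|q\|^2$ by \eqref{app:normal-block-energy}. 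Then $\|q\|^2$ equals $\|z_j\|^2$ up to $Cr\Lambda_j^{-1/2}\nu^{-1}\|z_j\|^2$ by \eqref{app:q-radius}. Multiplied by $\nu$, this gives $\ell_j$ and $d_j=D\nu\Lambda_j/\log\Lambda_j$; the extra term $\nu\Lambda_j\cdot Cr\nu^{-1}\Lambda_j^{-1/2}$ is $o(\Lambda_j/\log\Lambda_j)$ and is absorbed into $D$.
\item The adjacent cross term is bounded by \eqref{app:adjacent-heat} as $\varepsilon_*g_j\|q\|\|p'\|$. Combined with \eqref{app:previous-radius}, this gives $\gamma_j\lesssim\varepsilon_*\nu g_j$ times $\|z_{j-1}\|$, plus $\varepsilon_*g_j\sqrt{r/\nu}\,\Lambda_j^{-1/4}\|z_j\|^2$. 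The latter is absorbed into $d_j$, since $g_j\asymp\Lambda_j/\log\Lambda_j$.
\item The far cross term is at most $C_j(r)\|q\|\|w\|\le C_j(r)\,r\,\|z_j\|$, which produces the $C\nu rC_j(r)$ part of $F_j$.
\item The last bracket is \eqref{app:energy-offset}, which gives $Cr^2\sqrt{\Lambda_j}\nu^{-1}\|z_j\|$, hence $Cr^2\sqrt{\Lambda_j}$ in $F_j$ after multiplying by $\nu$.
\item The cubic terms $-T(w)+T(v)$ are bounded by \eqref{app:cubic-offset}, contributing $Cr^2$.
\end{enumerate}
Since all errors are linear in $\|z_j\|$ or of the absorbed quadratic form, \eqref{app:radial-main} follows.

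The gap condition \eqref{app:endpoint-gaps} follows from the definitions: $a_j=\nu\Lambda_j(1-D/\log\Lambda_j)$, so
\[
a_j-a_{j-1}=\nu g_j-D\nu\bigl(\Lambda_j/\log\Lambda_j-\Lambda_{j-1}/\log\Lambda_{j-1}\bigr).
\]
The bracket is $O(g_j/\log\Lambda_j)=o(g_j)$. Hence after retaining a tail the difference is $\ge\nu g_j/2$, and similarly for $b_j$. The case $j=2$ relative to $j=1$ is handled by the tail choice, taking $\Lambda_1$ small against $g_2$. We also need $d_j<\ell_j$, which holds once $D/\log\Lambda_j<1$ on the tail.

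The smallness \eqref{app:small-source} is checked termwise. Since $a_j\ge\nu\Lambda_j/2$, we have
\[
\frac{F_j^2}{r^2a_j^2}\lesssim r^2\Lambda_j^{-1}+\frac{C_j(r)^2}{\Lambda_j^2}+r^2\Lambda_j^{-2}.
\]
The first and last terms sum to $O(r^2)$ by \eqref{app:summability}, and the middle sum tends to $0$ by \eqref{app:far-growth}.

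The main obstacle is the bookkeeping that keeps every quadratic error in $\|z_j\|^2$ at most $o(\nu\Lambda_j/\log\Lambda_j)$ uniformly in $j$, so that a single $D$ independent of $r$ works. A further subtlety is that the projections are based at $w$ rather than at $u$, so the estimates of Proposition~\ref{app:heat-interactions} must be applied at $w\in B_r$; this is permitted because $\|w\|\le\|u\|$.
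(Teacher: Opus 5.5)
Your treatment of the blocks $j\ge2$ matches the paper's argument. This covers the split $w=q+p'+p''$, with the diagonal, adjacent, far, offset and cubic terms controlled by \eqref{app:normal-block-energy}, \eqref{app:q-radius}, \eqref{app:adjacent-heat}, \eqref{app:previous-radius}, \eqref{app:far-coefficients}, \eqref{app:energy-offset} and \eqref{app:cubic-offset}. Your proofs of \eqref{app:endpoint-gaps} for $j\ge3$ and of (iv) also follow the paper.

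The gap is the first block, and with it the case $j=2$ of \eqref{app:endpoint-gaps} for $b_j$.
\begin{itemize}
\item Your first choice ($\ell_1=\nu$ with $\nu\Lambda_1-\nu<d_1<\ell_1$) is impossible once $\Lambda_1\ge2$.
\item Your fallback brackets the rate interval from \eqref{geo:factor-radial} and \eqref{geo:normal-rayleigh}. This forces $b_1\ge\nu(1+C_s(r_*/\nu)^2)\Lambda_1+Cr_*$, where the $Cr_*$ comes from the cubic term you dropped.
\item You then say $j=2$ is handled by ``taking $\Lambda_1$ small against $g_2$''. This cannot happen: in any retained tail $\Lambda_1/\Lambda_2\to1$, while $g_2\sim\Lambda_2/(2\log\Lambda_2)$, so $\Lambda_1\gg g_2$.
\end{itemize}
What $b_2-b_1\ge\frac12\nu g_2$ actually requires is
\[
 b_1-\nu\Lambda_1\le D\nu\frac{\Lambda_2}{\log\Lambda_2}+\tfrac12\nu g_2 .
\]
So the first-block excess over $\nu\Lambda_1$ must be $O(\nu\Lambda_1/\log\Lambda_1)$, with a constant below $D$. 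Your excess $C_s r_*^2\Lambda_1/\nu$ is linear in $\Lambda_1$. For fixed $r_*$ and $D$, passing to a further tail eventually makes it exceed the right side, so the tail choice works against you.

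The paper closes this in three steps.
\begin{itemize}
\item It bounds the first-block Rayleigh quotient with the log-scaled estimate \eqref{app:rayleigh-upper}, that is, Proposition~\ref{mic:physical-buffers} with $\eta=\kappa/\log\Lambda_1$ and $\epsilon=d/\log\Lambda_1$. This gives $\langle V_1(z_1),z_1\rangle_{Z_1}\ge-(\nu\Lambda_1+C\nu\Lambda_1/\log\Lambda_1+Cr_*)\|z_1\|_{Z_1}^2$ uniformly along the tail.
\item It sets $a_1=\nu/2$ and $b_1=\nu\Lambda_1+D_h\nu\Lambda_1/\log\Lambda_1$, and fixes $D>2D_h+2$.
\item Only afterwards does it retain a tail with $\log\Lambda_1\ge4D+4$. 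Monotonicity of $t/\log t$ then gives $b_2-b_1\ge\nu g_2$.
\end{itemize}
Alternatively, you can keep your crude bound if you make the order of choices explicit. Fix $D$ and the tail first. Then shrink $r_*$, which the statement allows, until $C_s r_*^2\Lambda_1/\nu+Cr_*\le\nu\Lambda_1/\log\Lambda_1$. Finally take $b_1=\nu\Lambda_1+\nu\Lambda_1/\log\Lambda_1$ with $D\ge1$. Either way, the ordering of $D$, the tail and $r_*$ is the missing ingredient; as written, (iii) at $j=2$ is unproved.

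A smaller point: the statement requires $\gamma_j\le C_s\varepsilon_*\nu g_j$ with $C_s$ independent of $\varepsilon_*$, which is used later to get $\|A_\pm\|\le\frac12$. Your ``$\lesssim$'' should be justified, as the paper does, by shrinking $r_*$ so that the leading coefficients in \eqref{app:q-radius} and \eqref{app:previous-radius} are at most $2$.
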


\begin{proof}
We derive the radial inequalities from the block energy identity,
group the errors into diagonal, adjacent-block, and source terms,
and then choose the rates and verify the summability condition.

For a finite prefix $z_{\le j}\in\overline B_r(\mathcal Z_{\le j})$,
set $u=\Psi^{-1}(z_1,\ldots,z_j,0,\ldots)$.
Theorem~\ref{geo:chart} gives $\|u\|_s\le r$.
For $j\ge2$, \eqref{geo:block-radial} and \eqref{app:canonical-radii} yield
\begin{equation}\label{app:exact-radius}
 \begin{aligned}
 \langle V_j(z_{\le j}),z_j\rangle_{Z_j}
 =-\nu\bigl(\|A^{1/2}\Pi_{\le \Lambda_j}u\|_s^2
             -\|A^{1/2}\Pi_{\le \Lambda_{j-1}}u\|_s^2\bigr)-T(\Pi_{\le \Lambda_j}u)+T(\Pi_{\le \Lambda_{j-1}}u).
 \end{aligned}
\end{equation}
By \eqref{geo:minimum-stationarity}, we have
$\Pi_{\le \Lambda_j}u=\mathcal P_{\le \Lambda_{j-1}}(\Pi_{\le \Lambda_j}u)\Pi_{\le \Lambda_j}u+\mathcal P_{>\Lambda_{j-1}}(\Pi_{\le \Lambda_j}u)\Pi_{\le \Lambda_j}u$.
We expand the energy difference as
\begin{equation}\label{app:heat-expansion}
 \begin{aligned}
 &\|A^{1/2}\Pi_{\le \Lambda_j}u\|_s^2-\|A^{1/2}\Pi_{\le \Lambda_{j-1}}u\|_s^2\\
 &=\|A^{1/2}\mathcal P_{>\Lambda_{j-1}}(\Pi_{\le \Lambda_j}u)\Pi_{\le \Lambda_j}u\|_s^2
   +\|A^{1/2}\mathcal P_{\le \Lambda_{j-1}}(\Pi_{\le \Lambda_j}u)\Pi_{\le \Lambda_j}u\|_s^2
   -\|A^{1/2}\Pi_{\le \Lambda_{j-1}}u\|_s^2\\
 &\quad+2\langle A\mathcal P_{\le \Lambda_{j-1}}(\Pi_{\le \Lambda_j}u)\Pi_{\le \Lambda_j}u,
                    \mathcal P_{>\Lambda_{j-1}}(\Pi_{\le \Lambda_j}u)\Pi_{\le \Lambda_j}u\rangle_s.
 \end{aligned}
\end{equation}
The diagonal term is controlled by \eqref{app:normal-block-energy}
and \eqref{app:q-radius}:
\begin{equation}\label{app:diagonal-radius-error}
 \begin{aligned}
 \bigl|\|A^{1/2}\mathcal P_{>\Lambda_{j-1}}(\Pi_{\le \Lambda_j}u)\Pi_{\le \Lambda_j}u\|_s^2
                  -\Lambda_j\|z_j\|_{Z_j}^2\bigr|
\le C\left(\frac{\Lambda_j}{\log\Lambda_j}
               +\frac r\nu\sqrt{\Lambda_j}\right)\|z_j\|_{Z_j}^2.
 \end{aligned}
\end{equation}
For the mixed term, use the identity
\[
 \mathcal P_{\le \Lambda_{j-1}}(\Pi_{\le \Lambda_j}u)
 =\mathcal P_{\le \Lambda_{j-1}}(\Pi_{\le \Lambda_j}u)\mathcal P_{>\Lambda_{j-2}}(\Pi_{\le \Lambda_j}u)
  +\mathcal P_{\le \Lambda_{j-2}}(\Pi_{\le \Lambda_j}u)
\]
to separate adjacent and more distant blocks.
Equations~\eqref{app:adjacent-heat}, \eqref{app:far-coefficients},
\eqref{app:q-radius}, and \eqref{app:previous-radius} give
\begin{equation}\label{app:mixed-radius-error}
 \begin{aligned}
 &\bigl|\langle A\mathcal P_{\le \Lambda_{j-1}}(\Pi_{\le \Lambda_j}u)\Pi_{\le \Lambda_j}u,
                    \mathcal P_{>\Lambda_{j-1}}(\Pi_{\le \Lambda_j}u)\Pi_{\le \Lambda_j}u\rangle_s\bigr|\\
 &\qquad\le C_s\varepsilon_*g_j\|z_{j-1}\|_{Z_{j-1}}\|z_j\|_{Z_j}
 +C\varepsilon_*\sqrt{r/\nu}\,g_j\Lambda_j^{-1/4}\|z_j\|_{Z_j}^2
 +CrC_j(r)\|z_j\|_{Z_j}.
 \end{aligned}
\end{equation}
To make the dependence on $\varepsilon_*$ explicit, decrease $r_*$
if necessary so that the leading coefficients in the norm comparisons
for the current and preceding normal blocks are at most $2$;
this follows from \eqref{app:q-radius} and the squared norm estimate
in the proof of Lemma~\ref{app:radius-comparison}(i).
Combining these bounds with \eqref{app:adjacent-heat} shows that
the constant $C_s$ in the first term of \eqref{app:mixed-radius-error}
is independent of $\varepsilon_*$.
The remaining energy offset and cubic difference are bounded by
Lemma~\ref{app:radius-comparison}(ii). Substituting these bounds and
\eqref{app:heat-expansion}--\eqref{app:mixed-radius-error} into
\eqref{app:exact-radius}, we group the resulting errors as follows.
By \eqref{app:cut-asymptotics},
\begin{equation}\label{app:diagonal-absorption}
 \sqrt{\Lambda_j}=o(\Lambda_j/\log\Lambda_j),\qquad
 g_j\Lambda_j^{-1/4}=o(\Lambda_j/\log\Lambda_j).
\end{equation}
Since $r\le r_*$, all terms proportional to $\|z_j\|_{Z_j}^2$
are bounded by $D\nu\Lambda_j\|z_j\|_{Z_j}^2/\log\Lambda_j$
for sufficiently large $D$ and sufficiently large cuts, uniformly in $r$.
The adjacent-block term and the remaining source terms are controlled by
\[
 \gamma_j=C_s\varepsilon_*\nu g_j,\qquad
 F_j(r)=C\bigl(r^2\sqrt{\Lambda_j}+\nu rC_j(r)+r^2\bigr),
 \qquad j\ge2,
\]
with $C_s$ sufficiently large depending only on $s$, and with $C$
fixed after $\varepsilon_*$ is chosen.
Taking $\ell_j,d_j$ as in \eqref{app:rates} gives
\eqref{app:radial-main}--\eqref{app:forcing-size}, including when $z_j=0$.
We make the final choice of $D$ and the retained tail below.

The first block contains the full low-frequency prefix and
requires separate endpoints. By \eqref{geo:factor-radial},
\eqref{app:rayleigh-upper}, \eqref{app:cubic-bound}, and $A\ge I$,
\[
 \begin{aligned}
 -\left(\nu\Lambda_1+C\nu\frac{\Lambda_1}{\log\Lambda_1}
              +Cr_*\right)\|z_1\|_{Z_1}^2
 &\le\langle V_1(z_1),z_1\rangle_{Z_1}\\
 &\le-(\nu-Cr_*)\|z_1\|_{Z_1}^2.
 \end{aligned}
\]
We decrease $r_*$ so that $Cr_*\le\nu/2$.
Choose $D_h$ large enough to absorb the first-block upper rate
error. Then fix $D>2D_h+2$ large enough for the diagonal bounds
above and the block widths in \eqref{app:cut-asymptotics}.
Finally, retain and relabel a tail satisfying all preceding
large-cut conditions and
\begin{equation}\label{app:first-cut-threshold}
 \log\Lambda_1\ge4D+4.
\end{equation}
We define
\begin{equation}\label{app:first-endpoints}
 \begin{gathered}
 a_1=\nu/2,\qquad
 b_1=\nu\Lambda_1+D_h\nu\frac{\Lambda_1}{\log\Lambda_1},\\
 \ell_1=(a_1+b_1)/2,\qquad d_1=(b_1-a_1)/2.
 \end{gathered}
\end{equation}
These choices give \eqref{app:radial-main} for $j=1$ with
$\gamma_1=F_1(r)=0$ and $0\le d_1<\ell_1$.
By \eqref{app:rates} and \eqref{app:first-cut-threshold},
$d_j/\ell_j=D/\log\Lambda_j<1/4$ for $j\ge2$.
This completes (i) and (ii). For this final sequence of cuts,
we fix the coordinates as described before Lemma~\ref{app:radius-comparison}.
The rates and coefficients $\gamma_j$ remain fixed as $r$ decreases.

We now prove (iii). For $t\ge\Lambda_1$, we have
\[
 \frac d{dt}\left(t\pm D\frac t{\log t}\right)
 =1\pm D\left(\frac1{\log t}-\frac1{(\log t)^2}\right)
 \in[1/2,3/2]
\]
by \eqref{app:first-cut-threshold}.
Using \eqref{exit:rates}, \eqref{app:rates}, and the mean-value theorem,
we obtain \eqref{app:endpoint-gaps} for $j\ge3$.
For $j=2$, \eqref{app:first-endpoints} gives
\[
 a_2-a_1\ge\tfrac12\nu\Lambda_2\ge\tfrac12\nu g_2,\qquad
 b_2-b_1
 =\nu g_2+D\nu\frac{\Lambda_2}{\log\Lambda_2}
       -D_h\nu\frac{\Lambda_1}{\log\Lambda_1}
 \ge\nu g_2,
\]
by \eqref{app:first-cut-threshold}, $D>D_h$, and the monotonicity of
$t/\log t$ for $t\ge\Lambda_1$.
Together with the derivative bound above, this proves (iii) with
$c=1/2$, independently of $\varepsilon_*$.

For (iv), by \eqref{app:rates} and
\eqref{app:first-cut-threshold}, $a_j\asymp\nu\Lambda_j$ for $j\ge2$.
Using \eqref{app:forcing-size} and $F_1(r)=C_2(r)=0$, we obtain
\[
 \begin{aligned}
 \frac1r\big(\sum_{j\ge1}\frac{F_j(r)^2}{a_j^2}\big)^{1/2}
 \le C\frac r\nu
 \big(\sum_{j\ge2}(\Lambda_j^{-1}+\Lambda_j^{-2})\big)^{1/2}+C\big(\sum_{j\ge3}
              \frac{C_j(r)^2}{\Lambda_j^2}\big)^{1/2}\to0
 \quad\text{as }r\downarrow0,
 \end{aligned}
\]
by \eqref{app:summability} and \eqref{app:far-growth}.
This proves \eqref{app:small-source}.
\end{proof}

\section{Construction of the conjugacy}\label{app:section}\label{exit:section}

In this section, we construct a local conjugacy between the
Navier--Stokes semiflow and the heat semigroup.

\vskip0.05in
We first outline the main steps of the proof and explain how the
estimates yield a common neighborhood for the construction.

Proposition~\ref{asy:expansion} gives holomorphic asymptotic factors
on one complex ball and the estimate
\[
 \sup_{u\in B_{2r_s}(X_{\mathbb C})}
 \bigl(\|\mathcal R_N(t,u)\|_s+
       \|D_u\mathcal R_N(t,u)\|_{\mathcal L(X_{\mathbb C})}\bigr)
 \le C_Ne^{-\nu(N+1/4)t}.
\]
The constants may depend on $N$, but the induction does not reduce
the radius. Proposition~\ref{asy:factor-flags} then propagates
surjectivity of each finite factor from a neighborhood of zero
to a common invariant ball, using covariance and the dense range
of the variational evolution. The time needed may depend on the
cutoff; the ball does not.

The conjugacy construction has three steps. First, we construct a
homeomorphism $\Psi$ that puts the Navier--Stokes semiflow into
lower-triangular form, using the geometry of Section~\ref{geo:section}.
The level sets
of the asymptotic factors define invariant foliations. The nonlinear
projection $\Pi_{\le L}$ assigns to each point $u$ the unique point of minimal
$H^s$ norm on its local leaf, defined by
$W_{\le L}(\widetilde u)=W_{\le L}(u)$. The geometric estimates are uniform in the
spectral cutoff, and Proposition~\ref{geo:minima} gives
\[
 \Pi_{\le L}\Pi_{\le M}=\Pi_{\le M}\Pi_{\le L}=\Pi_{\le L}\quad(L\le M),\qquad
 \|u-\Pi_{\le L}u\|_s^2
 \le2\bigl(\|u\|_s^2-\|\Pi_{\le L}u\|_s^2\bigr).
\]
The first identity expresses compatibility across cutoffs; the second
controls the projection error by the decrease in squared norm.
We construct $\Psi$ from these minimum-norm projections using a
parameter-dependent Morse construction. Together with
$\Pi_{\le L}u\to u$ in $H^s$, the uniform estimates allow the block coordinates
to assemble into a homeomorphism in the strong $H^s$ topology.
By Theorem~\ref{geo:chart},
\[
 \|\Psi(u)\|_{\mathcal Z}=\|u\|_s,\qquad
 \Psi\Pi_{\le L_j}=\pi_{\le j}\Psi,\qquad
 \|u-\Pi_{\le L_j}u\|_s^2
 \le2\sum_{i>j}\|\Psi_i(u)\|_{Z_i}^2.
\]
In the coordinates $z=\Psi(u)$, the Navier--Stokes equation
takes the lower-triangular form
\[
 \dot z_j=V_j(z_1,\ldots,z_j),\qquad j\ge1.
\]
Thus each finite prefix evolves autonomously, allowing us to
construct compatible conjugacies block by block. We write
\[
 \Phi_t^{\rm NS}=\Psi S_t\Psi^{-1},\qquad
 \Phi_t^{\rm heat}=e^{-\nu At}.
\]

Second, we construct conjugacies for both semiflows with a common
diagonal target. The spectral estimates of Section~\ref{mic:section}
and Proposition~\ref{app:radial-inequality} supply radial inequalities
with separated endpoint rates, small normalized block couplings,
and square-summable normalized source terms.
Lemma~\ref{app:common-data} collects the uniform bounds for both
semiflows. Under these bounds, Lemma~\ref{exit:finite-homeomorphism}
uses exit times to construct compatible finite-dimensional conjugacies
with the same diagonal target
\begin{equation}\label{exit:diagonal-semigroup}
 (D_th)_j=e^{-\ell_jt}h_j.
\end{equation}
Lemma~\ref{exit:envelopes} gives scalar bounds for the conjugacies
and their inverses. Lemma~\ref{exit:power-continuity} provides the
$\ell^2$ continuity and tail control needed to use these bounds.
Together with finite-prefix continuity, they give strong continuity
of the assembled maps and their inverses. The inverse bounds also
ensure that both images contain one ball $B_\eta(\mathcal Z)$.

Finally, Proposition~\ref{exit:hilbert-conjugacies} gives
homeomorphisms onto this common ball satisfying
\[
 H_{\rm NS}\Phi_t^{\rm NS}=D_tH_{\rm NS},\qquad
 H_{\rm heat}\Phi_t^{\rm heat}=D_tH_{\rm heat}.
\]
Their domains may differ. The common target allows us to define
\[
 \mathcal H=H_{\rm heat}^{-1}\circ H_{\rm NS}\circ\Psi,
 \qquad \mathcal H S_t=e^{-\nu At}\mathcal H.
\]
The estimates in the next subsection explain why the finite
constructions use one radius independent of the truncation index.

\subsection{Preparation for the conjugacy construction}
\label{app:concrete-flows}
We choose common spectral blocks, comparison coefficients,
and a radius valid for every finite prefix of both semiflows,
providing uniform data for their conjugacy constructions
with the same diagonal target.

We use the spectral blocks $Z_j$ from
\eqref{geo:block-spaces}, with $L_j=\Lambda_j$, and the coordinate
map $\Psi$ from Theorem~\ref{geo:chart}. We identify
$X$ with $\mathcal Z=\bigoplus_{j\ge1}Z_j$ and write
$\mathcal Z_{\le N}=\pi_{\le N}\mathcal Z$.

The rates $\ell_j,d_j$, coupling coefficients $\gamma_j$, and
source bounds $F_j(r)$ are those of
Proposition~\ref{app:radial-inequality}; the endpoint rates
$a_j=\ell_j-d_j$ and $b_j=\ell_j+d_j$ are defined in
\eqref{exit:rates}. By \eqref{app:endpoint-gaps}, both endpoint
sequences are strictly increasing. We fix the spectral sequence,
the rates and coupling coefficients, and $\Psi$ on its reference ball
before decreasing the final radius. The source bounds $F_j(r)$
retain their dependence on $r$. For $0<2\rho<r_*$, we set
$F_j=F_j(2\rho)$ and restrict $\Psi$ to the smaller domain.

The construction below applies to both $\Phi_t^{\rm NS}$ and
$\Phi_t^{\rm heat}$. When treating them simultaneously, we write
$\Phi_t$ for either semiflow, and use $V^{(N)}$ and $\phi_t^{(N)}$
for its corresponding finite-prefix vector field and flow.
Both constructions use the target \eqref{exit:diagonal-semigroup}.

We set
\[
 \kappa=\nu/2,\qquad
 c_{jk}=\begin{cases}
 \gamma_j,&k=j-1,\\
 0,&k<j-1.
 \end{cases}
 \qquad 1\le k<j.
\]
We define the comparison matrices by
\begin{equation}\label{exit:matrices}
 (A_-)_{jk}=\begin{cases}
 c_{jk}/(a_j-a_k),&k<j,\\
 0,&k\ge j,
 \end{cases}
 \qquad
 (A_+)_{jk}=\begin{cases}
 c_{jk}/(b_j-b_k),&k<j,\\
 0,&k\ge j.
 \end{cases}
\end{equation}
We interpret $\exp(-c_0\ell_j/d_j)$ as zero when $d_j=0$.

The choice of a common radius rests on
\eqref{app:small-source} and the uniform matrix bounds proved below.
Once the cutoffs and coefficients are fixed, these bounds give
\[
 \|A_\pm\|_{\mathcal L(\ell^2)}\le\tfrac12,\qquad
 \Big\|(I-A_-)^{-1}
       \Big(\frac{F_j(2\rho)}{a_j}\Big)_{j\ge1}\Big\|_{\ell^2}
 \le2\Big\|\Big(\frac{F_j(2\rho)}{a_j}\Big)_{j\ge1}\Big\|_{\ell^2}
 =o(\rho).
\]
Thus the final smallness condition is imposed on the entire sequence
at once. Only finitely many reductions of the radius are needed,
each independent of the truncation index. The following lemma
makes these choices precise.

\begin{lemma}\label{app:common-data}
We can choose the spectral cutoffs $(\Lambda_j)_{j\ge1}$,
the coefficients $\ell_j,d_j,\gamma_j$ in
Proposition~\ref{app:radial-inequality}, and a radius
$\rho\in(0,r_*/2)$ such that the following assertions hold
for both $\Phi_t^{\rm NS}$ and $\Phi_t^{\rm heat}$.
We choose $\ell_j,d_j,\gamma_j$ independently of $\rho$
and set $F_j=F_j(2\rho)$.
\begin{enumerate}
\renewcommand{\theenumi}{\roman{enumi}}
\renewcommand{\labelenumi}{(\theenumi)}
\item For every $N\ge1$, the finite factors satisfy
\begin{equation}\label{exit:finite-factor}
 \pi_{\le N}\Phi_tz=\phi_t^{(N)}(\pi_{\le N}z),\qquad
 \dot z_{\le N}=V^{(N)}(z_{\le N}).
\end{equation}
The vector field $V^{(N)}$ is smooth on
$B_{2\rho}(\mathcal Z_{\le N})$, vanishes at zero, and satisfies
\begin{equation}\label{exit:inward}
 \langle V^{(N)}(u),u\rangle_{\mathcal Z}
 \le-\kappa\|u\|_{\mathcal Z}^2,
 \qquad u\in\overline B_\rho(\mathcal Z_{\le N}).
\end{equation}
\item For every $j\ge1$ and
$z_{\le j}\in\overline B_\rho(\mathcal Z_{\le j})$,
\begin{equation}\label{exit:radial}
 \begin{aligned}
 \bigl|\langle V_j(z_{\le j})+\ell_jz_j,z_j\rangle_{Z_j}\bigr|
 \le d_j\|z_j\|_{Z_j}^2 +\|z_j\|_{Z_j}
 \big(\sum_{k<j}c_{jk}\|z_k\|_{Z_k}+F_j\big).
 \end{aligned}
\end{equation}
\item We have
\begin{equation}\label{exit:delta}
 \sup_{j\ge1}\frac{d_j}{\ell_j}<1,\qquad
 \sum_{j=1}^{\infty}\exp\big(-c_0\frac{\ell_j}{d_j}\big)<\infty
 \quad\text{for every }c_0>0.
\end{equation}
\item We have
\begin{equation}\label{exit:matrix-small}
 \|A_-\|_{\mathcal L(\ell^2)}\le\tfrac12,\qquad
 \|A_+\|_{\mathcal L(\ell^2)}\le\tfrac12,
\end{equation}
and
\begin{equation}\label{exit:forcing-small}
 (F_j/a_j)_{j\ge1}\in\ell^2_+,\qquad
 \left\|(I-A_-)^{-1}(F_j/a_j)_{j\ge1}\right\|_{\ell^2}
 \le\rho/4.
\end{equation}
\end{enumerate}
\end{lemma}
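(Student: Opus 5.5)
We verify the four assertions in the order (iii), (iv), (ii), (i). The order reflects the dependencies. The sequences $\ell_j,d_j,\gamma_j$ and the cutoffs are fixed first, by Proposition~\ref{app:radial-inequality}, with $\varepsilon_*$ still free. After that, only the radius $\rho$ is decreased.

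For (iii), \eqref{app:rates} gives $d_j/\ell_j=D/\log\Lambda_j$ for $j\ge2$. The proof of Proposition~\ref{app:radial-inequality} shows that this ratio is at most $1/4$, and $d_1<\ell_1$ holds by \eqref{app:first-endpoints}; hence $\sup_j d_j/\ell_j<1$. Next, $\exp(-c_0\ell_j/d_j)=\Lambda_j^{-c_0/D}$, which is summable by \eqref{app:summability} with $\theta=c_0/D$.

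For (iv), $A_\pm$ has only one nonzero subdiagonal, with entries $\gamma_j/(a_j-a_{j-1})$ and $\gamma_j/(b_j-b_{j-1})$. By \eqref{app:forcing-size} and \eqref{app:endpoint-gaps} with $c=1/2$, each entry is at most $2C_s\varepsilon_*$. A matrix supported on one subdiagonal has operator norm equal to the supremum of its entries. The constants $C_s$ and $c$ do not depend on $\varepsilon_*$, so choosing $\varepsilon_*\le 1/(4C_s)$ at the outset, before the tail and $D$ are fixed, gives \eqref{exit:matrix-small}. Then $\|(I-A_-)^{-1}\|\le2$ by a Neumann series. Consequently \eqref{exit:forcing-small} reduces to requiring $2\|(F_j(2\rho)/a_j)\|_{\ell^2}\le\rho/4$. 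By \eqref{app:small-source} applied with $r=2\rho$, the left side is $o(\rho)$, so this holds for all small $\rho$.

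For (ii), I would apply \eqref{app:radial-main} with $r=2\rho$ on $\overline B_\rho\subset\overline B_{2\rho}$, which yields \eqref{exit:radial} with $c_{j,j-1}=\gamma_j$. This covers the NS flow. For the heat flow, $\Psi$ is taken to be the identity chart with the blocks $Z_j$, so $V_j=-\nu Az_j$. Then $\langle V_j+\ell_jz_j,z_j\rangle$ is bounded by the spread of $\nu A$ on $Z_j$. For $j\ge2$ this spread is at most $\nu g_j\le d_j$ once $D$ is large. For $j=1$ the spectrum lies in $[\nu,\nu\Lambda_1]\subset[a_1,b_1]$. The couplings and sources vanish, so the heat flow satisfies (ii) with the same data.

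For (i), the finite factor property of $\Phi^{\rm NS}$ is Theorem~\ref{geo:chart}(iii), obtained by conjugating through $K_j$; for the heat flow it is immediate. Smoothness on $B_{2\rho}$ and $V^{(N)}(0)=0$ follow because zero is fixed. The key step is the inward estimate \eqref{exit:inward}. For NS it follows directly from \eqref{geo:factor-radial}: $\|z_{\le N}\|^2=\|\Pi_{\le\Lambda_N}u\|_s^2$, and $A\ge I$ gives the rate $-\nu/2=-\kappa$. For the heat flow it follows from $A\ge I$. This is a small but essential point: summing the blockwise inequalities \eqref{exit:radial} would not produce a uniform $\kappa$, whereas the exact energy identity does, uniformly in $N$. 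The main obstacle I anticipate is the ordering of the choices. $\varepsilon_*$ must be fixed before the tail, $D$, and $C$, so that $\|A_\pm\|\le1/2$ holds uniformly in $j$; the dependence of $F_j$ on $\rho$ enters only through \eqref{app:small-source}.
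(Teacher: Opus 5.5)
Your proposal is correct and follows the paper's proof essentially step for step. The ingredients match: the weighted-shift bound on $A_\pm$ with $\varepsilon_*$ fixed before the tail and $D$; the ratio $d_j/\ell_j=D/\log\Lambda_j$ combined with Lemma~\ref{app:cuts}(iii); the radial inequality applied at radius $2\rho$; the inclusion $\sigma(\nu A|_{Z_j})\subset[a_j,b_j]$ for the heat flow; the exact energy identity \eqref{geo:factor-radial} giving a uniform $\kappa=\nu/2$; and the Neumann series together with \eqref{app:small-source} to fix $\rho$.

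One small slip: smoothness of $V^{(N)}$ on $B_{2\rho}$ comes from Theorem~\ref{geo:chart}(iii), not from zero being a fixed point. The fixed point gives only $V^{(N)}(0)=0$.
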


\begin{proof}
We first fix the cutoffs and comparison coefficients, verify
the estimates for both semiflows, and then choose a common
radius using the smallness of the full source sequence.

The matrices in \eqref{exit:matrices} are weighted shifts.
By \eqref{app:forcing-size} and \eqref{app:endpoint-gaps},
\[
 \begin{aligned}
 \|A_-\|_{\mathcal L(\ell^2)}
 &=\sup_{j\ge2}\frac{\gamma_j}{a_j-a_{j-1}}\le C_s\varepsilon_*,\\
 \|A_+\|_{\mathcal L(\ell^2)}
 &=\sup_{j\ge2}\frac{\gamma_j}{b_j-b_{j-1}}\le C_s\varepsilon_*.
 \end{aligned}
\]
Here $C_s$ depends only on $s$, by
Proposition~\ref{app:radial-inequality}.
We first choose $\varepsilon_*$ sufficiently small, depending only on $s$,
for \eqref{exit:matrix-small}, and then fix the corresponding cutoffs
and coefficients.
Equations~\eqref{app:rates}, \eqref{app:first-endpoints}, and
\eqref{app:first-cut-threshold} give
\[
 \frac{d_j}{\ell_j}=\frac D{\log\Lambda_j}<\frac14\quad(j\ge2),
 \qquad \frac{d_1}{\ell_1}<1.
\]
For every $c_0>0$,
\[
 \sum_{j\ge2}e^{-c_0\ell_j/d_j}
 =\sum_{j\ge2}\Lambda_j^{-c_0/D}<\infty
\]
by Lemma~\ref{app:cuts}(iii). This proves (iii).

We next verify (i) and (ii) for every sufficiently small
$\rho\in(0,r_*/2)$, restricting the fixed coordinate map $\Psi$
as described above. For $\Phi_t^{\rm NS}$,
Theorem~\ref{geo:chart} gives smooth finite factors on
$B_{2\rho}(\mathcal Z_{\le N})$, satisfying \eqref{exit:finite-factor}
and $V^{(N)}(0)=0$.
The norm identity \eqref{geo:isometry-origin} and
Proposition~\ref{geo:radial-factor}(i), with $A\ge I$,
give \eqref{exit:inward} with $\kappa=\nu/2$, uniformly in $N$.
Proposition~\ref{app:radial-inequality}(i), applied at radius $2\rho$,
gives \eqref{exit:radial} with $F_j=F_j(2\rho)$.

For $\Phi_t^{\rm heat}$, the finite-prefix vector field is
$V^{(N)}=-\nu A|_{\mathcal Z_{\le N}}$, so
\eqref{exit:finite-factor} holds and $A\ge I$ gives
\[
 \langle V^{(N)}(u),u\rangle_{\mathcal Z}
 =-\nu\|A^{1/2}u\|_{\mathcal Z}^2
 \le-\nu\|u\|_{\mathcal Z}^2.
\]
The choice of $D$ in the proof of
Proposition~\ref{app:radial-inequality} ensures $\nu g_j\le d_j$
for $j\ge2$. Together with \eqref{geo:block-spaces} and
\eqref{app:first-endpoints}, this gives
\[
 \sigma(\nu A|_{Z_j})\subset[a_j,b_j],\qquad j\ge1.
\]
Consequently,
\[
 \bigl|\langle(-\nu A+\ell_j)z_j,z_j\rangle_{Z_j}\bigr|
 \le d_j\|z_j\|_{Z_j}^2,
\]
which implies \eqref{exit:radial} with the same nonnegative
$c_{jk}$ and $F_j$. Thus (i) and (ii) hold for both semiflows.

Finally, the Neumann series and \eqref{exit:matrix-small} give
$\|(I-A_-)^{-1}\|_{\mathcal L(\ell^2)}\le2$.
By \eqref{app:small-source},
\begin{equation}\label{app:fixed-radius-smallness}
 \Big\|(I-A_-)^{-1}
       \Big(\frac{F_j(2\rho)}{a_j}\Big)_{j\ge1}\Big\|_{\ell^2}
 \le2\Big\|\Big(\frac{F_j(2\rho)}{a_j}\Big)_{j\ge1}\Big\|_{\ell^2}
 =o(\rho)\qquad(\rho\to0).
\end{equation}
Choose $\rho>0$ small enough for the preceding semiflow estimates
and for this norm to be at most $\rho/4$, and set $F_j=F_j(2\rho)$.
This proves \eqref{exit:forcing-small}. Since the estimate controls
the entire source sequence, the same radius works for every
finite prefix of both semiflows, with all cutoffs, rates,
coupling coefficients, and matrices fixed.
\end{proof}

We fix these choices for the rest of the section.
By \eqref{exit:inward}, the finite closed balls are positively invariant,
and
\[
 \|\phi_t^{(N)}u\|_{\mathcal Z}\le e^{-\kappa t}\|u\|_{\mathcal Z},
 \qquad t\ge0,\quad u\in\overline B_\rho(\mathcal Z_{\le N}).
\]
We now construct the exit maps for either of the two semiflows.

\subsection{Finite-dimensional conjugacies via exit times}
We use transverse sections and exit times to construct
finite-dimensional conjugacies compatible across truncation
levels, preparing their assembly into a conjugacy on the
full Hilbert space.

For $0\ne u\in\overline B_\rho(\mathcal Z_{\le N})$, we define
\[
 \tau_N(u)=\inf\{t\ge0:
       \|\phi^{(N)}_{-t}u\|_{\mathcal Z}=\rho\}.
\]
By the finite-dimensional continuation theorem
\cite[Chapter~2]{HartmanODE} and \eqref{exit:inward},
the backward solution exists until it reaches the sphere, and
\begin{equation}\label{exit:back-growth}
 \|\phi^{(N)}_{-t}u\|_{\mathcal Z}
 \ge e^{\kappa t}\|u\|_{\mathcal Z},
 \qquad 0\le t\le\tau_N(u).
\end{equation}
Hence
\[
 0\le\tau_N(u)\le
 \kappa^{-1}\log\frac{\rho}{\|u\|_{\mathcal Z}}<\infty.
\]
We set $\tau_N(0)=\infty$ and define
\begin{equation}\label{exit:coordinate}
 H_j(z_{\le j})=
 e^{-\ell_j\tau_j(z_{\le j})}
 \bigl[\phi^{(j)}_{-\tau_j(z_{\le j})}(z_{\le j})\bigr]_j
 \quad(z_{\le j}\ne0),
 \qquad H_j(0)=0.
\end{equation}
We set
\[
 H^{(N)}(z_{\le N})
 =\bigl(H_j(z_{\le j})\bigr)_{j=1}^N.
\]
Since $H_j$ depends only on $z_{\le j}$, we have
\[
 \pi_{\le M}H^{(N)}
 =H^{(M)}\pi_{\le M},\qquad M\le N.
\]

\Needspace{6\baselineskip}
We prove that the finite exit maps are homeomorphisms and describe their images recursively.

\nopagebreak[4]
\begin{lemma}\label{exit:finite-homeomorphism}
For every $N\ge1$, the map $H^{(N)}$ with components defined by
\eqref{exit:coordinate} satisfies the following properties.
\begin{enumerate}
\renewcommand{\theenumi}{\roman{enumi}}
\renewcommand{\labelenumi}{(\theenumi)}
\item The map
\[
 H^{(N)}:\overline B_\rho(\mathcal Z_{\le N})
 \longrightarrow\mathcal Z_{\le N}
\]
is continuous and injective. The set
$\mathcal O_N=H^{(N)}(B_\rho(\mathcal Z_{\le N}))$
is open in $\mathcal Z_{\le N}$, and
\[
 H^{(N)}:B_\rho(\mathcal Z_{\le N})
 \longrightarrow\mathcal O_N
\]
is a homeomorphism.
\item With $\mathcal Z_{\le0}=\mathcal O_0=\{0\}$ and
$H^{(0)}(0)=0$, we have
\begin{equation}\label{exit:finite-image}
 \mathcal O_N=
 \left\{(h_{<N},h_N)\in\mathcal O_{N-1}\times Z_N:
 \|(H^{(N-1)})^{-1}(h_{<N})\|_{\mathcal Z}^2
 +\|h_N\|_{Z_N}^2<\rho^2\right\}.
\end{equation}
\item For every $u\in\overline B_\rho(\mathcal Z_{\le N})$
and $t\ge0$, we have
\begin{equation}\label{exit:finite-conjugacy}
 H^{(N)}(\phi_t^{(N)}u)=D_tH^{(N)}(u),
\end{equation}
where $D_t$ is given by \eqref{exit:diagonal-semigroup},
restricted to $\mathcal Z_{\le N}$.
\end{enumerate}
\end{lemma}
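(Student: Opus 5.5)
The plan is to argue by induction on $N$, using the lower-triangular structure \eqref{exit:finite-factor} and the inward condition \eqref{exit:inward}. The first ingredient is regularity of the exit time on the punctured closed ball. By \eqref{exit:inward}, the function $t\mapsto\|\phi^{(N)}_{-t}u\|_{\mathcal Z}^2$ has derivative at least $2\kappa\|\phi^{(N)}_{-t}u\|^2>0$. The backward orbit therefore crosses the sphere of radius $\rho$ transversally and exactly once. The implicit function theorem, applied to $(t,u)\mapsto\|\phi^{(N)}_{-t}u\|^2-\rho^2$, then shows that $\tau_N$ is smooth on $\overline B_\rho(\mathcal Z_{\le N})\setminus\{0\}$, with $\tau_N=0$ exactly on the sphere. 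Each $H_j$ is then continuous away from $z_{\le j}=0$. Continuity at $0$ comes from the bound $\|[\phi^{(j)}_{-\tau_j}z_{\le j}]_j\|\le\rho$ together with $\tau_j(z_{\le j})\ge\kappa_*^{-1}\log(\rho/\|z_{\le j}\|)$ for a forward growth bound constant $\kappa_*$. This constant is finite on the compact finite-dimensional ball because $V^{(j)}$ is smooth there and $V^{(j)}(0)=0$, so $\|V^{(j)}(u)\|\le C_j\|u\|$. Hence $\tau_j\to\infty$ as $z_{\le j}\to 0$, and $e^{-\ell_j\tau_j}\to0$. This establishes continuity of $H^{(N)}$ on the closed ball.

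Next comes the conjugacy (iii). For $u\neq0$ and $t\ge0$, the flow property gives $\tau_j(\pi_{\le j}\phi^{(N)}_tu)=\tau_j(\pi_{\le j}u)+t$. Here we use that $\pi_{\le j}\phi^{(N)}_t=\phi^{(j)}_t\pi_{\le j}$ by \eqref{exit:finite-factor}, and that the forward orbit stays in the ball. The exit point is also the same. Therefore $H_j(\pi_{\le j}\phi_t u)=e^{-\ell_j t}H_j(\pi_{\le j}u)$, and the case $u=0$ is trivial.

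For injectivity and the image description (ii), I proceed inductively and write $h=(h_{<N},h_N)$. Given $h\in\mathcal O_{N-1}\times Z_N$, the prefix is determined by $z_{<N}=(H^{(N-1)})^{-1}(h_{<N})$. Set $\tau=\tau_{N-1}(z_{<N})$ and $y_{<N}=\phi^{(N-1)}_{-\tau}z_{<N}$, so that $\|y_{<N}\|=\rho$; the case $N=1$ and the case $z_{<N}=0$ are handled separately. The key observation is the following. For the full $N$-prefix, the backward orbit of $(z_{<N},z_N)$ has prefix $\phi^{(N-1)}_{-t}z_{<N}$, whose norm reaches $\rho$ at time $\tau$. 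The full norm is at least the prefix norm, so the full $N$-orbit reaches radius $\rho$ no later than $\tau$. I would therefore reparametrize by the exit data directly. The map $(\text{boundary point } y\in S_\rho,\ t\ge0)\mapsto\phi^{(N)}_t y$ is a bijection onto $\overline B_\rho\setminus\{0\}$. Under this bijection, $H^{(N)}(\phi_t y)=(H^{(N-1)}(\phi^{(N-1)}_t(\cdots)),\,e^{-\ell_Nt}y_N)$.

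Injectivity follows from the uniqueness of backward orbits in finite dimensions. If $H^{(N)}(u)=H^{(N)}(u')$, then $\pi_{<N}u=\pi_{<N}u'$ by induction. Write $\tau'=\tau_N$, which depends on the full $u$. The boundary points $y=\phi^{(N)}_{-\tau_N(u)}u$ and $y'=\phi^{(N)}_{-\tau_N(u')}u'$ satisfy $e^{-\ell_N\tau_N(u)}y_N=e^{-\ell_N\tau_N(u')}y'_N$, and their prefixes come from the same prefix orbit. If, say, $\tau_N(u)\le\tau_N(u')$, then $\|\pi_{<N}y\|\le\|\pi_{<N}y'\|$ along that orbit, by the monotonicity from \eqref{exit:inward} applied at level $N-1$. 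Since $\|y\|=\|y'\|=\rho$, this forces $\|y_N\|\ge\|y'_N\|$, while the $h_N$ equation gives the reverse inequality unless $\tau_N(u)=\tau_N(u')$. In that case $y=y'$, and hence $u=u'$. The same bookkeeping produces the image formula \eqref{exit:finite-image}. For fixed $h_{<N}$, the map from $u_N$ to $h_N$ is a bijection of the slice $\{\|u_N\|^2<\rho^2-\|z_{<N}\|^2\}$ onto the ball in $Z_N$ of radius $(\rho^2-\|z_{<N}\|^2)^{1/2}$. I would check this by showing that the map is continuous and injective on this finite-dimensional ball, that its boundary goes to the boundary sphere (where $\tau_N=0$ and $h_N=u_N$), and that it is proper; invariance of domain and a degree or radial argument then give surjectivity. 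I expect this slice bijectivity to be the main obstacle.

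Once (ii) is in place, the openness of $\mathcal O_N$ and the homeomorphism property in (i) follow from Brouwer's invariance of domain, since $H^{(N)}$ is a continuous injection between open subsets of the same finite-dimensional space.
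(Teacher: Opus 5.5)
Your proposal is correct, but for the image description (ii) and the homeomorphism in (i) it takes a different route from the paper.

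\textbf{Where you agree with the paper.} The continuity argument matches the paper; your lower bound on $\tau_j$ from the linear growth bound $\|V^{(j)}(u)\|\le C_j\|u\|$ is a quantitative form of its appeal to continuous dependence at the equilibrium. The conjugacy identity and your two-point injectivity comparison also agree in substance. In the injectivity step, two cases need a word:
\begin{itemize}
\item the degenerate case $y_N=y'_N=0$ is closed by the \emph{strict} growth of $\|\phi^{(N-1)}_{-t}w\|$ for $w\ne0$;
\item the case $w=0$, $h_N=0$ would force the exit point to vanish.
\end{itemize}

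\textbf{Your surjectivity argument.} You argue slice by slice. The map $g(u_N)=H_N(w,u_N)$ is continuous and injective on the closed disk $\overline D$ of radius $R=(\rho^2-\|w\|^2)^{1/2}$, and it is the identity on $\partial D$. The no-retraction theorem (or degree) then gives $D\subset g(\overline D)$, hence $D\subset g(D)$. Since $g(D)$ is connected and, by injectivity, disjoint from $g(\partial D)=\partial D$, it follows that $g(D)=D$. This does close the step you flagged as the main obstacle.

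\textbf{The paper's surjectivity argument.} The paper needs no topology here. Set $u=(H^{(N-1)})^{-1}(h_{<N})$. Any preimage must have exit point $(\phi^{(N-1)}_{-T}u,e^{\ell_NT}h_N)$, where $T$ is a root at level $\rho^2$ of the strictly increasing function $G(T)=\|\phi^{(N-1)}_{-T}u\|^2+e^{2\ell_NT}\|h_N\|^2$. The intermediate value theorem produces the preimage as the image of that exit point under $\phi^{(N)}_T$. Monotonicity gives uniqueness at the same time. Interior and boundary points are distinguished by $T>0$ versus $T=0$. Your own exit-data parametrization already reduces surjectivity to exactly this scalar equation.

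\textbf{Openness and the inverse.} The paper gets continuity of the inverse from compactness of the closed ball. It gets openness of $\mathcal O_N$ from the strict inequality in \eqref{exit:finite-image} together with the induction hypothesis. Your invariance-of-domain step gives (i) without using (ii).

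\textbf{Comparison.} The paper's route is elementary and constructive. Its recursive inverse is exactly what is reused when the inverses $z^{[N]}$ are built prefix by prefix in Proposition~\ref{exit:hilbert-conjugacies}. Yours is shorter to state, but it rests on Brouwer's theorems and gives no explicit inverse.
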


\begin{proof}
We first establish continuity and the conjugacy identity.
We then construct the inverse recursively by solving a strictly
monotone scalar equation, which also determines the image.

For $0\ne u\in\overline B_\rho(\mathcal Z_{\le j})$,
\eqref{exit:inward} gives
\[
 \left.\frac d{dt}\|\phi_{-t}^{(j)}u\|_{\mathcal Z}
 \right|_{t=\tau_j(u)}\ge\kappa\rho>0.
\]
The implicit-function theorem \cite[Chapter~4]{Deimling1985}
and smooth dependence for finite-dimensional ODEs
\cite[Chapter~5]{HartmanODE} therefore give continuity of
$\tau_j$ and the exit point away from zero, including on the
boundary since the vector field is defined on $B_{2\rho}$.
For each fixed $T>0$, continuous dependence at the equilibrium
keeps the backward orbit of sufficiently small $u$ inside $B_\rho$
on $[0,T]$. Thus $\tau_j(u)\to\infty$ as $u\to0$, and
\[
 \|H_j(u)\|_{Z_j}\le\rho e^{-\ell_j\tau_j(u)}\to0.
\]
Hence every $H_j$, and therefore $H^{(N)}$, is continuous.

Uniqueness of the flow and strict radial decrease give
$\tau_j(\phi_t^{(j)}u)=\tau_j(u)+t$ for $u\ne0$, with the same
exit point. Substituting into \eqref{exit:coordinate} yields
\[
 H_j(\phi_t^{(j)}u)=e^{-\ell_jt}H_j(u),\qquad t\ge0.
\]
This also holds at zero. Combining the component identities with
\eqref{exit:finite-factor} proves (iii).

We prove the remaining assertions by induction on $N$, starting
with $H^{(0)}(0)=0$ and the constant zeroth flow on $\{0\}$.
Assume (i) and (ii) for $N-1$.
Given $h_{<N}\in\mathcal O_{N-1}$, set
$u=(H^{(N-1)})^{-1}(h_{<N})$ and suppose
$\|u\|_{\mathcal Z}^2+\|h_N\|_{Z_N}^2<\rho^2$.
The zero pair has preimage zero. Otherwise, define
\begin{equation}\label{exit:root}
 G(T)=\|\phi_{-T}^{(N-1)}u\|_{\mathcal Z}^2
       +e^{2\ell_NT}\|h_N\|_{Z_N}^2.
\end{equation}
While the shorter backward orbit remains in the closed ball,
\eqref{exit:inward} gives
\[
 G'(T)\ge2\min\{\kappa,\ell_N\}G(T)>0.
\]
If $u\ne0$, then $G(0)<\rho^2\le G(\tau_{N-1}(u))$;
if $u=0$, then $h_N\ne0$ and $G(T)\to\infty$.
Thus there is a unique $T>0$ with $G(T)=\rho^2$. Set
\[
 v=\bigl(\phi_{-T}^{(N-1)}u,e^{\ell_NT}h_N\bigr),
 \qquad z=\phi_T^{(N)}v.
\]
By \eqref{exit:inward} and \eqref{exit:finite-factor},
\[
 \|v\|_{\mathcal Z}=\rho,\qquad
 z\in B_\rho(\mathcal Z_{\le N}),\qquad z_{<N}=u,
 \qquad\tau_N(z)=T.
\]
Equation~\eqref{exit:coordinate} and the induction hypothesis
then give $H^{(N)}(z)=h$.

Conversely, suppose $h=H^{(N)}(z)$ for
$z\in\overline B_\rho(\mathcal Z_{\le N})$.
By injectivity at level $N-1$, $h_{<N}$ uniquely determines
$u=z_{<N}$. If $z\ne0$, its exit time $T=\tau_N(z)$ satisfies
\[
 \phi_{-T}^{(N)}z
 =\bigl(\phi_{-T}^{(N-1)}u,e^{\ell_NT}h_N\bigr),
 \qquad G(T)=\rho^2,
\]
by \eqref{exit:coordinate} and \eqref{exit:finite-factor}.
If $h=0$, then $u=0$ and this exit point would be zero,
a contradiction. Thus zero has no other preimage.
For $h\ne0$, the same strictly increasing function $G$ uniquely
determines $T\ge0$ and hence $z=\phi_T^{(N)}v$.
This proves injectivity on the closed ball.
For nonzero $z$, an interior point has $T>0$ and therefore
$G(0)<\rho^2$, whereas a boundary point has $T=0$ and
$G(0)=\rho^2$. The strict inequality also holds at $z=0$.
Together with the inverse construction, this proves
\eqref{exit:finite-image}.

The finite-dimensional closed ball is compact, so the continuous
injection $H^{(N)}$ has a continuous inverse on its image.
By the induction hypothesis, the strict inequality in
\eqref{exit:finite-image} defines an open set $\mathcal O_N$.
This completes the induction and proves (i) and (ii).
\end{proof}

For $0\ne z_{\le j}\in\overline B_\rho(\mathcal Z_{\le j})$,
we use \eqref{exit:finite-factor} to obtain
\[
 \|\phi^{(k)}_{-t}(z_{\le k})\|_{\mathcal Z}
 \le \|\phi^{(j)}_{-t}(z_{\le j})\|_{\mathcal Z}
 \le\rho,
 \qquad 0\le t\le\tau_j(z_{\le j}),\quad k<j.
\]
By \eqref{exit:back-growth}, we conclude that
\begin{equation}\label{exit:nested-times}
 \tau_j(z_{\le j})\le\tau_k(z_{\le k}),\qquad k<j.
\end{equation}
The same inequality holds for $z_{\le j}=0$, since $\tau_k(0)=\infty$.

\subsection{Two scalar domination estimates}

We derive scalar bounds for the finite-dimensional conjugacies
and their inverses, together with uniform tail estimates along
convergent sequences. These bounds allow the compatible maps
to assemble into a homeomorphism in the strong $H^s$ topology.

For $z\in\mathcal Z$, we write
$|z|=(\|z_j\|_{Z_j})_{j\ge1}\in\ell^2_+$.
We extend finite block vectors and scalar sequences by zero.
For the power estimates, we set
\begin{equation}\label{exit:power-exponents}
 \delta_j=\frac{d_j}{\ell_j},\qquad p_j=(1+\delta_j)^{-1},
\end{equation}
and define coordinatewise maps on $\ell^2_+$ by
\begin{equation}\label{exit:powers}
 (\mathcal P_-u)_j=\rho^{\delta_j}u_j^{1-\delta_j},\qquad
 (\mathcal P_+u)_j=\rho^{1-p_j}u_j^{p_j}.
\end{equation}
We also set
\begin{equation}\label{exit:resolvents}
 R_\pm=(I-A_\pm)^{-1}.
\end{equation}
We prove continuity of the power maps and establish the tail property
of $R_\pm$.

\nopagebreak[4]
\begin{lemma}\label{exit:power-continuity}
\begin{enumerate}
\renewcommand{\theenumi}{\roman{enumi}}
\renewcommand{\labelenumi}{(\theenumi)}
\item The maps $\mathcal P_\pm:\ell^2_+\to\ell^2_+$
defined by \eqref{exit:powers} are continuous and satisfy
$\mathcal P_\pm(0)=0$.

\item The operators $R_\pm\in\mathcal L(\ell^2)$ are positive and satisfy
\[
 \pi_{\le N}R_\pm
 =\pi_{\le N}R_\pm\pi_{\le N},
 \qquad N\ge1.
\]
If $u^{(n)}\to u$ in $\ell^2$, then
\[
 \lim_{N\to\infty}\sup_{n\ge1}
 \bigl\|(I-\pi_{\le N})R_\pm u^{(n)}\bigr\|_{\ell^2}=0.
\]
\end{enumerate}
\end{lemma}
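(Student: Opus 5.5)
For (i), I will treat the two maps coordinatewise and then control the $\ell^2$ tails uniformly. First I check that $\mathcal P_\pm$ maps $\ell^2_+$ into $\ell^2_+$. For $u_j\le\rho$ we have $\rho^{\delta_j}u_j^{1-\delta_j}=\rho\,(u_j/\rho)^{1-\delta_j}$, and the relevant inequalities go as follows.
\begin{enumerate}
\renewcommand{\theenumi}{\roman{enumi}}
\renewcommand{\labelenumi}{(\theenumi)}
\item The basic bound is $x^{1-\delta}\le x^{1/2}\cdot x^{1/2-\delta}$ for $x=u_j/\rho\in[0,1]$. Since $\delta_j<1/4$ for $j\ge2$ by \eqref{exit:delta}, this gives $(\mathcal P_-u)_j\le \rho^{\delta_j}u_j^{3/4}\rho^{1/4}$, which is not square summable in general. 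I will therefore need a finer splitting.
\item Split by size. If $u_j\ge e^{-c/\delta_j}\rho$, then $(u_j/\rho)^{-\delta_j}\le e^{c}$, so $(\mathcal P_-u)_j\le e^cu_j$. If $u_j<e^{-c/\delta_j}\rho$, then $(\mathcal P_-u)_j\le\rho\,e^{-c(1-\delta_j)/\delta_j}\le \rho e^{c}e^{-c\ell_j/d_j}$.
\item Hence $(\mathcal P_-u)_j^2\le C(u_j^2+\rho^2 e^{-2c\ell_j/d_j})$, and the second term is summable by \eqref{exit:delta}.
\item For entries with $u_j>\rho$ (finitely many), the power is bounded by $\rho^{\delta_j}u_j^{1-\delta_j}\le u_j$.
\item $\mathcal P_+$ is handled identically, since $1-p_j=\delta_j/(1+\delta_j)\le\delta_j$.
\end{enumerate}
Continuity then follows from three facts. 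Each coordinate map $x\mapsto\rho^{\delta_j}x^{1-\delta_j}$ is continuous on $[0,\infty)$. The same domination holds uniformly: for $\|u^{(n)}-u\|_{\ell^2}\to0$, the tails satisfy $\sum_{j>N}(\mathcal P u^{(n)})_j^2\le C\sum_{j>N}\bigl((u^{(n)}_j)^2+\rho^2e^{-2c\ell_j/d_j}\bigr)$, and this is uniformly small because convergent sequences in $\ell^2$ have uniformly small tails. Finally, combining finite-coordinate convergence with uniform tails gives $\ell^2$ convergence. The identity $\mathcal P_\pm(0)=0$ is immediate.

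For (ii), the matrices $A_\pm$ are strictly lower triangular weighted shifts with nonnegative entries, since $\gamma_j\ge0$ and the gaps $a_j-a_{j-1}$ and $b_j-b_{j-1}$ are positive by \eqref{app:endpoint-gaps}. Because $\|A_\pm\|\le1/2$ by \eqref{exit:matrix-small}, the Neumann series $R_\pm=\sum_{m\ge0}A_\pm^m$ converges in $\mathcal L(\ell^2)$ with norm at most $2$. Every term has nonnegative entries, so $R_\pm$ maps $\ell^2_+$ into itself; this is the positivity claimed. Lower triangularity of each power gives $\pi_{\le N}A_\pm^m=\pi_{\le N}A_\pm^m\pi_{\le N}$, and summing over $m$ yields $\pi_{\le N}R_\pm=\pi_{\le N}R_\pm\pi_{\le N}$.

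For the uniform tail statement, write $R_\pm u^{(n)}=R_\pm u+R_\pm(u^{(n)}-u)$. This gives
\[
\|(I-\pi_{\le N})R_\pm u^{(n)}\|_{\ell^2}\le\|(I-\pi_{\le N})R_\pm u\|_{\ell^2}+2\|u^{(n)}-u\|_{\ell^2}.
\]
Given $\epsilon>0$, I choose $n_0$ so that the second term is below $\epsilon$ for $n>n_0$. The finitely many remaining vectors $R_\pm u^{(n)}$ with $n\le n_0$, together with $R_\pm u$, are fixed elements of $\ell^2$, so each has tails that vanish as $N\to\infty$. Taking $N$ large handles all of them at once. This is a standard argument based on the precompactness of a convergent sequence.

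The main obstacle is the $\ell^2$ control of the power maps in (i). There the exponent $1-\delta_j$ degrades small entries, and the summability $\sum_j e^{-c_0\ell_j/d_j}<\infty$ from \eqref{exit:delta} is exactly what is needed to absorb them.
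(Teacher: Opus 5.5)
Your proof is correct and follows the paper's argument. Your case split by the size of $u_j$ gives the same summable majorant $|(\mathcal P_\pm u)_j|^2\le K u_j^2+K\rho^2e^{-c_0/\delta_j}$ that the paper obtains by maximizing $\xi^{1-\beta}-K\xi$, and part (ii) matches the paper's Neumann-series, lower-triangularity, and convergent-sequence-tail argument, which you simply write out explicitly.
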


\begin{proof}
A summable scalar majorant gives continuity of the power maps.
The assertions for the resolvents follow from their Neumann
series and compactness of convergent sequences.

For $0<\beta<1$ and $K>1$, maximizing $\xi^{1-\beta}-K\xi$
and taking $\xi=t^2/\rho^2$ gives
\begin{equation}\label{exit:power-majorant}
 \rho^{2\beta}t^{2(1-\beta)}
 \le Kt^2+K\rho^2e^{-\log K/\beta},\qquad t\ge0.
\end{equation}
For $\beta=0$, the same inequality holds with the last term set to zero.
Fix $c_0>0$ and set $K=e^{c_0}$. Applying
\eqref{exit:power-majorant} with $\beta=\delta_j$ and
$\beta=1-p_j=\delta_j/(1+\delta_j)\le\delta_j$ gives the common bound
\[
 |(\mathcal P_\pm u)_j|^2
 \le K|u_j|^2+K\rho^2e^{-c_0/\delta_j}.
\]
Summing and using \eqref{exit:delta} shows that
$\mathcal P_\pm u\in\ell^2_+$ for every $u\in\ell^2_+$.
If $u^{(n)}\to u$ in $\ell^2_+$, the same bound gives
\[
 \sup_{n\ge1}\sum_{j>M}|(\mathcal P_\pm u^{(n)})_j|^2
 \le K\sup_{n\ge1}\sum_{j>M}|u_j^{(n)}|^2
      +K\rho^2\sum_{j>M}e^{-c_0/\delta_j}
 \to0\quad\text{as }M\to\infty,
\]
by the uniform tail property of convergent sequences and
\eqref{exit:delta}. Scalar continuity on each finite prefix,
together with this uniform tail bound, yields
$\mathcal P_\pm u^{(n)}\to\mathcal P_\pm u$ in $\ell^2$.
Since \eqref{exit:powers} gives $\mathcal P_\pm(0)=0$, this proves (i).

For (ii), the Neumann-series argument in the proof of
Lemma~\ref{app:common-data} gives
$R_\pm=\sum_{q\ge0}A_\pm^q$ in operator norm and $\|R_\pm\|\le2$.
Each term is positive and lower triangular, so the limit has
the stated positivity and prefix identity.
If $u^{(n)}\to u$, then $R_\pm u^{(n)}\to R_\pm u$;
these sequences together with their limits form compact sets.
The coordinate projections converge uniformly to the identity
on each such set, proving the asserted uniform tail estimate.
\end{proof}

The next lemma controls the distortion of the finite conjugacies
discussed in the introduction by bounding block norms in both
directions, uniformly in $N$. Together with
Lemma~\ref{exit:power-continuity}, these bounds preserve square
summability and give uniformly small tails along convergent input
sequences, as used in Proposition~\ref{exit:hilbert-conjugacies}.
We write
\begin{equation}\label{exit:normalized-source}
 f=(f_j)_{j\ge1},\qquad f_j=F_j/a_j.
\end{equation}

\nopagebreak[4]
\begin{lemma}\label{exit:envelopes}
Let $N\ge1$, $z\in\overline B_\rho(\mathcal Z_{\le N})$,
and $h=H^{(N)}(z)$. We extend the sequences
\[
 |z|=(\|z_j\|_{Z_j})_{j=1}^N,\qquad
 |h|=(\|h_j\|_{Z_j})_{j=1}^N
\]
by zero for $j>N$. Then, for every $1\le j\le N$,
\begin{enumerate}
\renewcommand{\theenumi}{\roman{enumi}}
\renewcommand{\labelenumi}{(\theenumi)}
\item
\begin{equation}\label{exit:inverse-envelope}
 \|z_j\|_{Z_j}
 \le \bigl[R_-(\mathcal P_-|h|+f)\bigr]_j.
\end{equation}
\item
\begin{equation}\label{exit:forward-envelope}
 \|h_j\|_{Z_j}
 \le \bigl[\mathcal P_+\bigl(R_+(|z|+f)\bigr)\bigr]_j.
\end{equation}
\end{enumerate}
\end{lemma}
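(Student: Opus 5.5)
The plan is to analyse the backward orbit $w(t)=\phi^{(N)}_{-t}z$, $0\le t\le\tau_j(z_{\le j})$, one block at a time, using the radial inequality \eqref{exit:radial}. By \eqref{exit:finite-factor}, the prefix $w_{\le j}(t)$ is the backward orbit $\phi^{(j)}_{-t}z_{\le j}$. For $t\le\tau_j(z_{\le j})$ it stays in $\overline B_\rho$, and by \eqref{exit:nested-times} the earlier prefixes are defined at least as long. Set $y_k(t)=\|w_k(t)\|_{Z_k}$. Since $\dot w_k=-V_k(w_{\le k})$ in backward time, \eqref{exit:radial} gives, wherever $y_k\ne0$,
\[
 a_ky_k-\sum_{l<k}c_{kl}y_l-F_k\;\le\;\dot y_k\;\le\;b_ky_k+\sum_{l<k}c_{kl}y_l+F_k .
\]
To avoid dividing by $y_k$ at zeros, I would work with $\sqrt{y_k^2+\epsilon}$ and let $\epsilon\downarrow0$. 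At the exit time $\tau=\tau_j(z_{\le j})$, definition \eqref{exit:coordinate} gives two facts: $y_j(\tau)=e^{\ell_j\tau}\|h_j\|_{Z_j}$, and, since the exit point has norm $\rho$, $\|h_j\|_{Z_j}\le\rho e^{-\ell_j\tau}$.

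For (i), set $E=R_-(\mathcal P_-|h|+f)$, so that $E=\mathcal P_-|h|+f+A_-E$. By induction on $j$, I would prove the stronger, time-uniform claim
\[
 y_k(t)\le e^{a_kt}E_k,\qquad 0\le t\le\tau_k .
\]
The shifted point $\phi^{(N)}_{-t}z$ has image $e^{\ell_kt}h_k$ in block $k$, by Lemma~\ref{exit:finite-homeomorphism}(iii) read backward. Also $\mathcal P_-$ sends $e^{\ell_kt}|h_k|$ to $e^{a_kt}(\mathcal P_-|h|)_k$, and $f_k\le e^{a_kt}f_k$. So it suffices to prove the claim at $t=0$ for every point of the ball.

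At $t=0$, integrate $\frac{d}{dt}(e^{-a_jt}y_j)\ge-e^{-a_jt}\bigl(\sum_{k<j}c_{jk}y_k+F_j\bigr)$ over $[0,\tau]$. This gives
\[
 \|z_j\|_{Z_j}\le e^{d_j\tau}\|h_j\|_{Z_j}
 +\sum_{k<j}c_{jk}E_k\int_0^\infty e^{-(a_j-a_k)t}\,dt+\frac{F_j}{a_j}.
\]
Using $e^{\tau}\le(\rho/\|h_j\|_{Z_j})^{1/\ell_j}$, the first term is at most $\rho^{\delta_j}\|h_j\|_{Z_j}^{1-\delta_j}=(\mathcal P_-|h|)_j$. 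The middle sum is $(A_-E)_j$, and $F_j/a_j=f_j$. The three terms add up to exactly $E_j$, and $E_j\ge\|z_j\|_{Z_j}$ is \eqref{exit:inverse-envelope}.

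For (ii), set $G=R_+(|z|+f)$ and prove by induction that
\[
 y_k(t)\le e^{b_kt}G_k,\qquad 0\le t\le\tau_j .
\]
This interval lies inside $[0,\tau_k]$ by \eqref{exit:nested-times}. Integrate $\frac d{dt}(e^{-b_jt}y_j)\le e^{-b_jt}\bigl(\sum_{k<j}c_{jk}y_k+F_j\bigr)$. Then use $\int_0^\infty e^{-(b_j-b_k)t}\,dt=1/(b_j-b_k)$, which is finite because $b_k<b_j$ by \eqref{app:endpoint-gaps}, together with $F_j/b_j\le f_j$. This gives $y_j(t)\le e^{b_jt}\bigl(\|z_j\|_{Z_j}+f_j+(A_+G)_j\bigr)=e^{b_jt}G_j$. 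At $t=\tau$ it yields $\|h_j\|_{Z_j}\le e^{d_j\tau}G_j$. Combined with $\|h_j\|_{Z_j}\le\rho e^{-\ell_j\tau}$, the minimum of the two bounds is largest when they coincide, and then it equals $\rho^{1-p_j}G_j^{p_j}$. This is \eqref{exit:forward-envelope}, since $\mathcal P_+$ acts coordinatewise.

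The main obstacle is making the induction rigorous. The bound on earlier blocks must hold uniformly over the whole backward time interval, not only at $t=0$, and this is where the shift-invariance of $H^{(N)}$ and the nesting of exit times are essential. The degenerate cases also need separate treatment: $y_k=0$, and $z=0$ or $h_j=0$, where $\tau=\infty$ or $\mathcal P_\pm$ vanishes. I would handle these by the $\epsilon$-regularization and by continuity.
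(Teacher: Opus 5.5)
Your proposal is correct and follows essentially the paper's proof. Both integrate the radial inequality along the backward orbit from the exit sphere. Both run the same induction, bounding earlier blocks by applying the level-$k$ estimate at the shifted points $\phi^{(k)}_{-t}z_{\le k}$, whose images are $(e^{\ell_it}h_i)_{i\le k}$. Both obtain (ii) by the same interpolation using $p_jb_j=\ell_j$. Your $\epsilon$-regularization replaces the paper's absolute-continuity argument at the zeros of $r_j$.

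One step is left implicit. To reduce the time-uniform bound to $t=0$, you need that $R_-$ is lower triangular with nonnegative entries and that $a_i\le a_k$ for $i\le k$. Together these give $\bigl[R_-\bigl(\mathcal P_-(e^{\ell_it}\|h_i\|_{Z_i})_{i\le k}+f\bigr)\bigr]_k\le e^{a_kt}E_k$.
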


\begin{proof}
We derive scalar inequalities for the block norms and use
induction along backward prefixes to obtain the two bounds.
The endpoint rate gaps give the comparison matrices, while
the exit sphere gives the power factors.

Along a forward solution, set $r_j(t)=\|z_j(t)\|_{Z_j}$.
By \eqref{exit:radial} and \eqref{exit:rates}, we obtain
\begin{equation}\label{exit:scalar-forward}
 -b_jr_j-\sum_{k<j}c_{jk}r_k-F_j
 \le r_j'
 \le-a_jr_j+\sum_{k<j}c_{jk}r_k+F_j
 \quad\text{a.e.}
\end{equation}
We divide \eqref{exit:radial} by $r_j$ where $r_j>0$;
on $\{r_j=0\}$, we use $r_j'=0$ almost everywhere by absolute continuity.

For both parts, the case $z_{\le j}=0$ is immediate.
For a nonzero prefix, set $T=\tau_j(z_{\le j})$ and use the backward
block norms
\[
 r_k(t)=\bigl\|[\phi_{-t}^{(j)}(z_{\le j})]_k\bigr\|_{Z_k},
 \qquad 0\le t\le T,\quad k\le j.
\]
By \eqref{exit:nested-times} and \eqref{exit:finite-factor},
all shorter prefixes follow their own backward flows in the
closed ball throughout $[0,T]$.

For (i), we induct on $j$, proving the bound for every point
in the corresponding closed ball. Integrating the upper
inequality in \eqref{exit:scalar-forward} from the exit point
to $z_{\le j}$ gives
\begin{equation}\label{exit:integral-inverse}
 \|z_j\|_{Z_j}\le e^{-a_jT}r_j(T)
 +\int_0^T e^{-a_jt}
       \Big(\sum_{k<j}c_{jk}r_k(t)+F_j\Big)\,dt.
\end{equation}
By \eqref{exit:coordinate}, we have
$\|h_j\|_{Z_j}=e^{-\ell_jT}r_j(T)$ and $r_j(T)\le\rho$.
If $r_j(T)>0$, \eqref{exit:rates} and \eqref{exit:powers} give
\[
 e^{-a_jT}r_j(T)
 =\|h_j\|_{Z_j}^{1-\delta_j}r_j(T)^{\delta_j}
 \le(\mathcal P_-|h|)_j.
\]
When $r_j(T)=0$, the inequality holds directly.

We set $\omega=R_-(\mathcal P_-|h|+f)$.
For $k<j$, the shorter backward prefix has conjugacy coordinates
$e^{\ell_it}h_i$ for $i\le k$ by \eqref{exit:finite-conjugacy}.
Apply the induction hypothesis at that point. The power formula
and positivity and prefix identity of $R_-$ from
Lemma~\ref{exit:power-continuity}(ii) give
\[
 r_k(t)\le\sum_{i\le k}(R_-)_{ki}
       \bigl(e^{a_it}(\mathcal P_-|h|)_i+f_i\bigr)
 \le e^{a_kt}\omega_k,
\]
where we use $0<a_i\le a_k$ and $(R_-)_{ki}\ge0$.
Substituting into \eqref{exit:integral-inverse} and using
\eqref{exit:matrices}, \eqref{exit:resolvents}, and
\eqref{exit:normalized-source}, we obtain
\[
 \|z_j\|_{Z_j}
 \le(\mathcal P_-|h|)_j+f_j
       +\sum_{k<j}\frac{c_{jk}}{a_j-a_k}\omega_k
 =(\mathcal P_-|h|+f+A_-\omega)_j=\omega_j.
\]
This proves (i).

For (ii), set $Y=R_+(|z|+f)$ and prove
$r_j(t)\le e^{b_jt}Y_j$ along each backward prefix by induction on $j$.
Reversing time in the lower inequality of \eqref{exit:scalar-forward},
we obtain
\[
 r_j'\le b_jr_j+\sum_{k<j}c_{jk}r_k+F_j
 \quad\text{a.e. on }[0,T].
\]
The shorter-prefix induction bounds hold throughout $[0,T]$.
Multiplying by $e^{-b_jt}$ and integrating, we obtain
\begin{align*}
 e^{-b_jt}r_j(t)
 &\le\|z_j\|_{Z_j}
   +\sum_{k<j}c_{jk}Y_k\int_0^t e^{-(b_j-b_k)s}\,ds
   +F_j\int_0^t e^{-b_js}\,ds\\
 &\le\|z_j\|_{Z_j}+\sum_{k<j}(A_+)_{jk}Y_k+f_j
 =Y_j,
\end{align*}
by \eqref{exit:matrices}, $F_j/b_j\le F_j/a_j=f_j$, and
$(I-A_+)Y=|z|+f$.
By \eqref{exit:rates} and \eqref{exit:power-exponents},
we have $p_jb_j=\ell_j$. If $r_j(T)>0$, we use
\eqref{exit:coordinate}, $r_j(T)\le\rho$, and \eqref{exit:powers}
to obtain
\[
 \|h_j\|_{Z_j}
 =\bigl(e^{-b_jT}r_j(T)\bigr)^{p_j}r_j(T)^{1-p_j}
 \le\rho^{1-p_j}Y_j^{p_j}
 =(\mathcal P_+Y)_j.
\]
If $r_j(T)=0$, then $h_j=0$ and the same bound holds.
In both inductions, the sums are empty for $j=1$, and the
integral estimates remain valid when $T=0$. This completes the proof.
\end{proof}

\subsection{Construction of the Hilbert-space conjugacies}

We assemble the compatible finite-dimensional conjugacies
into homeomorphisms in the strong $H^s$ topology, both onto
a common ball. Composing one with the inverse of the other
and the coordinate map $\Psi$ yields the desired conjugacy
between the Navier--Stokes semiflow and the heat semigroup.

\begin{proposition}\label{exit:hilbert-conjugacies}
There exist $\eta>0$ and open neighborhoods
$U_{\rm NS},U_{\rm heat}\subset B_\rho(\mathcal Z)$ of zero
such that the maps defined componentwise by
\eqref{exit:coordinate} for the respective semiflows
satisfy the following assertions.
\begin{enumerate}
\renewcommand{\theenumi}{\roman{enumi}}
\renewcommand{\labelenumi}{(\theenumi)}
\item The maps
\[
 H_{\rm NS}:U_{\rm NS}\to B_\eta(\mathcal Z),
 \qquad
 H_{\rm heat}:U_{\rm heat}\to B_\eta(\mathcal Z)
\]
are homeomorphisms in the $\mathcal Z$ norm and fix zero.
\item For every $t\ge0$, we have
\[
 \Phi_t^{\rm NS}(U_{\rm NS})\subset U_{\rm NS},
 \qquad
 \Phi_t^{\rm heat}(U_{\rm heat})\subset U_{\rm heat},
\]
and
\[
\begin{aligned}
 H_{\rm NS}(\Phi_t^{\rm NS}z)&=D_tH_{\rm NS}(z),
 &&z\in U_{\rm NS},\\
 H_{\rm heat}(\Phi_t^{\rm heat}z)&=D_tH_{\rm heat}(z),
 &&z\in U_{\rm heat},
\end{aligned}
\]
where $D_t$ is defined by \eqref{exit:diagonal-semigroup}.
\end{enumerate}
\end{proposition}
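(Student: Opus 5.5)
We define $H=(H_j(\pi_{\le j}z))_{j\ge1}$ on $B_\rho(\mathcal Z)$ for either semiflow. By the prefix compatibility $\pi_{\le M}H^{(N)}=H^{(M)}\pi_{\le M}$, this infinite sequence is well defined. Lemma~\ref{exit:envelopes}(ii), applied to each prefix $\pi_{\le N}z$ with $N\to\infty$, gives $|H(z)|\le\mathcal P_+(R_+(|z|+f))$ coordinatewise. Lemma~\ref{exit:power-continuity} then shows $H(z)\in\mathcal Z$.

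\emph{Continuity of $H$.} Let $z^{(n)}\to z$. Each finite prefix map $H^{(N)}$ is continuous by Lemma~\ref{exit:finite-homeomorphism}(i), so $\pi_{\le N}H(z^{(n)})\to\pi_{\le N}H(z)$ for every $N$. For the tails, the bound $|H(z^{(n)})|\le\mathcal P_+(R_+(|z^{(n)}|+f))$ applies. Since $R_+(|z^{(n)}|+f)$ converges in $\ell^2$, Lemma~\ref{exit:power-continuity} gives tails of $\mathcal P_+R_+(\cdot)$ that are uniformly small in $n$. Together, the prefix convergence and uniform tails give $H(z^{(n)})\to H(z)$ in $\mathcal Z$. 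Injectivity follows from injectivity of every $H^{(N)}$.

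\emph{Inverse and the common ball.} Given $h\in\mathcal Z$ with $\|h\|$ small, we reconstruct $z$ prefixwise through \eqref{exit:finite-image}. We need $h_{\le N}\in\mathcal O_N$ for all $N$, which means $\|(H^{(N)})^{-1}(h_{\le N})\|<\rho$. Lemma~\ref{exit:envelopes}(i) gives $|z_{\le N}|\le R_-(\mathcal P_-|h|+f)$, and $\|R_-f\|\le\rho/4$ by \eqref{exit:forcing-small}. Since $\mathcal P_-$ is continuous with $\mathcal P_-(0)=0$, we can choose $\eta$ so that $\|R_-\mathcal P_-|h|\|\le\rho/4$ for $\|h\|<\eta$. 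Then every prefix preimage has norm at most $\rho/2$.

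This needs an induction on $N$: the envelope in Lemma~\ref{exit:envelopes}(i) is stated for points already in the closed ball. The step is as follows. If $h_{<N}\in\mathcal O_{N-1}$ with preimage $u$, then $\|u\|\le\rho/2$. We must show $\|u\|^2+\|h_N\|^2<\rho^2$, which holds because $\|h_N\|<\eta\le\rho/2$ after shrinking $\eta$. The preimages $z^{(N)}$ are nested, $\pi_{\le M}z^{(N)}=z^{(M)}$, and uniformly bounded, so they define $z\in\overline B_{\rho/2}(\mathcal Z)$ with $H(z)=h$. Hence $B_\eta\subset H(B_\rho)$.

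Set $U=H^{-1}(B_\eta)$; it is open by continuity of $H$. Continuity of $H^{-1}$ follows the same pattern as for $H$: prefix convergence comes from finite-dimensional continuity of $(H^{(N)})^{-1}$ on $\mathcal O_N$, and uniform tails come from the envelope $R_-(\mathcal P_-|h^{(n)}|+f)$ together with Lemma~\ref{exit:power-continuity}. The term $f$ is fixed, so it contributes a uniformly small tail.

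\emph{Conjugacy and invariance.} The conjugacy follows prefixwise from \eqref{exit:finite-conjugacy} and \eqref{exit:finite-factor}, giving $H\Phi_t=D_tH$ on $U$. For $\Phi_t^{\rm NS}$ this uses that $\Phi_t$ preserves $B_\rho$, which holds by \eqref{geo:isometry-origin} and the dissipation estimate. Invariance of $U$ follows because $D_t$ contracts, so it maps $B_\eta$ into itself, and therefore $\Phi_tU=H^{-1}D_tH(U)\subset U$.

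\textbf{Main obstacle.} The delicate step is the bootstrap showing that every prefix of a small $h$ lies in $\mathcal O_N$, that is, that no prefix preimage escapes the ball. I would carry this out by the induction above, making sure the envelope bound is applied only to already-constructed prefix preimages.
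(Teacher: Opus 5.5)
Your proposal is correct and follows essentially the same route as the paper: the infinite envelopes from Lemma~\ref{exit:envelopes} give membership in $\mathcal Z$ and uniformly small tails, finite-prefix continuity handles the head, and the inductive use of \eqref{exit:finite-image} with $\|R_-f\|\le\rho/4$ and a small $\eta$ places $B_\eta(\mathcal Z)$ in the image, for both $H$ and its inverse. The differences are only cosmetic and both are valid: you get convergence of the nested preimages from their uniform bound rather than from domination by $R_-(\mathcal P_-|h|+f)$, and you justify invariance of $B_\rho(\mathcal Z)$ via the norm identity and energy dissipation rather than via \eqref{exit:inward}.
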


\begin{proof}
We treat either semiflow $\Phi_t$ in
Subsection~\ref{app:concrete-flows}, using the common coefficients
of Lemma~\ref{app:common-data}. We assemble the finite maps,
construct a common ball in their images, and prove continuity
of the inverses.

We first define
$H(z)=(H_j(z_{\le j}))_{j\ge1}$ for $z\in B_\rho(\mathcal Z)$
by \eqref{exit:coordinate}.
Applying Lemma~\ref{exit:envelopes} to each finite prefix and
using the prefix identity in Lemma~\ref{exit:power-continuity}(ii),
we obtain
\begin{equation}\label{exit:infinite-envelopes}
 |H(z)|\le\mathcal P_+\bigl(R_+(|z|+f)\bigr),\qquad
 |z|\le R_-\bigl(\mathcal P_-|H(z)|+f\bigr).
\end{equation}
The first bound and Lemma~\ref{exit:power-continuity} give
$H(z)\in\mathcal Z$, so the second bound is also well defined.
By Lemma~\ref{exit:finite-homeomorphism}(i), $H$ is injective,
fixes zero, and satisfies
$\pi_{\le N}H=H^{(N)}\pi_{\le N}$.
Equations~\eqref{exit:inward} and \eqref{exit:finite-factor} give
$\|\Phi_tz\|_{\mathcal Z}\le e^{-\kappa t}\|z\|_{\mathcal Z}$.
The finite conjugacy identity \eqref{exit:finite-conjugacy}
therefore yields
\begin{equation}\label{exit:global-identity}
 H(\Phi_tz)=D_tH(z),\qquad z\in B_\rho(\mathcal Z),\quad t\ge0.
\end{equation}

To prove continuity, let $z^{(n)}\to z$ in $B_\rho(\mathcal Z)$.
Since $|z^{(n)}|\to|z|$ in $\ell^2$, Lemma~\ref{exit:power-continuity}
gives
\[
 \mathcal P_+\bigl(R_+(|z^{(n)}|+f)\bigr)
 \to\mathcal P_+\bigl(R_+(|z|+f)\bigr)
 \quad\text{in }\ell^2.
\]
Convergent sequences in $\ell^2$ have uniformly small tails;
hence \eqref{exit:infinite-envelopes} implies
\[
 \lim_{M\to\infty}\sup_{n\ge1}
 \sum_{j>M}\|H_j(z^{(n)}_{\le j})\|_{Z_j}^2=0.
\]
For each fixed $M$, Lemma~\ref{exit:finite-homeomorphism}(i) gives
$\pi_{\le M}H(z^{(n)})\to\pi_{\le M}H(z)$.
Together these facts yield $H(z^{(n)})\to H(z)$ in $\mathcal Z$.
This argument also proves continuity at zero: the fixed source
$f$ need not vanish, but the envelope controls tails and
finite-prefix continuity controls the remaining coordinates.

We next construct a ball contained in the image of $H$.
By \eqref{exit:forcing-small}, $\|R_-f\|_{\ell^2}\le\rho/4$.
Since $\mathcal P_-$ is continuous and vanishes at zero,
Lemma~\ref{exit:power-continuity} allows us to choose
$0<\eta\le\rho/4$ such that
\begin{equation}\label{exit:eta-choice}
 \|h\|_{\mathcal Z}<\eta
 \quad\Longrightarrow\quad
 \|R_-\mathcal P_-|h|\|_{\ell^2}<\rho/4.
\end{equation}
This choice uses only the common coefficients and $\rho$.
Fix $h\in B_\eta(\mathcal Z)$ and set
\[
 \omega=R_-(\mathcal P_-|h|+f),\qquad \|\omega\|_{\ell^2}<\rho/2.
\]
We construct the finite inverses by induction, starting with the
empty prefix. If $u=(H^{(N-1)})^{-1}(h_{<N})$ exists,
Lemmas~\ref{exit:envelopes}(i) and \ref{exit:power-continuity}(ii)
give $|u|\le\pi_{<N}\omega$. Thus
\[
 \|u\|_{\mathcal Z}^2+\|h_N\|_{Z_N}^2
 <\frac{\rho^2}{4}+\frac{\rho^2}{16}<\rho^2.
\]
The image description \eqref{exit:finite-image} gives the unique
inverse $z^{[N]}=(H^{(N)})^{-1}(h_{\le N})$
in $B_\rho(\mathcal Z_{\le N})$.
By Lemma~\ref{exit:finite-homeomorphism} and the inverse envelope,
these inverses are compatible and satisfy
\begin{equation}\label{exit:inverse-dominated}
 \pi_{<N}z^{[N]}=z^{[N-1]},\qquad
 \|z_j^{[N]}\|_{Z_j}\le \omega_j\quad(j\le N).
\end{equation}
Extending them by zero, we obtain, for $N>M$,
\[
 \|z^{[N]}-z^{[M]}\|_{\mathcal Z}^2
 \le\sum_{j>M}\omega_j^2\to0\quad\text{as }M\to\infty.
\]
Their limit $z$ satisfies $\|z\|_{\mathcal Z}\le\|\omega\|_{\ell^2}<\rho/2$,
and compatibility gives $H_j(z_{\le j})=h_j$ for every $j$.
Thus $H(z)=h$ and $B_\eta(\mathcal Z)\subset H(B_\rho(\mathcal Z))$.

We finally prove continuity of the inverse and restrict the maps
to the required neighborhoods.
Set $U=H^{-1}(B_\eta(\mathcal Z))$. By continuity and injectivity
of $H$, this is an open neighborhood of zero and
$H:U\to B_\eta(\mathcal Z)$ is a bijection.
Let $h^{(n)}\to h$ in $B_\eta(\mathcal Z)$ and write
$z^{(n)}=H^{-1}(h^{(n)})$, $z=H^{-1}(h)$.
Lemma~\ref{exit:power-continuity} gives
\[
 R_-(\mathcal P_-|h^{(n)}|+f)
 \to R_-(\mathcal P_-|h|+f)\quad\text{in }\ell^2.
\]
The inverse bound in \eqref{exit:infinite-envelopes} therefore
gives uniformly small tails for $z^{(n)}$.
For each fixed $M$, prefix compatibility and
Lemma~\ref{exit:finite-homeomorphism}(i) give
$\pi_{\le M}z^{(n)}\to\pi_{\le M}z$.
The finite-prefix and tail argument above yields
$z^{(n)}\to z$ in $\mathcal Z$, proving (i).
Since $D_tB_\eta(\mathcal Z)\subset B_\eta(\mathcal Z)$,
\eqref{exit:global-identity} implies $\Phi_tU\subset U$
and proves (ii).
Applying the construction to $\Phi_t^{\rm NS}$ and
$\Phi_t^{\rm heat}$ gives $H_{\rm NS}$ and $H_{\rm heat}$
with the same target ball $B_\eta(\mathcal Z)$ and diagonal
semigroup $D_t$.
\end{proof}

\subsection{Completion of the proof}

\begin{proof}[Completion of the proof of Theorem~\ref{app:main}]
By Proposition~\ref{exit:hilbert-conjugacies}, the homeomorphisms
$H_{\rm NS}:U_{\rm NS}\to B_\eta(\mathcal Z)$ and
$H_{\rm heat}:U_{\rm heat}\to B_\eta(\mathcal Z)$
conjugate the two semiflows to the same $D_t$.
We define
\begin{equation}\label{app:final-conjugacy}
\begin{gathered}
 U=\Psi^{-1}(U_{\rm NS}),\qquad V=U_{\rm heat},
 \\
 \mathcal H=H_{\rm heat}^{-1}\circ H_{\rm NS}\circ\Psi.
\end{gathered}
\end{equation}
The sets in \eqref{app:final-conjugacy} are open neighborhoods of zero, and each map and its inverse
is strongly continuous.  Since $D_tB_\eta\subset B_\eta$, both neighborhoods
are positively invariant.  Finally,
\[
 \mathcal H S_t
 =H_{\rm heat}^{-1}D_tH_{\rm NS}\Psi
 =e^{-\nu At}\mathcal H
\]
on $U$ for every $t\ge0$.  This proves \eqref{app:conjugacy}.
\end{proof}

\section*{Acknowledgements}
The authors used OpenAI's ChatGPT to assist with language editing, reference checking, and reviewing the proofs for clarity and internal consistency. The authors take full responsibility for the
mathematical arguments, references, and final manuscript.

\section*{Statements and declarations}
\noindent\textbf{Data availability.}
No datasets were generated or analysed in this study.


\end{document}